\title{Classification of finite dimensional irreducible
modules over W-algebras}
\author{Ivan Losev and Victor Ostrik}
\address{I.L: Department
of Mathematics, Northeastern University, Boston MA 02115 USA;
V.O.: Department of Mathematics, University of Oregon, Eugene OR 97403 USA}
\email{i.loseu@neu.edu; vostrik@darkwing.uoregon.edu}
\thanks{I.L. is supported by the NSF grant DMS-0900907; V.O. is supported by the NSF grant
DMS-0602263}
\thanks{MSC 2010: 16G99, 17B35}
\newcommand\g{{\mathfrak g}}
\newcommand\bfr{\mathfrak{b}}
\newcommand\h{{\mathfrak h}}
\newcommand{\q}{\mathfrak{q}}
\newcommand{\Mod}{\operatorname{Mod}}
\newcommand{\Sk}{\mathcal{S}}
\newcommand\m{\mathfrak m}
\newcommand\Coh{\operatorname{Coh}}
\newcommand\lf{\mathfrak l}
\newcommand\z{\mathfrak z}
\renewcommand\t{\mathfrak t}
\newcommand\codim{\operatorname{codim}}
\newcommand\Spec{\operatorname{Spec}}
\newcommand\F{\operatorname{F}}
\newcommand\W{{\mathbb{A}}}
\newcommand\K{\mathbb K}
\newcommand\U{\mathcal U}
\newcommand\Verm{\Delta}
\renewcommand\Pr{\operatorname{Pr}}
\newcommand\D{\mathcal D}
\newcommand\Ann{\operatorname{Ann}}
\newcommand\LAnn{\operatorname{LAnn}}
\newcommand\RAnn{\operatorname{RAnn}}
\newcommand\Id{\mathfrak{Id}}
\newcommand\Walg{\mathcal W}
\newcommand\ZZ{\mathbb Z}
\newcommand\A{\mathcal A}
\newcommand\M{\mathcal M}
\newcommand\gr{\operatorname{gr}}
\newcommand\OCat{\mathcal O}
\newcommand\I{\mathcal I}
\newcommand\J{\mathcal J}
\renewcommand\sl{\mathfrak{sl}}
\newcommand\Hom{\operatorname{Hom}}
\newcommand{\ad}{\mathop{\rm ad}\nolimits}
\newcommand{\HC}{\operatorname{HC}}
\newcommand\Centr{\mathcal Z}
\newcommand\mult{\operatorname{mult}}
\newcommand{\VA}{\operatorname{V}}
\newcommand\cell{\sigma}
\newcommand\dcell{\mathbf{c}}
\newcommand\Nil{\mathcal{N}}
\newcommand\Orb{\mathbb{O}}
\newcommand\KFun{\mathcal{K}}
\newcommand\FF{\mathcal{F}}
\newcommand\GF{\mathcal{G}}
\newcommand\KF{\operatorname{K}}
\newcommand\rank{\operatorname{rk}}
\newcommand\Bimod{\operatorname{Bimod}}
\newcommand\Irr{\operatorname{Irr}}
\newcommand\Wh{\operatorname{Wh}}
\newcommand\BQ{\mathbb{Q}}
\newcommand\Spr{\operatorname{Spr}}
\newcommand\bA{\bar{A}}
\newcommand\Hecke{\operatorname{H}}
\newcommand\JCat{\mathfrak{J}}
\newcommand\JAlg{\mathsf{J}}
\newcommand\Mfr{\mathfrak{M}}
\newcommand\BR{\mathbb{R}}
\newcommand\bF{\mathbb{F}}
\newcommand\Fam{\mathcal{F}}
\newcommand\CCat{\mathfrak{C}}
\newcommand\DCat{\mathfrak{D}}
\renewcommand\Vec{\operatorname{Vec}}
\newcommand\MCat{\mathfrak{M}}
\newcommand\YCat{\mathfrak{Y}}
\newcommand{\Fun}{\operatorname{Fun}}
\newcommand{\be}{\mathbf{1}}
\newcommand{\Rep}{\operatorname{Rep}}
\newcommand{\fA}{\mathbf{A}}
\newcommand{\bfA}{\bar{\mathbf{A}}}
\newcommand{\Lagr}{\mathcal{L}}
\newcommand{\bH}{\mathbf{H}}
\newcommand{\bpsi}{\bar{\psi}}
\newcommand{\Prim}{\operatorname{Pr}}
\newcommand{\Mat}{\operatorname{Mat}}
\newcommand{\GL}{\operatorname{GL}}
\newcommand{\Aut}{\operatorname{Aut}}
\newcommand{\BG}{\mathcal{BG}}
\newcommand{\Der}{\operatorname{Der}}
\newcommand{\cellb}{\tau}
\newtheorem{Thm}{Theorem}[section]
\newtheorem{Prop}[Thm]{Proposition}
\newtheorem{Cor}[Thm]{Corollary}
\newtheorem{Lem}[Thm]{Lemma}
\theoremstyle{definition}
\newtheorem{Ex}[Thm]{Example}
\newtheorem{Rem}[Thm]{Remark}
\newtheorem{Conj}[Thm]{Conjecture}
\numberwithin{equation}{section}
\numberwithin{table}{section} \oddsidemargin=0cm
\begin{document}
\begin{abstract}
Finite W-algebras are certain associative algebras arising in Lie theory.
Each W-algebra is constructed from a pair of a semisimple Lie algebra $\g$  (our base field
is algebraically closed and of characteristic $0$)
and its nilpotent element $e$. In this paper we classify finite dimensional
irreducible modules with integral central character over W-algebras.
In more detail, in a previous paper the first author proved that the component
group $A(e)$ of the centralizer of the nilpotent element under consideration
acts on the set of finite dimensional irreducible modules over the W-algebra
and the quotient set is naturally identified with the set of primitive ideals in
$U(\g)$ whose associated variety is the closure of the adjoint orbit of $e$.
In this paper, for a given primitive ideal with integral central character, we compute
the corresponding $A(e)$-orbit. The answer is that the stabilizer of that orbit is
basically a subgroup of $A(e)$ introduced by G. Lusztig. In the proof we use a variety
of different ingredients: the structure theory of primitive ideals and Harish-Chandra
bimodules for semisimple Lie algebras, the representation theory of W-algebras,
the structure theory of cells and Springer representations, and multi-fusion monoidal categories.
\end{abstract}
\maketitle
\tableofcontents

\section{Introduction}
\subsection{Finite W-algebras}
Finite W-algebras are certain associative algebras arising in  Lie representation theory.
Each W-algebra $\Walg$ is constructed from a pair $(\g,\Orb)$, where $\g$ is a semisimple Lie algebra
over an algebraically closed field $\K$ of characteristic 0, and $\Orb$ is a nilpotent orbit
in $\g$. Some information, including a definition, is recalled in Section \ref{SECTION_Walg}.
For more details about (finite) W-algebras the reader is referred to
the reviews  \cite{Wang},\cite{ICM}.

One of the most basic questions in Representation theory is, given an associative algebra
$\A$, classify its irreducible finite dimensional representations. In this paper we solve this problem
for finite $W$-algebras under the restriction that we only consider representations with
{\it integral} central character. It is known that the center of $\Walg$ is canonically identified
with the center of the universal enveloping algebra $\U:=U(\g)$ of $\g$ and so one uses
this identification to define the notion of
an integral central character for $\Walg$-modules.

Before stating our main result in Subsection \ref{SUBSECTION_main_result} we
would like to explain some prior classification results.


\subsection{Known classification results}
The main theorem of the present paper is a refinement of a classification result
from \cite{HC}, so we are going to explain that result first.

Fix a semisimple Lie algebra $\g$, its nilpotent orbit $\Orb$ and construct the
$W$-algebra $\Walg$ from these data. Let $\Irr_{fin}(\Walg)$ denote the set of finite dimensional irreducible $\Walg$-modules.

This set comes equipped with a finite group action. Namely, let $G$ denote the simply
connected semisimple algebraic group with Lie algebra $\g$. Pick an element
$e\in \Orb$ and consider its centralizer $Z_G(e)$ in $G$. In general, this subgroup
is not connected. Consider the component group $A(e):=Z_G(e)/Z_G(e)^\circ$, where
the superscript ``$^\circ$'' denotes the unit connected component. It turns out that
there is a natural action of $A(e)$ on $\Irr_{fin}(\Walg)$, see Section \ref{SECTION_Walg}
for the definition. This action is by outer automorphisms and so the modules in the same
orbit are basically indistinguishable.

In \cite{HC} the first author described the orbit space $\Irr_{fin}(\Walg)/A(e)$
for the $A(e)$-action on $\Irr_{fin}(\Walg)$. Namely, consider the set $\Prim_{\Orb}(\U)$
of all primitive ideals of $\U$ whose associated variety is the closure $\overline{\Orb}$
of $\Orb$, see Section \ref{SECTION_ideals} for a reminder on primitive ideals. Premet
conjectured, Conjecture 1.2.1 in loc.cit., that there is a natural identification between $\Irr_{fin}(\Walg)/A(e)$ and $\Prim_{\Orb}(\U)$
and this conjecture was proved in Subsection 4.2 of loc.cit.

The set $\Prim_{\Orb}(\U)$ is basically computable in all cases, we gather some
results in Subsection \ref{SUBSECTION_classification}. So, since the $A(e)$-conjugate modules
are practically indistinguishable, the result from \cite{HC} can be regarded
as an almost complete classification. To complete the classification one needs
to determine the  $A(e)$-orbit corresponding to each primitive ideal $\J\in \Prim_{\Orb}(\U)$.
This is a problem that we solve in this paper under the restriction that $\J$ has an
integral central character. The general case is an ``endoscopy-like'' problem
and so seems to be difficult, we only have some conjectures there.

Let us also recall other known classification results although they will not be
used in the present paper. In \cite{BK2} Brundan and Kleshchev produced an explicit
combinatorial description of $\Irr_{fin}(\Walg)$ for $\g=\sl_n$. Here the group
$A(e)$ acts trivially on $\Irr_{fin}(\Walg)$ and so $\Irr_{fin}(\Walg)\cong \Prim_{\Orb}(\U)$.
Modulo that identification, the Brundan-Kleshchev classification is equivalent
to the combinatorial description of $\Prim_{\Orb}(\U)$ due to Joseph. Some combinatorial
descriptions were also obtained for certain orbits in the classical Lie algebras, see
\cite{BG1},\cite{BG2}.

For the minimal nilpotent orbit in any $\g$ the classification of $\Irr_{fin}(\Walg)$
was obtained by Premet in \cite{Premet2}.

Finally, let us mention a result obtained in \cite{Miura}. There the first author obtained a
criterium for a module in the category $\mathcal{O}$ for $\Walg$ to be finite dimensional.
In the case when $\Orb$ is a so called {\it principal Levi} orbit this gives a complete
classification of $\Irr_{fin}(\Walg)$. The description heavily depends on the properties
of $\Prim_{\Orb}(\U)$ and so is rather implicit. Also it is a hard and interesting question of
how the classification  from \cite{Miura} agrees with the result of the present paper.

\subsection{Main theorem}\label{SUBSECTION_main_result}
So our goal is
to describe, for any ideal $\J\in \Prim_{\Orb}(\U)$ with integral central character,
the $A(e)$-orbit in $\Irr_{fin}(\Walg)$ lying over $\J$. We remark that
such $\J$ exists only when the orbit $\Orb$ is {\it special} in the sense of Lusztig,
see \cite[13.1.1]{orange} for the definition of a special orbit.

To a special orbit $\Orb$ one can assign a subset $\dcell$ in the Weyl group $W$ of $\g$
called a {\it two-sided cell}. The two-sided cell splits into the union of subsets called
{\it left cells}. For an integral central character $\lambda$ of $\U$ let $\Prim_{\Orb}(\U_\lambda)$
be the set of primitive ideals in $\Prim_{\Orb}(\U)$ with central character $\lambda$.
The set $\Prim_{\Orb}(\U_\lambda)$ naturally embeds into the set of all left cells
in $\dcell$, and this embedding is a bijection when $\lambda$ is regular.

Lusztig defined a certain quotient $\bA$ of $A(e)$, see \cite[p. 343]{orange}. Further to each left cell $\cell$ inside
$\dcell$ he associated a subgroup $H_\cell\subset \bA$ defined up to conjugacy,
see \cite[Proposition 3.8]{Lusztig_subgroups}. For reader's convenience let us provide
some description here, more details  will be given in Section \ref{SECTION_cell}. To $\dcell,\cell$
one assigns the cell $W$-modules $[\cell]\subset [\dcell]$. To $\Orb$ one assigns the Springer $W\times A(e)$-module
$\Spr(\Orb)$. Then $H_\cell$ has the (defining, in fact) property that $\BQ (\bA/H_\cell)=\Hom_W([\cell],\Spr(\Orb))$
as $A(e)$-modules, while $\bA$ is the minimal quotient of $A(e)$
that acts on $\Hom_W([\dcell],\Spr(\Orb))$.

Now we are ready to state our main result.

\begin{Thm}\label{Thm:very_main}
Let $\J\in \Prim_\Orb(\U)$ have integral central character. Let $\cell$ be the corresponding left cell.
The $A(e)$-orbit over $\J$ is $\bA/H_\cell$.
\end{Thm}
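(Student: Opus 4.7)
The plan is to upgrade the bijection $\Irr_{fin}(\Walg)/A(e) \cong \Prim_\Orb(\U)$ of \cite{HC} to a categorical statement about module categories over a multi-fusion monoidal category attached to the two-sided cell $\dcell$, and then extract the stabilizer of $\J$ from Lusztig's parametrization. Since the $A(e)$-orbit over $\J$ is determined by the stabilizer of any simple module in the fiber, the target is precisely to identify this stabilizer with a conjugate of $H_\cell\subset \bA$.

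As the first main step, I would set up the categorification. Let $\HC_\dcell(\U)$ denote the category of Harish-Chandra $\U$-bimodules whose two-sided annihilators lie in $\Prim_\Orb(\U_\lambda)$, taken modulo those with strictly smaller associated variety; under $\otimes_\U$ this is a monoidal category. Losev's restriction functor $\bullet_\dagger$ sends HC $\U$-bimodules to $\Walg$-bimodules, and so makes an appropriate finite-length category of $\Walg$-modules into a module category over $\HC_\dcell(\U)$, with the $A(e)$-action lifting by outer automorphisms. On the Hecke-algebraic side, Lusztig's asymptotic algebra $\JAlg_\dcell$ admits a description in terms of $\bA$-equivariant data on a finite set of left cells with point-stabilizers the $H_\cell$, lifting to a multi-fusion categorification whose module categories are classified by (twisted) $\bA$-sets. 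The pivotal step is to prove an $A(e)$-equivariant monoidal equivalence between $\HC_\dcell(\U)$ and this fusion categorification, which then recasts the $\Walg$-module category as an $\bA$-equivariant one; the fiber over $\J$ thereby becomes an $\bA$-set of the form $\bA/H$ for some subgroup $H\subset\bA$.

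To identify $H$ with $H_\cell$ I would match numerical invariants. By Joseph, the Goldie rank polynomials of primitive ideals in the left cell $\cell$ span a copy of $[\cell] \subset \Spr(\Orb)$, and the dimensions of the simple finite-dimensional $\Walg$-modules obtained via $\bullet_\dagger$ from ideals in $\cell$ are controlled by these Goldie ranks. Combined with the defining property $\BQ(\bA/H_\cell) = \Hom_W([\cell],\Spr(\Orb))$, this matches the sizes on both sides; the $\bA$-equivariance provided by the categorification then pins down $H$ to be conjugate to $H_\cell$.

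The main obstacle is the monoidal equivalence at step two: proving that $\HC_\dcell(\U)$ realizes, as an $A(e)$-equivariant multi-fusion category, the expected categorification of $\JAlg_\dcell$ for arbitrary $\g$ and integral central character. This is where the Hecke-algebraic side (cells, Springer representations) and the multi-fusion-categorical side announced in the abstract must be tightly coordinated with the HC-bimodule theory, including the control of module categories via $\bullet_\dagger$ and sharp tracking of the $A(e)$-action so as to recover $H_\cell$ up to conjugacy rather than a mere subgroup of the same index.
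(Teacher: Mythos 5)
Your outline captures the right broad shape (multi--fusion category machinery plus a Springer/Goldie-type invariant) but it misses the central obstacle that drives the actual proof, and it relies on inputs that would make the argument circular.

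The key difficulty is that even a complete monoidal equivalence $\JCat_{\Orb}\simeq\Coh^{\bA}(Y'\times Y')$ (which is the BFO theorem you propose to use as your pivotal step) does \emph{not} determine the $\bA$-set $Y'$: as a monoidal category, $\Coh^{\Gamma}(X_1\times X_1)$ and $\Coh^{\Gamma}(X_2\times X_2)$ can be equivalent for non-isomorphic $\Gamma$-sets $X_1,X_2$ (e.g.\ $\Gamma=\ZZ/2\ZZ$, $X_1=\{pt\}$, $X_2=\Gamma$), and 2- and 3-cocycle twists make the ambiguity worse. So your step ``the fiber over $\J$ thereby becomes an $\bA$-set of the form $\bA/H$'' does not follow from the equivalence alone; this is exactly the discussion in Subsection~\ref{SUBSECTION_discussion}. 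What actually breaks the Morita ambiguity is Dodd's $W\times A(e)$-equivariant embedding $\BQ(Y)\hookrightarrow\Spr(\Orb)$, which you do not invoke. Your substitute --- Goldie ranks controlling dimensions of simple $\Walg$-modules --- is not available at this stage: that identity is proved in the sequel \cite{Goldie} \emph{using} Theorem~\ref{Thm:very_main}, so relying on it here would be circular. Relatedly, taking the BFO equivalence as an input also defeats one of the stated goals, namely obtaining an independent proof of Lusztig's conjecture as a byproduct.

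You also omit the hardest genuinely new W-algebraic ingredient, Theorem~\ref{Thm_main}: the statement that for any two finite-dimensional simple $\Walg$-modules $N_1,N_2$ with compatible integral central characters there is an irreducible $\M\in\HC_{\Orb}(\U)$ with $\Hom(N_1,N_2)$ a summand of $\M_\dagger$, i.e.\ that $\YCat^{\Lambda}$ is indecomposable as a $\,^{\Lambda}\!\JCat^{\Lambda}_{\Orb}$-module. Without this, the duality formalism of Section~\ref{SECTION_categories} does not apply, and one cannot even obtain a quotient $\bfA$ of $A(e)$ acting transitively per left cell, let alone identify it with $\bA$. Finally, even after Dodd's embedding and indecomposability are in hand, the paper must argue cell-by-cell (classical types via Temperley--Lieb combinatorics, then $\bA=\ZZ/2\ZZ,S_3,S_4,S_5$, then the three exceptional cells separately via \cite{O}); your ``match numerical invariants'' step elides this and would not, by itself, rule out strict subgroup/cocycle alternatives in the exceptional cases.
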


\subsection{Discussion}\label{SUBSECTION_discussion}
We would like to outline some ideas leading to the statement
of the theorem as well as some techniques used in the proof. The reader should keep in mind
that some constructions are explained informally and often not as they are used in the
actual proofs below.

Theorem \ref{Thm:very_main} was conjectured by R.~Bezrukavnikov and the second author
(unpublished).
The main motivation for that
conjecture came from \cite{BFO1}. So, first, we are going to explain what was done in \cite{BFO1}.

Fix a special orbit $\Orb$.
In \cite{Lusztig_tensor} Lusztig assigned a certain multi-fusion (=rigid monoidal, semisimple
with finitely many simples) category
$\JCat_{\Orb}$ to $\Orb$ (or, more precisely, to the corresponding
two-sided cell $\dcell$). The category $\JCat_{\Orb}$ categorifies the block $\JAlg_{\Orb}$
in Lusztig's asymptotic Hecke algebra $\JAlg$. Lusztig conjectured, among other things,
that his category should admit a fairly easy description: it should be isomorphic
to the category $\Coh^{\bA}(Y'\times Y')$, where $Y':=\bigsqcup_{\cell}\bA/H_\cell$, of
$\bA$-equivariant  sheaves of finite dimensional vector spaces on $Y'\times Y'$.
This conjecture was verified in \cite{BFO1}.

Lusztig's category $\JCat_{\Orb}$ can be defined using the representation theory of
$\U$. Namely, consider the monoidal category $\HC(\U_\rho)$ of all Harish-Chandra $\U$-bimodules
whose left and right central characters are trivial. Consider its tensor ideals
$\HC_{\overline{\Orb}}(\U_\rho)\supset \HC_{\partial\Orb}(\U_\rho)$ of all bimodules
supported on the closure $\overline{\Orb}$ and on the boundary $\partial\Orb$,
respectively. Form the quotient $\HC_{\Orb}(\U_\rho)$. We do not know whether this category is  semisimple,
%
in general, in any case, we  can consider its subcategory $\HC_{\Orb}(\U_\rho)^{ss}$ of all semisimple
objects. The last subcategory happens to be closed under  tensor products. Moreover,
it is naturally isomorphic to Lusztig's category $\JCat_{\Orb}$, see \cite{BFO2} for details.

On the other hand, in \cite{HC} the first author related the category $\HC_{\Orb}(\U)$
(define similarly to $\HC_{\Orb}(\U_\rho)$ but without restrictions on the central characters)
to a certain category of finite dimensional $\Walg$-bimodules to be defined next.

Consider a maximal reductive subgroup $Q$ of $Z_G(e)$. This group acts on $\Walg$ by automorphisms and the action
produces the $A(e)$-action on $\Irr_{fin}(\Walg)$. Moreover, there is a $Q$-equivariant embedding
of the Lie algebra $\q$ of $Q$ into $\Walg$.
So we can define the category $\HC^Q_{fin}(\Walg)$ of $Q$-equivariant finite dimensional $\Walg$-bimodules.
The $Q$-equivariance includes the condition that the action of $Q^\circ$ should integrate
the adjoint  $\q$-action, where $\q$ is viewed as a Lie subalgebra of $\Walg$. In other words,
the only additional structure on a $Q$-equivariant bimodule comparing to a usual bimodule
is the action of representatives of the elements of $Q/Q^\circ$. In particular, the subcategory
$\HC^Q_{fin}(\Walg_\rho)^{ss}$ of semisimple $Q$-equivariant finite dimensional bimodules
with left and right central characters $\rho$ roughly looks as the category $\Coh^{A(e)}(Y\times Y)$
with $Y=\Irr_{fin}(\Walg_\rho)$. One way how $\HC^Q_{fin}(\Walg_\rho)^{ss}$ may be different
from $\Coh^{A(e)}(Y\times Y)$ is via twists with 2- and 3-cocycles. We are not going to make this precise here,
we only present an example of what we mean by  a 2-cocycle twist.

Consider the algebra $\A:=\Mat_2(\K)\oplus \Mat_2(\K)$ and its group $\Gamma$ of automorphisms constructed
as follows. Take the dihedral subgroup $\operatorname{Dyh}_8$ of order 8 in $\GL_2(\K)$.
Then for $\Gamma$ take its image under the homomorphism
$\GL_2(\K)\rightarrow \operatorname{PGL}_2(\K)\times \operatorname{PGL}_2(\K)\subset \Aut(\A)$ projecting
$\GL_2(\K)$ onto the first factor. So $\Gamma\cong \ZZ/2\ZZ\times \ZZ/2\ZZ$. Consider the category $\Bimod^\Gamma(\A)$ of $\Gamma$-equivariant
(in the usual sense) $\A$-bimodules. 
It is easy to see that $\Bimod^{\Gamma}(\A)$ is equivalent to the category of all sheaves on $\{1,2\}\times \{1,2\}$,
whose fibers over the diagonal points are genuine $\Gamma$-modules, while fibers over the non-diagonal
points are projective modules corresponding to a non-trivial 2-cocycle. This is what we mean by a 2-cocycle twist.
3-cocycle twists we mentioned have to do with triple product isomorphisms that are a part of the
definition of a monoidal category.

Now let us explain a relationship between the categories $\HC_{\Orb}(\U)$ and $\Bimod^Q_{fin}(\Walg)$.
It was shown in \cite{HC}, Theorem 1.3.1, that there is a fully faithful tensor embedding
$\HC_{\Orb}(\U)\rightarrow \Bimod^Q_{fin}(\Walg)$ of abelian categories, whose image is closed under
taking subquotients. The embedding is compatible with central characters so that
$\HC_{\Orb}(\U_\rho)$ embeds into $\Bimod^Q_{fin}(\Walg_\rho)$. Since the image is closed
under subquotients we see that $\JCat_{\Orb}=\HC_{\Orb}(\U_\rho)$ embeds into
the category $\Bimod^Q_{fin}(\Walg_\rho)^{ss}$, which, as we mentioned, is, basically,
$\Coh^{A(e)}(Y\times Y)$ with various twists.

To relate the approach from \cite{HC} with that from \cite{BFO1} we would like to
assert that not only $Y=Y'$ (which is the regular central character case of Theorem \ref{Thm:very_main})
 but also that the embedding $\JCat_{\Orb}\hookrightarrow \Coh^{A(e)}(Y\times Y)$
realizes $\JCat_{\Orb}$ as $\Coh^{\bA}(Y\times Y)$. There is a bunch of various problems with this
claim -- otherwise we would not write this paper. The first problem to address is as follows:
not every multi-fusion subcategory of $\Coh^{A(e)}(Y\times Y)$ has the form $\Coh^{\bfA}(Y\times Y)$
(with various additional twists) for some quotient $\bfA$ of $ A(e)$ acting on $Y$.
It turns out that a criterium for a subcategory to have that form is that
for each $A(e)$-orbit in $Y\times Y$ the subcategory
has an object supported on that orbit.  It so happens that $\JCat_{\Orb}\subset \Coh^{A(e)}(Y\times Y)$
does have that property but this is a pretty non-trivial fact about W-algebras to be proved
in Section   \ref{SECTION_Walg_further}.

But even if we know that $\JCat_{\Orb}=\Coh^{\bfA}(Y\times Y)$ (again with various twists) there
are other problems: why $\bfA$  viewed as a quotient of $A(e)$ should be the same as $\bA$, why
no twists occur and, most importantly, why $Y=Y'$? An important remark here is that even if a finite
group $\Gamma$ is fixed, knowing the category $\Coh^{\Gamma}(X\times X)$ up to an equivalence of monoidal categories
is not sufficient to recover $X$ as a set with a $\Gamma$-action. The simplest example is as follows:
take $\Gamma=\ZZ/2\ZZ, X_1=\{pt\}, X_2=\Gamma$. Then the multi-fusion categories
$\Coh^{\Gamma}(X_1\times X_1), \Coh^{\Gamma}(X_2\times X_2)$ are equivalent. The presence
of twists makes things even worse: for $\Gamma=\ZZ/2\ZZ\times \ZZ/2\ZZ$ the category
$\Bimod^{\Gamma}(\A)$ considered above is equivalent to $\Coh^\Gamma(X\times X)$ with $X=\Gamma\sqcup\{pt\}$.

Fortunately, that ambiguity can be fixed with, basically, just one powerful tool. That tool
is a relationship between the finite dimensional representations of $\Walg_\rho$
and the Springer representation $\Spr(\Orb)$ obtained by Dodd in \cite{Dodd}. Namely consider the rational $K$-group
of the category of finite dimensional $\Walg_\rho$-modules. This $K$-group gets identified
with the  $\BQ$-span $\BQ (Y)$ of $Y$. One can define a natural
structure of a $W\times A(e)$-module on $\BQ (Y)$.    Dodd proved that there is
a $W\times A(e)$-equivariant embedding $\BQ (Y)\hookrightarrow \Spr(\Orb)$.

The existence of an embedding is a very restrictive condition on $Y$. It turns out that the claim we need
to prove is more or less equivalent to showing that $\BQ (Y)$ is as big as possible,
meaning that it coincides with the maximal submodule
$\Spr(\Orb)^{\dcell}$ of $\Spr(\Orb)$ whose irreducible $W$-submodules appear in the cell module $[\dcell]$.

So, very roughly, the proof of  Theorem \ref{Thm:very_main} uses the following algorithm:
\begin{enumerate}
\item[(0)] Start with a left cell $\cell$ with $H_{\cell}=\bA$ and assume no knowledge
of the $W\times A(e)$-module $\BQ (Y)$ (well, we know that the trivial $\bA$-module always occurs
there but this is not of much help).
\item[(1)] Use the information on the structure of $\JCat_{\Orb}$ and known irreducible
constituents of $\BQ (Y)$ to prove that the stabilizers in $Y$ for more left cells (=primitive
ideals) $\cellb$ coincide with $H_\cellb$.
\item[(2)] Get some new irreducible constituents of $\BQ (Y)$.
\item[(3)] If $\BQ (Y)=\Spr(\Orb)^{\dcell}$ is known, then we are, more or less, done.
Otherwise, return to step 1.
\end{enumerate}
In the course of the proof we will also see that basically no twists  occur in $\JCat_{\Orb}=\Coh^{\bA}(Y\times Y)$.

Of course, the scheme above assumes that there is a left cell $\cell$ with $H_{\cell}=\bA$, so the question
is whether this is always the case. The answer is: always, with three exceptions: one cell
for $E_7$ and 2 cells for $E_8$. These are so called exceptional cells that have to be treated separately,
see \cite{O}.

\subsection{Applications}
In fact, together with Theorem \ref{Thm:very_main} we obtain an alternative
proof of the Lusztig conjecture on the structure of $\JCat_{\Orb}$ mentioned in the previous subsection.
This application is pretty well expected and is straightforward from our proof.

There is also a much less expected application: using Theorem \ref{Thm:very_main} and some
of the techniques used in the proof the first author was able to compute the dimensions
of finite dimensional irreducible modules with integral central characters. Further, he proved
that the dimension of such a module equals the Goldie rank of the corresponding primitive ideal in $\U$.
This and related developments will be a subject of a forthcoming paper \cite{Goldie}.

\subsection{Structure of the paper}
Let us describe the organization of the paper. The paper is broken into sections, the beginning of each section describes its content
in more detail.

Sections \ref{SECTION_ideals}-\ref{SECTION_Walg}, \ref{SECTION_cell}
are preliminary and, basically, contain nothing new. In Section \ref{SECTION_ideals} we introduce certain categories related to
Harish-Chandra bimodules and describe some related constructions.
In our proofs we need to
use the language of multi-fusion and module categories, those are recalled
in Section \ref{SECTION_categories}. In Section \ref{SECTION_Walg}
we recall various facts about $W$-algebras.
Finally, in Section \ref{SECTION_cell} we recall various things related to cells and Lusztig's subgroups,
including the explicit computations of the latter.

In Section \ref{SECTION_Walg_further} is technical, there we prove several
results regarding the functor $\HC_{\Orb}(\U)\rightarrow \HC^Q_{fin}(\Walg)$.
The most important one is Theorem \ref{Thm_main}.

Finally, in Section \ref{SECTION:proof_main} we complete the proof of Theorem \ref{Thm:very_main}.


\subsection{Acknowledgements} We are grateful to R. Bezrukavnikov, C. Dodd, G. Lusztig
and D. Vogan for stimulating discussions.

\subsection{Conventions and notation}
In this subsection we describe conventions and the notation used in this paper.
The notation will be recalled from time to time in the main body of the paper.

\subsubsection{Lie algebras and algebraic groups}
Throughout the paper $\g$ is a reductive Lie algebra defined over an algebraically closed field
$\K$ of characteristic $0$. We fix a Borel subalgebra $\bfr\subset \g$ and a Cartan subalgebra $\h\subset \bfr$.
Let $\Delta,W$ be the root system and the Weyl group corresponding to the choice of $\h$, and $\Delta_+$
the system of positive roots corresponding to the choice of $\bfr$. Let $U(\g)$ denote the universal
enveloping algebra of $\g$. We will often write $\U$ for $U(\g)$.
Further, $G$ denotes a connected reductive algebraic group with Lie algebra $\g$.

By $(\cdot,\cdot)$ we denote a symmetric invariant  form on $\g$ whose restriction to $\h(\BQ)$ is
positive definite. We identify $\g$ with $\g^*$ using that form.

\subsubsection{Nilpotent orbits} By $\Orb$ we denote a nilpotent orbit in $\g$. Starting from Section \ref{SECTION_Walg_further}
we assume that the orbit $\Orb$ is special in the sense of Lusztig, unless otherwise is specified. We pick
an element $e\in \Orb$ and include it into an $\sl_2$-triple $(e,h,f)$. Then $S$ denotes the Slodowy
slice, $S:=e+\z_\g(f)$, where $\z_{\g}(\bullet)$ stands for the centralizer in $\g$.
We set $Q:=Z_G(e,h,f)$, this is a maximal reductive subgroup of the centralizer $Z_G(e)$.
Then $A:=A(e)=Q/Q^\circ=Z_G(e)/Z_G(e)^\circ$ denotes the component group.

Starting from $\g$ and $\Orb$ (or, more precisely, from $\g$ and the $\sl_2$-triple $(e,h,f)$)
one constructs the W-algebra to be denoted by $U(\g,e)$ or, more frequently, by  $\Walg$.

\subsubsection{Central characters}\label{SSS_central}
Recall that the center $\Centr(\U)$ of $\U$ is identified with $\K[\h^*]^W$
via the Harish-Chandra isomorphism: to $z\in \Centr(\U)$ one assigns the polynomial of $\nu$
by which $z$ acts on the irreducible module $L(\nu)$ with highest weight $\nu-\rho$.
Here, as usual, $\rho$ stands for half the sum of all positive roots.

One says that a character of $\Centr(\U)$ is {\it integral} (with respect to $G$) if its representative $\lambda\in \h^*$
lies in the character lattice of $G$, and {\it regular} if $\lambda$ is  non-zero on all coroots.
A non-regular character is also called {\it singular}.

The integral central characters are therefore in one-to-one correspondence with
the dominant weights, where $\lambda\in \h^*$ is called dominant if it is integral
and non-negative on all positive coroots.  We will usually denote dominant weights
(=integral central characters) by Greek letters $\lambda,\mu$. The set of dominant
characters of $G$ will be denoted by $P^+$. The set of strictly dominant characters
(those that are positive on all positive coroots) is denoted by $P^{++}$.
Further we say that $\mu\in P^+$ is compatible with $w\in W$ (or vice versa)
if $w\alpha\in -\Delta_+$ for any $\alpha\in \Delta_+$ with $\langle\mu,\alpha^\vee\rangle=0$.
Equivalently, $w$ is the longest element in $wW_\alpha$.
Clearly any integral element of $\h^*$ is represented in the form $w\mu$ with compatible
$w,\mu$ in a unique way.

For $\lambda\in \h^*$, let $\U_\lambda:=\U/\U \Centr(\U)_\lambda$ be the central reduction
of $\U$ by the ideal in $\U$ generated by the maximal ideal $\Centr(\U)_\lambda$ of $\lambda$ in $\Centr(\U)$.
Of course, with our conventions, $\U_\lambda=\U_{w\lambda}$ for all $w\in W$.

\subsubsection{Cells and Lusztig's groups}
In what follows we usually deal with one special orbit $\Orb$ (or the corresponding
two-sided cell $\dcell$) at a time so we do not indicate the dependence  on $\Orb$ or $\dcell$
when this does not lead to a confusion. By $\bA$ we denote the Lusztig quotient of
the component group $A$.

Left cells inside $\dcell$ are usually denoted by Greek letters $\sigma,\tau$. For a left cell
$\sigma$ by $H_\sigma$ we denote the corresponding Lusztig subgroup in $\bA$.

\subsubsection{Primitive ideals and Harish-Chandra bimodules}
Recall that by a primitive ideal one means the annihilator of an irreducible module.
For $\lambda\in \h^*$ set $\J(\lambda):=\Ann_{\U}L(\lambda)$. According to the Duflo
theorem, every primitive ideal is of the form $\J(\lambda)$ for some (non-unique,
in general) $\lambda\in \h^*$.

The set of all primitive ideals will be denoted by $\Prim(\U)$. By $\Prim_{\Orb}(\U)$
we denote the set of all primitive ideals $\J$ such that the associated variety $\VA(\U/\J)$
coincides with $\overline{\Orb}$. By $\Prim(\U_\lambda)$ we denote the subset of $\Prim(\U)$
consisting of all ideals with central character $\lambda$, i.e., all $\J$ with $\J\cap \Centr(\U)=\Centr(\U)_\lambda$.
We use the notation $\Prim(\U_\Lambda)$ to denote the set of all primitive ideals whose central
characters belong to a subset $\Lambda\subset \h^*$.

The notation for Harish-Chandra bimodules will be explained in more detail in Subsection \ref{SUBSECTION_HC}.
Let us mention now that we consider various categories related to Harish-Chandra bimodules
and our notation for them usually looks like $\,^{\Lambda_1}\!\HC^{\Lambda_2}_{Y}(\U)$.
This stands for the category of all HC bimodules with generalized left central character lying in $\Lambda_1\subset\h^*$,
generalized right central character in $\Lambda_2$, and the associated variety contained in $Y\subset \g\cong \g^*$
-- when $Y$ is closed. When $Y$ is locally closed, $\HC_Y(\U)$ stands for the subquotient
$\HC_{\overline{Y}}(\U)/\HC_{\partial Y}(\U)$. The notion of (generalized) central characters still
makes sense for the objects of such subquotients.

By $^{\Lambda_1}\!\JCat^{\Lambda_2}$ we denote the category $^{\Lambda_1}\!\HC^{\Lambda_2}_{\Orb}(\U)^{ss}$,
where the superscript ``$ss$'' means the semisimple part of the category.
Any missing superscript stands for the genuine central character $\rho$
so that, for instance, $\JCat^{\Lambda_2}$ means $^\rho\!\JCat^{\Lambda_2}$.
Also we will use the notation like $\JCat_\sigma, \,_\cellb\!\JCat, \,_\sigma\!\JCat_\cellb$ for certain subcategories
in $\JCat(=\,^\rho\!\JCat^\rho)$ associated with left cells $\sigma,\cellb$ -- see Subsection \ref{SUBSECTION_cell_HC} for the definitions.

\subsubsection{Irreducible $\Walg$-modules}
The set of all irreducible $\Walg$-modules with central characters from a subset $\Lambda\in \h^*$
will be denote by $Y^{\Lambda}$. As above, we write $Y$ for $Y^{\rho}$.
Below we will introduce an extension $\fA$ of $A(e)$ and a quotient $\bfA$ of $\fA$. The latter
acts on $Y^\Lambda$ and eventually will be shown to coincide with $\bA$. The stabilizer (defined
up to conjugacy) of the $\bfA$-orbit lying over $\J$ will be denoted by $\bH^\lambda_\sigma$,
where $\lambda$ is the central character, and $\sigma$ is the left cell corresponding to $\J$. So our main theorem just asserts
that $\bH^\lambda_\sigma=H_\sigma$.

\subsubsection{Miscellaneous notation}
This notation is summarized below.
\setlongtables
\begin{longtable}{p{2.5cm} p{12.5cm}}
$\A^{opp}$& the opposite algebra of $\A$.\\
$\widehat{\otimes}$&the completed tensor product of complete topological vector spaces/ modules.\\
$(a_1,\ldots,a_k)$& the two-sided ideal in an associative algebra generated by  elements $a_1,\ldots,a_k$.\\
 $A^\wedge_\chi$&
the completion of a commutative algebra $A$ with respect to the maximal ideal
of a point $\chi\in \Spec(A)$.\\
$\Ann_\A(\M)$& the annihilator of an $\A$-module $\M$ in an algebra
$\A$.\\
$\#\CCat$& the number of isomorphism classes of simple objects in an abelian category $\CCat$.\\
$[\CCat]$& the rational $K$-group of an abelian category $\CCat$.\\
$\Coh(X)$& the category of sheaves of finite dimensional
vector spaces on a set $X$.\\
$\Coh^{\Gamma}(X)$& the category of $\Gamma$-equivariant sheaves of finite dimensional
vector spaces on a $\Gamma$-set $X$.\\
$\Der(A)$& the Lie algebra of derivations of an algebra $A$.\\
$\Irr(\Gamma)$& the set of irreducible modules of a finite group $\Gamma$.\\
$G_x$& the stabilizer of $x$ in $G$.\\
$\gr \A$& the associated graded vector space of a filtered
vector space $\A$.\\
$\BQ (X)$& the $\BQ$-linear span of a finite set $X$.\\
$R_\hbar(\A)$&$:=\bigoplus_{i\in
\mathbb{Z}}\hbar^i \F_i\A$ :the Rees $\K[\hbar]$-module of a filtered
vector space $\A$.\\
$\Rep^\psi(\Gamma)$& the category of projective representations of a finite group $\Gamma$
corresponding to a 2-cocycle $\psi$.
\\$\VA(\M)$& the associated variety of $\M$.\\
$\Centr(\g)$& the center of $U(\g)$.\\
\end{longtable}

\section{Preliminaries on  Harish-Chandra bimodules}\label{SECTION_ideals}
\subsection{Subcategories and subquotients}\label{SUBSECTION_HC}
Let us recall that a $\U$-bimodule $\M$ is said to be {\it Harish-Chandra} (shortly, HC) for $G$
if
\begin{itemize}
\item $\M$ is finitely generated,
\item $\M$ coincides with the sum of its finite dimensional
submodules for the adjoint action of $\g$  ($\ad(\xi)m=\xi m-m\xi$),
\item and the adjoint $\g$-action on $\M$ integrates to a $G$-action.
\end{itemize}
Of course, for a simply connected semisimple group $G$ the last condition is satisfied automatically.

For $\M\in \HC(\U)$ one can define its associated variety $\VA(\M)\subset \g^*$ as follows.
Equip $\U$ with the standard PBW filtration. A compatible filtration on $\M$ is said
to be {\it good} if it is $\ad(\g)$-stable and the associated graded $\gr\M$ is a finitely
generated $\gr\U=S(\g)$-module (since the filtration is $\ad(\g)$-stable, the left and
the right $S(\g)$-actions on $\M$ coincide). By the associated variety $\VA(\M)$ of $\M$
one means the support of $\gr\M$ in $\g\cong \g^*=\Spec(S(\g))$.

For a closed $G$-stable subvariety $Y\subset \g^*$ let $\HC_{Y}(\U)$  denote the full subcategory in $\HC(\U)$ consisting
of all bimodules $\M$ with $\VA(\M)\subset Y$. Clearly, $\HC_{Y}(\U)$ is a Serre subcategory
of $\HC(\U)$ (i.e., it is closed under taking subquotients and extensions). We also remark that
$\VA(\M)=\VA(\U/\LAnn(\M))=\VA(\U/\RAnn(\M))$. Here and below
$\LAnn,\RAnn$ denote the left and right annihilators, respectively.

The tensor product over $\U$ defines the structure of a monoidal category on $\HC(\U)$ (the unit object
is $\U$ itself). For two modules $\M_1,\M_2\in \HC(\U)$ we have
\begin{equation}\label{eq:0.1}
\LAnn(\M_1\otimes_\U \M_2)\supset \LAnn(\M_1), \RAnn(\M_1\otimes_{\U} \M_2)\supset \RAnn(\M_2).
\end{equation}
It follows that $\VA(\M_1\otimes_\U \M_2)\subset \VA(\M_1)\cap \VA(\M_2)$.

There is an internal $\Hom$ functor in the category $\HC(\U)$. Namely, for
$\M_1,\M_2\in \HC(\U)$ the space $\underline{\Hom}(\M_1,\M_2):=\Hom_{\U^{opp}}(\M_1,\M_2)$
of homomorphisms of right $\U$-modules has a natural structure of a $\U$-bimodule.
It is known that $\underline{\Hom}(\M_1,\M_2)$ is HC, see e.g., \cite{Jantzen}, 6.36.
Moreover, \begin{equation}\label{eq:annihilators2}\LAnn(\underline{\Hom}(\M_1,\M_2))\subset \LAnn(\M_2),
\RAnn(\underline{\Hom}(\M_1,\M_2))\supset \LAnn(\M_1),\end{equation}
in particular, $\VA(\underline{\Hom}(\M_1,\M_2))\supset \VA(\M_1)\cap\VA(\M_2)$.

Now consider the category $\HC_{\overline{\Orb}}(\U)$ and its subcategory $\HC_{\partial\Orb}(\U)$, where
$\partial\Orb:=\overline{\Orb}\setminus \Orb$. We can form the quotient category
$\HC_{\Orb}(\U):=\HC_{\overline{\Orb}}(\U)/\HC_{\partial\Orb}(\U)$, which comes equipped with
natural tensor product and internal Hom functors induced from $\HC_{\overline{\Orb}}(\U)$.

Let us proceed to central characters.
One says  that $\lambda\in \h^*$ is the left central character of $\M\in \HC(\U)$ if the left $\U$-action
on $\M$ factors through  $\U_\lambda$. We say that $\lambda\in \h^*$ is the generalized central character
of $\M$ if the left $\Centr_\lambda$ action  on $\M$ is locally nilpotent. Here  $\Centr_\lambda$
denotes the maximal ideal of $\lambda$ in the center $\Centr$ of $\U$.
 Right (usual and generalized) central characters are defined similarly. Let
$^{\lambda}\!\HC(\U)$ (resp., $\HC^\lambda(\U)$) stand for the (full) subcategories in $\HC(\U)$
consisting of all HC bimodules with generalized left (resp., right) central character $\lambda$. Next, put
$^{\lambda}\!\HC^{\mu}(\U)=\,^{\lambda}\!\HC(\U)\cap \HC^{\mu}(\U)$.  Then for subsets
$\Lambda_1,\Lambda_2\subset \h^*$ we set
$$^{\Lambda_1}\!\HC(\U):=\bigoplus_{\lambda\in \Lambda_1}\,^{\lambda}\!\HC(\U), \quad \HC^{\Lambda_2}(\U):=\bigoplus_{\mu\in \Lambda_2}\HC^{\mu}(\U),$$
$$^{\Lambda_1}\!\HC^{\Lambda_2}(\U)=\,^{\Lambda_1}\!\HC(\U)\cap \HC^{\Lambda_2}(\U)=\bigoplus_{\lambda\in \Lambda_1,\mu\in \Lambda_2}\,^\lambda\HC(\U)^\mu.$$
Further for a closed subvariety $Y\subset \g^*$ we set $^{?}\HC^{\bullet}_Y(\U):=\,^{?}\HC^{\bullet}(\U)\cap \HC_Y(\U)$.
Then define $^{?}\!\HC^{\bullet}_\Orb(\U):=\,^{?}\!\HC^{\bullet}_{\overline{\Orb}}(\U)/^{?}\!\HC^{\bullet}_{\partial{\Orb}}(\U)$.
Of course, $$^{\Lambda_1}\!\HC^{\Lambda_2}_{\Orb}(\U)=\bigoplus_{\lambda\in \Lambda_1, \mu\in \Lambda_2}\,^\lambda\!\HC^\mu_{\Orb}(\U).$$
Also by $\HC^{(\lambda)}(\U)$ we denote the full subcategory of modules with actual right central character $\lambda$.

Recall the notation $^{\Lambda_1}\!\JCat_{\Orb}^{\Lambda_2}:=\,^{\Lambda_1}\!\HC_{\Orb}^{\Lambda_2}(\U)^{ss}$. When $\Lambda_1=\Lambda_2$ the last category is closed in $^{\Lambda}\HC_{\Orb}^{\Lambda}(\U)$ under the tensor product. For instance, this can be deduced
from Corollary 1.3.2 in \cite{HC} and (\ref{eq:0.1}), see \cite{BFO2} for details. Now \cite[Corollary 1.3.2]{HC} and (\ref{eq:annihilators2}) imply that $^{\Lambda}\!\JCat_{\Orb}^{\Lambda}$ is closed under the internal
Hom functor. Next, there is a unit object $\be$ in $^{\Lambda}\JCat_{\Orb}^{\Lambda}$ that is the class
of $\bigoplus_{\J} \U/\J$, where $\J$ is running over $\Prim_{\Orb}(\U_{\Lambda})$. So we can define
the duality functor $\bullet^*:=\underline{\Hom}(\bullet,\be)$ on $^{\Lambda}\!\JCat_{\Orb}^{\Lambda}$.

Any missed superscript in $^{\Lambda_1}\JCat_{\Orb}^{\Lambda_2}$ means ``$\rho$''.
We would like to remark that a  Harish-Chandra bimodule (for $G$) with left and right central
character that differ by an element of the root lattice is automatically an HC bimodule for
the adjoint group $\operatorname{Ad}(\g)$.

Also in the sequel
$\Orb$ will be fixed so we drop the subscript.

\subsection{Bernstein-Gelfand equivalence}
In this and the subsequent subsection $G$ is supposed to be semisimple and simply connected.
We will need the Bernstein-Gelfand equivalence, \cite{BG}, between $^\mu\!\OCat$ and $^\mu\!\HC^{(\rho)}(\U)$.
Let us recall a few basics about the category $\OCat$. By definition, it consists of all
finitely generated $\U$-modules with locally finite action of $\bfr$
and diagonalizable action of $\h$. Let $\Delta(\lambda)$
denote the Verma module with highest weight $\lambda-\rho$. Next, let $^\mu\!\OCat$ be the full
subcategory in $\OCat$ consisting of all modules with generalized central character $\mu$, and $^{P^+}\!\OCat$
be the subcategory of all modules with integral generalized central character.

Consider a functor $\BG:\,^{P^+}\!\OCat\rightarrow \,^{P^+}\!\HC^{(\rho)}(\U)$
that sends $M\in \,^{P^+}\!\OCat$ to the space $L(\Delta(\rho), M)$
of all $\g$-finite linear maps $\Delta(\rho)\rightarrow M$. This functor is known to be an equivalence
of abelian categories. Under this equivalence
the left annihilator of $L(\Delta(\rho),M)$ coincides with the annihilator of $M$. The right
annihilator of $L(\Delta(\rho),L(w\mu))$ is $J(w^{-1}\rho)$ provided $w,\mu$
are compatible in the sense explained in \ref{SSS_central}.

The equivalence $\BG$ is compatible with tensor products and internal Homs as follows:
we have $\M\otimes_{\U}\BG(N)=\BG(\M\otimes_\U N)$ for all $\M\in \,^{P^+}\!\HC^{P^+}(\U),
N\in \,^{P^+}\!\OCat$. Further we have $\underline{\Hom}(\BG(N_1),\BG(N_2))=L(N_1,N_2)$,
see, for example, \cite{Jantzen}, 6.37.

\subsection{Translation functors}\label{SUBSECTION_shifts}
 Fix $\lambda,\mu\in P^+$. Consider the categories $\U$-$\operatorname{Mod}^\lambda,\U$-$\operatorname{Mod}^\mu$
 of all $\U$-modules with generalized central characters $\lambda,\mu$. Then we have an exact functor $T_\lambda^\mu:\U$-$\operatorname{Mod}^\lambda\rightarrow
\U$-$\operatorname{Mod}^\lambda$ called the {\it translation functor}, see, for instance, \cite{Jantzen}, Kapitel 4.
Namely, let $\nu$ be the dominant weight lying in $W(\mu-\lambda)$. Then for $N\in \U$-$\operatorname{Mod}^\lambda$
for $T_\lambda^\mu(N)$ we take the component of character $\mu$ in $L(\nu+\rho)\otimes N$.


The functors
$T_\lambda^\mu$ enjoy the following properties:
\begin{enumerate}
\item $T_\lambda^\mu$ is an equivalence whenever $W_\lambda=W_\mu$.
\item Suppose $\lambda$ is regular. Then $T^\mu_\lambda$ maps
$L(w\lambda)$ to $L(w\mu)$ if $w$ and $\mu$ are compatible
and to 0 otherwise. In particular, if $T^{\lambda}_\mu(L(w_1\lambda))\cong T^{\lambda}_\mu(L(w_2\lambda))\neq 0$,
then $w_1=w_2$.
\item Suppose again that $\lambda$ is regular. Then $T_\lambda^\mu\circ T_\mu^\lambda$
is the sum of $|W_\mu|$ copies of the identity functor.
\end{enumerate}

(1) and (3) can be found in \cite{BG}, 4.1 and 4.2, respectively, while for (2) the reader is
referred to \cite{Jantzen}, 4.12.

On the category $\U$-$\operatorname{Mod}^{\lambda,k}$ consisting of all $\U$-modules
annihilated by $\Centr_\lambda^k$, the functor $T_\lambda^\mu$ is given by the tensor product by the Harish-Chandra
bimodule $\mathcal{T}_\lambda^\mu(k)$ that is the generalized eigenspace for $\mu$ in $L(\nu+\rho)\otimes \U/\U\Centr_\lambda^k$.
Therefore $T_\lambda^\mu=\varprojlim_k \mathcal{T}_\lambda^\mu(k)\otimes_\U\bullet$.
Property (3) means that $\mathcal{T}_\lambda^\mu(k')\otimes_{\U}\mathcal{T}_\mu^\lambda(k)=(\U/\U\Centr_\mu^k)^{\oplus |W_\mu|}$
for $k'\gg 0$ whenever $\lambda$ is regular.

In fact, $T_\lambda^\mu$ defines an inclusion preserving map between the sets of ideals in $\U_\lambda,\U_\mu$,
see \cite{Jantzen}, 5.4-5.8. Namely, we can define the map $T_\lambda^\mu$ by setting $T_\lambda^\mu(\Ann_{\U_\lambda} M):=\Ann_{\U_\mu}(T_\lambda^\mu(M))$. For regular $\lambda$  the map $T_\lambda^\mu$ restricts
to a map $T_\lambda^\mu:\Prim_{\Orb}(\U_\lambda)\rightarrow \Prim_{\Orb}(\U_\mu)\cup\{\U_\mu\}$. Next, there is an embedding $\underline{T}_\mu^\lambda: \Prim_{\Orb}(\U_\mu)\rightarrow \Prim_{\Orb}(\U_\lambda)$. For an irreducible $M\in \OCat^\mu$ this embedding sends $\Ann_\U M$  to the only minimal prime ideal of  $\Ann_\U(T_\mu^\lambda(M))$ that does not map to $\U_\mu$
under $T_\lambda^\mu$. Explicitly, $T_\lambda^\mu(J(w\lambda))=J(w\mu)$ if $w$ and $\mu$ are compatible and  $T_\lambda^\mu(J(w\lambda))=\U_\mu$ else. For compatible $\mu$ and $w$ we have $\underline{T}_\mu^\lambda(J(w\mu))=J(w\lambda)$.

Using the Bernstein-Gelfand equivalence,  we get the functor $T_{\rho}^\lambda: \,^{\rho}\!\HC^{\rho}_{\Orb}(\U)\rightarrow \,^{\lambda}\!\HC^{\rho}_{\Orb}(\U)$ with the following properties:
\begin{itemize}
\item[(i)] The functor sends an irreducible to an irreducible or 0.
\item[(ii)] Two irreducible bimodules with the same left annihilator
are sent or not sent to zero simultaneously.
\item[(iii)] The functor induces a bijection between the set of irreducibles in
$^{\rho}\!\HC^\rho_{\Orb}(\U)$ that it does not annihilate and the set
of all irreducibles in $^{\lambda}\!\HC^{\rho}_{\Orb}(\U)$. This bijection preserves
the right annihilators.
\end{itemize}


\section{Reminder on multi-fusion categories  and their modules}\label{SECTION_categories}
\subsection{Multi-fusion categories: definition and examples}
In this subsection we are going to recall various definitions, constructions
and results related to multi-fusion monoidal categories and their module
categories. The reader who is not familiar with the subject should view
it as a categorification of the representation theory of finite dimensional semisimple
associative algebras over an algebraically closed field. Of course, the categorical
framework is somewhat more involved.

A rigid monoidal $\K$-linear abelian category $\CCat$ is said to be {\it multi-fusion} if
\begin{itemize} \item it has finite dimensional $\Hom$-spaces, \item it is semisimple, \item all objects
have finite length, \item and there are finitely many isomorphism classes of simple objects.
\end{itemize} In particular, we will see that
the category $\JCat_{\Orb}$ (and, more generally, the category $\,^{\Lambda}\!\JCat_{\Orb}^\Lambda$)
is multi-fusion and this is the category we are mostly interested in.

Large class of multi-fusion categories can be produced from so called
{\it centrally extended} $\Gamma$-sets, where $\Gamma$ is a finite group.
Namely, let $X$ be a finite set acted on by a finite group $\Gamma$.
By the centrally extended structure on $X$ one means a $\Gamma$-invariant collection
$\psi=(\psi_x)_{x\in X}$ of classes $\psi_x\in H^2(G_x,\K^\times)$. Given
$X$ and $\psi$ we can form the category $\Coh^{\Gamma,\psi}(X\times X)$ of
$\psi$-twisted $\Gamma$-equivariant sheaves of finite dimensional vector
spaces on $X\times X$. Here ``$\psi$-twisted'' means that the fiber of any
obect in $(x,y)$ is a projective $\Gamma_{(x,y)}$-module, whose Schur
multiplier is $\psi_x|_{\Gamma_{(x,y)}}-\psi_y|_{\Gamma_{(x,y)}}$. The tensor
product on this category is defined by convolution. It is straightforward to
check that the category $\Coh^{\Gamma,\psi}(X\times X)$ is multi-fusion.

Yet another example is
the category $\Vec_{\Gamma}^\omega$, where $\omega\in H^3(\Gamma,\K^\times)$.
The simple objects $1_\gamma$ of this category are parameterized by $\gamma\in \Gamma$.
By definition, we have $1_{\gamma}\otimes 1_{\gamma'}=1_{\gamma\gamma'}$ and the triple
product isomorphism are defined using some 3-cocycle representing $\omega$. The category
$\Vec_\Gamma^\omega$ is a even a fusion category, which, by definition, means that the unit object is
simple. The categories $\Vec_\Gamma^\omega$ admit an axiomatic description: those are only fusion
categories such that all simple objects are invertible, see e.g. \cite[p. 183, Example (vi)]{Omod}.

We will say that a multi-fusion category $\DCat$ is a {\em quotient} of a multi-fusion category $\CCat$
if there is a tensor functor $F: \CCat \to \DCat$ such that any object of $\DCat$ is a direct summand
of an object of the form $F(X)$, $X\in \CCat$. For examples, the quotients of $\Vec_{\Gamma}$ are precisely the categories
$\Vec_{\bar{\Gamma}}^\omega$, where $\bar{\Gamma}$ is a quotient of $\Gamma$ and $\omega\in H^3(\bar{\Gamma},\K^\times)$
is such that its pull-back to $\Gamma$ is trivial.

If $\CCat_1,\CCat_2$ are multi-fusion categories, then their direct sum $\CCat_1\oplus \CCat_2$
acquires a natural structure of a multi-fusion category. So we get the notion of
an indecomposable multi-fusion category.
For example, it is easy to see that the categories $\Coh^{\Gamma,\psi}(Y\times Y),
\Vec_{\Gamma}^\omega$ are indecomposable.
The category $^{\Lambda}\!\JCat_{\Orb}^{\Lambda}$ is also indecomposable, see Proposition
\ref{Prop:J_indecomp}.

To conclude the subsection, let us mention that if $\CCat$ is a multi-fusion category, then $[\CCat]$ is a based ring in a sense of \cite{Lusztig_subgroups}, in particular, $[\CCat]$ is semisimple by \cite[1.2(a)]{Lusztig_subgroups}.
For example, $[\Vec_\Gamma^\omega]=\BQ (\Gamma)$.

\subsection{Module categories: definition and examples}\label{SUBSECTION_module_categories}
Now let $\CCat$ be a multi-fusion category. A $\CCat$-module
is a semisimple $\K$-linear abelian category $\MCat$ equipped with a tensor product functor
$\otimes:\CCat\boxtimes \MCat\rightarrow \MCat$ together with a collection of associativity
isomorphisms $(C_1\otimes C_2)\otimes M\xrightarrow{\sim}C_1\otimes (C_2\otimes M)$
for all objects $C_1,C_2$ of $\CCat$ and $M$ of $\MCat$.
The rational $K$-group $[\MCat]$ of a $\CCat$-module $\MCat$ is naturally a $[\CCat]$-module.

The module categories we are mostly interested in are the  $\JCat_{\Orb}$-module $\YCat:=\Coh(Y)$,
and, more generally, the $\,^{\Lambda}\!\JCat_{\Orb}^\Lambda$-module $\YCat^\Lambda:=\Coh(Y^\Lambda)$.

An example of a $\Coh^{\Gamma,\psi}(X\times X)$-module is provided by the category
$\Coh(X)$. The module structure is again given by convolution.

For $\CCat$-modules $\MCat_1,\MCat_2$ the direct sum $\MCat_1\oplus\MCat_2$
has a natural $\CCat$-module structure. So we get the natural notion of an indecomposable
$\CCat$-module. For example, a $\Coh^{\Gamma,\psi}(X\times X)$-module $\Coh(X)$
is always indecomposable. The main result of Section \ref{SECTION_Walg_further} is equivalent to saying that
the $\,^\Lambda\!\JCat^\Lambda_{\Orb}$-module $\YCat^\Lambda$ is indecomposable for any $\Lambda$.

Let us describe indecomposable $\Vec_{\Gamma}^\omega$-modules, see e.g. \cite[Example 2.1]{Orbi}.
Up to equivalence, they are classified by pairs $(\Gamma_0, \psi)$, where $\Gamma_0$ is a subgroup of $\Gamma$ defined up to conjugacy
and $\psi$ is a $\K^\times$-valued 2-cochain on $\Gamma_0$ with $d\psi$ cohomological to
$\omega$. The simples in the corresponding category are parameterized by the points of $\Gamma/\Gamma_0$,
and, moreover, the rational $K$-group is $\BQ (\Gamma/\Gamma_0)$ as a $[\Vec_{\Gamma}^\omega]=\BQ (\Gamma)$-module.

Finally let us remark that the definition introduced above is of left module categories.
Similarly, one can speak about right module categories. Also for two multi-fusion categories
one can speak about their bimodules.

\subsection{Functors between $\CCat$-modules}
Let $\CCat$ be a multi-fusion category and $\MCat_1,\MCat_2$ be its module categories. We can consider the category
$\Fun_{\CCat}(\MCat_1,\MCat_2)$ of $\CCat$-linear functors $\MCat_1\rightarrow \MCat_2$. Its objects are functors
$\MCat_1\rightarrow\MCat_2$ together with ``$\CCat$-linearity isomorphisms'' (that piece of data is needed because the $\CCat$-linearity
cannot be a condition; in the categorical world this is always
an additional structure, see, e.g., \cite{Omod}, for details).

The category $\Fun_{\CCat}(\MCat,\MCat)$ has a natural tensor product
and is a multi-fusion category (indecomposable if $\CCat$ is indecomposable,
genuinely fusion if $\MCat$ is indecomposable), see \cite[Theorem 2.18]{ENO}.
This category is said to be dual to $\CCat$ with respect to
$\MCat$ and is denoted by $\CCat^*_{\MCat}$. We remark that $\MCat$ is naturally a right $\CCat^*_{\MCat}$-module
and we have the double centralizer property: $\CCat=(\CCat^*_{\MCat})^*_{\MCat}$,
\cite[Theorem 4.2]{Omod}.
We remark that for the double centralizer property it is important that
$\CCat$ is indecomposable.

In general, $\Fun_{\CCat}(\MCat_1,\MCat_2)$ is a  $\CCat_{\MCat_1}^*$-$\CCat_{\MCat_2}^*$-bimodule.

Below we will need various more or less standard facts about the functor
categories.

\begin{Lem}\label{Lem:0.2} Let $X$ be a finite set acted on by a finite
group $\Gamma$. Consider the category $\Coh^{\Gamma,\psi}(X\times X)$
and its rigid monoidal (hence multi-fusion) subcategory $\CCat$ such that
$\MCat:=\Coh(X)$ is indecomposable over $\CCat$.

 (i) The dual category of $\Coh^{\Gamma,\psi}(X\times X)$ with respect to the module
category $\MCat$ is $\Vec_{\Gamma}$.

(ii)  $\CCat^*_{\MCat}=\Vec_{\bar{\Gamma}}^\omega$, where
$\bar{\Gamma}$ is some quotient of $\Gamma$ and $\omega \in H^3(\bar{\Gamma}, \K^\times)$ is such that the functor
$\Vec_\Gamma \to \Fun_{\CCat}(\MCat,\MCat)$ factors through $\Vec_{\Gamma} \to \Vec_{\bar{\Gamma}}^\omega$ (so $\MCat$ is a module
category over $\Vec_{\bar{\Gamma}}^\omega$).

(iii)  $\CCat\cong  (\Vec_{\bar{\Gamma}}^\omega)^*_\MCat$.
\end{Lem}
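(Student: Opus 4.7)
The overall plan is to treat (i) by a direct identification, then leverage (i) together with biduality of module categories to deduce (ii) and (iii). For part (i), my plan is to invoke the standard Morita duality for module categories over pointed fusion categories: $\MCat = \Coh(X)$ is naturally a $\Vec_{\Gamma}$-module via the $\Gamma$-action on $X$, with the cocycle data $\psi$ recording precisely how this module structure is twisted on the stabilizers of points of $X$ (cf.\ \cite{Omod}). A direct identification of module endofunctors, together with matching the convolution tensor product to the composition of functors, should yield $(\Vec_{\Gamma})^*_{\MCat} \cong \Coh^{\Gamma,\psi}(X \times X)$. Applying biduality (\cite[Theorem 4.2]{Omod})---after verifying that $\Coh^{\Gamma,\psi}(X\times X)$ is indecomposable and that $\MCat$ is indecomposable over it, both of which follow from the faithfulness of the convolution action of $\Coh^{\Gamma,\psi}(X\times X)$ on $\Coh(X)$---will then give (i).

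For part (ii), I would interpret the inclusion $\iota\colon \CCat \hookrightarrow \Coh^{\Gamma,\psi}(X\times X)$ as inducing, by restriction of module-functor structures, a tensor functor $\iota^* \colon \Vec_{\Gamma} = (\Coh^{\Gamma,\psi}(X\times X))^*_{\MCat} \to \CCat^*_{\MCat}$; this is precisely the functor mentioned in the statement. First I would check that $\CCat$ itself is indecomposable: if $\CCat = \CCat' \oplus \CCat''$ as multi-fusion categories, then $\MCat$ splits as $(\CCat'\cdot\MCat) \oplus (\CCat''\cdot\MCat)$, so indecomposability of $\MCat$ over $\CCat$ forces one summand of $\CCat$ to act trivially and hence, by faithfulness of convolution, to vanish. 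With $\CCat$ and $\MCat$ both indecomposable, $\CCat^*_{\MCat}$ is a genuine fusion category and the biduality $(\CCat^*_{\MCat})^*_{\MCat} = \CCat$ applies. The key step is then to show $\iota^*$ is a quotient tensor functor, i.e.\ every simple of $\CCat^*_{\MCat}$ is a summand of an image. This I would extract from the general Morita/ENO principle that, in the 2-category of indecomposable multi-fusion categories, passing to duals exchanges fully faithful embeddings with surjective tensor functors: applying $(\bullet)^*_{\MCat}$ to $\iota^*$ recovers the fully faithful $\iota$ by biduality, so $\iota^*$ must itself be surjective. Once this is established, every simple of $\CCat^*_{\MCat}$ is invertible (as a summand of the image of an invertible object of $\Vec_{\Gamma}$), and combining the description of fusion categories with invertible simples with the earlier classification of quotients of $\Vec_{\Gamma}$ forces $\CCat^*_{\MCat} \cong \Vec^{\omega}_{\bar{\Gamma}}$ for a quotient $\bar{\Gamma}$ of $\Gamma$ and a class $\omega \in H^3(\bar{\Gamma}, \K^\times)$ whose pullback to $\Gamma$ is trivial.

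Part (iii) is then immediate from biduality applied to $\CCat^*_{\MCat} \cong \Vec^{\omega}_{\bar{\Gamma}}$, giving $\CCat = (\CCat^*_{\MCat})^*_{\MCat} \cong (\Vec^{\omega}_{\bar{\Gamma}})^*_{\MCat}$. The main obstacle I expect is the Morita-duality lemma exchanging fully faithful embeddings with quotient tensor functors: one must invoke it in the multi-fusion (rather than purely fusion) setting and be careful to track which indecomposability hypotheses enter at each stage. The remaining ingredients---the identification of the dual of $\Coh^{\Gamma,\psi}(X\times X)$ with $\Vec_{\Gamma}$, the reduction to indecomposable $\CCat$, and the classification of fusion categories whose simples are all invertible---should be routine once the biduality framework is in place.
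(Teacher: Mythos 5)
Your proof is correct and follows essentially the same route as the paper's: (i) and (iii) by the double-centralizer property, and (ii) by observing that the inclusion $\iota\colon\CCat\hookrightarrow\Coh^{\Gamma,\psi}(X\times X)$ is injective in the ENO sense, so its dual $\Vec_\Gamma\to\CCat^*_\MCat$ is surjective (this is \cite[Proposition 5.3]{ENO}, which the paper cites directly rather than re-deriving from biduality as you do), whence all simples of $\CCat^*_\MCat$ are invertible. The only slight awkwardness is your round-about re-derivation of the surjectivity from biduality---it is cleaner to apply the duality principle once in the direction ``injective $\Rightarrow$ dual surjective''---but the substance is identical.
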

\begin{proof}
(i) is basically  a corollary of the double centralizer property -- $\Coh^{\Gamma,\psi}(X\times X)$ is just
 the dual to $\Vec_\Gamma$ with respect to
$\Coh(X)$.

(ii): Since the functor $\CCat\hookrightarrow \Coh^{\Gamma}(X\times X)$ is {\em injective} in the sense
of \cite[\S 5.7]{ENO}, we have by \cite[Proposition 5.3]{ENO} that the dual functor $\Vec_{\Gamma}\to \CCat_\MCat^*$ is {\em surjective} in the sense of {\em loc. cit.} It follows that any simple object of $\CCat_\MCat^*$
is invertible, whence $\CCat_\MCat^*=\Vec_{\bar{\Gamma}}^\omega$.


(iii) is the double centralizer property.
\end{proof}

We say that an object $e\in \CCat$ is a direct summand of the unit object if there is another object $f\in \CCat$ and an isomorphism
$e\oplus f\simeq \be$. It is clear that such an object is an idempotent, that is, there is a canonical isomorphism $e\otimes e\simeq e$,
and is self-dual: $e\cong e^*$.
For a simple object $X\in \CCat$ we have either $X\otimes e\simeq X$ or $X\otimes e=0$. Thus $\CCat \otimes e$ is a full subcategory of $\CCat$;
clearly $\CCat \otimes e$ is a left $\CCat$-module subcategory of $\CCat$.
Similarly, for a $\CCat$-module category $\MCat$ we have a full subcategory $e\otimes \MCat \subset \MCat$.
This subcategory is a  right $\CCat^*_{\MCat}$-submodule in $\MCat$.

Let us consider some examples. Let $\Gamma,X,\psi$ be such as above. Let $X=\bigsqcup_{i=1}^n X_i$
be the $\Gamma$-orbit decomposition. Then one has the decomposition
$\be=\bigoplus_{i=1}^n e_i$, where the summands are in one-to-one correspondence with the $\Gamma$-orbits
in $X$. We have $\Coh^{\Gamma,\psi}(X\times X)\otimes e_i=\Coh^{\Gamma,\psi}(X\times X_i)$ and
$e_i\otimes \Coh(X)= \Coh(X_i)$.

We are going to investigate some properties of the categories $\Fun_{\CCat}(\CCat\otimes e, \MCat)$.

First, we need to know various aspects  of how the duality agrees with passing
to the subcategories of the form $e\otimes \CCat\otimes e$.

%
%

\begin{Lem}\label{Lem:0.3pre}
Let $\CCat$ be a  multi-fusion category and $\MCat$ be its
left module. Consider the decomposition $\be=\bigoplus_i e_i$ and set
$\MCat_i:=e_i\otimes \MCat$. Then we have the following natural identifications:

(i)  $\Fun_{\CCat}(\CCat\otimes e_i, \MCat)= \MCat_i$
of right $\CCat^*_{\MCat}$-modules.

(ii)  $(e_i\otimes \CCat\otimes e_i)^*_{\MCat_i}\cong \CCat^*_{\MCat}$ of multi-fusion categories,
provided $\CCat$ is indecomposable.

(iii) $e_j\otimes \CCat\otimes e_i=\Fun_{\CCat^*_{\MCat}}(\MCat_i,\MCat_j)$ of abelian categories.
\end{Lem}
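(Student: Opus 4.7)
My plan is to handle (i) by direct construction, then deduce (iii) as an essentially formal consequence of the double centralizer property for $\CCat$, and finally obtain (ii) from (iii) by a second application of double centralizer. The order matters because (ii) will rely on (iii).

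For (i), I define the functor $F\mapsto F(e_i)$ from $\Fun_\CCat(\CCat\otimes e_i,\MCat)$ to $\MCat_i$. Since every object of $\CCat\otimes e_i$ has the form $X\otimes e_i$, a $\CCat$-linear $F$ is determined by $F(e_i)$; and the calculation $F(e_i)=F(e_i\otimes e_i)=e_i\otimes F(e_i)$ forces $F(e_i)\in\MCat_i$. The inverse sends $N\in\MCat_i$ to the functor $X\otimes e_i\mapsto X\otimes N$, which is well-defined because $X\otimes N=X\otimes e_i\otimes N$ depends only on $X\otimes e_i$. Compatibility with the right $\CCat^*_\MCat$-module structure is immediate once one observes that every $G\in\CCat^*_\MCat=\Fun_\CCat(\MCat,\MCat)$ preserves each $\MCat_k$; indeed, $\CCat$-linearity of $G$ gives $G(e_k\otimes M)=e_k\otimes G(M)$.

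For (iii), I invoke the double centralizer property $\CCat\simeq\Fun_{\CCat^*_\MCat}(\MCat,\MCat)$ recalled in the excerpt, under which $C\in\CCat$ corresponds to the endofunctor $M\mapsto C\otimes M$. The same $\CCat$-linearity observation shows that $\MCat=\bigoplus_k\MCat_k$ is also a decomposition of right $\CCat^*_\MCat$-modules, so any $\CCat^*_\MCat$-linear endofunctor of $\MCat$ assembles into a ``matrix'' of $\CCat^*_\MCat$-linear functors $\MCat_i\to\MCat_j$. A direct check, using $e_i\otimes e_k=0$ for $k\ne i$, shows that $C$ lies in $e_j\otimes\CCat\otimes e_i$ precisely when the corresponding endofunctor vanishes on $\MCat_k$ for $k\ne i$ and lands in $\MCat_j$, i.e., is concentrated in the $(j,i)$-slot. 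This identifies $e_j\otimes\CCat\otimes e_i$ with $\Fun_{\CCat^*_\MCat}(\MCat_i,\MCat_j)$ as abelian categories.

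For (ii), I combine (iii) with $j=i$ and a second application of the double centralizer property. By (iii) we have $e_i\otimes\CCat\otimes e_i\cong\Fun_{\CCat^*_\MCat}(\MCat_i,\MCat_i)=(\CCat^*_\MCat)^*_{\MCat_i}$; that is, $\MCat_i$ is a module over the indecomposable multi-fusion category $\CCat^*_\MCat$ whose categorical dual is exactly $e_i\otimes\CCat\otimes e_i$. Applying the double centralizer property once more to this pair gives $(e_i\otimes\CCat\otimes e_i)^*_{\MCat_i}\cong\CCat^*_\MCat$, as asserted. The main technical obstacle is justifying this last invocation: one must check that $\CCat^*_\MCat$ is indecomposable, which is immediate from the indecomposability of $\CCat$ as noted in the excerpt, and that $\MCat_i$ is an indecomposable $\CCat^*_\MCat$-module. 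For the latter I would argue that the $\CCat^*_\MCat$-action on $\MCat_i$ is faithful and transitive on isomorphism classes of simples: indeed the identification in (iii) with $j=i$ shows that the simples of $e_i\otimes\CCat\otimes e_i$ realize enough endofunctors of $\MCat_i$ to connect its simple summands, and together with $\CCat$-indecomposability this rules out a nontrivial $\CCat^*_\MCat$-module decomposition of $\MCat_i$.
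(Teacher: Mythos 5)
Your proof follows essentially the same route as the paper's: (i) via $F\mapsto F(e_i)$; (iii) by comparing the decomposition $\CCat=\bigoplus_{i,j}e_j\otimes\CCat\otimes e_i$ with the one coming from the double centralizer equivalence $\CCat=\Fun_{\CCat^*_\MCat}(\MCat,\MCat)=\bigoplus_{i,j}\Fun_{\CCat^*_\MCat}(\MCat_i,\MCat_j)$; and (ii) by a second application of the double centralizer to the pair $(\CCat^*_\MCat,\MCat_i)$ via (iii) with $i=j$. The one thing to flag is the extra step in (ii), where you try to verify that $\MCat_i$ is indecomposable as a $\CCat^*_\MCat$-module: this is unnecessary for the argument, since the formulation of the double centralizer property used in this paper (Subsection \ref{SUBSECTION_module_categories}) requires only that the ambient multi-fusion category — here $\CCat^*_\MCat$, indecomposable because $\CCat$ is — be indecomposable, together with nonvanishing of the module, which already follows from (iii) because $e_i\in e_i\otimes\CCat\otimes e_i=\Fun_{\CCat^*_\MCat}(\MCat_i,\MCat_i)$ is nonzero. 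Moreover, the justification you offer for that step (that the simples of $e_i\otimes\CCat\otimes e_i$ ``connect'' the simple summands of $\MCat_i$, together with indecomposability of $\CCat$) is too vague to stand on its own — it is essentially restating what needs to be proved — so it is better simply to drop it.
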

\begin{proof}
The identification in (i) is given by $F\mapsto F(e_i)$.

Using $\CCat=(\CCat^*_\MCat)^*_\MCat$ we get
$$\CCat =\bigoplus_{i,j}\Fun_{\CCat_\MCat^*}(e_i\otimes \MCat,e_j\otimes \MCat).$$
Clearly, $\Fun_{\CCat_\MCat^*}(e_i\otimes \MCat,e_j\otimes \MCat)\subset e_j\otimes \CCat \otimes e_i$,
whence $\Fun_{\CCat_\MCat^*}(e_i\otimes \MCat,e_j\otimes \MCat)=e_j\otimes \CCat \otimes e_i$, which is (iii).

Setting $i=j$ and applying the duality again, we get (ii).
\end{proof}

\begin{Rem}\label{Rem:idemp_nonzero}
(i) implies that $e_j\otimes \CCat\otimes e_i=\Fun_{\CCat}(\CCat\otimes e_i,\CCat\otimes e_j)$.
Also (ii) implies that $\MCat_i$ is nonzero.
\end{Rem}

Second, we need to describe the $K$-groups of certain $\Fun$-categories.

\begin{Lem}\label{Lem:0.3}
Let $\CCat$ be a multi-fusion category, $\MCat$ its left module, and $e$
a direct summand of the unit object in $\CCat$.
We have a natural (in particular, $[\CCat^*_{\MCat}]$-linear)
isomorphism $\Hom_{[\CCat]}([\CCat\otimes e], [\MCat])=[\Fun_{\CCat}(\CCat\otimes e, \MCat)]$.
\end{Lem}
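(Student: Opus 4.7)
The plan is to reduce the statement to the standard Morita identity $\Hom_R(Re,N)\cong eN$ for a ring $R$ with idempotent $e$ and left $R$-module $N$. The starting observation is that the decomposition $\be = e\oplus f$ in $\CCat$ produces a decomposition $\CCat = (\CCat\otimes e)\oplus(\CCat\otimes f)$ of $\CCat$ viewed as a left $\CCat$-module category. Passing to rational $K$-groups, $[\CCat\otimes e]$ is identified with the left $[\CCat]$-submodule $[\CCat]\cdot e$ of $[\CCat]$, where $e\in[\CCat]$ is an idempotent because $e\otimes e\cong e$.

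Two further identifications then close the argument. First, evaluation at $e$ gives a natural isomorphism $\Hom_{[\CCat]}([\CCat]\cdot e,[\MCat])\xrightarrow{\sim}e\cdot[\MCat]$; the inverse sends $m\in e\cdot[\MCat]$ to the $[\CCat]$-linear map $x\cdot e\mapsto x\cdot m$, and well-definedness uses only $e^2=e$. Second, since $e\otimes M$ is either $M$ or $0$ for any simple $M\in\MCat$, the subspace $e\cdot[\MCat]\subset[\MCat]$ equals $[e\otimes\MCat]$, the $K$-group of the full subcategory $e\otimes\MCat\subset\MCat$. Writing $e=\bigoplus_{i\in I}e_i$ as a sum of minimal direct summands of $\be$ and applying Lemma \ref{Lem:0.3pre}(i) to each $e_i$, I obtain
$$\Fun_\CCat(\CCat\otimes e,\MCat)=\bigoplus_{i\in I}\Fun_\CCat(\CCat\otimes e_i,\MCat)=\bigoplus_{i\in I}e_i\otimes\MCat=e\otimes\MCat,$$
and hence $[\Fun_\CCat(\CCat\otimes e,\MCat)]=e\cdot[\MCat]$.

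Concatenating the two isomorphisms produces the desired identification. The $[\CCat^*_{\MCat}]$-linearity and naturality are then a formal check: $[\CCat^*_{\MCat}]$ acts on $[\MCat]$ through the right $\CCat^*_{\MCat}$-module structure on $\MCat$, and this action commutes with the left $[\CCat]$-action, so it passes to the Hom space; on the functor-category side the action is by post-composition with $\CCat$-linear endofunctors of $\MCat$, and evaluation at $e$ intertwines the two. I do not anticipate a serious obstacle here, since everything is elementary manipulation with an idempotent in the ring $[\CCat]$, combined with Lemma \ref{Lem:0.3pre}(i). The only point that requires any care is the compatibility of the bimodule structures at the very end, and this follows immediately from tracing through the definitions.
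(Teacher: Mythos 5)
Your proof is correct and follows essentially the same route as the paper's: identify $\Fun_\CCat(\CCat\otimes e,\MCat)$ with $e\otimes\MCat$ via Lemma \ref{Lem:0.3pre}(i), and then compute $\Hom_{[\CCat]}([\CCat][e],[\MCat])=[e][\MCat]=[e\otimes\MCat]$ using the idempotent $[e]\in[\CCat]$. The only difference is presentational: you spell out the Morita-type identity $\Hom_R(Re,N)\cong eN$ and the decomposition of $e$ into minimal summands explicitly, whereas the paper uses both tacitly.
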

\begin{proof}
Thanks to Lemma \ref{Lem:0.3pre},
$\Fun_\CCat(\CCat \otimes e,\MCat)\simeq e\otimes \MCat$. It is clear that $[e\otimes \MCat]=[e][\MCat]$, where
$[e]\in [\CCat]$ is the class of $e\in \CCat$. An isomorphism of the lemma  follows since
we have the following equalities of  $[\CCat^*_{\MCat}]$-modules
$$\Hom_{[\CCat]}([\CCat \otimes e],[\MCat]))=\Hom_{[\CCat]}([\CCat][e],[\MCat])=
[e][\MCat]=[e\otimes \MCat].$$
\end{proof}

\begin{Rem} Let us notice that for arbitrary module categories $\MCat_1, \MCat_2$ even the equality
$\# \Fun_\CCat(\MCat_1,\MCat_2)=\dim_\BQ \Hom_{[\CCat]}([\MCat_1],[\MCat_2])$ does not hold.
For example, set $\CCat=\Rep(\Gamma)$ for some finite group $\Gamma$. Then the category
$\MCat:=\Vec$ of vector spaces is a left $\CCat$-module -- the action of
$\CCat$ on $\MCat$ factors through the forgetful functor $\Rep(\Gamma)\rightarrow \Vec$.
We have  $\Fun_\CCat(\MCat,\MCat)=\Vec_\Gamma$, while
$\Hom_{[\CCat]}([\MCat],[\MCat])$ is 1-dimensional.
\end{Rem}

\section{Preliminaries on W-algebras}\label{SECTION_Walg}
\subsection{Definition}\label{SUBSECTION_W_def}
First we  recall the definition of the W-algebras given in \cite{Wquant} (with slight refinements obtained
in \cite{HC}). All results mentioned here can be found in \cite{HC}, Subsections 2.1, 2.2.

Recall that $G$ is an arbitrary connected reductive group.

 The Slodowy slice $S\subset \g$ is, by definition, the affine subspace
$e+\z_\g(f)\subset\g$, where $\z_\g(\cdot)$ denotes the centralizer. It will
be convenient for us to consider $S$ as a subvariety in $\g^*$.

Consider the {\it equivariant Slodowy slice} $X:=G\times S\subset G\times \g^*=T^*G$. The group $G$ acts on $T^*G$ and on $X$ by left translations. Moreover, we have  actions of the one-dimensional torus $\K^\times$ and of the group $Q:=Z_G(e,h,f)$ on $T^*G$
defined by
\begin{align*}&t.(g,\alpha)=(g\gamma(t)^{-1}, t^{-2}\gamma(t)\alpha),\\ &q.(g,\alpha)=(gq^{-1}, q\alpha),\\ &t\in\K^\times,q\in Q, g\in G,\alpha\in \g^*.\end{align*}
Here $\gamma:\K^\times\rightarrow G$ is the one-parameter group with $d_1\gamma=h$.
The subvariety $X\subset T^*G$ is $Q\times \K^\times$-stable.

According to \cite{Wquant}, Subsection 3.1, there is a $G\times Q$-invariant
symplectic form $\omega$ on $X$ satisfying the additional condition
$t.\omega=t^2\omega, t\in \K^\times$. This form is obtained by restricting
to $X$ the natural form on $T^*G$.

Using the Fedosov deformation quantization, in \cite{Wquant} the first author constructed
a star-product $*:\K[X]\otimes_{\K} \K[X]\rightarrow \K[X][\hbar], f*g:=\sum_{i=0}^\infty D_i(f,g)\hbar^{2i},$ satisfying the following conditions:
\begin{itemize}
\item[(i)] A natural $\K[\hbar]$-bilinear extension of $*$ to $\K[X][\hbar]\otimes_{\K[\hbar]}\K[X][\hbar]$
is associative. Moreover, $1\in \K[X][\hbar]$ is a unit for $*$.
\item[(ii)] $D_0(f,g)=fg, D_1(f,g)-D_1(g,f)=\{f,g\}$, where $\{\cdot,\cdot\}$ stands for the Poisson
bracket associated with $\omega$.
\item[(iii)] $*$ is $G\times Q$ equivariant, i.e., all $D_i$ are $G\times Q$-equivariant maps.
Also $*$ is homogeneous, that is, $D_i$ has degree $-2i$ for any $i$.
\end{itemize}

By a {\it homogeneous equivariant $W$-algebra} we mean the space $\widetilde{\Walg}_\hbar:=\K[X][\hbar]$
equipped with the star-product constructed above. A {\it homogeneous $W$-algebra}, by definition, is
$\Walg_\hbar:=\widetilde{\Walg}_\hbar^G$. Finally, by definition, the equivariant $W$-algebra $\widetilde{\Walg}$
is $\widetilde{\Walg}_\hbar/(\hbar-1)$, and the $W$-algebra $\Walg$ is
$\widetilde{\Walg}^G=\Walg_\hbar/(\hbar-1)$. Let $\KF_i\Walg$ denote the image of
the $\K^\times$-eigenspace in $\Walg_\hbar$ with eigencharacter $t\mapsto t^i$. Then the spaces
$\KF_i\Walg$ form an increasing exhaustive filtration on $\Walg$.


By \cite{HC}, there is a $G\times Q$-equivariant map $\g\times\q\rightarrow \widetilde{\Walg}_\hbar, \xi\mapsto \widehat{H}^{\Walg}_\xi,$
that is a {\it quantum comoment map} for the action of $G\times Q$ on $\widetilde{\Walg}$, i.e.,
$[\widehat{H}^\Walg_\xi, f]=\hbar^2\xi_{\widetilde{\Walg}}f$ for any $f\in \widetilde{\Walg}_\hbar,\xi\in \g\oplus \q$.
In the r.h.s. $\xi_{\widetilde{\Walg}}$ means the derivation of $\widetilde{\Walg}_\hbar$ coming from the group action. Taking the quotient by $\hbar-1$ we get the quantum comoment map $\g\times \q\rightarrow \widetilde{\Walg}$ also denoted by $\widehat{H}^{\Walg}_\xi$.

The quantum comoment maps $\q\rightarrow \widetilde{\Walg}_\hbar,\widetilde{\Walg}$ are  $G$-invariant and so
their images lie in $\Walg_\hbar,\Walg$, respectively.
Also the algebra homomorphism  $\U\rightarrow \widetilde{\Walg}$ induced by the quantum comoment map for the $G$-action is $G$-equivariant. So restricting it to the $G$-invariants we get
a homomorphism $\Centr\rightarrow \Walg$.

Below we will need the following lemma
that essentially  appeared in \cite{Wquant}.

\begin{Lem}\label{Lem:1.5.1}
There is a $G\times Q$-equivariant
isomorphism $\K[G]\otimes \Walg\rightarrow \widetilde{\Walg}$
of right $\Walg$-modules. Here $Q$ acts by right translations on $\K[G]$
and diagonally on the tensor product.
\end{Lem}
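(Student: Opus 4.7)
The plan is to lift the classical trivialization of the equivariant Slodowy slice $X=G\times S$ to the quantum level via a filtered argument, and then check bijectivity on associated gradeds.

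First I would record the classical picture. Since $X=G\times S$ with $G$ acting by left translation and $Q$ acting diagonally, we have the identification $\K[X]=\K[G]\otimes\K[S]$ as $G\times Q$-algebras, with $\K[S]=\K[X]^G$. Taking associated gradeds of the $\KF$-filtrations on $\widetilde{\Walg}$ and $\Walg$, this yields a canonical $G\times Q$-equivariant isomorphism $\gr\widetilde{\Walg}\cong \K[G]\otimes\gr\Walg$ of right $\gr\Walg$-modules.

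Next I would construct a $G\times Q$-equivariant linear lift $\ell\colon\K[G]\to\widetilde{\Walg}$ of the embedding $\K[G]\hookrightarrow\gr\widetilde{\Walg}$, with the property that each $\K^\times$-weight space $\K[G]_n$ is sent into $\KF_n\widetilde{\Walg}$. The $\K^\times$-action on $G$ (by right translation via $\gamma$) commutes with $G\times Q$, so each $\K[G]_n$ is a $G\times Q$-submodule. Since $G\times Q$ is reductive and acts on $\widetilde{\Walg}$ preserving the filtration, each short exact sequence
\[
0 \to \KF_{n-1}\widetilde{\Walg} \to \KF_n\widetilde{\Walg} \to \gr_n\widetilde{\Walg} \to 0
\]
splits $G\times Q$-equivariantly. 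Composing chosen splittings with the embeddings $\K[G]_n\hookrightarrow\gr_n\widetilde{\Walg}$ and assembling over $n$ produces the required $\ell$.

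Then I would define the comparison map $\mu\colon\K[G]\otimes\Walg\to\widetilde{\Walg}$ by $\mu(f\otimes w):=\ell(f)\cdot w$, with product the star-product in $\widetilde{\Walg}$. Associativity makes $\mu$ a right $\Walg$-module homomorphism; it is $G$-equivariant because $\ell$ is and because right multiplication by elements of $\Walg=\widetilde{\Walg}^G$ commutes with the $G$-action, and it is $Q$-equivariant because $\ell$ and the star-product are. Finally, filtering $\K[G]\otimes\Walg$ by $F_j:=\bigoplus_n\K[G]_n\otimes\KF_{j-n}\Walg$ makes $\mu$ a filtered map whose associated graded coincides with the classical multiplication $\K[G]\otimes\K[S]\to\K[X]$. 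The latter is the tautological isomorphism from the first paragraph, so $\mu$ itself is an isomorphism by the standard filtered-graded argument.

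The principal difficulty is producing the lift $\ell$ that is simultaneously $G\times Q$-equivariant and filtration-compatible in the above sense; this combines reductivity of $G\times Q$ (to get equivariant splittings of the filtration) with the $\K^\times$-equivariance of $\KF_\bullet\widetilde{\Walg}$ (so that $\K[G]_n$ does in fact lift into $\KF_n\widetilde{\Walg}$). Once $\ell$ is in hand the remaining steps are formal.
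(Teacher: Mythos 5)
Your construction of the lift $\ell$ and the comparison map $\mu$ follows the same route as the paper (lift $\K[G]$ to a $G\times Q$-stable subspace of $\widetilde{\Walg}$ using reductivity, then multiply on the right), so the first three paragraphs are fine. The gap is in the final appeal to ``the standard filtered-graded argument'': it does not apply as stated, and what is missing is precisely the point the paper flags as the content of the proof.

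The filtration $F_j=\bigoplus_n \K[G]_n\otimes\KF_{j-n}\Walg$ on the source, and likewise $\KF_\bullet$ on $\widetilde{\Walg}$, are exhaustive and separated but are \emph{not bounded from below}: by Peter--Weyl, $\K[G]_n\neq 0$ for $n$ arbitrarily negative, since the $\gamma(\K^\times)$-weights occurring in $L^*$ are unbounded as $L$ ranges over $\Irr(G)$. Separatedness lets you deduce injectivity of $\mu$ from injectivity of $\gr\mu$, but surjectivity is \emph{not} a formal consequence of surjectivity of $\gr\mu$ for an exhaustive filtration that is neither bounded below nor complete: given $y\in\KF_j\widetilde{\Walg}$, one can peel off graded pieces and land in $\KF_{j-1},\KF_{j-2},\dots$, but this descent need not terminate. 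The repair is to observe that $\mu$ is $G$-equivariant, hence respects the decomposition into $G$-isotypic components, and that on each $G$-isotypic component the filtration of $\widetilde{\Walg}$ \emph{is} bounded below: for fixed irreducible $L$ one has $\Hom_G(L,\gr\widetilde{\Walg})\cong\Hom_G(L,\K[G])\otimes\K[S]\cong L^*\otimes\K[S]$, with $L^*$ finite dimensional and $\K[S]$ positively graded. On each such component the filtered-graded argument does give bijectivity, and summing over $L$ finishes the proof. Your argument should state this isotypic-component reduction explicitly rather than invoking a blanket filtered-graded principle.
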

\begin{proof}
We have an obvious  $G\times Q$-equivariant isomorphism $\K[G]\otimes \K[S]\rightarrow \K[X]$. Lift
$\K[G]$ to a $G\times Q$-stable subspace in $\widetilde{\Walg}$. We claim that the multiplication map
$\K[G]\otimes \Walg\rightarrow \widetilde{\Walg}$ is an isomorphism with required properties.
This map is injective, because its associated graded map is.
 The surjectivity follows from the observation
that the filtration on any $G$-isotypic component in $\widetilde{\Walg}$ is bounded
from below. The latter is a consequence of the fact that $\K[S]$ is positively graded, see the proof
of Proposition 2.1.5 in \cite{Wquant}.
\end{proof}

We finish this subsection by recalling Premet's definition, \cite{Premet1}, Section 4, of the W-algebras that was
historically first.

Introduce a grading on $\g$ by eigenvalues of $\ad h$:
$\g:=\bigoplus \g(i), \g(i):=\{\xi\in\g| [h,\xi]=i\xi\}$
so that $\gamma(t)\xi=t^i\xi$ for $\xi\in \g(i)$. Define the element $\chi\in \g^*$ by
$\chi=(e,\cdot)$ and the skew-symmetric form
$\omega_\chi$ on $\g(-1)$ by $\omega_\chi(\xi,\eta)=\langle\chi,[\xi,\eta]\rangle$.
It turns out that this form is symplectic. Fix a Cartan subalgebra $\t\subset \q$. Pick a $\t$-stable lagrangian subspace $l\subset \g(-1)$
and define the subalgebra $\m:=l\oplus\bigoplus_{i\leqslant -2}\g(i)$. Then
$\chi$ is a character of $\m$. Define the shift $\m_{\chi}=\{\xi-\langle\chi,\xi\rangle,\xi\in \m\}\subset \g\oplus\K$.
Essentially, in \cite{Premet1} the W-algebra was defined as the quantum Hamiltonian reduction
$(\U/\U\m_\chi)^{\ad \m}$.

We checked in \cite{Wquant}, see also \cite{HC}, Theorem 2.2.1, and the discussion after it,
that both definitions agree and also that the homomorphism
$\Centr\rightarrow \Walg$ coincides with one considered in \cite{Premet1}, 6.2,
and so is an isomorphism of $\Centr$ with the center of $\Walg$, see \cite{Premet2}, footnote 2.
Below we always identify $\Centr$ with the center of $\Walg$
using this isomorphism.

A useful feature of Premet's construction is that it allows to construct functors
between the categories of $\U$- and $\Walg$-modules. We say that a left $\U$-module $M$ is
a {\it Whittaker} module if $\m_\chi$ acts on $M$ by locally
nilpotent endomorphisms. In this case $M^{\m_\chi}=\{m\in M| \xi
m=\langle\chi,\xi\rangle m, \forall \xi\in\m\}$ is a nonzero $\Walg$-module. As
Skryabin proved in the appendix to \cite{Premet1}, the functor
$M\mapsto M^{\m_\chi}$ is an equivalence between the category of
Whittaker $\U$-modules and the category $\Walg$-$\Mod$ of
$\Walg$-modules. A quasiinverse equivalence
is given by $N\mapsto \Sk(N):=(\U/\U\m_\chi)\otimes_\Walg N$,
where $\U/\U\m_\chi$ is equipped with a natural structure of a
$\U$-$\Walg$-bimodule. In the sequel we will call $\Sk$ the {\it Skryabin functor}.

\subsection{Decomposition theorem}\label{SUBSECTION_decomposition}
All results of this subsection are taken from \cite{HC}, Subsection 2.3.

Set $V:=[\g,f]$. Equip $V$ with a symplectic form
$\omega(\xi,\eta)=\langle\chi,[\xi,\eta]\rangle$, an action of
$\K^\times: t.v=\gamma(t)^{-1}v$, and an  action of $Q$ restricted from $\g$.
Consider the homogeneous Weyl algebra $\W_\hbar:=T(V)[\hbar]/(u\otimes v-v\otimes u-\hbar^2 \omega(u,v))$. We have $Q$- and
$\K^\times$-actions of $\W_\hbar$ induced from $V$ (with $q.\hbar=\hbar, t.\hbar=t\hbar$).
As a vector  space, $\W_\hbar$ coincides with $\K[V^*][\hbar]$, while the product
 on $\W_\hbar$ is the Moyal-Weyl star-product.
The quotient $\W:=\W_\hbar/(\hbar-1)$ is the usual Weyl algebra.

Consider the cotangent bundle $T^*G$ of $G$. There is a $G\times Q$-equivariant homogeneous (with respect to
the Kazhdan action) star-product
$*:\K[T^*G]\otimes \K[T^*G]\rightarrow \K[T^*G][\hbar]$.
Set $\chi:=(e,\cdot)\in \g^*, x:=(1,\chi)\in X\subset T^*G$. The star-products on
$\K[T^*G][\hbar], \K[X][\hbar]$ extend by continuity to the completions
$\K[T^*G]^\wedge_{Gx}[[\hbar]],\K[X]^\wedge_{Gx}[[\hbar]]$. Also the star-product
on $\W_\hbar$ extends by continuity to $\W^\wedge_\hbar:=\K[V^*]^\wedge_0[[\hbar]]$.
See \cite{Wquant}, Subsection 3.3 for details.

We remark that the algebra $\K[T^*G]^\wedge_{Gx}[[\hbar]]$ is the completion of
$\K[T^*G][\hbar]$ with respect to the preimage of the ideal of the orbit $Gx$
in $\K[T^*G]$. A similar claim holds for $\K[X]^\wedge_{Gx}[[\hbar]]$.

The decomposition theorem below asserts that $\K[T^*G]^\wedge_{Gx}[[\hbar]]$ can be decomposed
into the completed tensor product of $\W^\wedge_\hbar$ and $\K[X]^\wedge_{Gx}[[\hbar]]$.
Moreover, this decomposition agrees with quantum comoment maps.

More precisely, we have quantum comoment maps $\g\times \q\rightarrow \K[T^*G][\hbar], \K[X][\hbar]$
and also a quantum comoment map $\q\rightarrow \W_\hbar$ for the $Q$-action on
$\W_\hbar$, see \cite{HC}, Subsections 2.1,2.2 for details. So we get
Lie algebra homomorphisms $\g\times \q\rightarrow \K[T^*G]^\wedge_{Gx}[[\hbar]],
\W^\wedge_\hbar\widehat{\otimes}_{\K[[\hbar]]}\K[X]^\wedge_{Gx}[[\hbar]]$ that are quantum comoment maps
for the $G\times Q$-actions. Also we remark that $\K^\times$ acts on $\K[T^*G]^\wedge_{Gx}[[\hbar]],
\W^\wedge_\hbar\widehat{\otimes}_{\K[[\hbar]]}\K[X]^\wedge_{Gx}[[\hbar]]$.

\begin{Prop}[\cite{HC}, Theorem 2.3.1]\label{Prop:10.1}
There is a $G\times Q\times \K^\times$-equivariant isomorphism $\Phi_\hbar: \K[T^*G]^\wedge_{Gx}[[\hbar]]\rightarrow
\W^\wedge_\hbar\widehat{\otimes}_{\K[[\hbar]]}\K[X]^\wedge_{Gx}[[\hbar]]$ of topological $\K[[\hbar]]$-algebras intertwining the quantum comoment maps from $\g\times\q$.
\end{Prop}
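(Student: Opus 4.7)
The plan is to obtain $\Phi_\hbar$ in three stages: first the classical (mod $\hbar$) decomposition, then an equivariant uniqueness argument for Fedosov quantizations, and finally matching of the quantum comoment maps.

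For the classical step, the $\sl_2$-triple produces a $Q\times\K^\times$-stable decomposition $\g=V\oplus\z_\g(f)$, which via the $G$-action on $T^*G=G\times\g^*$ exhibits $V$ as the symplectic normal to $X=G\times S$ along the orbit $Gx$. Since $\dim V=\codim_{T^*G}X=\dim\Orb$ and the restriction of the symplectic form of $T^*G$ to this normal direction is precisely $\omega$, a standard slice-type argument gives a $G\times Q\times\K^\times$-equivariant symplectomorphism $\K[T^*G]^\wedge_{Gx}\cong \K[V^*]^\wedge_0\,\widehat\otimes\,\K[X]^\wedge_{Gx}$. The Kazhdan $\K^\times$-action was designed precisely so that $\omega$ has weight $2$ on both sides, ensuring the homogeneity matches up.

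For the quantum step, both $\K[T^*G]^\wedge_{Gx}[[\hbar]]$ and $\W^\wedge_\hbar\,\widehat\otimes_{\K[[\hbar]]}\,\K[X]^\wedge_{Gx}[[\hbar]]$ are $G\times Q\times\K^\times$-equivariant homogeneous star-product quantizations of the same formal symplectic manifold. A uniqueness theorem for such quantizations (at each order in $\hbar$ the obstruction lives in a weight-positive piece of equivariant Hochschild cohomology, which vanishes by averaging over the compact form of $G\times Q$ together with the contracting $\K^\times$-action, cf.\ \cite{Wquant}) produces a $G\times Q\times\K^\times$-equivariant $\K[[\hbar]]$-algebra isomorphism $\Phi_\hbar$ whose reduction mod $\hbar$ is the classical decomposition.

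Finally, for the intertwining of the quantum comoment maps, the difference between the pullback via $\Phi_\hbar$ of the natural quantum comoment map on the right-hand side and the original quantum comoment map on $\K[T^*G]^\wedge_{Gx}[[\hbar]]$ is a $G\times Q\times\K^\times$-equivariant map $\g\oplus\q\to\K[T^*G]^\wedge_{Gx}[[\hbar]]$ whose values commute with everything, hence lie in the center $\K[[\hbar]]$. By $G\times Q$-equivariance and the prescribed $\K^\times$-weight, any remaining freedom can be absorbed into an $\hbar$-linear adjustment of the comoment map on the $\W^\wedge_\hbar$-factor (or equivalently into a further modification of $\Phi_\hbar$), after which the resulting $\Phi_\hbar$ intertwines both comoment maps.

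The main obstacle is this last step: producing an equivariant $\K[[\hbar]]$-algebra isomorphism via Fedosov theory is essentially routine once the classical decomposition is in place, but strictly matching the two quantum comoment maps from $\g\oplus\q$ is delicate and relies critically on the $\K^\times$-grading for controlling the ambiguities, as well as on the uniqueness of the quantum comoment map for $\W$ once the weight is fixed.
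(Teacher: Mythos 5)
This Proposition is quoted here without proof from \cite{HC}, Theorem 2.3.1 (whose proof in turn rests on the decomposition theorem of \cite{Wquant}), so there is no proof in the present paper to compare against; your outline reproduces the strategy of those references. Namely: the equivariant formal Darboux--Weinstein splitting of the formal neighbourhood of $Gx$ along $V=[\g,f]$, rigidity of $G\times Q\times\K^\times$-equivariant homogeneous star-products under the contracting Kazhdan $\K^\times$-action (with the caveat that ``averaging over a compact form'' should be replaced by the Reynolds operator for the reductive group in this algebraic setting), and a central-ambiguity/weight argument to align the two quantum comoment maps on $\g\oplus\q$ are precisely the three steps used there.
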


Using the star-product we
can equip $\U_\hbar:=\K[\g^*][\hbar]=\K[T^*G][\hbar]^G$ with a new associative product.
The quotient $\U_\hbar/(\hbar-1)$ is identified with $\U$. To recover $\U_\hbar$ from
$\U$ recall the notion of the Rees algebra. Namely, let $\A$ be an associative algebra
equipped with an increasing exhaustive $\ZZ_{\geqslant 0}$-filtration $\F_i\A$. Then, by definition,
the Rees algebra $R_\hbar(\A)$ is $\bigoplus_{i\geqslant 0}\F_i\A \hbar^i\subset \A[\hbar]$.
Now equip $\U$ with the "doubled" standard filtration: the space $\F_i\U$ is spanned
by all monomials $\xi_1\ldots \xi_j, 2j\leqslant i$. We get $R_\hbar(\U)=\U_\hbar$.

The action of $Q$ on $\U_\hbar$ has a quantum comoment map that is nothing else
but the natural embedding $\q\hookrightarrow \g\subset R_\hbar(\U)= \U_\hbar$. We can form
the completion $\U^\wedge_\hbar:=\K[\g^*]^\wedge_{\chi}[[\hbar]]$ and extend the star-product from
$\U_\hbar$ to $\U^\wedge_\hbar$. Alternatively, the star-product on $\U^\wedge_\hbar=\K[T^*G]^\wedge_{Gx}[[\hbar]]^G$ is obtained by restriction from $\K[T^*G]^\wedge_{Gx}[[\hbar]]$.

Similarly, define the completion $\Walg^\wedge_\hbar:=\K[X]^\wedge_{Gx}[[\hbar]]^G$
and equip it with the product induced from $\Walg_\hbar$. The isomorphism $\Phi_\hbar$ from Proposition \ref{Prop:10.1}
restricts to a $Q\times\K^\times$-equivariant isomorphism $\U^\wedge_\hbar\rightarrow
\W^\wedge_\hbar\widehat{\otimes}_{\K[[\hbar]]}\Walg^\wedge_\hbar$ intertwining the quantum
comoment maps from $\q$.

\subsection{Primitive ideals and Harish-Chandra bimodules vs W-algebras}\label{SUBSECTION_primitive}
In this subsection we will explain results of \cite{HC} on a relationship between
\begin{itemize}
\item The set $\Id_{fin}(\Walg)$ of two-sided ideals of finite codimension in $\Walg$ and the
set $\Id_{\Orb}(\U)$ of two-sided ideals of $\J\subset\U$ with $\VA(\U/\J)=\overline{\Orb}$.
\item The category $\HC_{fin}^Q(\Walg)$ of finite dimensional $Q$-equivariant bimodules
and the  subquotient  $\HC_{\Orb}(\U)$ of  $\HC(\U)$. The latter stands for the category of
Harish-Chandra bimodules for $G$, i.e., those where the adjoint action of $\g$ integrates
to a $G$-action.
\end{itemize}

We remark that although in \cite{HC} only the case when $G$ is semisimple and simply connected
was considered, the general case is obtained from there in a straightforward way.

Namely, we have two maps $\I\mapsto \I^{\dagger}:\Id_{fin}(\Walg)\rightarrow \Id_{\Orb}(\U),
\J\mapsto \J_{\dagger}: \Id_{\Orb}(\U)\rightarrow \Id_{fin}(\Walg)$ having the
following properties.

\begin{Thm}\label{Thm:1.2.1}
\begin{itemize}
\item[(i)] $\I^\dagger\cap \Centr(\g)=\I\cap \Centr(\g)$.
\item[(ii)] $\J_\dagger\cap \Centr(\g)\supset \J\cap \Centr(\g)$.
\item[(iii)] $\codim_{\Walg}\J_\dagger=\mult_{\Orb}\U/\J$, where the right hand side denotes
the multiplicity of $\U/\J$ on $\Orb$.
\item[(iii)] $\J_{\dagger}$ is $Q$-stable for any $\J\in \Id_{\Orb}(\U)$.
\item[(iv)] If $\I$ is a $Q$-stable element of $\Id_{fin}(\Walg)$, then $\I=(\I^{\dagger})_{\dagger}$.
\item[(v)] $(\J_\dagger)^\dagger/\J\in \HC_{\partial\Orb}(\U)$ for any
$\J\in \Id_{\Orb}(\U)$.
\end{itemize}
\end{Thm}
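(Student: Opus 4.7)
The plan is to construct both maps from the decomposition isomorphism $\Phi_\hbar\colon \U^\wedge_\hbar \xrightarrow{\sim} \W^\wedge_\hbar\widehat\otimes_{\K[[\hbar]]}\Walg^\wedge_\hbar$ of Proposition~\ref{Prop:10.1}, and then to read off each item from its compatibility with the quantum comoment maps and the $Q\times\K^\times$-action. For $\I\in\Id_{fin}(\Walg)$, I would form the Rees ideal $R_\hbar(\I)\subset\Walg_\hbar$ using the Kazhdan filtration $\KF$, complete at $Gx$ to $\I^\wedge_\hbar\subset\Walg^\wedge_\hbar$, build the extended two-sided ideal $\W^\wedge_\hbar\widehat\otimes\I^\wedge_\hbar$, transport it through $\Phi_\hbar^{-1}$ to $\U^\wedge_\hbar$, intersect with $\U_\hbar$, and specialize at $\hbar=1$ to obtain $\I^\dagger$. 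In the opposite direction I would first establish that every closed $\K^\times$-stable two-sided ideal of $\W^\wedge_\hbar\widehat\otimes\Walg^\wedge_\hbar$ has the form $\W^\wedge_\hbar\widehat\otimes J'$ for a unique $J'\subset\Walg^\wedge_\hbar$ (a standard argument using that the completed Weyl algebra has no nontrivial $\K^\times$-stable closed two-sided ideals), then apply this decomposition to the completion of $R_\hbar(\J)$ and specialize to recover $\J_\dagger\subset\Walg$.

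With this setup most of the listed properties become essentially formal. $Q$-equivariance of $\Phi_\hbar$ gives the $Q$-stability of $\J_\dagger$ immediately. The fact that $\Phi_\hbar$ intertwines the quantum comoment maps from $\g$ identifies $\Centr(\g)\subset\U^\wedge_\hbar$ with the image of the center of $\Walg^\wedge_\hbar$ embedded into the second factor, from which both inclusions in (i) follow: elements of $\I\cap\Centr(\g)$ survive the round-trip because Rees completion is faithful on them, and elements of $\I^\dagger\cap\Centr(\g)$ can be traced back into $\I^\wedge_\hbar$. The same identification gives $\J\cap\Centr(\g)\subset\J_\dagger$, which is (ii); strictness can occur because completion can introduce new central annihilators coming from boundary components. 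For the codimension equality, identifying $(\U/\J)^\wedge_{Gx}$ with $\W^\wedge_\hbar\widehat\otimes(\Walg/\J_\dagger)^\wedge_\hbar$ reduces $\mult_\Orb(\U/\J)$ to the rank of the transverse-slice contribution, which equals $\dim\Walg/\J_\dagger$ because the Weyl-algebra factor contributes trivially transversally. Property (iv), the round-trip identity on $Q$-stable finite codimensional ideals, then follows from the uniqueness of $J'$ combined with Lemma~\ref{Lem:1.5.1}, which lets one recover $\Walg$-level data from its completion after stripping off the $\K[G]$-factor.

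The hardest step is (v). Containment $\J\subset(\J_\dagger)^\dagger$ is built into the construction, since by design $(\J_\dagger)^\dagger$ has the same $Gx$-completion as $\J$; the quotient $(\J_\dagger)^\dagger/\J$ thus vanishes after $Gx$-completion, so its associated variety misses the open orbit $\Orb$ and lies in $\partial\Orb$. What requires genuine work is verifying that $(\J_\dagger)^\dagger/\J$ lies in $\HC(\U)$ rather than only in a completed category: one must show that the ``intersect with $\U_\hbar$, then set $\hbar=1$'' operation produces a genuine $\U$-submodule whose quotient by $\J$ is $\ad(\g)$-locally finite and finitely generated. The intended resolution is to carry out the whole construction at the Rees level and exploit the fact that the $G\times\K^\times$-finite part of $\U^\wedge_\hbar$ recovers $\U_\hbar$ itself (a restoration statement in the spirit of Lemma~\ref{Lem:1.5.1}), so that the Harish-Chandra structure on $\U/\J$ passes to the quotient in question automatically.
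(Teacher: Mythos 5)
The paper states Theorem \ref{Thm:1.2.1} without proof; it is a summary of results from \cite{HC}, and the surrounding subsection is explicitly labeled as a reminder of that reference, so there is no proof in this paper to compare against. That said, your reconstruction uses exactly the machinery the paper does describe for the construction of $\bullet_\dagger$ (good filtration, Rees bimodule, $Gx$-completion, the decomposition isomorphism $\Phi_\hbar$ of Proposition \ref{Prop:10.1}, restriction to $\K^\times$-finite vectors, specialization at $\hbar=1$), so it is a reasonable reconstruction of the cited argument. Your device of classifying closed $\K^\times$-stable two-sided ideals in $\W^\wedge_\hbar\widehat{\otimes}\Walg^\wedge_\hbar$ is equivalent to the paper's passage to $\ad V$-invariants (compare the role played by Lemma \ref{Lem:11.2.1} later in the paper). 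One caution: ``essentially formal'' undersells item (iv). Showing that the round trip is the identity on $Q$-stable finite-codimension ideals requires establishing both that the ideal $\W^\wedge_\hbar\widehat{\otimes}\I^\wedge_\hbar$ determines $\I$ and that the image of $\bullet_\dagger$ on ideals exhausts the $Q$-stable ones of finite codimension; that is the substantive content of \cite{HC} here, and Lemma \ref{Lem:1.5.1} alone, while relevant, does not close it. Likewise, the argument you give for the inclusion $\I^\dagger\cap\Centr(\g)\subset\I\cap\Centr(\g)$ in (i) needs the identification of $\Centr(\g)$ with the center of $\Walg$ plus the faithfulness of the completion maps on the center; worth stating explicitly rather than letting ``can be traced back'' carry the weight.
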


Now we proceed to functors between the categories of  HC $\U$- and $\Walg$- bimodules.

Let us explain what we mean by an HC $\Walg$-bimodule.
Let $\Nil$ be a $\Walg$-bimodule. We say that $\Nil$ is HC if it is finitely generated and there is an increasing filtration $\F_i\Nil$  such that
\begin{itemize}\item[(i)] $[\KF_i \Walg, \F_j \Nil]\subset \F_{i+j-2}\Nil$
\item[(ii)] and $\gr \Nil$ is a finitely generated $\gr\Walg=\K[S]$-module.\end{itemize} Since $[\KF_i\Walg, \KF_j\Walg]\subset \KF_{i+j-2}\Walg$, we see that $\Walg$ itself is HC. Also any finite dimensional
bimodule is HC. In general, the filtration $\F_\bullet$ is bounded from below and any
$\F_i\Nil$ is finite dimensional.  An important series of infinite dimensional
HC $\Walg$-bimodules including $\Walg$ will be constructed in Subsection \ref{SUBSECTION_UL}. The category of HC $\Walg$-bimodules will be denoted by $\HC(\Walg)$.

By a {\it $Q$-equivariant}
HC bimodule we mean a $\Walg$-bimodule $\Nil$ equipped with
\begin{itemize}\item  a $Q$-action compatible with the $Q$-action
on $\Walg$ \item and a $Q$-stable filtration $\F_i\Nil$ as above such that the differential of the $Q$-action
coincides with the adjoint action of $\q\subset \Walg$
\end{itemize}
(the differential is defined because the filtration is $Q$-stable and any subspace $\F_i\Nil$ is finite dimensional).
The category of $Q$-equivariant HC bimodules will be denoted by $\HC^Q(\Walg)$.
Let $\HC^Q_{fin}(\Walg)$  denote the category of finite dimensional (and so automatically Harish-Chandra)
$Q$-equivariant  bimodules.

The tensor product over
$\Walg$ defines  monoidal structures on $\HC(\Walg)$ and $\HC^Q(\Walg)$ (the unit object is
$\Walg$ itself).

In \cite{HC} the first author has constructed a functor $\bullet_\dagger:\HC(\U)\rightarrow \HC^Q(\Walg)$.
We will need the construction in the present paper so let us recall it.

Pick $\M\in \HC(\U)$. Equip it with a {\it good filtration} $\F_i\M$. Recall that a filtration
$\F_i\M$ is said to be good if
\begin{itemize}
\item
it is compatible with the filtration $\F_i\U$ on $\U$, i.e., $\F_i\U\cdot\F_j\M,\F_j\M\cdot\F_i\U\subset \F_{i+j}\M$.
\item $\F_i\M$ is $\ad(\g)$-stable for all $i$.
\item $\gr\M$ is a finitely generated $S(\g)=\gr\U$-module.
\end{itemize}

Consider the Rees $\U_\hbar$-bimodule $\M_\hbar:=\bigoplus \F_i\M \hbar^i\subset \M[\hbar]$.
Then $\M^\wedge_\hbar:=\U^\wedge_\hbar\otimes_{\U_\hbar}\M_\hbar$ has a natural structure of
a $Q$-equivariant Harish-Chandra $\U^\wedge_\hbar$-bimodule in the sense
of \cite{HC}, Subsection 2.5. According to \cite{HC},
Proposition 3.3.1, $\M^\wedge_\hbar\cong \W^\wedge_\hbar\widehat{\otimes}_{\K[[\hbar]]}\Nil'_\hbar$,
where $\Nil'_\hbar$ is the space of all $m\in \M^\wedge_\hbar$ such that $vm=mv$ for all
$v\in V\subset \W^\wedge_\hbar\subset \W^\wedge_\hbar\widehat{\otimes}_{\K[[\hbar]]}\Walg^\wedge_\hbar$.
Then $\Nil'_\hbar$ carries a natural structure of a $Q$-equivariant HC $\Walg^\wedge_\hbar$-bimodule.
Let $\Nil_\hbar$ stand for the space of all $\K^\times$-finite vectors in $\Nil'_\hbar$
(a vector is said to be $\K^\times$-finite if it lies in a finite dimensional $\K^\times$-stable submodule).
Then $\Nil_\hbar$ is a $Q$-equivariant HC $\Walg_\hbar$-bimodule. Finally, set
$\M_\dagger:=\Nil_\hbar/(\hbar-1)\Nil_\hbar$. It was shown in \cite{HC}, Section 3.4,
that $\M_\dagger$ does not depend on the choice of a filtration on $\M$ and that $\M\mapsto\M_\dagger$
is a functor $\HC(\U)\rightarrow \HC^Q(\Walg)$.

The following result was obtained in \cite{HC}, Theorem 1.3.1, Theorem 4.1.1.

\begin{Thm}\label{Thm_dagger}
\begin{enumerate}
\item The functor $\M\mapsto \M_\dagger:\HC(\U)\rightarrow \HC^Q(\Walg)$ is exact.
Moreover, $\U_\dagger=\Walg$ and for an ideal $\J\subset \U$ its image under the functor
$\bullet_\dagger$ coincides with the ideal $\J_\dagger$ mentioned above.
\item $\bullet_\dagger$ maps $\HC_{\overline{\Orb}}(\U)$ to $\HC^Q_{fin}(\Walg)$. There is a functor $\Nil\mapsto \Nil^{\dagger}:\HC^Q_{fin}(\Walg)\rightarrow \HC_{\overline{\Orb}}(\U)$
right adjoint to $\M\mapsto \M_{\dagger}: \HC_{\overline{\Orb}}(\U)\rightarrow \HC^Q_{fin}(\Walg)$.
\item Let $\M\in \HC_{\overline{\Orb}}(\U)$. Then
$\dim\M_{\dagger}=\mult_{\overline{\Orb}}(\M)$, and the kernel and the cokernel of the natural
homomorphism $\M\rightarrow (\M_\dagger)^\dagger$ lie in $\HC_{\partial\Orb}(\U)$.
\item $\M\rightarrow \M_{\dagger}$ is a tensor functor.
\item $\operatorname{LAnn}(\M)_{\dagger}=\operatorname{LAnn}(\M_\dagger), \operatorname{RAnn}(\M)_{\dagger}=\operatorname{RAnn}(\M_\dagger)$ for any $\M\in \HC_{\overline{\Orb}}(\U)$.
\item Let $\M\in \HC(\U)$ and $\Nil\subset \M_{\dagger}$ be a $Q$-stable
subbimodule of finite codimension. Then $\Nil=\M'_\dagger$ for some
$\M'\subset \M$ with $\VA(\M/\M')=\overline{\Orb}$.
\item The functor $\M\mapsto \M_\dagger$ gives rise to an equivalence of
$\HC_{\Orb}(\U)$ and some full  subcategory in
$\HC_{fin}^Q(\Walg)$ closed under taking subquotients.
\end{enumerate}
\end{Thm}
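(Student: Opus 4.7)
The plan is to build the theorem from the four-step construction that sends $\M$ to $\M_\dagger$ via intermediate objects $\M_\hbar, \M^\wedge_\hbar, \Nil'_\hbar$ as reviewed just before the statement, with Proposition \ref{Prop:10.1} as the main technical input. I begin with the exactness in (1). Passage $\M \mapsto \M_\hbar$ is exact because any short exact sequence in $\HC(\U)$ admits compatible good filtrations; the completion $\M_\hbar \mapsto \M^\wedge_\hbar$ is exact by flatness of $\U^\wedge_\hbar$ over $\U_\hbar$; Proposition \ref{Prop:10.1} identifies $\M^\wedge_\hbar$ with $\W^\wedge_\hbar \widehat{\otimes}_{\K[[\hbar]]} \Nil'_\hbar$, so extraction of the $V$-centralizer is exact; and taking $\K^\times$-finite vectors followed by setting $\hbar = 1$ is plainly exact. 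The identity $\U_\dagger = \Walg$ drops out of the construction via $\Phi_\hbar$, and exactness then produces $\J_\dagger$ as a two-sided ideal in $\Walg$ coinciding with the one from Theorem \ref{Thm:1.2.1} by uniqueness.

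Next I would package the first part of (2) together with (4), (5), and the dimension formula in (3). The first part of (2) follows because the associated graded of $\M_\dagger$ is a finitely generated $\K[S]$-module supported on $S \cap \overline{\Orb}$, a zero-dimensional set, so $\M_\dagger$ is finite dimensional. For (4), since $\Phi_\hbar$ is an algebra isomorphism the $V$-centralizer of a completed tensor product factors as the tensor product of $V$-centralizers, and this persists after setting $\hbar = 1$. Then (5) is formal from (1) together with $\U_\dagger = \Walg$. The dimension formula comes from identifying $\dim \M_\dagger$ with the generic rank of $\gr \M$ along $\overline{\Orb}$, which is exactly $\mult_{\overline{\Orb}}(\M)$.

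For the second part of (2) and the kernel/cokernel claim in (3) I would construct the right adjoint $\bullet^\dagger$ using Skryabin's equivalence on both sides: starting from $\Nil \in \HC^Q_{fin}(\Walg)$, apply Skryabin induction via the $\U$-$\Walg$-bimodule $\U/\U\m_\chi$ in each variable to obtain a $\U$-bimodule, then pass to its maximal $\ad(\g)$-locally finite, $G$-integrable subbimodule. Adjointness is verified against the explicit construction of $\bullet_\dagger$. To see that the kernel and cokernel of the adjunction unit $\M \to (\M_\dagger)^\dagger$ lie in $\HC_{\partial\Orb}(\U)$, one first observes that $\bullet_\dagger$ kills precisely those HC bimodules whose associated variety misses $\Orb$; applying $\bullet_\dagger$ to the unit yields an isomorphism by the triangle identities, so any failure of injectivity or surjectivity is annihilated by $\bullet_\dagger$ and hence supported on $\partial\Orb$. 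Statement (7) then follows formally from these facts combined with (5): modulo $\HC_{\partial\Orb}(\U)$ the unit becomes an isomorphism, so $\bullet_\dagger$ descends to a fully faithful functor on $\HC_\Orb(\U)$ whose image is closed under subquotients.

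The main obstacle I anticipate is (6): lifting a $Q$-stable finite-codimension subbimodule $\Nil \subset \M_\dagger$ to some $\M' \subset \M$ with $\M'_\dagger = \Nil$ and $\VA(\M/\M') = \overline{\Orb}$. The natural candidate is the preimage in $\M$ of $\Nil^\dagger \subset (\M_\dagger)^\dagger$ under the unit, but showing this preimage has the desired properties demands careful tracking of how $\bullet_\dagger$ interacts with formation of subobjects. One must verify that the induced map $(\M/\M')_\dagger \to \M_\dagger/\Nil$ is an isomorphism, which reduces to exactness of $\bullet_\dagger$, the identification $(\Nil^\dagger)_\dagger \cong \Nil$ coming from the counit of adjunction (which itself relies on the vanishing of $\bullet_\dagger$ on boundary bimodules), and the fact that every proper quotient of $\M_\dagger$ of the right codimension arises this way. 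This compatibility bookkeeping is the delicate heart of the argument.
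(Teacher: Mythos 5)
Theorem~\ref{Thm_dagger} is not proved in the present paper; it is quoted from \cite{HC} (Theorems~1.3.1 and~4.1.1 there), and Subsection~\ref{SUBSECTION_primitive} only recalls the construction of $\bullet_\dagger$ needed to state it. There is therefore no in-paper proof to compare your sketch against, and I can only test it for internal coherence against the recalled machinery.

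On that score the outline is broadly plausible but has real gaps. For the right adjoint $\bullet^\dagger$ you propose Skryabin-inducing in each bimodule variable and then taking the maximal $\ad(\g)$-locally finite, $G$-integrable subbimodule; the adjunction is asserted rather than verified, and items (3), (6), (7) all lean on the precise behavior of this functor. A concrete failure is in (5): exactness, the tensor property (4), and $\U_\dagger=\Walg$ give only one inclusion --- with $\J=\LAnn(\M)$ one has $\J\M=0$, hence $\J_\dagger\M_\dagger=0$ and $\J_\dagger\subset\LAnn(\M_\dagger)$ --- but the reverse inclusion is not formal; it requires Theorem~\ref{Thm:1.2.1}(iv) applied to the $Q$-stable ideal $\LAnn(\M_\dagger)$ together with the boundary-support facts. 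Another gap sits inside your argument for (3) and (7): you claim that ``applying $\bullet_\dagger$ to the unit yields an isomorphism by the triangle identities,'' but the triangle identity $\epsilon_{\M_\dagger}\circ(\eta_\M)_\dagger=\mathrm{id}$ gives only that $(\eta_\M)_\dagger$ is a split monomorphism. Concluding that it is an isomorphism needs the separate substantive fact that the counit $\epsilon_\Nil\colon(\Nil^\dagger)_\dagger\to\Nil$ is an isomorphism for all $\Nil\in\HC^Q_{fin}(\Walg)$, which your sketch never establishes; without it, the deduction that $\ker\eta_\M$ and $\mathrm{coker}\,\eta_\M$ lie in $\HC_{\partial\Orb}(\U)$ breaks down, and so does the fully-faithfulness in (7).
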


%
%

\subsection{Category $\OCat$ for a W-algebra and the equivalence $\KFun$}\label{SUBSECTION_Ocat}
All results of this subsection can be found in \cite{Ocat}.

Here we will discuss a certain category of $\Walg$-modules. To define this category we fix a
torus $T\subset Q$ and a cocharacter $\theta:\K^\times\rightarrow T$. We assume that
$\theta$ is regular: i.e., for $\g_0:=\z_\g(\theta)$ we have $ \t:=\z(\g_0)$.
We remark that $e,h,f\in\g_0$.

We have an eigenspace decomposition $\Walg=\bigoplus_{\alpha\in \ZZ}\Walg_\alpha$ with respect to $\theta$. Set
$\Walg_{\geqslant 0}:=\sum_{\alpha\geqslant 0}\Walg_\alpha, \Walg_{>0}:=\sum_{\alpha>0}\Walg_\alpha,
\Walg_{\geqslant 0}^+:=\Walg_{\geqslant 0}\cap \Walg\Walg_{>0}, \Walg^0:=\Walg_{\geqslant 0}/\Walg_{\geqslant 0}^+$. Clearly, $\Walg_{\geqslant 0}$ is a subalgebra in $\Walg$, and $\Walg_{\geqslant 0}^+$ is an ideal in $\Walg_{\geqslant 0}$.

By definition, the category $\widetilde{\OCat}^{\t}$ for the pair $(\Walg,\theta)$ consists of all $\Walg$-modules
$N$ satisfying the following conditions:

\begin{itemize}
\item $N$ is finitely generated.
\item $\t$ acts on $N$ by diagonalizable endomorphisms.
\item $\Walg_{>0}$ acts on $N$ by locally nilpotent endomorphisms.
\end{itemize}

This category (in a somewhat different form) was first introduced by Brundan, Goodwin and Kleshchev
in \cite{BGK}.

The category $\widetilde{\OCat}^{\t}(\theta)$ has analogs of Verma modules. Namely,  take a finitely generated $\Walg^0$-module $N$ with diagonalizable action of $\t$. We can consider $N^0$ as a $\Walg_{\geqslant 0}$-module
by letting $\Walg_{\geqslant 0}^+$ act by zero. Then we have the Verma module $\Verm^\theta(N^0):=\Walg\otimes_{\Walg_{\geqslant 0}}N^0$. The module $\Verm^\theta(N^0)$ has a unique irreducible quotient provided $N^0$ is irreducible.
We denote this quotient by $L^\theta(N^0)$. 

The functor $N^0\mapsto \Verm^\theta(N^0)$ from the category $\Walg^0$-$\Mod^\t$ of finitely generated $\Walg^0$-modules with diagonalizable action of $\t$ to $\widetilde{\OCat}^\t$ has a right adjoint: $\FF:M\mapsto M^{\Walg_{>0}}$.

It turns out that the category $\widetilde{\OCat}^\t$ is equivalent to a certain category of
{\it generalized Whittaker} modules to be defined now.

Let $\g_{>0}\subset \g$ denote the sum of all eigenspaces  for $\ad\theta$ with
positive eigenvalues. Then $\g_{\geqslant 0}:=\g_0\oplus \g_{>0}$ is a
parabolic subalgebra of $\g$ and $\g_{>0}$ is its nilpotent radical.
Let $\underline{\m}\subset \g_0$ be the subalgebra defined  analogously to $\m\subset\g$,
Further, set $\widetilde{\m}:=\g_{>0}\oplus \underline{\m}, \widetilde{\m}_\chi:=\{\xi-\langle\chi,\xi\rangle,\xi\in \widetilde{\m}\}$. By a generalized Whittaker module we mean a $\U$-module $\M$ satisfying the following conditions:

\begin{itemize}
\item $\M$ is finitely generated.
\item $\t$ acts on $\M$ by diagonalizable endomorphisms.
\item $\widetilde{\m}_\chi$ acts on $\M$ by locally nilpotent endomorphisms.
\end{itemize}

The category of generalized Whittaker modules will be denoted by $\widetilde{\Wh}^{\t}(e,\theta)$.


The category $\widetilde{\Wh}^{\t}(e,\theta)$ also contains analogs of Verma modules defined as follows.
Let $\underline{\Walg}$ denote the W-algebra constructed for the pair $(\g_0,e)$ and let $\Sk_0$
be the Skryabin equivalence for $\g_0,e$. Pick a finitely generated $\underline{\Walg}$-module
$\underline{N}$ with diagonalizable action of $\t$. Set $\Verm^{e,\theta}(\underline{N}):=\U\otimes_{U(\g_{\geqslant 0})}\Sk_\lf(\underline{N})$. This module has a unique irreducible quotient $L^{e,\theta}(\underline{N})$.
As before, the functor $\Verm^{e,\theta}$ has the right adjoint $\GF:M\mapsto M^{\widetilde{\m}_\chi}$.

The main result of \cite{Ocat} is the following theorem.

\begin{Thm}\label{Thm_Ocat}
There is an equivalence $\KFun:\widetilde{\Wh}^\t(e,\theta)\rightarrow
\widetilde{\OCat}^\theta(\theta)$ of abelian categories and an isomorphism $\Psi:\underline{\Walg}\rightarrow
\Walg^0$ satisfying the following conditions:
\begin{enumerate}
\item $\Ann_{\Walg}(\KFun(M))^{\dagger}=\Ann_{\U}(M)$ for any $M\in \widetilde{\Wh}(e,\theta)$.
\item The functors $\Psi^*\circ\FF\circ \KFun$ and $\GF$ from $\widetilde{\Wh}^\t(e,\theta)$
to $\underline{\Walg}$-$\Mod^\t$ (the category of $\underline{\Walg}$-modules
with diagonalizable $\t$-action) are isomorphic. Here $\Psi^*$ denotes the pull-back functor
between the categories of modules induced by $\Psi$.
\item The functors $\KFun\circ\Verm^{e,\theta},\Verm^\theta\circ (\Psi^{-1})^*$ from $\underline{\Walg}\text{-}\Mod^\t$
to $\widetilde{\OCat}^\t(\theta)$ are isomorphic.
\end{enumerate}
\end{Thm}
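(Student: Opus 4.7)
The plan is to construct $\KFun$ as a refined Skryabin functor, after first identifying $\underline{\Walg}$ with $\Walg^0$. Making compatible Lagrangian choices ($\underline{l} \subset l$ inside $\g_0(-1) \subset \g(-1)$, both $\t$-stable) gives inclusions $\underline{\m} \subset \m \subset \widetilde{\m}$: for the second inclusion, decompose $\m$ under the $\theta$-grading as $\m = (\m \cap \g_0) \oplus (\m \cap \g_{>0})$, observing that $\m \cap \g_0 = \underline{\m}$ by the compatibility of the Lagrangians, and $\m \cap \g_{>0} \subset \g_{>0} \subset \widetilde{\m}$. To produce $\Psi$, I would present $\Walg^0$ as a two-step reduction: using Premet's quantum Hamiltonian reduction $\Walg = (\U/\U\m_\chi)^{\ad \m}$ and the $\theta$-grading compatibility, $\Walg^0 = \Walg_{\geqslant 0}/\Walg_{\geqslant 0}^+$ is naturally isomorphic to $(U(\g_0)/U(\g_0)\underline{\m}_\chi)^{\ad \underline{\m}} = \underline{\Walg}$, since killing $\Walg_{>0}$ corresponds (via the comoment map from $\g_{\geqslant 0}$) to imposing the $\g_{>0}$-reduction on top of the $\m$-reduction.

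Having $\Psi$ in hand, for $M \in \widetilde{\Wh}^\t(e,\theta)$ I set $\KFun(M) := M^{\m_\chi}$, which is a nonzero $\Walg$-module by the Skryabin equivalence since $\m_\chi \subset \widetilde{\m}_\chi$ acts locally nilpotently. The $\t$-diagonalizability descends because $\t$ normalizes $\m$ (as $l$ is $\t$-stable) and fixes $\chi$, so $\t$ preserves $M^{\m_\chi}$; finite generation as a $\Walg$-module follows from the quasi-inverse description $\Sk(N) = (\U/\U\m_\chi) \otimes_\Walg N$. \textbf{The main obstacle} is proving that $\Walg_{>0}$ acts locally nilpotently on $M^{\m_\chi}$, together with the dual fact that $\g_{>0}$ acts locally nilpotently on $\Sk(N)$ for $N \in \widetilde{\OCat}^\t(\theta)$. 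The cleanest way I see is to work with the completions of Proposition \ref{Prop:10.1}: pass to $\U_\hbar^\wedge$ and $\Walg_\hbar^\wedge$, use the $\K^\times$-action given by $\theta$ to produce compatible $\theta$-gradings, and observe that the comoment map from $\g_{\geqslant 0}$ has image compatible with the filtration, so that the $\g_{>0}$-action and the $\Walg_{>0}$-action match up modulo the Weyl algebra part $\W_\hbar^\wedge$. Local nilpotency then transfers between the two sides via the Skryabin functor applied to completed modules.

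Once the equivalence is established, the three properties follow fairly directly. Property (1) is inherited from the standard Skryabin equivalence: the left annihilator of $\KFun(M) = M^{\m_\chi}$ in $\Walg$ corresponds, under $\bullet^\dagger$, to the annihilator of $\Sk(\KFun(M)) = M$ in $\U$ by Theorem \ref{Thm_dagger}(5) combined with the fact that the Skryabin functor is an equivalence. For property (2), note that $\FF \circ \KFun(M) = (M^{\m_\chi})^{\Walg_{>0}}$, and iterating the invariants (first under $\m_\chi$, then under the lift of $\g_{>0}$ provided by the comoment map, which accounts for $\Walg_{>0}$) gives $M^{\m_\chi + \g_{>0}} = M^{\widetilde{\m}_\chi} = \GF(M)$; the resulting $\Walg^0$-module structure matches $\underline{\Walg}$ via $\Psi$. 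For (3), use the explicit descriptions $\Verm^\theta(N^0) = \Walg \otimes_{\Walg_{\geqslant 0}} N^0$ and $\Verm^{e,\theta}(\underline{N}) = \U \otimes_{U(\g_{\geqslant 0})} \Sk_0(\underline{N})$, together with a base-change argument: applying $\KFun$ to the right-hand side reduces to verifying that $(\U \otimes_{U(\g_{\geqslant 0})} \Sk_0(\underline{N}))^{\m_\chi}$ is naturally $\Walg \otimes_{\Walg_{\geqslant 0}} \underline{N}$, which follows from the identifications already made together with the transitivity of induction.
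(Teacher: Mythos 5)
Your approach --- defining $\KFun$ directly as the Skryabin invariants functor $M\mapsto M^{\m_\chi}$ after choosing compatible Lagrangians --- is genuinely different from the paper's, which obtains $\KFun$ as push-forward along an explicit isomorphism $\Phi\colon\U^\wedge\to\W(\Walg)^\wedge$ of topological algebras built from the decomposition theorem (Proposition \ref{Prop:10.1}), followed by taking $\widetilde{\m}\cap V$-invariants and $\K^\times$-finite parts. The step you yourself flag as the ``main obstacle'' --- local nilpotency of $\Walg_{>0}$ on $M^{\m_\chi}$, and dually of $\g_{>0}$ on $\Sk(N)$ --- is where essentially all the work of the theorem lives, and your proposed solution (``pass to completions, use the $\K^\times$-action, observe comoment-map compatibility, local nilpotency transfers'') is not an argument but a description of the kind of argument one would need; once you commit to it you are in effect reconstructing $\Phi$ from the completed decomposition, at which point the elementary Skryabin route has been abandoned. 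You also never address the prior question of whether $M^{\m_\chi}$ with its Skryabin $\Walg$-structure agrees with the $\Walg^\wedge$-module produced via $\Phi$; this compatibility (analogous to the agreement of Fedosov and Premet definitions of $\Walg$, now at the module level) is needed for your $\Psi$ and properties (1)--(3) to match the paper's.

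Your derivation of property (2) contains a concrete error. You assert $(M^{\m_\chi})^{\Walg_{>0}}=M^{\m_\chi+\g_{>0}}$ because ``the lift of $\g_{>0}$ provided by the comoment map accounts for $\Walg_{>0}$''. But there is no quantum comoment map $\g_{>0}\to\Walg$: the only comoment map into $\Walg$ is from $\q$ (the $\g$-part of the comoment map lands in the equivariant algebra $\widetilde{\Walg}$, not in $\Walg=\widetilde{\Walg}^G$). The subspace $\Walg_{>0}$ is the entire positive $\theta$-eigenspace of $\Walg$ and is not the image of $\g_{>0}$ under any natural map; identifying $\Walg_{>0}$-invariants of $M^{\m_\chi}$ with $\widetilde{\m}_\chi$-invariants of $M$ is exactly what the isomorphism $\Phi$ accomplishes, by matching the $\widetilde{\m}_\chi$-adic topology on the source with the $[(\widetilde{\m}\cap V)\otimes 1+1\otimes\Walg_{>0}]$-adic topology on the target. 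Finally, the citation of Theorem \ref{Thm_dagger}(5) for property (1) is a mismatch: that statement is $\operatorname{LAnn}(\M)_\dagger=\operatorname{LAnn}(\M_\dagger)$ for Harish-Chandra \emph{bimodules}, in the $\dagger$-downward direction, whereas you need a statement relating $\Ann_{\Walg}$ of a Skryabin-invariants \emph{module} to $\Ann_{\U}(M)$ via $\bullet^\dagger$, which is a distinct fact (true, but with a different source and proof).
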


We will need a construction of the equivalence $\KFun$. This equivalence is a push-forward
with respect to an isomorphism of an appropriate topological algebras that we are going
to recall now.

We set $\U^\wedge:=\varprojlim_{n\rightarrow \infty }\U/\U\widetilde{\m}_\chi^n$. The category
$\widetilde{\Wh}^{\t}(e,\theta)$ is nothing else but the category of finitely generated topological $\U^\wedge$-modules
with discrete topology   such that the action of $\t\subset\U^\wedge$ is diagonalizable.

The subspace $\widetilde{\m}\cap V$ is lagrangian in $V$. So we can form the completion
$$\W^\wedge:=\varprojlim_{n\rightarrow \infty} \W/\W(\widetilde{\m}\cap V)^n, $$ where
$\W=\W_\hbar/(\hbar-1)$ stands for the Weyl algebra of $V$.

Also we need a completion $\Walg^\wedge:=\varprojlim_{n\rightarrow \infty}\Walg/\Walg\Walg_{>0}^n$
of $\Walg$. We remark that  $\widetilde{\OCat}^\t(\theta)$ coincides with the category of finitely generated topological $\Walg^\wedge$-modules with discrete topology such that the action of $\t$ is diagonalizable. Finally, set $\W(\Walg)^\wedge:=\W^\wedge\widehat{\otimes}\Walg^\wedge$ (in \cite{Ocat} we used a different construction
of $\W(\Walg)^\wedge$ -- using an appropriate completion of $\W(\Walg)$ -- but it is easy to
see that the two constructions are equivalent). There is an equivalence of the categories
of topological $\Walg^\wedge$- and $\W(\Walg)^\wedge$-modules with discrete topologies.
Namely, we send a $\Walg^\wedge$-module $N$ to $\K[\widetilde{\m}\cap V]\otimes N$ (recall that
$\W$ can be thought as the algebra $\mathcal{D}(\widetilde{\m}\cap V)$ of differential operators on  the lagrangian subspace $\widetilde{\m}\cap V$, so $\K[\widetilde{\m}\cap V]$ is the tautological
module over this algebra). A quasiinverse equivalence sends an $\W(\Walg)^\wedge$-module $M$  to $M^{\widetilde{\m}\cap V}$.

So to establish an equivalence of $\widetilde{\Wh}^\t(e,\theta)$ and $\widetilde{\OCat}^\t(\theta)$
it is enough to produce an isomorphism $\Phi:\U^\wedge\rightarrow \W(\Walg)^\wedge$
intertwining the embeddings of $\t$. Such an isomorphism was constructed in \cite{Ocat}. Its properties imply conditions
(i)-(iii) of Theorem \ref{Thm_Ocat}. Let us recall this construction given in \cite{Ocat},
Section 5.

The torus $\K^\times\times T$ acts on $\U^\wedge_\hbar,\Walg^\wedge_\hbar,\W^\wedge_\hbar$,
where the $\K^\times$-action is Kazhdan, and $T$ acts as a subgroup of $Q$. Embed $\K^\times$ into $\K^\times\times T$ so that
the differential of the embedding maps $1$ to $(1,-n\theta)$, where $n$ is sufficiently
large. We remark that $\widetilde{\m}$  contains  the sum of eigenspaces
for $\K^\times$ corresponding to characters $t\mapsto t^i$ with $i\leqslant 0$.

Let $\K^\times$ act on the algebras in consideration via this embedding  (this  action  will be called a
{\it twisted Kazhdan action}). Consider the subalgebras $(\U^\wedge_\hbar)_{\K^\times-l.f.},
\W^\wedge_\hbar(\Walg^\wedge_\hbar)_{\K^\times-l.f.}$ consisting of all $\K^\times$-finite vectors. Let
$\U^\heartsuit, \W(\Walg)^\heartsuit$ denote the quotients of these algebras by $\hbar-1$. Then $\Phi_\hbar$
induces an isomorphism $\U^\heartsuit\rightarrow \W(\Walg)^\heartsuit$. There are
natural embeddings $\U^\heartsuit\hookrightarrow \U^\wedge, \W(\Walg)^\heartsuit\hookrightarrow \W(\Walg)^\wedge$
and $\Phi$ extends uniquely to an isomorphism $\U^\wedge\rightarrow \W(\Walg)^\wedge$ of topological
algebras. We remark that $\Phi$ also induces an isomorphism $\Psi:\underline{\Walg}\rightarrow \Walg^0$.

\section{Further study of functor $\bullet_\dagger$}\label{SECTION_Walg_further}
\subsection{Main result}
In this section we will study some further properties of the functor $\bullet_\dagger: \HC(\U)\rightarrow \HC^Q(\Walg)$.
We fix some connected reductive algebraic group $G$ and consider Harish-Chandra bimodules related to that group.
Our main result is the following theorem.
\begin{Thm}\label{Thm_main}
Let $N_1,N_2$ be irreducible finite dimensional $\Walg$-modules with integral central characters,
whose difference lies in $P^+$ (of course, for a semisimple simply connected group the last
condition is vacuous). Then there exists an irreducible object $\M\in \HC_{\Orb}(\U)$  such that $\Hom(N_1,N_2)$
is a direct summand of $\M_\dagger$.
\end{Thm}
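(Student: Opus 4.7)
The plan is to build the required Harish-Chandra $\U$-bimodule from $N_1,N_2$ via category $\OCat$ for $\Walg$ (Subsection \ref{SUBSECTION_Ocat}), and then show that its image under $\bullet_\dagger$ contains $\Hom_\K(N_1,N_2)$ as a direct summand. The main tools are the equivalence $\KFun$ of Theorem \ref{Thm_Ocat}, the properties of $\bullet_\dagger$ from Theorem \ref{Thm_dagger}, and the fact that both functors are ultimately built from the same decomposition isomorphism $\Phi_\hbar$ of Proposition \ref{Prop:10.1}.

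First I would pick a generic cocharacter $\theta:\K^\times\to Q$ so that both $N_1$ and $N_2$ lie in $\widetilde{\OCat}^\t(\theta)$; via $\KFun^{-1}$ they correspond to generalized Whittaker $\U$-modules $M_i:=\KFun^{-1}(N_i) \in \widetilde{\Wh}^\t(e,\theta)$ with central characters $\lambda_i$. I would then form the candidate $\U$-bimodule $L(M_1,M_2)$ of $\ad\g$-finite linear maps $M_1\to M_2$, in analogy with the Bernstein--Gelfand construction of Subsection \ref{SUBSECTION_shifts}. Using the integrality of the central characters and finite-dimensionality of the $N_i$, one checks that $L(M_1,M_2)$ is a Harish-Chandra $\U$-bimodule with $\VA(L(M_1,M_2))\subset \overline{\Orb}$ and with left/right central characters $\lambda_2,\lambda_1$; the hypothesis $\lambda_1-\lambda_2\in P^+$ ensures the relevant translation behaviour is dominant and prevents collapse of $L(M_1,M_2)$ into $\HC_{\partial\Orb}(\U)$.

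Next I would identify $L(M_1,M_2)_\dagger$. Both $\bullet_\dagger$ and $\KFun$ factor through the decomposition $\Phi_\hbar$ of Proposition \ref{Prop:10.1} and its Kazhdan-graded completions; pushing $L(M_1,M_2)$ through this common framework and tracking the $\W^\wedge_\hbar$-factor together with the $Q$-equivariance shows that $\Hom_\K(N_1,N_2)$ is a direct summand of $L(M_1,M_2)_\dagger$. Since $\bullet_\dagger$ is exact and its image is closed under subquotients (Theorem \ref{Thm_dagger}(7)), the simple constituent of $L(M_1,M_2)$ modulo $\HC_{\partial\Orb}(\U)$ accounting for this summand provides the required irreducible $\M$. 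The hard part is precisely this identification of $L(M_1,M_2)_\dagger$: it requires a delicate comparison of the two completion-based constructions so that one recovers the specific $(N_1,N_2)$-isotypic component, and not merely some bimodule with the correct annihilators.
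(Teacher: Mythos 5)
Your approach is genuinely different from the paper's, and it has a real gap at its center. Let me explain where the difficulty lies.

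The paper does not prove Theorem \ref{Thm_main} by constructing a bimodule of $\ad\g$-finite maps between Whittaker modules. Instead, it reformulates the statement (via Propositions \ref{Prop:3.2} and \ref{Prop:11.2}) as the claim (*) that any two finite dimensional irreducibles $N_1,N_2$ with the indicated central characters satisfy $N_1\sim N_2$, i.e.\ $\Hom_\Walg(L\rightthreetimes N_1,N_2)\neq 0$ for some finite dimensional $G$-module $L$. It then proves (*) in three moves: (i) any two $A(e)$-orbits in $Y^{P^+}$ contain equivalent points (Proposition \ref{Lem:12.3}, a restatement of Corollary \ref{Cor:indecomp}); (ii) for an \emph{even} nilpotent there is an $A(e)$-fixed point in $Y^{P^+}$, coming from the Borho--Brylinski surjection $\U\twoheadrightarrow D(G/P)$ whose kernel has multiplicity $1$ on $\Orb$ (Proposition \ref{Prop:12.4}); and (iii) the general case is reduced to the even case by passing to the Levi $\g_0=\z_\g(\theta)$ in which $e$ is distinguished, via $\KFun$ and Proposition \ref{Prop:12.1}. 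Steps (i) and (ii) together with the $A(e)$-invariance of $\sim$ (Remark \ref{Rem:3.4}) give (*) for even $e$, since all orbits are then equivalent to a single fixed point.

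Your proposal tries to short-circuit this by forming $L(M_1,M_2)$ for $M_i=\KFun^{-1}(N_i)$ and claiming $\Hom_\K(N_1,N_2)$ appears as a direct summand of $L(M_1,M_2)_\dagger$. But if you unwind what that claim actually says, using Theorem \ref{Thm_tensor} and the adjunction of Corollary \ref{Cor:1.5.6}, the question ``is $\Hom_\K(N_1,N_2)$ a quotient of $X_\dagger$ for some $X\in\HC_{\overline{\Orb}}(\U)$?'' is \emph{equivalent} to ``is there a $G$-module $L$ with $\Hom_\U(M_1, L^*\otimes M_2)\neq 0$?'', which is exactly (*). So your ``delicate comparison'' of the two completion-based constructions, even if carried out, would just reformulate Theorem \ref{Thm_main} once more rather than prove it. The content of the theorem is a nontrivial non-vanishing statement, and some genuine input is needed to produce it; the paper's input is the multiplicity-one Richardson ideal for even orbits, an ingredient your proposal makes no contact with. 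Without something of this nature the argument cannot close.

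There are also two secondary problems you gloss over. First, it is not obvious (and not proved anywhere in the paper) that $L(M_1,M_2)$ is a Harish-Chandra bimodule for generalized Whittaker modules $M_1,M_2$, nor that its associated variety lies in $\overline{\Orb}$; this requires a GK-dimension estimate that is standard for category $\OCat$ but not for generalized Whittaker modules, and the paper deliberately avoids having to make it. Second, $\bullet_\dagger$ and Proposition \ref{Prop:int_hom} are statements about $\U$-\emph{bimodules}; the $M_i$ are one-sided modules, and there is no direct one-sided analogue of $\bullet_\dagger$ that would let you ``track the $\W^\wedge_\hbar$-factor'' in the way you suggest. The correct one-sided companion is $\KFun$ (or Skryabin), and the compatibility with $\bullet_\dagger$ is precisely Theorem \ref{Thm_tensor} — which, as noted above, turns your claim back into (*) rather than proving it.
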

We remark that for any irreducible $\M\in \HC_{\Orb}(\U)$ its image under $\bullet_\dagger$
is a simple object in $\HC^Q_{fin}(\Walg)$ and hence a semisimple finite dimensional
$\Walg$-bimodule. Since any simple object in $\HC_{fin}(\Walg)$ has the form $\Hom(N_1,N_2)$
for some irreducibles $N_1,N_2$, the claim of Theorem \ref{Thm_main} makes sense.

Let us explain the scheme of the proof of Theorem \ref{Thm_main}.
First, Subsection \ref{SUBSECTION_UL}, we will give some very implicit description of
the image of $\HC_{\Orb}(\U)$ under $\bullet_\dagger$. Then we will
examine a relationship between the functors $\bullet_\dagger$ and $\mathcal{K}$,
Subsection \ref{SUBSECTION_dagger_K}. Next, Subsection \ref{SUBSECTION_equivalence},  we will introduce a certain equivalence relation on the set $\Irr_{fin}(\Walg)$ of finite dimensional irreducible $\Walg$-modules.
We will see that Theorem \ref{Thm_main} means that any two irreducibles with integral
central characters are equivalent. Theorem \ref{Thm_main} is then proved in the next
two subsections: in Subsection \ref{SUBSECTION_even} we prove it in the case
when $e$ is even, and in Subsection \ref{SUBSECTION_reduction} we reduce a general
case to that one using the results of Subsection \ref{SUBSECTION_dagger_K}.

Before proving Theorem \ref{Thm_main} we will establish several easier
claims. First, in Subsection \ref{SUBSECTION_inthoms} we will show that
$\bullet_\dagger$ intertwines the internal $\Hom$ functors. Using this
we will present an easy proof that $^{\Lambda}\!\JCat^{\Lambda}_{\Orb}$ is a multi-fusion
category. Also we will show that this multi-fusion category is indecomposable.
This will be done in Subsection \ref{SUBSECTION_J_mult_fus}. In Subsection \ref{SUBSECTION_W_trans}
we will prove the inclusion (up to conjugacy) $\bH_\alpha\subset \bH^\lambda_\alpha$
for a dominant weight $\lambda$ and a left cell $\alpha$ that are compatible with each other.

\subsection{$\bullet_\dagger$ vs internal $\Hom$}\label{SUBSECTION_inthoms}
Recall the internal Hom in $\HC(\U)$, Subsection \ref{SUBSECTION_HC}.
On the other hand, let $\Nil_1,\Nil_2\in \HC^Q(\Walg)$. Let $\underline{\Hom}(\Nil_1,\Nil_2)$
denote the space of right $\Walg$-module homomorphisms. Then $\underline{\Hom}(\Nil_1,\Nil_2)$
has a natural structure of a  $\Walg$-bimodule and also a $Q$-action.

The goal of this subsection is to prove that $\bullet_\dagger$ intertwines the $\underline{\Hom}$-bifunctors.
But first we prove the following lemma.

\begin{Lem}\label{Lem:int_hom_W}
For any $\Nil_1,\Nil_2\in \HC^Q(\Walg)$ the bimodule $\underline{\Hom}(\Nil_1,\Nil_2)$ is also in $HC^Q(\Walg)$.
\end{Lem}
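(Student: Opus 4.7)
The plan is to construct a good filtration on $\underline{\Hom}(\Nil_1,\Nil_2)$ by hand and verify the HC and $Q$-equivariance axioms directly, imitating the familiar argument for HC bimodules over $\U$. Fix good $Q$-stable filtrations $\F_\bullet\Nil_1$ and $\F_\bullet\Nil_2$ and set
$$\F_k\underline{\Hom}(\Nil_1,\Nil_2):=\{\phi:\phi(\F_i\Nil_1)\subset \F_{i+k}\Nil_2\text{ for all }i\}.$$
This is manifestly exhaustive, increasing, $Q$-stable, and preserved by the left and right $\Walg$-actions on $\underline{\Hom}$. The $Q$-action $(q\cdot\phi)(n)=q\phi(q^{-1}n)$ differentiates at $\xi\in\q$ to $(\xi\cdot\phi)(n)=\xi\phi(n)-\phi(\xi n)$, and this agrees with the commutator $[\xi,\phi]=\xi\phi-\phi\xi$ computed inside the bimodule, so $Q$-equivariance reduces to the filtration/finiteness properties.

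To establish the bimodule bound $[\KF_i\Walg,\F_j\underline{\Hom}]\subset \F_{i+j-2}\underline{\Hom}$, I would rewrite, for $a\in\KF_i\Walg$ and $n\in\F_{i_0}\Nil_1$,
$$(a\phi-\phi a)(n)=\bigl(a\phi(n)-\phi(n)a\bigr)-\phi(an-na).$$
The first summand lies in $\F_{i+i_0+j-2}\Nil_2$ by the corresponding commutator estimate for $\Nil_2$; the second lies there because $an-na\in\F_{i+i_0-2}\Nil_1$ and $\phi$ shifts the filtration by at most $j$.

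The heart of the argument is finite generation. I would build a graded map $\gr\underline{\Hom}(\Nil_1,\Nil_2)\to \Hom_{\K[S]}(\gr\Nil_1,\gr\Nil_2)$ by $\phi\mapsto \gr\phi$, with $\gr\phi(\bar n):=\overline{\phi(n)}$ for $n\in\F_i\Nil_1$. The fact that $\gr\phi$ is $\K[S]$-linear comes from the same commutator estimate: the adjoint action of $\KF_\bullet\Walg$ drops filtration degree by two, so the left and right $\K[S]$-actions on $\gr\Nil_i$ coincide and the failure of $\phi$ to be a left-module map is absorbed into lower filtration terms. Injectivity is immediate from the definition of $\F_\bullet\underline{\Hom}$. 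Since $\K[S]$ is noetherian and each $\gr\Nil_i$ is finitely generated, the target is finitely generated over $\K[S]$, hence so is $\gr\underline{\Hom}$, which yields finite generation of $\underline{\Hom}(\Nil_1,\Nil_2)$ as a $\Walg$-bimodule. Finally, because $\K[S]$ is positively graded and $\gr\Nil_1$ is generated in finitely many degrees, the grading on $\Hom_{\K[S]}(\gr\Nil_1,\gr\Nil_2)$ is bounded below, forcing the filtration on $\underline{\Hom}$ to be bounded below as well.

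The main technical obstacle is the well-definedness of $\gr\phi$ as a \emph{graded} $\K[S]$-module map, rather than merely a map of vector spaces; together with the commutator bound above, this is the one place where the specific HC axiom $[\KF_i\Walg,\F_j\Nil]\subset\F_{i+j-2}\Nil$ is essential. Everything else is bookkeeping.
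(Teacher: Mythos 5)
Your argument is essentially the paper's: same candidate filtration
$\F_k\underline{\Hom}:=\{\varphi:\varphi(\F_i\Nil_1)\subset\F_{i+k}\Nil_2\ \forall i\}$,
same explicit commutator estimate, and the same injective graded map
$\gr\underline{\Hom}(\Nil_1,\Nil_2)\hookrightarrow\Hom_{\K[S]}(\gr\Nil_1,\gr\Nil_2)$
followed by Noetherianity of $\K[S]$ to conclude finite generation. One point is dismissed too quickly: exhaustiveness of the filtration on $\underline{\Hom}$ is \emph{not} manifest. For $\varphi$ to lie in some $\F_k\underline{\Hom}$ you need the shift bound $\varphi(\F_i\Nil_1)\subset\F_{i+k}\Nil_2$ to hold \emph{uniformly} in $i$, and an arbitrary right $\Walg$-module map need not satisfy any such uniform bound a priori. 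The paper supplies the missing step by invoking that $\Nil_1$ is finitely generated as a right $\Walg$-module (Lemma 2.5.1 of \cite{HC}), so that $\varphi$ is determined by its images of finitely many generators sitting in bounded filtration degree, which produces the uniform $k$. You should include this; once it is in, the rest of your write-up matches the intended proof.
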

\begin{proof}
The proof is pretty standard.
We need to equip $\Nil:=\underline{\Hom}(\Nil_1,\Nil_2)$ with a $Q$-stable filtration $\F_i\Nil$
having the properties indicated in Subsection \ref{SUBSECTION_primitive}.
Pick $Q$-stable filtrations $\F_i\Nil_1,\F_i\Nil_2$ satisfying the  conditions analogous
to (1),(2) in Subsection \ref{SUBSECTION_primitive}. Then define $\F_i\Nil$ to be the set of all maps $\varphi\in \Nil$
such that $\varphi(\F_j\Nil_1)\subset \F_{i+j}\Nil_2$ for all $j$. It follows
from \cite{HC}, Lemma 2.5.1, that $\Nil_1$
is a finitely generated right $\Walg$-module. This easily implies that the filtration
$\KF_i\Nil$ is exhaustive. The inclusion $[\KF_i\Walg,\F_j\Nil]\subset \F_{i+j-2}\Nil$ is checked in a straightforward way.
To prove that $\gr\Nil$ is finitely generated
we note that $\gr\Nil$ is naturally embedded into $\Hom_{\K[S]}(\gr \Nil_1,\gr \Nil_2)$.
Since both $\gr\Nil_1,\gr\Nil_2$ are finitely generated $\K[S]$-modules, we see that
$\Hom_{\K[S]}(\gr\Nil_1,\gr\Nil_2)$ is finitely generated. Hence (2).
 \end{proof}

\begin{Prop}\label{Prop:int_hom}
The bifunctors $\underline{\Hom}(\bullet,\bullet)_\dagger, \underline{\Hom}(\bullet_\dagger,\bullet_\dagger):
\HC(\U)\times \HC(\U)\rightarrow \HC^Q(\Walg)$ are isomorphic.
\end{Prop}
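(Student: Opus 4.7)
My plan is to trace each step of the construction of $\bullet_\dagger$ recalled in Subsection \ref{SUBSECTION_primitive} and verify that taking the internal $\underline{\Hom}$ commutes with all of them, so that the composite $\underline{\Hom}(\bullet,\bullet)_\dagger$ matches $\underline{\Hom}(\bullet_\dagger,\bullet_\dagger)$.

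First, given good filtrations $\F_\bullet\M_i$ on $\M_i\in\HC(\U)$, I equip $\underline{\Hom}(\M_1,\M_2)$ with the filtration $\F_i\underline{\Hom}(\M_1,\M_2):=\{\varphi\mid\varphi(\F_j\M_1)\subset \F_{i+j}\M_2\ \forall j\}$; the proof that this is a good filtration on a Harish-Chandra bimodule is parallel to Lemma \ref{Lem:int_hom_W}. Passing to Rees bimodules yields an obvious injective $\U_\hbar$-bimodule homomorphism
\[
\underline{\Hom}(\M_1,\M_2)_\hbar\hookrightarrow \Hom_{(\U_\hbar)^{opp}}((\M_1)_\hbar,(\M_2)_\hbar),
\]
and after completion along the orbit $Gx\subset T^*G$ this induces a natural morphism of $\U^\wedge_\hbar$-bimodules
\[
\alpha^\wedge_\hbar\colon \underline{\Hom}(\M_1,\M_2)^\wedge_\hbar \longrightarrow \Hom_{(\U^\wedge_\hbar)^{opp}}((\M_1)^\wedge_\hbar,(\M_2)^\wedge_\hbar).
\]

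Next, Proposition \ref{Prop:10.1} provides a decomposition $(\M_i)^\wedge_\hbar\cong \W^\wedge_\hbar\widehat{\otimes}_{\K[[\hbar]]}\Nil'_{i,\hbar}$, where $\Nil'_{i,\hbar}$ is the centralizer of $V$ in $(\M_i)^\wedge_\hbar$. The key step is the identification of $V$-centralizers on both sides of $\alpha^\wedge_\hbar$: since the centralizer of $V\subset \W^\wedge_\hbar$ inside $\W^\wedge_\hbar$ reduces to $\K[[\hbar]]$, any right-$\U^\wedge_\hbar$-linear map that also commutes with the (transferred) left-$V$-action on both $(\M_1)^\wedge_\hbar$ and $(\M_2)^\wedge_\hbar$ is automatically left-$\W^\wedge_\hbar$-linear, hence of the form $\mathrm{id}_{\W^\wedge_\hbar}\widehat{\otimes}\psi$ for a unique right-$\Walg^\wedge_\hbar$-linear map $\psi\colon \Nil'_{1,\hbar}\to\Nil'_{2,\hbar}$. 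This identifies the $V$-centralizer of the target of $\alpha^\wedge_\hbar$ with $\Hom_{(\Walg^\wedge_\hbar)^{opp}}(\Nil'_{1,\hbar},\Nil'_{2,\hbar})$, while by definition the $V$-centralizer of the source is the $\hbar$-adic completion of $\underline{\Hom}(\M_1,\M_2)_\dagger$.

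Finally, taking $\K^\times$-finite vectors and setting $\hbar=1$ in the resulting identification of $V$-centralizers produces a canonical, $Q$-equivariant isomorphism $\underline{\Hom}(\M_1,\M_2)_\dagger \cong \underline{\Hom}((\M_1)_\dagger,(\M_2)_\dagger)$, and naturality in $\M_1,\M_2$ is built into the construction; the $Q$-equivariance is automatic because every intermediate identification is $Q\times\K^\times$-equivariant. The main obstacle will be the topological subtleties at the level of the $\hbar$-adically completed tensor products: because $\W^\wedge_\hbar$ is genuinely infinite-dimensional, I cannot invoke the naive tensor-hom adjunction, and the assertion that $V$-centralized homs ``factor through the second tensor factor'' must be justified by reducing to finitely generated sub-bimodules, controlling their images through the good filtrations, and carrying the argument to the $\hbar$-adic limit; a similar care is required to verify that $\alpha^\wedge_\hbar$ is an isomorphism onto the appropriate closed subspace of its target, which will follow from the finite-generation clauses built into the definition of a good filtration.
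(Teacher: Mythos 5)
Your plan follows the paper's actual proof quite closely (filter $\underline{\Hom}$, pass to Rees bimodules, complete, decompose along $\W^\wedge_\hbar\widehat{\otimes}\,\Walg^\wedge_\hbar$, take $V$-centralizers, take $\K^\times$-finite vectors, set $\hbar=1$), but you stop at precisely the two points where the argument has content, and the hedging in your last paragraph does not close them.

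The first is the Rees step: you only claim an \emph{injective} map $\underline{\Hom}(\M_1,\M_2)_\hbar\hookrightarrow \Hom_{\U_\hbar^{opp}}((\M_1)_\hbar,(\M_2)_\hbar)$. In fact one needs this to be a bijection; otherwise the rest of the comparison can only give an inclusion $\underline{\Hom}(\M_1,\M_2)_\dagger\hookrightarrow \underline{\Hom}((\M_1)_\dagger,(\M_2)_\dagger)$. The bijectivity is elementary but not automatic: a degree-$i$ homomorphism in $\Hom_{\U_\hbar^{opp}}((\M_1)_\hbar,(\M_2)_\hbar)$ determines a filtered $\U$-linear map $\M_1\to\M_2$ by $\K[\hbar]$-linearity, and one checks this gives an inverse; you should state this rather than call the map ``obvious injective.''

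The second, more serious, gap is the passage across completion. You write that completion ``induces a natural morphism'' $\alpha^\wedge_\hbar$ and postpone showing it is an isomorphism, suggesting this will ``follow from finite-generation clauses built into the definition of a good filtration.'' That is exactly where the proof lives, and the correct mechanism is not a reduction to finitely generated sub-bimodules in an unspecified limit argument but a standard flat base-change for $\Hom$: because $(\M_1)_\hbar$ is \emph{finitely generated} as a right $\U_\hbar$-module and $\U^\wedge_\hbar$ is a \emph{flat} right $\U_\hbar$-module, one has
\[
\U^\wedge_\hbar\otimes_{\U_\hbar}\Hom_{\U_\hbar^{opp}}\bigl((\M_1)_\hbar,(\M_2)_\hbar\bigr)\;\cong\;\Hom_{\U_\hbar^{opp}}\bigl((\M_1)_\hbar,(\M_2)^\wedge_\hbar\bigr)\;\cong\;\Hom_{\U_\hbar^{\wedge opp}}\bigl((\M_1)^\wedge_\hbar,(\M_2)^\wedge_\hbar\bigr),
\]
using $(\M_1)^\wedge_\hbar=\U^\wedge_\hbar\otimes_{\U_\hbar}(\M_1)_\hbar$. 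Without invoking both flatness and finite generation (finite \emph{presentation}, really), the displayed isomorphisms fail and the map $\alpha^\wedge_\hbar$ need not be surjective. Once this is in place, your centralizer argument in Step~3 — that the $V$-centralizer of $\W^\wedge_\hbar$ is $\K[[\hbar]]$, so $V$-centralizing right-$\U^\wedge_\hbar$-linear maps decompose as $\mathrm{id}\widehat{\otimes}\psi$ — is the right idea and matches the paper's Step 3; but note that without surjectivity of $\alpha^\wedge_\hbar$, comparing $V$-centralizers only transports the inclusion, not the isomorphism.
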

\begin{proof}
The proof is in several steps, corresponding to the steps in the construction of $\bullet_\dagger$.

{\it Step 1.}
Pick $\M^1,\M^2\in \HC(\U)$ and pick good filtrations  $\F_i\M^1,\F_i\M^2$. Let $\M^1_\hbar,\M^2_\hbar$
stand for the Rees bimodules. Equip $\M:=\underline{\Hom}(\M^1,\M^2)$ with the filtration
$\F_i\M$ analogous to the filtration in the proof of Lemma \ref{Lem:int_hom_W}. Then, similarly to
the proof of Lemma \ref{Lem:int_hom_W}, $\F_i\M$
is a good filtration.

We claim that the Rees bimodule $\M_\hbar$ is naturally identified with the space
$\Hom_{\U_\hbar^{opp}}(\M^1_\hbar,\M^2_\hbar)$ of homomorphisms of right $\U_\hbar$-modules.
To show this we need to verify that $\F_i\M$ is identified with the space of
maps of degree $i$ in $\Hom_{\U_\hbar^{opp}}(\M^1_\hbar,\M^2_\hbar)$, i.e., the space
of right $\U_\hbar$-module homomorphisms $\varphi$ mapping $\F_j\M^1\hbar^j$ to $ \F_{i+j}\M^2\hbar^{i+j}$
for all $j$. Let $\varphi':\M^1\rightarrow \M^2$ be a map coinciding with the composition $\F_j\M^1\cong \F_j\M^1\hbar^j\rightarrow \F_{i+j}\M^2\hbar^{i+j}\cong\F_{i+j}\M^2$ on $\F_j\M^1$. Since $\varphi$ is $\K[\hbar]$-linear, we see that $\varphi'$ is well-defined. It is easy to see that $\varphi\mapsto \varphi'$ defines a bijection between the space of maps of degree $i$ in $\Hom_{\U_\hbar^{opp}}(\M^1_\hbar,\M^2_\hbar)$ and
$\F_i\M$.

{\it Step 2.}
We claim that $\M^\wedge_\hbar$ is naturally identified with $\Hom_{\U_\hbar^{\wedge opp}}(\M^{1\wedge}_\hbar, \M^{2\wedge}_\hbar)$.
Since $\M^1_\hbar,\M^2_\hbar$ are   finitely generated as right $\U_\hbar$-modules
and $\U^\wedge_\hbar$ is a flat right $\U_\hbar$-module, we see that $$\M_\hbar^\wedge=\U_\hbar^{\wedge}\otimes_{\U_\hbar}\Hom_{\U_\hbar^{opp}}(\M^1_\hbar, \M^{2}_\hbar)=\Hom_{\U_\hbar^{opp}}(\M^1_\hbar,\M^{2\wedge}_\hbar).$$
Our claim now follows from the fact that $\M^{1\wedge}_\hbar$ is naturally identified with
$\U^\wedge_\hbar\otimes_{\U_\hbar}\M_\hbar^1$ (compare with \cite{HC}, Proposition 2.4.1, (1)).

{\it Step 3.} Now let $\Nil'^{1}_\hbar,\Nil'^{2}_\hbar$ be the spaces of $\ad V$-invariants
in $\M^{1\wedge}_\hbar,\M^{2\wedge}_\hbar$, respectively. Then $\M^{i\wedge}_\hbar=\W^\wedge_\hbar\widehat{\otimes}_{\K[[\hbar]]}\Nil'^{i}_\hbar$.
It is easy to see that $$\Hom_{\U^{\wedge opp}_\hbar}(\M^{1\wedge}_\hbar,\M^{2\wedge}_\hbar)=
\W^\wedge_\hbar\widehat{\otimes}_{\K[[\hbar]]}\Hom_{\Walg^{\wedge opp}_\hbar}(\Nil'^1_\hbar,\Nil'^2_\hbar).$$

{\it Step 4.} Also it is easy to see that the space of $\K^\times$-finite vectors in
$\Hom_{\Walg^{\wedge opp}_\hbar}(\Nil'^1_\hbar,\Nil'^2_\hbar)$ is naturally identified
with $\Hom_{\Walg_\hbar^{opp}}(\Nil^1_\hbar,\Nil^2_\hbar)$, where $\Nil^i_\hbar$ stands for the
space of $\K^\times$-finite vectors in $\Nil'^i_\hbar$.

{\it Step 5.} Finally, set $\Nil^i:=\Nil^i_\hbar/(\hbar-1)\Nil^i_\hbar$, $\Nil:=\underline{\Hom}(\Nil^1,\Nil^2)$.
Of course, $\Nil^i=\M^i_\dagger$.
Let $\Nil_\hbar$ stand for the Rees bimodule of $\Nil$. Analogously to step 1, $\Nil_\hbar$ is naturally
identified with $\Hom_{\Walg_\hbar^{opp}}(\Nil^1_\hbar,\Nil^2_\hbar)$. In particular,
$$\underline{\Hom}(\M^1_\dagger,\M^2_\dagger)=\Nil\cong \Hom_{\Walg_\hbar^{opp}}(\Nil^1_\hbar,\Nil^2_\hbar)/(\hbar-1)\Hom_{\Walg_\hbar^{opp}}(\Nil^1_\hbar,\Nil^2_\hbar)=
\underline{\Hom}(\M^1,\M^2)_\dagger.$$
\end{proof}

\subsection{$^{\Lambda}\!\JCat^{\Lambda}_{\Orb}$ as a multi-fusion category}\label{SUBSECTION_J_mult_fus}
From now on the orbit $\Orb$ is supposed to be special. We fix a finite subset $\Lambda\subset P^+$.
Recall, the very end of Subsection \ref{SUBSECTION_HC}, that the category $^{\Lambda}\!\JCat^{\Lambda}_{\Orb}$ has a tensor product
bifunctor, a unit object $\be$, and also has a duality functor $\bullet^*$.

\begin{Lem}\label{Lem:J_MF}
With respect to these data $^{\Lambda}\!\JCat^{\Lambda}_{\Orb}$ is a multi-fusion category.
\end{Lem}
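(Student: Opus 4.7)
The plan is to verify each of the defining conditions of a multi-fusion category in turn, observing that four of them are essentially immediate from the set-up, while rigidity is the only point that requires real work. Semisimplicity holds by construction since we take the semisimple part, and the category is $\K$-linear abelian as a semisimple subcategory of $^\Lambda\!\HC_\Orb^\Lambda(\U)$.

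For finiteness of $\Hom$-spaces and of the set of isomorphism classes of simples, I would invoke Theorem~\ref{Thm_dagger}(7): the functor $\bullet_\dagger$ realizes $\HC_\Orb(\U)$, and hence $^\Lambda\!\HC_\Orb^\Lambda(\U)$, as a full subcategory of $\HC^Q_{fin}(\Walg)$ closed under subquotients. All $\Hom$-spaces in the latter are finite dimensional because the bimodules themselves are, and every object automatically has finite length. Finiteness of the set of isomorphism classes of simples follows because, after applying $\bullet_\dagger$, simples of $^\Lambda\!\JCat_\Orb^\Lambda$ correspond to finite dimensional simple $Q$-equivariant $\Walg$-bimodules, which in turn are built from pairs of irreducible $\Walg$-modules with generalized central characters in $\Lambda$; since $\Prim_\Orb(\U_\lambda)$ is finite for each integral $\lambda$ and $\Lambda$ itself is finite, so is the resulting count. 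The monoidal product, the unit $\be$, and closure of $^\Lambda\!\JCat_\Orb^\Lambda$ under tensor product were all recorded at the end of Subsection~\ref{SUBSECTION_HC}.

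What remains, and what is really the content of the lemma, is rigidity of the duality functor $\bullet^* = \underline{\Hom}(\bullet,\be)$. The strategy is to transport the problem across $\bullet_\dagger$ using Proposition~\ref{Prop:int_hom} (intertwining of the internal $\underline{\Hom}$), Theorem~\ref{Thm_dagger}(4) (tensor preservation) and Theorem~\ref{Thm_dagger}(7) (fully faithful embedding with image closed under subquotients). These together reduce the existence of evaluation and coevaluation morphisms satisfying the triangle identities to the analogous statement for the image of $\bullet_\dagger$ inside $\HC^Q_{fin}(\Walg)^{ss}$. There every simple object is of the form $\Hom_\K(N_1,N_2)$ for irreducible finite dimensional $\Walg$-modules $N_1,N_2$, tensor product over $\Walg$ is composition of homomorphisms, and the image of $\be$ decomposes as a direct sum of matrix algebras $\End_\K(N)$. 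In this transparent setting evaluation is the composition map, coevaluation is the canonical element of $\End_\K(N)$, and the triangle identities reduce to elementary linear algebra.

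The only real obstacle I foresee is the bookkeeping needed to identify the image of $\be$ under $\bullet_\dagger$ with the expected unit on the $\Walg$-side after semisimplification, for which one combines Theorem~\ref{Thm:1.2.1}(iii) (relating the codimension of $\J_\dagger$ to the multiplicity of $\U/\J$ on $\Orb$) with the definition $\be=\bigoplus_{\J\in\Prim_\Orb(\U_\Lambda)}\U/\J$ and the resulting finite-dimensionality of each summand $(\U/\J)_\dagger$. Once this identification is in place, the rigidity argument runs as sketched and the verification of multi-fusion is complete.
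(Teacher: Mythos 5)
Your proof takes essentially the same approach as the paper: transport everything across $\bullet_\dagger$ (via Theorem~\ref{Thm_dagger} and Proposition~\ref{Prop:int_hom}) to the ambient category $\Bimod^Q(\Walg_\Lambda^{ss})$ of $Q$-equivariant bimodules over the finite-dimensional semisimple algebra $\Walg_\Lambda^{ss}$, and observe that the latter is multi-fusion. The paper packages the final step more tersely -- since the image is a full subcategory closed under direct summands, containing the unit, and stable under tensor products and internal Homs (hence duals), it inherits multi-fusion structure directly from the ambient category -- whereas you re-derive rigidity by hand on the $\Walg$-side; both routes are valid and your hands-on verification of evaluation and coevaluation is an unnecessary but harmless elaboration.
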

\begin{proof}
The proof is based on Theorem \ref{Thm:1.2.1}.
The functor $\bullet_{\dagger}$ embeds $^{\Lambda}\!\JCat^{\Lambda}_{\Orb}$ as a full subcategory
into $\Bimod^Q(\Walg_\Lambda^{ss})$. This subcategory is closed under taking direct summands.
Here $\Walg^{ss}_\Lambda$ is the quotient of $\Walg$ by the intersection of all primitive ideals of
finite codimension with central characters in $\Lambda$. Also $\bullet_\dagger$ maps
$\be$ to the unit object  $\Walg_\Lambda^{ss}$ of $\Bimod^Q(\Walg_\Lambda)$,
intertwines the tensor products, the internal homs -- and hence the duality functors.   Now the claim of the
lemma follows from an easy fact that $\Bimod^Q(\Walg_\Lambda^{ss})$ itself is  multi-fusion.
\end{proof}

The following claim seems to be well-known (at least for $\Lambda=\{\rho\}$, see \cite{orange}, 12.16)
but we are going to provide its proof for reader's convenience.

\begin{Prop}\label{Prop:J_indecomp}
The multi-fusion category $^{\Lambda}\!\JCat^{\Lambda}_{\Orb}$ is indecomposable.
\end{Prop}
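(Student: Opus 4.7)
The plan is to reduce indecomposability to the classical fact that $\dcell$ is a single two-sided cell. Indecomposability of $^{\Lambda}\!\JCat^{\Lambda}_{\Orb}$ amounts to connectedness of the graph $\Gamma$ on $\Prim_{\Orb}(\U_\Lambda)$ with an edge $\J_1$--$\J_2$ whenever there is a simple $\M \in \,^{\Lambda}\!\JCat^{\Lambda}_{\Orb}$ with $\LAnn(\M) = \J_1$ and $\RAnn(\M) = \J_2$. This edge relation composes: given $\M_1$ with $(\LAnn,\RAnn)=(\J_1,\J_3)$ and $\M_2$ with $(\LAnn,\RAnn)=(\J_3,\J_2)$, any simple constituent of $\M_1\otimes_\U\M_2$ that survives in $^{\Lambda}\!\JCat^{\Lambda}_{\Orb}$ has annihilators $(\J_1,\J_2)$, since primitive ideals with the same associated variety $\overline{\Orb}$ are pairwise incomparable under inclusion.

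I would first treat $\Lambda=\{\rho\}$. Via the Bernstein--Gelfand equivalence, the simples of $^\rho\!\JCat^\rho_{\Orb}$ are the bimodules $\BG(L(w\rho))$ with $w\in\dcell$, and their pair of annihilators is $(J(w\rho),J(w^{-1}\rho))$ as recalled in Subsection \ref{SUBSECTION_shifts}. Under the bijection between $\Prim_{\Orb}(\U_\rho)$ and the set of left cells of $\dcell$, this identifies $\Gamma$ with the graph on left cells whose edges are $\{\sigma,\tau\}$ with $\sigma$ the left cell of some $w\in\dcell$ and $\tau$ the left cell of $w^{-1}$. Connectedness of this graph is precisely the assertion that $\dcell$ is a single $\sim_{LR}$-equivalence class, which is the definition of a two-sided cell.

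For general $\Lambda\subset P^+$, I would use translation on both sides. Given $\lambda_1,\lambda_2\in\Lambda$ and $w\in\dcell$ such that $w$ is compatible with $\lambda_1$ and $w^{-1}$ with $\lambda_2$, tensoring $\BG(L(w\rho))$ on the left with $\mathcal{T}^{\lambda_1}_\rho$ and on the right with $\mathcal{T}^{\lambda_2}_\rho$ (extracting the appropriate central character block) yields, by properties (i)--(iii) of Subsection \ref{SUBSECTION_shifts}, a simple object of $^{\lambda_1}\!\JCat^{\lambda_2}_{\Orb}$ with annihilators $(J(w\lambda_1),J(w^{-1}\lambda_2))$. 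Every $\J\in\Prim_{\Orb}(\U_\lambda)$ is of the form $J(w\lambda)$ for some $w\in\dcell$ compatible with $\lambda$, so every ideal in $\Prim_{\Orb}(\U_\Lambda)$ is connected by an edge of $\Gamma$ to some ideal in $\Prim_{\Orb}(\U_\rho)$. Combined with the previous paragraph, this yields connectedness of $\Gamma$.

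The main point to check is that simple constituents of a tensor product inherit the annihilators of the factors, which reduces to the incomparability of primitive ideals sharing associated variety $\overline{\Orb}$; the remainder is direct application of the Bernstein--Gelfand dictionary and the translation-functor bookkeeping. I would not expect a serious obstacle, because for $\Lambda=\{\rho\}$ the assertion essentially rephrases the definition of a two-sided cell, and the general case reduces to it through translation on both sides.
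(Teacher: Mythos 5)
Your approach is essentially the same as the paper's: the paper also reduces indecomposability to the absence of a nontrivial $2$-coloring $\Prim_{\Orb}(\U_\Lambda)=P_e\sqcup P_f$ with no simple bimodule crossing, shows connectedness for the regular-character block by observing that the relevant set of $w$ is closed under $\sim_L$ and $\sim_R$ and hence equals $\dcell$, and then uses translation to reach the remaining central characters. Your graph-theoretic phrasing, the use of the Bernstein--Gelfand dictionary, and the reduction to the definition of a two-sided cell as the $\sim_L,\sim_R$-generated equivalence all match.

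There is one genuine gap. Your translation step connects every $\J\in\Prim_{\Orb}(\U_\Lambda)$ ``by an edge of $\Gamma$ to some ideal in $\Prim_{\Orb}(\U_\rho)$,'' which presupposes that $\rho$ (or at least some regular weight) lies in $\Lambda$; otherwise $\Prim_{\Orb}(\U_\rho)$ is not among the vertices of $\Gamma$ and the pivoting collapses. The proposition is stated for an arbitrary finite $\Lambda\subset P^+$, and when $\Lambda$ consists entirely of singular weights your argument as written does not close up --- you cannot in general produce, for two singular $\lambda_1,\lambda_2\in\Lambda$ and prescribed left cells, an element $w\in\dcell$ simultaneously compatible with $\lambda_1$ on the left and $\lambda_2^{-1}$ on the right that hits the desired cells. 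The paper's fix is to enlarge $\Lambda$ to $\Lambda'=\Lambda\sqcup\{\varrho\}$ with $\varrho$ regular, prove indecomposability of $\,^{\Lambda'}\!\JCat^{\Lambda'}_{\Orb}$, and then observe that $\,^{\Lambda}\!\JCat^{\Lambda}_{\Orb}=e'\,\,^{\Lambda'}\!\JCat^{\Lambda'}_{\Orb}\,e'$ for a direct summand $e'$ of the unit, which by Remark \ref{Rem:idemp_nonzero} (via Lemma \ref{Lem:0.3pre}) inherits indecomposability. You should add this reduction. A second, minor point: when you invoke composition of edges you should justify that $\M_1\otimes_\U\M_2$ has nonzero image in the quotient $\HC_{\Orb}(\U)$ --- this follows from rigidity of the multi-fusion category (e.g.\ since $\M_1^*\otimes\M_1$ contains the unit summand $e_{\J_3}$), but is not automatic from the incomparability of primitive ideals alone. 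In any case this composition remark is not actually needed for mere connectedness of $\Gamma$, so it could simply be dropped.
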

\begin{proof}
We need to show that there is no decomposition $\be=e\oplus f$ in $^{\Lambda}\!\JCat^{\Lambda}_{\Orb}$
such that $e\,^{\Lambda}\!\JCat^{\Lambda}_{\Orb} f=0$ (and then automatically, thanks to duality,
$e\,^{\Lambda}\!\JCat^{\Lambda}_{\Orb} f$). The object $\be$ is represented by
$\sum_{\J\in \Prim_{\Orb}(\U_\Lambda)} \U/\J$. We remark that each $\U/\J$ is a simple
direct summand of $\be$ (because there is no inclusion between the elements of $\Prim_{\Orb}(\U)$).
So our assumption implies that there is a partition $P_e\sqcup P_f= \Prim_{\Orb}(\U_\Lambda)$
such that the left and right annihilators of each simple $\M\in \,^{\Lambda}\!\HC^\Lambda(\U)$
with $\VA(\M)=\overline{\Orb}$ either both lie in $P_e$ or both lie in $P_f$.

Assume, first, that $\Lambda$ contains a regular element, say $\varrho$.

First of all, let us notice that $\Prim_{\Orb}(\U_\varrho)$ is contained in either $P_e$ or
$P_f$. There are several possible proofs of this. For instance, consider the equivalence
relations $\sim,\sim_L,\sim_R$ on $W$ indicating whether two elements lie in the same
two-sided, left, or right cell, respectively (this will be recalled in Section \ref{SECTION_cell} below).
Then it is known that $\sim$ is generated by
$\sim_L,\sim_R$. Now take a simple HC bimodule $\BG(L(w\varrho))$ with $w\in \dcell$ such that its left (and
hence right) annihilator lies in $P_e$. The set of all $w$ with this property is closed under
both $\sim_L,\sim_R$ and hence coincides with $\dcell$. So $\Prim_{\Orb}(\U_\varrho)\subset P_e$.

To show that $P_e=\Prim_{\Orb}(\U_\Lambda)$ we recall that the right annihilator
of $\BG(L(w\lambda))$, where $w$ and $\lambda$ are compatible, is $\J(w\lambda)$, while the right annihilator is $\J(w^{-1}\varrho)$
and hence lies in $P_e$. But the ideals of the form $\J(w\lambda)$ exhaust $\Prim_{\Orb}(\U_\Lambda)$.

Now suppose that $\Lambda$ contains no regular elements. Set $\Lambda'=\Lambda\sqcup\{\varrho\}$. The category
$^{\Lambda}\!\JCat^{\Lambda}_{\Orb}$ can be realized as $e'\,^{\Lambda'}\!\JCat^{\Lambda'}_{\Orb}e'\subset
\,^{\Lambda'}\!\JCat^{\Lambda'}_{\Orb}$.
Such a subcategory is itself indecomposable, see Remark \ref{Rem:idemp_nonzero}.
\end{proof}

As we have mentioned in the proof,  the objects $\U/\J, \J\in \Prim_{\Orb}(\U_\Lambda)$
are simple direct summands of $\be$. The object corresponding to a dominant weight $\lambda$
and a left cell $\sigma$ will be denoted by $e^\lambda_\sigma$.

The following corollary is standard and well-known: it can be deduced from \cite[3.8]{GJ} and the standard
 properties of cells, see, e.g., \cite[Corollary 12.16]{orange}. 
%
Also it follows from Proposition \ref{Prop:J_indecomp} and Remark \ref{Rem:idemp_nonzero}.

\begin{Cor}\label{Cor:indecomp}
For any two primitive ideals $\J_1,\J_2\in \Prim_{\Orb}(\U_{P^+})$ there is a simple HC
bimodule $\M$ with $\J_1=\LAnn_{\U}(\M), \J_2=\RAnn_{\U}(\M)$.
\end{Cor}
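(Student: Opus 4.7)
The plan is to derive this corollary from the indecomposability of $\,^{P^+}\!\JCat^{P^+}_{\Orb}$ established in Proposition \ref{Prop:J_indecomp}, combined with the general machinery of Lemma \ref{Lem:0.3pre} and Remark \ref{Rem:idemp_nonzero}. Fix $\J_1,\J_2\in \Prim_\Orb(\U_{P^+})$ and set $\CCat:=\,^{P^+}\!\JCat^{P^+}_{\Orb}$. The corresponding simple summands $e_{\J_1},e_{\J_2}$ of the unit $\be=\bigoplus_{\J}\U/\J$ cut out a block $e_{\J_1}\otimes \CCat\otimes e_{\J_2}$, and the corollary amounts to producing a nonzero simple object in it.

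The core step is to show that this block is nonzero. I would introduce the relation on $\Prim_\Orb(\U_{P^+})$ defined by $\J\sim \J'$ iff $e_{\J}\otimes \CCat\otimes e_{\J'}\neq 0$. Reflexivity is clear (take $e_\J$ itself), and symmetry follows from rigidity by passing to duals. For transitivity, which is the only categorically nontrivial point, I would argue as follows: given nonzero simples $\M_{12}\in e_{\J}\otimes\CCat\otimes e_{\J'}$ and $\M_{23}\in e_{\J'}\otimes\CCat\otimes e_{\J''}$, the coevaluation for $\M_{12}$ embeds $e_{\J'}$ as a direct summand of $\M_{12}^*\otimes \M_{12}$; hence $\M_{23}=e_{\J'}\otimes\M_{23}$ is a summand of $\M_{12}^*\otimes \M_{12}\otimes \M_{23}$, forcing $\M_{12}\otimes \M_{23}\neq 0$ and giving a nonzero object in $e_{\J}\otimes\CCat\otimes e_{\J''}$. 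The equivalence classes of $\sim$ would then yield a decomposition of $\CCat$ into a direct sum of multi-fusion categories; by Proposition \ref{Prop:J_indecomp} there is only one class, so $e_{\J_1}\otimes\CCat\otimes e_{\J_2}\neq 0$.

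To finish, pick any nonzero simple $\M\in e_{\J_1}\otimes\CCat\otimes e_{\J_2}$. The identities $\M=e_{\J_1}\otimes\M$ and $\M=\M\otimes e_{\J_2}$ translate to $\J_1\subseteq \LAnn(\M)$ and $\J_2\subseteq \RAnn(\M)$. On the other hand $\M$ is a simple HC bimodule with $\VA(\M)=\overline{\Orb}$, so both annihilators automatically lie in $\Prim_\Orb(\U_{P^+})$; since the elements of $\Prim_\Orb(\U)$ share the same associated variety and form an antichain under inclusion (as used in the proof of Proposition \ref{Prop:J_indecomp}), these inclusions are forced to be equalities. The transitivity step in the middle paragraph is the only nonroutine ingredient and is where the main categorical work lies; the rest is book-keeping with annihilators, and the whole argument is essentially the content already implicit in Remark \ref{Rem:idemp_nonzero}.
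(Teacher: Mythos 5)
Your proof is correct and follows the same overall strategy the paper intends (the route ``from Proposition~\ref{Prop:J_indecomp} and Remark~\ref{Rem:idemp_nonzero}''): decompose $\be$ into the simple summands $e_\J$, reduce to showing $e_{\J_1}\otimes\CCat\otimes e_{\J_2}\neq 0$ for $\CCat:=\,^{P^+}\!\JCat^{P^+}_{\Orb}$, and conclude using the antichain property of $\Prim_\Orb(\U)$. The one place you take a genuinely different path is in how the nonvanishing of the off-diagonal blocks is obtained. The paper invokes Remark~\ref{Rem:idemp_nonzero}, which relies on the double-centralizer statement of Lemma~\ref{Lem:0.3pre}(ii) (passing to $\CCat^*_\MCat$ and identifying $(e_i\otimes\CCat\otimes e_i)^*_{\MCat_i}$ with it). You instead prove it by hand: the relation $\J\sim\J'$ iff $e_\J\otimes\CCat\otimes e_{\J'}\neq 0$ is shown to be an equivalence relation, with transitivity following from the fact that the evaluation $\M_{12}^*\otimes\M_{12}\twoheadrightarrow e_{\J'}$ is nonzero (snake identities) and splits by semisimplicity, so $e_{\J'}$ is a summand of $\M_{12}^*\otimes\M_{12}$. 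Having more than one equivalence class would decompose $\CCat$, contradicting Proposition~\ref{Prop:J_indecomp}. This is a more elementary, self-contained derivation that avoids the dual-category formalism at the cost of re-establishing something the paper packages as a remark; it is a perfectly legitimate alternative. One small point worth making explicit: the identities $\M=e_{\J_1}\otimes\M$, $\M=\M\otimes e_{\J_2}$ live in the quotient category $\HC_\Orb(\U)$, so the inclusions $\J_1\subseteq\LAnn(\M)$, $\J_2\subseteq\RAnn(\M)$ should really be read for a simple lift $\M'\in\HC_{\overline\Orb}(\U)$ of $\M$; this is harmless precisely because the annihilators of such a simple $\M'$ already lie in $\Prim_\Orb(\U)$ and that set is an antichain, but it is cleaner to say so.
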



\subsection{Translations to/from the walls for $\Walg$-modules}\label{SUBSECTION_W_trans}
The goal of this subsection is to prove the following proposition.
\begin{Prop}\label{Prop:W_translation}
Let $\cell$ be a left cell, and $\lambda$ a dominant weight compatible with $\cell$.
Then $\bH_\cell\subset \bH_\cell^\lambda$ with the equality when $\lambda$ is strictly
dominant.
\end{Prop}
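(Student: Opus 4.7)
The plan is to transfer translation functors from $\U$-modules to $\Walg$-modules via Skryabin's equivalence, and combine this with Dodd's Springer embedding to compare $\bH_\sigma^\rho$ with Lusztig's subgroup $\bH_\sigma$.

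First I would construct a $\Walg$-translation $T_\rho^\lambda \colon \Walg\text{-}\Mod^\rho \to \Walg\text{-}\Mod^\lambda$ by $N \mapsto \bigl(T_\rho^\lambda\,\Sk(N)\bigr)^{\m_\chi}$. Since tensoring with the finite-dimensional $\g$-module $L(\nu+\rho)$ preserves local nilpotence of $\m_\chi$, the outer $\U$-translation stays inside Whittaker modules and Skryabin applies. All constructions are $Q$-equivariant, so $T_\rho^\lambda$ descends to an $\bfA$-equivariant map on isomorphism classes of finite-dimensional irreducibles. For $w$ labeling $\sigma$ and compatible with $\lambda$, property~(2) from Subsection~\ref{SUBSECTION_shifts} gives $T_\rho^\lambda L(w\rho)=L(w\lambda)\ne 0$; hence the induced $\bfA$-equivariant map sends the $\bfA$-orbit in $Y^\rho$ over $\J(w\rho)$ surjectively onto the orbit in $Y^\lambda$ over $\J(w\lambda)$. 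Surjectivity of transitive $\bfA$-sets yields $\bH_\sigma^\rho \subset \bH_\sigma^\lambda$. When $\lambda$ is strictly dominant, $W_\rho=W_\lambda=\{1\}$ and property~(1) makes $T_\rho^\lambda$ a $\U$-equivalence; this transfers through Skryabin to an equivalence of $\Walg$-categories, hence to an $\bfA$-equivariant bijection $Y^\rho \xrightarrow{\sim} Y^\lambda$, giving $\bH_\sigma^\rho = \bH_\sigma^\lambda$.

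It remains to prove the purely $\rho$-level inclusion $\bH_\sigma \subset \bH_\sigma^\rho$. For this, I would invoke Dodd's $W\times A(e)$-equivariant embedding $\BQ(Y^\rho)\hookrightarrow \Spr(\Orb)$ recalled in Subsection~\ref{SUBSECTION_discussion}. Applying $\Hom_W([\sigma],-)$ gives an $A(e)$-equivariant injection $\Hom_W([\sigma],\BQ(Y^\rho)) \hookrightarrow \Hom_W([\sigma],\Spr(\Orb)) = \BQ(\bA/\bH_\sigma)$. Combining the identification $\Irr_{fin}(\Walg_\rho)/A(e)\cong \Prim_\Orb(\U_\rho)$ from \cite{HC} with the Kazhdan--Lusztig description of the coherent continuation action on $\BQ(Y^\rho)$, one sees that the $\sigma$-block of $Y^\rho$ produces an $\bfA$-equivariant copy of $\BQ(\bfA/\bH_\sigma^\rho)$ inside $\Hom_W([\sigma],\BQ(Y^\rho))$. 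The resulting $A(e)$-equivariant injection $\BQ(\bfA/\bH_\sigma^\rho)\hookrightarrow \BQ(\bA/\bH_\sigma)$ then forces the quotient $A(e)\twoheadrightarrow \bfA$ to factor through $\bA$ and, up to conjugacy, $\bH_\sigma \subset \bH_\sigma^\rho$.

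The hardest step will be the last: the cell-theoretic identification of the $[\sigma]$-isotypic part of $\BQ(Y^\rho)$ as containing the $\bfA$-module $\BQ(\bfA/\bH_\sigma^\rho)$. This bookkeeping requires a careful match between the partition of $Y^\rho$ into $\bfA$-orbits (indexed by left cells via \cite{HC}) and the $W$-module structure on $\BQ(Y^\rho)$ coming from translation functors, together with verifying that the cell module $[\sigma]$ occurs in the $W$-span of the $\sigma$-block with the correct $\bfA$-equivariant multiplicity.
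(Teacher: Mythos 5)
You have misread the statement: by the paper's conventions a missing superscript means $\rho$, so $\bH_\cell$ denotes $\bH^\rho_\cell$, the stabilizer of a point in $Y^\rho_\cell$ under $\bfA$ -- it is \emph{not} Lusztig's subgroup $H_\cell$. The proposition therefore asserts only an inclusion between stabilizers at two different central characters, $\bH^\rho_\cell\subset\bH^\lambda_\cell$, and your entire third paragraph (an attempt to prove $H_\sigma\subset\bH^\rho_\sigma$ via Dodd's embedding) is aimed at something the proposition does not claim and which cannot be established at this stage: the equality $\bH^\lambda_\cell=H_\cell$ is the paper's main theorem, and the step you sketch is its hard direction. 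Moreover, the deduction you give there is not valid as stated: an $A(e)$-equivariant injection $\BQ(\bfA/\bH^\rho_\sigma)\hookrightarrow\BQ(\bA/H_\sigma)$ by itself does not determine the image, and so does not force $A(e)\twoheadrightarrow\bfA$ to factor through $\bA$ or pin down the subgroup $\bH^\rho_\sigma$. Extracting such containments requires knowing precisely which irreducible $\bA$-constituents actually occur in $\BQ(Y^\rho)$, which is the delicate induction carried out across Section \ref{SECTION:proof_main}.

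Your first two paragraphs do isolate the right mechanism -- translation via Skryabin, which by Theorem \ref{Thm_tensor} (applied with $\theta=0$) coincides with the paper's functor $N\mapsto(\mathcal{T}_\rho^\lambda(k))_\dagger\otimes_\Walg N$. But you jump from property (2) of Subsection \ref{SUBSECTION_shifts}, a statement about highest-weight $\U$-modules, to asserting that $T_\rho^\lambda$ induces a well-defined $\bfA$-equivariant map $Y^\rho_\sigma\to Y^\lambda_\sigma$. This is not automatic: a priori $T_\rho^\lambda(N')$ could have several nonisomorphic constituents, or vanish for some $N'\in Y^\rho_\sigma$. The paper circumvents this by going the other way: pick $N\in Y^\lambda_\sigma$, set $\widetilde N=(\mathcal{T}_\lambda^\rho(1))_\dagger\otimes_\Walg N$, use property (3) to get $(\mathcal{T}_\rho^\lambda(k))_\dagger\otimes_\Walg\widetilde N=N^{\oplus|W_\lambda|}$ for $k\gg0$, and choose an irreducible subquotient $N'$ of $\widetilde N$ on which the translation back does not vanish; such an $N'$ lies in $Y^\rho_\sigma$. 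Exactness of $(\mathcal{T}_\rho^\lambda(k))_\dagger\otimes_\Walg(\bullet)$ on the relevant subcategory (a point worth spelling out, via the Skryabin equivalence) then forces $(\mathcal{T}_\rho^\lambda(k))_\dagger\otimes_\Walg N'$ to be a nonzero direct sum of copies of the single irreducible $N$. Since the bimodule $(\mathcal{T}_\rho^\lambda(k))_\dagger$ is $Q$-equivariant, the $Q$-stabilizer of $N'$ preserves this sum and hence fixes the class of $N$, giving $\bH^\rho_\cell\subset\bH^\lambda_\cell$ up to conjugacy, with no need to construct a global map $Y^\rho_\sigma\to Y^\lambda_\sigma$. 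For strictly dominant $\lambda$ the translation is an equivalence (property (1)) and equality follows, as you noted.
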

\begin{proof}
Recall  the Harish-Chandra bimodules $\mathcal{T}_\bullet^\bullet(\bullet)$ that appeared in
Subsection \ref{SUBSECTION_shifts}.
Pick an irreducible $\Walg$-module $N$ such that the primitive ideal corresponding to $N$, i.e., $\Ann_{\Walg}(N)^\dagger$,
has central character $\lambda$ and corresponds to the left cell $\sigma$. Then for sufficiently large $k$ we have $$(\mathcal{T}_\rho^\lambda(k))_\dagger\otimes_{\Walg}
(\mathcal{T}_\lambda^\rho(1))_\dagger\otimes_{\Walg}N=(\mathcal{T}_\rho^\lambda(k)\otimes_{\U} \mathcal{T}_\lambda^\rho(1))_\dagger\otimes_{\Walg}N=N^{\oplus |W_\lambda|}.$$
Set $\widetilde{N}=(\mathcal{T}_{\lambda}^\rho(1))_\dagger\otimes_{\Walg}N$ and let $N'$ be an irreducible
subquotient of $\widetilde{N}$ with $(\mathcal{T}_\rho^\lambda(k))_\dagger\otimes_{\Walg}N'\neq 0$.
Then $N'\in Y_{\cell}$.
Let $Q_0$ denote the stabilizer
of $N'$ under the action of $Q$. Since the bimodule $(T_{\rho}^\lambda(k))_\dagger$ is $Q$-equivariant,
we see that $(T_{\rho}^\lambda(k))_\dagger\otimes_{\Walg}N'$ is stable under the action of $Q_0$. But
$(T_{\rho}^\lambda(k))_\dagger\otimes_{\Walg}N'$ is just the direct sum of several copies of $N$
and so $N$ is $Q_0$-stable. This proves the claim.
\end{proof}

\subsection{The image of $\bullet_\dagger$}\label{SUBSECTION_UL}
We are starting the proof of Theorem \ref{Thm_main}.
The goal of this subsection is to give a (very implicit) description
of the image of $\HC_{\overline{\Orb}}(\U)$ under $\bullet_\dagger$. The main result
is Proposition \ref{Prop:11.2} below.

Let $L$ be a finite dimensional $G$-module. Then the space $\U_L:=L\otimes\U$ has a natural structure of
a Harish-Chandra bimodule: the left and right products with elements of $\g$ are defined by
$$\xi\cdot l\otimes u=(\xi. l)\otimes u+l\otimes \xi u, (l\otimes u)\cdot\xi=l\otimes u\xi.$$

Let us describe the $Q$-equivariant Harish-Chandra $\Walg$-bimodule
$\U_{L\dagger}$.

Set $\Walg_L:=\Hom_G(L^*,\widetilde{\Walg})$. The $Q$-action
and the bimodule structure on $\Walg_L$
are induced  from those  on $\widetilde{\Walg}$.

\begin{Lem}\label{Lem:10.2}
$\Walg_L$ is a $Q$-equivariant HC $\Walg$-bimodule. Moreover, there is a $Q$-equivariant  isomorphism $\Walg_L\cong L\otimes\Walg$ of  right $\Walg$-modules.
\end{Lem}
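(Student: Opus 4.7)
The plan is to deduce the lemma directly from the decomposition of $\widetilde{\Walg}$ furnished by Lemma \ref{Lem:1.5.1}. First I would apply the functor $\Hom_G(L^*,-)$ to the $G\times Q$-equivariant right $\Walg$-module isomorphism $\K[G]\otimes \Walg \xrightarrow{\sim} \widetilde{\Walg}$. Since $G$ acts trivially on the $\Walg$-factor (which consists of $G$-invariants in $\widetilde{\Walg}$), one may pull $\Walg$ out of the $\Hom$, giving
$$\Walg_L = \Hom_G(L^*,\widetilde{\Walg}) \cong \Hom_G(L^*,\K[G])\otimes \Walg.$$
The classical identification $\Hom_G(L^*,\K[G]) \cong L$, coming from the Peter--Weyl decomposition with $G$ acting on $\K[G]$ by one translation, is automatically $Q$-equivariant for the $Q$-action on $\K[G]$ by the opposite translation, and the residual $Q$-action on $L$ coincides with the restriction of the natural $G$-action on $L$ via $Q\subset G$. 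This yields the required $Q$-equivariant isomorphism $\Walg_L\cong L\otimes \Walg$ of right $\Walg$-modules.

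Next I would upgrade $\Walg_L$ from a right module to a $Q$-equivariant $\Walg$-bimodule by pulling the structure back from $\widetilde{\Walg}$. Since $\Walg=\widetilde{\Walg}^G$ commutes with the $G$-action on $\widetilde{\Walg}$, left and right multiplications by elements of $\Walg$ preserve every $G$-isotypic component, and in particular preserve $\Walg_L = \Hom_G(L^*,\widetilde{\Walg})$. The group $Q$ commutes with $G$ and preserves $\Walg$, so its action on $\widetilde{\Walg}$ descends to one on $\Walg_L$ by $\Walg$-bimodule automorphisms.

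To verify the HC condition, I would transport the Kazhdan filtration from $\widetilde{\Walg}$ to $\Walg_L$. Concretely, as in the proof of Lemma \ref{Lem:1.5.1}, one may choose a $G\times Q$-stable lift of $L$ sitting in a finite range of Kazhdan degrees inside $\widetilde{\Walg}$; this is possible because the induced filtration on any $G$-isotypic component of $\widetilde{\Walg}$ is bounded below (compare the proof of Proposition~2.1.5 in \cite{Wquant}). The resulting filtration $\F_\bullet\Walg_L$ is $Q$-stable, the commutator bound $[\KF_i\Walg,\F_j\Walg_L]\subset \F_{i+j-2}\Walg_L$ is inherited from the analogous bound in $\widetilde{\Walg}$, and the associated graded is naturally identified with $L\otimes \K[S]$, a finitely generated module over $\K[S]=\gr\Walg$.

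The main obstacle is not conceptual --- everything reduces to Lemma \ref{Lem:1.5.1} together with a Peter--Weyl-type argument --- but rather the bookkeeping required to ensure that the isomorphism $\Walg_L\cong L\otimes \Walg$ is simultaneously $Q$-equivariant, right $\Walg$-linear, and compatible with the Kazhdan filtration, so that the resulting filtration on $\Walg_L$ is both $Q$-stable and good.
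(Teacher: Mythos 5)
Your proposal is correct and follows essentially the same route as the paper's proof: both derive the isomorphism $\Walg_L\cong L\otimes\Walg$ by applying $\Hom_G(L^*,-)$ to the isomorphism of Lemma~\ref{Lem:1.5.1}, and both obtain the HC property by inducing the filtration from $\widetilde{\Walg}$, with associated graded $\Hom_G(L^*,\K[X])\cong L\otimes\K[S]$ finitely generated over $\K[S]$. You simply spell out the Peter--Weyl step and the $Q$-equivariance bookkeeping that the paper leaves implicit.
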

\begin{proof}
A required filtration on $\Walg_L$ is induced from the filtration on $\widetilde{\Walg}$:
the associated graded is $\Hom_G(L^*, \K[X])$. This is a finitely generated $\K[S]=\K[X]^G$-module.
The second claim of the lemma follows from Lemma \ref{Lem:1.5.1}.
\end{proof}

\begin{Prop}\label{Prop:ULdagger}
$\U_{L\dagger}\cong \Walg_L$.
\end{Prop}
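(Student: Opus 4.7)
The plan is to carry out the construction of $\bullet_\dagger$ directly for $\M=\U_L$, taking advantage of a clean description of the $V$-centralizer $\Nil'_\hbar$ as a right-module quotient.

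Equip $\U_L$ with the good filtration $\F_i(\U_L):=L\otimes\F_i\U$, where $\U$ carries the doubled PBW filtration and $L$ sits in filtration degree $0$. Then the Rees bimodule is $(\U_L)_\hbar=L\otimes\U_\hbar$, and since $L$ is finite-dimensional the completion at the preimage of the ideal of $Gx$ is simply $(\U_L)^\wedge_\hbar=L\otimes\U^\wedge_\hbar$. By the Decomposition Theorem (Proposition \ref{Prop:10.1}) there is a $G\times Q\times\K^\times$-equivariant isomorphism $\U^\wedge_\hbar\cong\W^\wedge_\hbar\widehat\otimes_{\K[[\hbar]]}\Walg^\wedge_\hbar$, and so $(\U_L)^\wedge_\hbar$ is identified with $L\otimes\W^\wedge_\hbar\widehat\otimes_{\K[[\hbar]]}\Walg^\wedge_\hbar$.

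The next step is to identify the $V$-centralizer. I would argue that for any HC bimodule $\M$, the centralizer $\Nil'_\hbar$ in the decomposition $\M^\wedge_\hbar\cong\W^\wedge_\hbar\widehat\otimes\Nil'_\hbar$ is naturally isomorphic, as a $Q$-equivariant $\Walg^\wedge_\hbar$-bimodule, to the right-module quotient $\M^\wedge_\hbar/\M^\wedge_\hbar\cdot V$. Indeed, the right action of $V\subset\W^\wedge_\hbar$ only touches the $\W^\wedge_\hbar$-factor, so $\M^\wedge_\hbar\cdot V=(\W^\wedge_\hbar V)\widehat\otimes\Nil'_\hbar$, and a PBW-type computation shows $\W^\wedge_\hbar/\W^\wedge_\hbar V=\K[[\hbar]]$, which recovers $\Nil'_\hbar$. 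Applied to $\M=\U_L$, where $V$ acts on the right only through the $\U^\wedge_\hbar$-factor, this gives
$$
\Nil'_\hbar \;=\; (\U_L)^\wedge_\hbar\big/(\U_L)^\wedge_\hbar\cdot V \;=\; L\otimes\bigl(\U^\wedge_\hbar/\U^\wedge_\hbar\cdot V\bigr) \;=\; L\otimes\Walg^\wedge_\hbar.
$$

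Taking $\K^\times$-finite vectors produces $\Nil_\hbar\cong L\otimes\Walg_\hbar$, and reducing modulo $\hbar-1$ yields $\U_{L\dagger}\cong L\otimes\Walg$. By Lemma \ref{Lem:10.2} the latter is canonically isomorphic to $\Walg_L$ as a $Q$-equivariant right $\Walg$-module. The remaining bimodule and $Q$-equivariance compatibilities follow from the fact that the construction is $Q\times\K^\times$-equivariant throughout and the left $\Walg$-action on $\Nil'_\hbar$ descends from the inclusion $\Walg^\wedge_\hbar\subset\U^\wedge_\hbar$; this matches the left $\Walg$-action on $\Walg_L=\Hom_G(L^*,\widetilde{\Walg})$ through the Rees-and-completed form of the $G$-equivariant isomorphism $\widetilde{\Walg}\cong\K[G]\otimes\Walg$ from Lemma \ref{Lem:1.5.1}. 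The main point requiring care is the consistency of this last identification of left $\Walg$-structures, for which the $G$-equivariance of the quantum comoment map built into Proposition \ref{Prop:10.1} is essential.
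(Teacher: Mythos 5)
Your proposal breaks down at the claim that the $V$-centralizer $\Nil'_\hbar$ in the decomposition $\M^\wedge_\hbar\cong\W^\wedge_\hbar\widehat\otimes\Nil'_\hbar$ is the right-module quotient $\M^\wedge_\hbar/\M^\wedge_\hbar\cdot V$. That identification is false. In $\W_\hbar$ we have $p\,q-q\,p=\hbar^2\omega(p,q)$, so for any Darboux pair this puts $\hbar^2\in \W^\wedge_\hbar\cdot V$; consequently $\W^\wedge_\hbar/\W^\wedge_\hbar V$ is a torsion $\K[[\hbar]]$-module and cannot equal $\K[[\hbar]]$. (Already for $L=\K$ your formula would give $\U^\wedge_\hbar/\U^\wedge_\hbar V=(\W^\wedge_\hbar/\W^\wedge_\hbar V)\widehat\otimes\Walg^\wedge_\hbar$, which is killed by $\hbar^2$, whereas $\Nil'_\hbar$ must be $\Walg^\wedge_\hbar$, which is $\hbar$-torsion-free.) The "PBW-type computation" you invoke does not exist: $\W^\wedge_\hbar V$ is a right ideal in a noncommutative algebra, not the maximal ideal in a polynomial ring. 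So the passage from $\Nil'_\hbar=(\M^\wedge_\hbar)^{\ad V}$ (correct) to a one-line right-module quotient (wrong) is precisely the gap.

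There is a second, independent difficulty that you acknowledge only in passing: after identifying $(\U_L)^\wedge_\hbar$ with $L\otimes\U^\wedge_\hbar$ as a vector space, the \emph{left} $\U^\wedge_\hbar$-action involves the $\g$-action on $L$ (the formula $\xi\cdot(l\otimes u)=(\xi.l)\otimes u+l\otimes\xi u$), and $V\subset\g$ acts nontrivially on $L$. Hence $(\M^\wedge_\hbar)^{\ad V}$ is not simply $L\otimes(\U^\wedge_\hbar)^{\ad V}$; the invariance condition couples $L$ and $\U^\wedge_\hbar$. This is exactly why the paper avoids the $G$-invariant decomposition $\U^\wedge_\hbar\cong\W^\wedge_\hbar\widehat\otimes\Walg^\wedge_\hbar$ altogether and instead works at the level of $\K[T^*G]^\wedge_{Gx}[[\hbar]]$, applying $\Hom_G(L^*,-)$ to the full $G\times Q\times\K^\times$-equivariant isomorphism of Proposition \ref{Prop:10.1}. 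There the identifications $\Hom_G(L^*,\K[T^*G][\hbar])=R_\hbar(\U_L)$ and $\Hom_G(L^*,\K[X][\hbar])=R_\hbar(\Walg_L)$ are clean, and the $\W^\wedge_\hbar$-factor is untouched by $\Hom_G(L^*,-)$ because the $G$-action lives entirely on the $\K[X]^\wedge_{Gx}[[\hbar]]$-factor. To repair your argument you would essentially need to reproduce that manoeuvre; the shortcut through a right-ideal quotient cannot be made to work.
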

\begin{proof}
Recall the isomorphism $$\Phi_\hbar:\K[T^*G]^\wedge_{Gx}[[\hbar]]\rightarrow \W^\wedge_\hbar \widehat{\otimes}_{\K[[\hbar]]}\K[X]^\wedge_{Gx}[[\hbar]]$$
from Proposition \ref{Prop:10.1}. It induces the isomorphism
$$\Phi_\hbar:\Hom_G(L^*,\K[T^*G]^\wedge_{Gx}[[\hbar]])\rightarrow \Hom_G(L^*,
\W^\wedge_\hbar \widehat{\otimes}_{\K[[\hbar]]}\K[X]^\wedge_{Gx}[[\hbar]])$$
of $Q$-equivariant $\U^\wedge_\hbar$-bimodules. Note that $\Hom_G(L^*, \K[T^*G][\hbar])$ is precisely
$R_\hbar(\U_L)$, where the filtration on $\U_L$ is given by $\F_i\U_L:=\F_i\U\otimes L$. Also note that  $$\Hom_G(L^*,\K[T^*G]^\wedge_{Gx}[[\hbar]])\cong R_\hbar(\U_L)^\wedge.$$ On the other hand,
\begin{align*}&\Hom_G(L^*,\W^\wedge_\hbar \widehat{\otimes}_{\K[[\hbar]]}\K[X]^\wedge_{Gx}[[\hbar]])=
\Hom_G(L^*,\K[X]^\wedge_{Gx}[[\hbar]])\widehat{\otimes}_{\K[[\hbar]]}\W^\wedge_\hbar.
\end{align*}
Again, $\Hom_G(L^*, \K[X]^{\wedge}_{Gx}[[\hbar]])$ is the completion of $\Hom_G(L^*,\K[X][\hbar])$.
So $\Hom_G(L^*,\K[X][\hbar])$  coincides with the space of $\K^\times$-finite vectors
in $\Hom_G(L^*, \K[X]^{\wedge}_{Gx}[[\hbar]])$ by \cite{HC}, Proposition 3.3.1.
The quotient of $\Hom_G(L^*, \K[X][\hbar])$ modulo $\hbar-1$ is nothing else
but $\Walg_L$. Now the claim of the proposition follows directly from the construction of $\bullet_\dagger$.
\end{proof}

Now we are ready to give some description of $\HC_{\overline{\Orb}}(\U)_\dagger$.

\begin{Prop}\label{Prop:11.2}
Let $\Nil\in \HC_{fin}^Q(\Walg)$. Then the following conditions are equivalent:
\begin{enumerate}
\item There is $\M\in \HC_{\overline{\Orb}}(\U)$ with $\M_{\dagger}\cong \Nil$.
\item $\Nil$ is a quotient of $\Walg_{L}$ for some finite dimensional $G$-module
$L$.
\item $\Nil$ is a $Q$-stable sub-bimodule in $\Walg_{L}/R$, where $L$ is a finite dimensional
$G$-module and $R$ is a $Q$-stable sub-bimodule in $\Walg_{L}$ of finite codimension.
\end{enumerate}
\end{Prop}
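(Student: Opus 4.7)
The plan is to prove a cycle $(1)\Rightarrow(2)\Rightarrow(3)\Rightarrow(1)$, with the middle implication being immediate (if $\Nil$ is a quotient of $\Walg_L$, take $R=\ker(\Walg_L\twoheadrightarrow\Nil)$; then $\Nil=\Walg_L/R$ is tautologically a $Q$-stable sub of itself, and finite-codimensionality of $R$ is automatic when $\Nil$ is finite dimensional). So the work is in the other two implications, and the key external inputs are Proposition \ref{Prop:ULdagger} (which identifies $\U_{L\dagger}$ with $\Walg_L$) and parts (1), (3), (6) of Theorem \ref{Thm_dagger}.

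For $(1)\Rightarrow(2)$, start with $\M\in\HC_{\overline{\Orb}}(\U)$ and recall that any Harish-Chandra bimodule for $G$ is finitely generated as a right $\U$-module by the $G$-locally finite, finitely generated property (this is where $G$-integrability is used: it lets one enlarge a finite generating set to a $G$-stable one of finite dimension). Pick such a $G$-stable generating subspace $L\subset\M$. The map $L\otimes \U\rightarrow \M$ given by $l\otimes u\mapsto l\cdot u$ is manifestly a right $\U$-module homomorphism; it is also a left $\U$-module homomorphism because in $\U_L=L\otimes\U$ the left action is defined by $\xi\cdot(l\otimes u)=(\xi.l)\otimes u+l\otimes\xi u$, which the map sends to $(\xi.l)u+l\xi u=\xi l u$. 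Thus $\U_L\twoheadrightarrow \M$ is a surjection in $\HC(\U)$. Applying the exact functor $\bullet_\dagger$ and using $\U_{L\dagger}\cong\Walg_L$ from Proposition \ref{Prop:ULdagger} gives a surjection $\Walg_L\twoheadrightarrow\M_\dagger$, which is (2).

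For $(3)\Rightarrow(1)$ we will use Theorem \ref{Thm_dagger}(6) twice. Start with $R\subset \Walg_L=\U_{L\dagger}$, a $Q$-stable sub-bimodule of finite codimension. By Theorem \ref{Thm_dagger}(6) there is $\M''\subset\U_L$ with $\VA(\U_L/\M'')=\overline{\Orb}$ and $\M''_\dagger=R$. In particular $\U_L/\M''\in\HC_{\overline{\Orb}}(\U)$, and by exactness of $\bullet_\dagger$ we have $(\U_L/\M'')_\dagger=\Walg_L/R$. By Theorem \ref{Thm_dagger}(3), $\dim(\U_L/\M'')_\dagger=\mult_{\overline{\Orb}}(\U_L/\M'')<\infty$, so the $Q$-stable sub-bimodule $\Nil\subset\Walg_L/R$ has finite codimension there. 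A second application of Theorem \ref{Thm_dagger}(6), now to the bimodule $\U_L/\M''$, produces $\M'\subset\U_L/\M''$ with $\VA((\U_L/\M'')/\M')=\overline{\Orb}$ and $\M'_\dagger=\Nil$. Since $\M'\subset\U_L/\M''\in\HC_{\overline{\Orb}}(\U)$, we have $\M'\in\HC_{\overline{\Orb}}(\U)$, which is (1).

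The main obstacle, as far as I can see, is purely bookkeeping: making sure that the version of Theorem \ref{Thm_dagger}(6) available to us can be applied to the second step, i.e.\ that $(\U_L/\M'')_\dagger$ is genuinely the $\bullet_\dagger$-image of $\U_L/\M''$ (so that $\Nil$ sits inside $\M_\dagger$ for an actual HC bimodule $\M$) rather than just isomorphic to $\Walg_L/R$ in some abstract sense. Exactness of $\bullet_\dagger$ handles this cleanly. A minor point worth checking is that the ad-$\g$ action on $\U_L$ integrates to $G$ whether or not $G$ is simply connected, but this is clear since $L$ is a $G$-module by hypothesis and $\U$ is itself a Harish-Chandra bimodule for $G$.
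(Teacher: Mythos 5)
Your proof is correct and follows the same route as the paper: $(1)\Rightarrow(2)$ via a $G$-stable generating subspace and Proposition \ref{Prop:ULdagger} plus exactness of $\bullet_\dagger$, $(2)\Rightarrow(3)$ tautologically, and $(3)\Rightarrow(1)$ from Theorem \ref{Thm_dagger}(6). The paper's proof of $(3)\Rightarrow(1)$ is a one-liner citing Theorem \ref{Thm_dagger}(6); you unpack it into two applications of that assertion together with exactness, which is the intended argument spelled out.
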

\begin{proof}
$(1)\Rightarrow (2)$: It is easy to show that $\M$ is a quotient of $\U_L$ for some
finite dimensional $G$-module $L$. Since $\bullet_\dagger: \HC(\U)\rightarrow \HC^Q(\Walg)$ is an exact functor,
(2) follows from Proposition \ref{Prop:ULdagger}.

$(2)\Rightarrow (3)$: This is tautological.

$(3)\Rightarrow (1)$: This follows from assertion (6) of Theorem \ref{Thm_dagger}.
\end{proof}

\subsection{$\bullet_\dagger$ vs $\KFun$}\label{SUBSECTION_dagger_K}
In this subsection we are going to obtain the following result that seems to be of independent
interest. We preserve the notation of Subsection \ref{SUBSECTION_Ocat}.

\begin{Thm}\label{Thm_tensor}
The bifunctors   $(X,M)\mapsto \KFun(X\otimes M), X_\dagger\otimes \KFun(M)$ from $\HC(\U)\times \widetilde{\Wh}^\t(e,\theta)$ to $\widetilde{\OCat}^\t(\theta)$ are isomorphic.
\end{Thm}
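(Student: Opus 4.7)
The plan is to unravel both sides of the claimed isomorphism using the Kazhdan decomposition (Proposition \ref{Prop:10.1}) and the construction of $\KFun$ as push-forward along $\Phi:\U^\wedge\to \W(\Walg)^\wedge=\W^\wedge\widehat{\otimes}\Walg^\wedge$, and to show that tensor product over $\U$ on the Whittaker side becomes tensor product over $\Walg$ on the W-algebra side after passing through $\Phi$.

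First I would verify the left-hand side makes sense: for $X\in\HC(\U)$ and $M\in\widetilde{\Wh}^\t(e,\theta)$, the module $X\otimes_\U M$ is again a generalized Whittaker module. Finite generation is clear; diagonalizability of $\t$ follows from the $\ad\t$-action on $X$ together with that on $M$; and local nilpotence of $\widetilde{\m}_\chi$ on $X\otimes_\U M$ follows from $\ad(\g)$-local finiteness of $X$ combined with local nilpotence of $\widetilde{\m}_\chi$ on $M$ (write $(\xi-\langle\chi,\xi\rangle)(x\otimes m)=[\xi,x]\otimes m+x\otimes(\xi-\langle\chi,\xi\rangle)m$ and iterate).

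Second, using the presentation of $\widetilde{\Wh}^\t(e,\theta)$ and $\widetilde{\OCat}^\t(\theta)$ as topological $\U^\wedge$- and $\Walg^\wedge$-modules with discrete topology (and of $\W(\Walg)^\wedge$-modules via the tautological module $\K[\widetilde{\m}\cap V]$), $\KFun(M)$ is characterized by the identification $\Phi_*(M^\wedge)\cong \K[\widetilde{\m}\cap V]\otimes\KFun(M)$ as $\W(\Walg)^\wedge$-modules, followed by $(\widetilde{\m}\cap V)$-invariants. So I need to identify the $\W(\Walg)^\wedge$-module structure that $\Phi$ puts on $(X\otimes_\U M)^\wedge$.

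Third, the crucial technical input is a bimodule version of the decomposition theorem for $X$ itself, viewed through the twisted Kazhdan completion used in Subsection \ref{SUBSECTION_Ocat}. Starting from a good $Q$-stable filtration on $X$, form the Rees bimodule $X_\hbar$, complete to $X^\wedge_\hbar$, apply $\Phi_\hbar$ to obtain $X^\wedge_\hbar\cong\W^\wedge_\hbar\widehat{\otimes}_{\K[[\hbar]]}\Nil'_\hbar$ with $\Nil_\hbar/(\hbar-1)\Nil_\hbar=X_\dagger$ (this is exactly the construction recalled in Subsection \ref{SUBSECTION_primitive}), then pass to $\K^\times$-finite vectors under the twisted Kazhdan grading and specialize $\hbar=1$. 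The key lemma I would prove is that this yields a canonical isomorphism of $\U^\wedge$-bimodules
\begin{equation*}
X^\wedge\cong \W^\wedge\,\widehat{\otimes}\,X_\dagger^{\wedge}
\end{equation*}
compatible with $\Phi$ on both sides, where $X_\dagger^\wedge$ denotes the completion of $X_\dagger$ with respect to the ideal generated by $\Walg_{>0}$. The argument is parallel to the one establishing Proposition \ref{Prop:10.1} and Proposition \ref{Prop:ULdagger}: both the $\hbar$-completion used to define $\bullet_\dagger$ and the twisted-Kazhdan completion governing $\KFun$ are specializations of the same underlying $\Phi_\hbar$, applied to $G$-invariants.

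Granting this, the computation is direct:
\begin{align*}
\Phi_*\!\bigl((X\otimes_\U M)^\wedge\bigr)
&\;\cong\; X^\wedge\otimes_{\U^\wedge} M^\wedge \\
&\;\cong\; \bigl(\W^\wedge\widehat{\otimes} X_\dagger^\wedge\bigr)\otimes_{\W^\wedge\widehat{\otimes}\Walg^\wedge}\bigl(\K[\widetilde{\m}\cap V]\otimes \KFun(M)\bigr) \\
&\;\cong\; \K[\widetilde{\m}\cap V]\otimes\bigl(X_\dagger\otimes_\Walg \KFun(M)\bigr).
\end{align*}
Taking $(\widetilde{\m}\cap V)$-invariants, and noting $\K[\widetilde{\m}\cap V]^{\widetilde{\m}\cap V}=\K$, extracts the desired natural isomorphism $\KFun(X\otimes_\U M)\cong X_\dagger\otimes_\Walg \KFun(M)$; bifunctoriality in $X$ and $M$ is manifest from the construction.

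The hard part will be the third step: carefully matching the two completions of $X$—the $\hbar$-adic one used to build $X_\dagger$ and the twisted Kazhdan completion that enters $\KFun$—and verifying the flatness/associativity needed to commute the completed tensor products with $\otimes_{\U^\wedge}$. I expect this to reduce to a direct check using $\K^\times$-finiteness and the fact that $\Phi$ is constructed from $\Phi_\hbar$ by restriction to $\K^\times$-finite parts and specialization at $\hbar=1$, so that all identifications are canonical once the good filtration on $X$ has been fixed, and independent of it because $\bullet_\dagger$ is.
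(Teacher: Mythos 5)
Your proposal is essentially the paper's proof. Your ``key lemma'' $X^\wedge\cong\W^\wedge\widehat\otimes X_\dagger^\wedge$ is exactly what the paper establishes, split into two pieces: Proposition~\ref{Lem:11.2} first proves that $\Phi_*(X^\wedge)\cong\W^\wedge\widehat\otimes\Phi_*(X^\wedge)^{\ad V}$ directly (via a technical Weyl-algebra lemma, Lemma~\ref{Lem:11.2.1}, about bimodules over a completed Weyl algebra decomposing as $\W^\wedge\widehat\otimes Y^{\ad V}$), and then Lemma~\ref{Lem:11.3} identifies $\Phi_*(X^\wedge)^{\ad V}$ with $(X_\dagger)^\wedge$ by the matching of the twisted Kazhdan completion with the Rees/$\hbar$-completion that you sketch; the paper also proves the step $X\otimes_\U M\cong X^\wedge\widehat\otimes_{\U^\wedge}M$ (Lemma~\ref{Lem:11.1}) by resolving $X$ by bimodules $\U_L$, whereas you invoke it implicitly. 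Your only real gap is that the decomposition of the completed bimodule $X^\wedge$ does not follow formally from $\Phi_\hbar$ and $\K^\times$-finiteness alone --- the content of Lemma~\ref{Lem:11.2.1} (an explicit contraction argument with the operators $\alpha,\beta$) is what forces the $\widetilde{\m}_\chi$-adically completed bimodule to split off the full $\W^\wedge$ factor, a point you treat as automatic; otherwise the strategy and every intermediate identification are the ones the paper uses.
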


We will give a proof after a series of lemmas.

For $X\in \HC(\U)$ we set $X^\wedge:=\varprojlim_{n\rightarrow \infty} X/X\widetilde{\m}_\chi^n$. Since
$\widetilde{\m}$ consists of nilpotent elements, we see that $X^\wedge$ has a natural structure of a $\U^\wedge$-bimodule (compare with \cite{Wquant}, Subsection 3.2, the construction of $\U^\wedge$). Moreover, $X^{\wedge}$ becomes a topological $\U^\wedge\widehat{\otimes}\U^{\wedge,opp}$-module (the word ``topological'' here means that
the structure action map is continuous). Also we remark that the topology on $X$ is complete
and separated.

\begin{Lem}\label{Lem:11.1}
Let $X\in \HC(\U), M\in \widetilde{\Wh}^\t(e,\theta)$.
\begin{enumerate}
\item The natural map $\U^\wedge\otimes_\U X\rightarrow X^\wedge$ is an isomorphism.
\item The natural map $X\otimes_\U M\rightarrow X^\wedge\otimes_{\U^\wedge} M$ is an isomorphism.
\end{enumerate}
\end{Lem}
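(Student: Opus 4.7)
The plan is to treat the two parts in order, with part (1) doing the real work and part (2) following by a direct reduction. Throughout, write $I := \widetilde{\m}_\chi$ and recall that $I$ consists of ad-locally-nilpotent elements of $\U$, which in turn guarantees that $\U^\wedge$ carries compatible algebra and topological right $\U$-module structures, and that the subspaces $XI^n \subset X$ define a separated, well-behaved ``right $I$-adic'' topology on any right $\U$-module.

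For part (1), the key input is that a Harish-Chandra bimodule $X$ is finitely generated as a right $\U$-module, so there exists a presentation $\U^m \xrightarrow{A} \U^n \to X \to 0$ of right $\U$-modules. I would then argue on two sides:
\begin{itemize}
\item Tensoring with $\U^\wedge$ over $\U$ from the left gives $(\U^\wedge)^m \xrightarrow{A} (\U^\wedge)^n \to \U^\wedge \otimes_\U X \to 0$, since $\otimes_\U$ is right exact and commutes with finite direct sums.
\item Completing with respect to $I$ gives $(\U^\wedge)^m \xrightarrow{A} (\U^\wedge)^n \to X^\wedge \to 0$; here finite direct sums commute with the inverse limit, and right exactness of $(\cdot)^\wedge$ on finitely generated right $\U$-modules reduces to a Mittag-Leffler verification: the surjective system $\{(\ker A)/(\ker A \cap (\U I^k)^n)\}$ has surjective transition maps, because each $(\U I^k)^n$ is cofinal with $(\ker A) \cap (\U I^{k'})^n$ for $k' \gg k$ by the Artin-Rees type estimate coming from ad-nilpotency.
\end{itemize}
The two resulting cokernels coincide, giving the isomorphism $\U^\wedge \otimes_\U X \xrightarrow{\sim} X^\wedge$.

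For part (2), the plan is to filter $M$ by the submodules $M_n := \{m \in M : I^n m = 0\}$, which exhaust $M$ by definition of $\widetilde{\Wh}^\t(e,\theta)$. Because tensor products commute with filtered colimits,
\begin{equation*}
X \otimes_\U M = \varinjlim_n \, X \otimes_\U M_n, \qquad X^\wedge \otimes_{\U^\wedge} M = \varinjlim_n \, X^\wedge \otimes_{\U^\wedge} M_n.
\end{equation*}
For each $n$, the standard identification $Y \otimes_R R/J = Y/YJ$ (valid for any right module $Y$ and left ideal $J$) yields $X \otimes_\U M_n = (X/XI^n) \otimes_{\U/\U I^n} M_n$ and, using part (1), $X^\wedge \otimes_{\U^\wedge} M_n = (X^\wedge/X^\wedge I^n) \otimes_{\U^\wedge/\U^\wedge I^n} M_n$. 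Since by construction of the completions $\U^\wedge/\U^\wedge I^n = \U/\U I^n$ and $X^\wedge/X^\wedge I^n = X/XI^n$, the two terms in the colimits are canonically identified, and passing to the limit proves (2).

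The main obstacle is the right exactness of the $I$-adic completion in part (1): the subspace $I \subset \U$ is not a two-sided ideal, so one cannot cite the standard theorem for $J$-adic completion of modules over a noetherian ring with $J$ two-sided. What rescues the argument is that every element of $I$ acts ad-locally-nilpotently on $\U$ and on $X$, which provides a substitute for the usual Artin-Rees lemma and gives the Mittag-Leffler condition needed to conclude. Verifying this carefully, and in particular checking that the closure of $XI^n$ inside $X^\wedge$ agrees with $X^\wedge I^n$, is the technical core of the proof.
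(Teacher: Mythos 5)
Your argument differs substantially from the paper's, and it has several genuine gaps. The paper resolves $X$ not by free one-sided $\U$-modules but by the \emph{bimodules} $\U_{L_i}=L_i\otimes\U$ of Subsection \ref{SUBSECTION_UL}, and this choice is not cosmetic. The functor $\U^\wedge\otimes_\U(-)$ contracts against the \emph{left} $\U$-module structure of $X$, while $X^\wedge=\varprojlim X/X\widetilde{\m}_\chi^n$ is built from the \emph{right} $\widetilde{\m}_\chi$-adic filtration; the whole content of part (1) is the interaction between the two sides, and a one-sided free presentation destroys it. Concretely, your displayed step ``tensoring with $\U^\wedge$ over $\U$ from the left gives $(\U^\wedge)^m\to(\U^\wedge)^n\to\U^\wedge\otimes_\U X\to0$'' does not typecheck: you chose a presentation of $X$ by \emph{right} $\U$-modules, yet $\U^\wedge\otimes_\U(-)$ is a functor on \emph{left} $\U$-modules. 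Passing to a left presentation instead, the differentials become right multiplications by matrices over $\U$, which do not preserve the subspaces $(\U\widetilde{\m}_\chi^k)^n$ on the nose, so the completion side breaks there. The paper's $\U_{L_i}$-resolution avoids both problems at once: the differentials are $\U$-\emph{bi}module maps (hence right-module maps, automatically filtered on the side that matters for $\bullet^\wedge$), and for $X=\U_L$ both assertions are elementary, the key point for (1) being precisely that $\widetilde{\m}$ acts nilpotently on the finite-dimensional module $L$. Right exactness of the two functors plus the five lemma then finish the proof. Finally, the ``Artin--Rees type estimate'' you invoke cannot be waved at: $\widetilde{\m}_\chi$ is not a one- or two-sided ideal of $\U$, so the usual noetherian Artin--Rees theorem does not apply; the substitute is the $\ad$-local-finiteness built into $\HC(\U)$, and a $\U_{L_i}$-resolution is exactly what packages this cleanly.

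Part (2) has a separate, more elementary gap. The subspaces $M_n=\{m\in M:\widetilde{\m}_\chi^nm=0\}$ are \emph{not} $\U$-submodules of $M$: already for $\g=\sl_2$ with $\widetilde{\m}_\chi=\K(f-1)$, a Whittaker vector $m$ with $(f-1)m=0$ lies in $M_1$, yet $(f-1)(em)=-hm\neq0$, so $em\notin M_1$. Hence $X\otimes_\U M_n$ is undefined and the filtered-colimit decomposition of $X\otimes_\U M$ does not make sense. In the same vein, $\U\widetilde{\m}_\chi^n$ is only a left ideal, so $\U/\U\widetilde{\m}_\chi^n$ is not a ring and the expression $\otimes_{\U/\U I^n}$ is unavailable. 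In the paper, (2) for $X=\U_L$ is immediate -- $\U_L\otimes_\U M=L\otimes M=\U_L^\wedge\otimes_{\U^\wedge}M$ since $\U_L$ and $\U_L^\wedge$ are free right modules with basis $L$ -- and the general case again follows from the bimodule resolution together with right exactness, with no need to filter $M$ at all.
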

We remark that, since the topology on $M$ is discrete, $X^\wedge\otimes_{\U^\wedge} M$ is the same
as $X^\wedge\widehat{\otimes}_{\U^\wedge} M$.
\begin{proof}
Let us check (1) for $X=\U_L$, where $L$ is a  finite dimensional $G$-module. Here the assertion boils down to the claim that
the filtrations $L\otimes\widetilde{\m}_\chi^k$ and $\widetilde{\m}_\chi^k \U_L$ on
$\U_L=L\otimes\U$ are compatible. This easily follows from the fact that $\widetilde{\m}$ acts on
$L$ by nilpotent endomorphisms. Also for $X=\U_L$ (2) is clear.

In general, $X$ has a resolution consisting of modules of the form $\U_{L_i}$, where
$L_i$ is a finite dimensional $G$-module. Since all functors in consideration are
right-exact, both assertions follow from the 5-lemma.
\end{proof}

Recall the isomorphism $\Phi:\U^\wedge\xrightarrow{\sim} \W(\Walg)^\wedge$.
The $\W(\Walg)^\wedge$-bimodule $\Phi_*(X^\wedge)$ is complete in the $\Phi(\widetilde{\m}_\chi)=[(\widetilde{\m}\cap V)\otimes 1+1\otimes \Walg_{>0}]$-adic topology.
Therefore the map \begin{equation}\label{eq:11.1}\W^\wedge\widehat{\otimes} \Phi_*(X^\wedge)^{\ad V}\rightarrow \Phi_*(X^\wedge)\end{equation} is well-defined.

\begin{Prop}\label{Lem:11.2}
The map (\ref{eq:11.1}) is an isomorphism for any $X\in \HC(\U)$.
\end{Prop}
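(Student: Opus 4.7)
The plan is to reduce the statement to the base case $X = \U_L$ for a finite-dimensional $G$-module $L$ by a standard presentation argument, and then to handle that case by transferring the $\hbar$-deformed decomposition of Proposition \ref{Prop:10.1} from the Kazhdan completion at $Gx$ to the $\widetilde{\m}_\chi$-adic completion, using the same procedure of \cite{Ocat}, Section 5, that was used to construct $\Phi$ from $\Phi_\hbar$ in the first place.

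For the base case, I would start from the Rees bimodule $R_\hbar(\U_L)$ equipped with the filtration $\F_i\U_L = L\otimes \F_i\U$. Restricting the isomorphism $\Phi_\hbar$ of Proposition \ref{Prop:10.1} to the $G$-invariants of the $L$-isotypic component gives (as in the proof of Proposition \ref{Prop:ULdagger}) an isomorphism of $\K[[\hbar]]$-bimodules
$$R_\hbar(\U_L)^\wedge \;\cong\; \W^\wedge_\hbar \,\widehat{\otimes}_{\K[[\hbar]]}\, \Hom_G(L^*, \K[X]^\wedge_{Gx}[[\hbar]]),$$
the second factor being precisely the $\ad V$-invariants of the left-hand side. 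To descend this Rees/Kazhdan statement to the one at hand, I would follow the procedure of \cite{Ocat}, Section 5: introduce the twisted Kazhdan $\K^\times$-action, pass to $\K^\times$-finite vectors, specialize $\hbar = 1$, and extend by continuity to the $\widetilde{\m}_\chi$-adic completion. Since $\ad V$ commutes with each of these operations, the decomposition descends to the claimed iso (\ref{eq:11.1}) for $X=\U_L$.

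For the general case, any $X \in \HC(\U)$ admits a presentation $\U_{L_1} \to \U_{L_0} \to X \to 0$: by the HC condition, $X$ is generated over $\U$ by a finite-dimensional $\ad\g$-stable subspace $L_0 \subset X$, and one then iterates on the kernel. By Lemma \ref{Lem:11.1}(1), applying $\bullet^\wedge$ preserves right-exactness, so $\Phi_*(\U_{L_1}^\wedge) \to \Phi_*(\U_{L_0}^\wedge) \to \Phi_*(X^\wedge) \to 0$ is exact. Both sides of (\ref{eq:11.1}) are functorial in $X$, and the iso established for $\U_L$-type bimodules transfers to $X$ via the 5-lemma, provided the composite functor $Y \mapsto \W^\wedge \widehat{\otimes} \Phi_*(Y^\wedge)^{\ad V}$ is right-exact on $\HC(\U)$. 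The cleanest way to verify this is to carry the whole argument out at the Rees/Kazhdan level, where right-exactness of the analogous composite is manifest from Proposition \ref{Prop:10.1} together with the positivity of the Kazhdan filtration, and only pass to $\K^\times$-finite vectors and specialize $\hbar=1$ at the very end.

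The main obstacle is the descent step in the base case: one must track that the twisted Kazhdan $\K^\times$-action, the passage to $\K^\times$-finite vectors, the specialization $\hbar = 1$, and the extension by continuity to the $\widetilde{\m}_\chi$-adic completion all interact correctly with the $\ad V$-invariants functor. This is the direct bimodule analog of the construction of $\Phi$ carried out in \cite{Ocat}, Section 5 for the algebra $\U^\wedge$ itself, and I expect it to proceed along identical lines with no essentially new difficulties; the only delicate points are ensuring that the twisted-Kazhdan weights on $\Phi_*(X^\wedge_\hbar)^{\ad V}$ are bounded below (so that $\K^\times$-finite vectors are dense in the appropriate sense) and that the embedding of the $\hbar=1$ quotient into the $\widetilde{\m}_\chi$-adic completion has dense image, both of which follow from the corresponding facts for the algebra and the filtered structure on $X$.
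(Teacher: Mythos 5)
Your approach is genuinely different from, and substantially more involved than, the paper's. The paper's proof is a two-line reduction to a standalone structural lemma (Lemma \ref{Lem:11.2.1}): for any complete, separated topological $\W^\wedge\widehat{\otimes}\W^{\wedge,opp}$-module $Y$, the natural map $\W^\wedge\widehat{\otimes}Y^{\ad V}\to Y$ is an isomorphism. That lemma is proved directly by explicit projection operators
$\alpha(y)=\sum_k\tfrac{1}{k!}\ad(p)^k(y)q^k$ and $\beta(y)=\sum_k\tfrac{(-1)^k}{k!}p^k\ad(q)^k(y)$
after a reduction to a rank-one Weyl algebra; no filtration, no Rees module, no Kazhdan action, and no HC structure on $X$ enters. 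The only input from the HC setup is the observation that $X^\wedge$ is a complete, separated topological $\U^\wedge$-bimodule, hence (via $\Phi$) a complete, separated topological $\W^\wedge$-bimodule. You never take this conceptual step: you keep treating $X^\wedge$ as an object attached to $X\in\HC(\U)$ and try to drag the decomposition through the $\hbar$-deformation from Proposition \ref{Prop:10.1}, rather than noticing that only the topological bimodule structure matters.

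There is also a genuine gap in your reduction to the base case $X=\U_L$. For the five-lemma argument you need the functor $X\mapsto\W^\wedge\widehat{\otimes}\Phi_*(X^\wedge)^{\ad V}$ to be right exact. But $(\cdot)^{\ad V}$ is an invariants functor and is a priori only left exact; and the paper derives right exactness of $\bullet_\ddag:=\Phi_*((\cdot)^\wedge)^{\ad V}$ \emph{from} the present proposition (see the proof of Lemma \ref{Lem:11.3}, where right exactness of $\bullet_\ddag$ is cited as a consequence of Lemma \ref{Lem:11.2}). So, as stated, your argument is circular. You recognize this and propose to establish right exactness at the Rees/Kazhdan level instead and then descend; that may be feasible, but it amounts to re-running the construction of $\Phi$ and $\bullet_\dagger$ at the bimodule level and verifying in detail that the twisted Kazhdan $\K^\times$-action, the passage to $\K^\times$-finite vectors, the $\hbar=1$ specialization, and the $\widetilde{\m}_\chi$-adic completion all commute with taking $\ad V$-invariants --- exactly the delicate verifications you defer. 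The paper's route via Lemma \ref{Lem:11.2.1} bypasses all of this and is both shorter and more robust.
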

In  the proof we will need the following technical lemma.
\begin{Lem}\label{Lem:11.2.1}
Let $V$ be a symplectic vector space, $U\subset V$ be a lagrangian subspace, $\W$ be the Weyl algebra
of $V$, $\W^\wedge:=\varprojlim \W/\W U^k$. Further, let $Y$ be a topological $\W^\wedge\widehat{\otimes}\W^{\wedge,opp}$-module
such that the topology on $Y$ is complete and separated.
Then a natural map $\W^\wedge\widehat{\otimes}Y^{\ad V}\rightarrow Y$ is an isomorphism.
\end{Lem}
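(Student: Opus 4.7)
The strategy is to realize $\W^\wedge\widehat\otimes\W^{\wedge,opp}$ as a completion of the Weyl algebra of the doubled symplectic space $V\oplus V^-$ (where $V^-$ is $V$ equipped with $-\omega$), in which $\ad V$ appears as one of a pair of transverse Lagrangians, so that the claim becomes a topological Stone--von Neumann-type assertion.

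First I would identify $\W\otimes\W^{opp}\cong\W_{V\oplus V^-}$ in the natural way. Under this identification, $\alpha(V):=\{v\otimes 1-1\otimes v:v\in V\}$ is a Lagrangian of $V\oplus V^-$ and coincides with the image of $\ad V$, while $\beta(V):=\{v\otimes 1+1\otimes v:v\in V\}$ is a complementary Lagrangian with commutator pairing $[\alpha(v),\beta(w)]=2\omega(v,w)$. A direct check shows that the completion $\W^\wedge\widehat\otimes\W^{\wedge,opp}$ corresponds to the completion of $\W_{V\oplus V^-}$ along the subspace $\beta(U)\oplus\alpha(U)\subset V\oplus V^-$, which is itself Lagrangian because $U\subset V$ is.

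Next I would reduce to the case $\dim V=2$ by decomposing $V=V_1\oplus\cdots\oplus V_n$ as a symplectic orthogonal sum of $2$-dim subspaces compatible with $U$, so that $U=\bigoplus_i(U\cap V_i)$. The tensor factorization $\W\cong\bigotimes_i\W_{V_i}$ is compatible with the relevant completions, and iterating the $2$-dim case gives the general result.

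For the $2$-dim case with $V=\K u\oplus\K u'$, $[u',u]=1$, $U=\K u$, the crucial technical input is that $\ad u=L_u-R_u$ is topologically nilpotent on $Y$: the topology makes both $L_u^k$ and $R_u^k$ tend to $0$, and since they commute, binomial expansion yields $(\ad u)^k\to 0$. Together with the Heisenberg relations $[L_{u'},\ad u]=1$ and $[L_u,\ad u']=-1$, this permits the construction of an explicit projection $Y\to Y^{\ad V}$ by the following two-stage procedure. First, one projects onto $Y^{\ad u}$ via the (formally convergent) series $\pi_1(y)=\sum_{k\geq 0}\tfrac{1}{k!}L_{u'}^k(\ad u)^k y$; a telescoping calculation using $[\ad u,L_{u'}^k]=-kL_{u'}^{k-1}$ verifies $\ad u\cdot\pi_1(y)=0$. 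Second, one projects from $Y^{\ad u}$ onto $Y^{\ad V}$ by exploiting the Heisenberg duality $[L_u,\ad u']=-1$ to extract Taylor coefficients of elements of $Y^{\ad u}$ regarded as formal series in the variable $L_u$ with $\ad V$-invariant coefficients. Composing with the structural map $\W^\wedge\widehat\otimes Y^{\ad V}\to Y$ and verifying that the composition in either order is the identity gives the desired isomorphism. The main obstacle is that $\ad u'$ is in general \emph{not} topologically nilpotent (easily seen on $Y=\W^\wedge$ itself), so the second-stage projection cannot be effected by a topologically convergent series of the same type as $\pi_1$; instead, it must be carried out algebraically via the Heisenberg duality. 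Establishing that the combined procedure is well-defined and continuous in the topology of $Y$ is the technical heart of the argument.
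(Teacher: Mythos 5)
Your overall strategy — reduce to the two-dimensional case and build an explicit projection onto $Y^{\ad V}$ — matches the paper's, and your first operator $\pi_1(y)=\sum_k\frac1{k!}L_{u'}^k(\ad u)^k y$ is, up to the sign discrepancy between your convention $[u',u]=1$ and the paper's $[q,p]=1$, exactly the paper's operator $\beta(y)=\sum_k\frac{(-1)^k}{k!}p^k\ad(q)^k(y)$.

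However, there is a genuine gap in the second stage, and it sits precisely where you flag "the technical heart." You claim the projection from $Y^{\ad u}$ onto $Y^{\ad V}$ "cannot be effected by a topologically convergent series of the same type as $\pi_1$" because $\ad u'$ is not topologically nilpotent, and so you fall back on an unspecified algebraic extraction of Taylor coefficients. This impossibility claim is wrong, and that is exactly what the paper exploits: the second projection \emph{is} given by a convergent series, namely
\[
\alpha(y)=\sum_{k\geq 0}\frac{(-1)^k}{k!}\,\ad(u')^k(y)\,u^k,
\]
where the crucial feature is that the $k$-th term is dampened by \emph{right} multiplication by $u^k$ rather than by any power of $\ad u'$. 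Right multiplication by $u^k$ tends to $0$ in the module topology (since $u\in U$), which forces convergence even though $\ad u'$ is not nilpotent; it is the exact mirror image of why $\pi_1$ converges (there left multiplication by $u'^k$ is not contracting, but $(\ad u)^k$ is). One then checks $[u',\alpha(y)]=0$ and $[\alpha,\pi_1]=0$ by the same telescoping computation you sketched for $\pi_1$, so $\alpha\circ\pi_1$ is a continuous projection onto $Y^{\ad V}$. You should also note that having the projection is not by itself enough: to prove surjectivity of $\W^\wedge\widehat{\otimes}Y^{\ad V}\to Y$ one must write an arbitrary $y$ as a convergent sum of terms $u'^k\,z_{k,l}\,u^l$ with $z_{k,l}\in Y^{\ad V}$, which the paper does via the double series
\[
\sum_{k,l\geq 0}\frac{1}{k!\,l!}\,u'^k\,(\alpha\circ\pi_1)\bigl(\ad(u)^k\ad(u')^l y\bigr)\,u^l,
\]
which converges for the same two-sided reason, and which a direct expansion shows equals $y$. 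Your phrase "verifying that the composition in either order is the identity" is hiding exactly this step. Absent the construction of $\alpha$ and this reconstruction formula, the proof is incomplete.
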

\begin{proof}
First of all, we note that all $\W^\wedge$-sub-bimodules
in  $\W^{\wedge}\widehat{\otimes}Y^{\ad V}$ have the form $\W^\wedge\widehat{\otimes}Y_0$ for $Y_0\subset Y^{\ad V}$. This can be
proved following the lines of the proof of Lemma 3.4.3 in \cite{Wquant}. It follows that the map (\ref{eq:11.1})
is injective.

It remains to prove that the map (\ref{eq:11.1}) is surjective. Let $q_1,\ldots, q_k$ denote a basis
in the lagrangian subspace $U$. Choose a complimentary lagrangian subspace
$U^*\subset V$ and let $p_1,\ldots,p_k$ denote the dual basis to $q_1,\ldots, q_k$.
So we have $[q_i,p_j]=\delta_{ij}$. Let us reduce the proof to the case $k=1$.

Consider the Weyl algebras $\W_0$ of the span $V_0$ of $p_1,q_1$ and $\W^0$ of the span $V^0$ of $p_2,\ldots,p_k,$ $q_2,\ldots,q_k$.
Let $\W_0^\wedge,\W^{0\wedge}$ be their completions with respect to the subspaces spanned
by $q_1$ and by $q_2,\ldots,q_k$. Then, of course, $\W^\wedge= \W_0^\wedge\widehat{\otimes}\W^{0\wedge}$.
Suppose now that the natural map $\W_0^\wedge\widehat{\otimes} Y^{\ad V_0}\rightarrow Y$
is bijective. The space $Y^{\ad V_0}$ is a closed $\W^0$-subbimodule of $Y$ and, in particular, a
$\W^{0\wedge}\widehat{\otimes}\W^{0\wedge opp}$-module. Also the topology on $Y^{\ad V_0}$
induced from $Y$ is again complete and separated. So it is enough
to show the claim of the lemma for $k=1$. Below we write $p,q$ instead of $p_1,q_1$.
We remark that $\W_0^\wedge q^i$ form a fundamental system of neighborhoods of $0$
in $\W_0^\wedge$.

Define the following two linear operators on $Y$:
\begin{equation}\label{eq:11.2}
\begin{split}
&\alpha(y):= \sum_{k=0}^\infty \frac{1}{k!}\ad(p)^k(y)q^k,\\
&\beta(y):= \sum_{k=0}^\infty \frac{(-1)^k}{k!} p^k \ad(q)^k(y).
\end{split}
\end{equation}
From the conditions on the topology on $Y$ it follows that the operators $\alpha,\beta$
are well-defined and continuous.  We remark that for $y_0\in Y^{\ad V}$ we have
$\alpha(p^i y_0 q^j)=\delta_{j,0}p^i y_0 q^j$ and $\beta(p^i y_0 q^j)=\delta_{i,0}p^i y_0 q^j$.
These equalities actually motivated the definition.


It is checked directly that $[\alpha,\beta]=0$ and  $[p,\alpha(y)]=[q,\beta(y)]=0$ for all $y\in Y$.
In particular, $\operatorname{im}(\alpha\circ\beta)\subset Y^{\ad V}$.
Further, similarly to the previous paragraph, the series \begin{equation}\label{eq:series1}\sum_{k,l=0}^\infty \frac{1}{k!l!}p^k
\alpha\circ\beta\left(\ad(q)^k \ad(p)^ly\right)q^l\end{equation} converges for any $y\in Y$. Moreover, expanding
  $\alpha$ and $\beta$ in (\ref{eq:series1}) (and getting a summation over 4 indexes) one gets that this sum coincides with $y$.
So (\ref{eq:series1}) is a presentation of $y$ as an element of
$\W^\wedge\widehat{\otimes}Y^{\ad V}\subset Y$.
\end{proof}
\begin{proof}[Proof of Proposition \ref{Lem:11.2}]
Set $U:=\widetilde{\m}\cap V$.
Consider $Y:=X^\wedge$ as an $\W^\wedge$-bimodule by means of the isomorphism $\U^\wedge\cong \W^\wedge\widehat{\otimes}
\Walg^\wedge$. Since $Y$ is a complete and separated topological $\U^{\wedge}$-bimodule, it is also
a complete and separated topological $\W^\wedge$-module. Our claim now follows from Lemma \ref{Lem:11.2.1}.
\end{proof}

For $X\in \HC(\U)$ we set $X_{\ddag}:=\Phi_*(X^\wedge)^{\ad V}$. This is a topological $T$-equivariant bimodule over $\Walg^\wedge$.
On the other hand, pick $Y\in \HC(\Walg)$ and consider its completion $Y^\wedge:=\varprojlim_{n\rightarrow\infty}
Y/Y(\Walg_{>0})^n$. This is also a topological $T$-equivariant $\Walg^\wedge$-bimodule.

\begin{Lem}\label{Lem:11.3}
The functors $\bullet_{\ddag}$ and $(\bullet_\dagger)^\wedge$ from $\HC(\U)$ to the category
of topological $T$-equivariant $\Walg^\wedge$-bimodules are isomorphic.
\end{Lem}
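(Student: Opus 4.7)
My plan has three steps: construct a natural morphism $\eta_X\colon (X_\dagger)^\wedge \to X_\ddag$, check it is an isomorphism for $X = \U_L$, and extend to arbitrary $X \in \HC(\U)$ by right-exactness.

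For the natural transformation I would follow the construction of $\bullet_\dagger$: pick a good filtration on $X$, form $X_\hbar$, use Proposition \ref{Prop:10.1} to decompose its Kazhdan completion as $X_\hbar^\wedge \cong \W_\hbar^\wedge \widehat{\otimes} \Nil'_\hbar$, take the $\K^\times$-finite subspace $\Nil_\hbar \subset \Nil'_\hbar$, and set $X_\dagger = \Nil_\hbar/(\hbar-1)$. Mimicking the passage from $\Phi_\hbar$ to $\Phi$ recalled at the end of Subsection \ref{SUBSECTION_Ocat} but with coefficients in $X$ (i.e.\ using a twisted Kazhdan action on $X_\hbar^\wedge$ so that the $\K^\times$-finite vectors modulo $\hbar-1$ embed into the $\widetilde{\m}_\chi$-adic completion $X^\wedge$), I would produce a natural embedding $X_\dagger \hookrightarrow X_\ddag$ realizing $X_\dagger$ as the $\K^\times$-finite part of $X_\ddag$ under this twisted Kazhdan action. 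Since $X_\ddag$ is complete and separated in the $\Walg_{>0}$-adic topology, this embedding extends uniquely to a continuous morphism $\eta_X$ of topological $T$-equivariant $\Walg^\wedge$-bimodules.

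To verify that $\eta_X$ is an isomorphism on $X = \U_L$, I would use Proposition \ref{Prop:ULdagger} to identify $(\U_L)_\dagger$ with $\Walg_L$, so that $((\U_L)_\dagger)^\wedge$ is just the $\Walg_{>0}$-adic completion $\Walg_L^\wedge$. On the other hand, Lemma \ref{Lem:11.1}(1) gives $(\U_L)^\wedge = \U^\wedge \otimes_\U \U_L$, and transporting the argument of Proposition \ref{Prop:ULdagger} through $\Phi$ rather than through $\Phi_\hbar$ yields $\Phi_*((\U_L)^\wedge) \cong \W^\wedge \widehat{\otimes} \Walg_L^\wedge$. Taking $\ad V$-invariants and using Lemma \ref{Lem:11.2.1} (together with the observation that $(\W^\wedge)^{\ad V} = \K$) then gives $(\U_L)_\ddag \cong \Walg_L^\wedge$, and tracing the definitions identifies $\eta_{\U_L}$ with this isomorphism.

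For arbitrary $X$ I would argue by right-exactness. The functor $\bullet_\dagger$ is exact by Theorem \ref{Thm_dagger}(1), the $\Walg_{>0}$-adic completion is exact on finitely generated Harish-Chandra $\Walg$-bimodules, and $X \mapsto X^\wedge$ is right-exact by Lemma \ref{Lem:11.1}(1); finally, $X^\wedge \mapsto X_\ddag$ is exact on the relevant objects because, on the image of Proposition \ref{Lem:11.2}, taking $\ad V$-invariants is quasi-inverse to $Z \mapsto \W^\wedge \widehat{\otimes} Z$ (Lemma \ref{Lem:11.2.1}). Choosing a presentation $\U_{L_1} \to \U_{L_0} \to X \to 0$ by bimodules of the form $\U_L$ and applying the 5-lemma, the $\U_L$ case propagates to all $X \in \HC(\U)$. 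The main obstacle is the first step: one must carefully track the twisted Kazhdan action on $X^\wedge$ induced via $\Phi$ so that its $\K^\times$-finite part really coincides with the image of $X_\dagger$, mirroring the passage from $\Phi_\hbar$ to $\Phi$ in Subsection \ref{SUBSECTION_Ocat} but now with module-valued coefficients.
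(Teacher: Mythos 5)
Your proposal reproduces the paper's argument essentially verbatim: construct the natural map $(\bullet_\dagger)^\wedge \to \bullet_\ddag$ via the twisted Kazhdan action and the $\heartsuit$-intermediate step, verify it on $\U_L$ via Proposition~\ref{Prop:ULdagger} and the isomorphism $\Phi$, then propagate to arbitrary $X \in \HC(\U)$ by right-exactness and a presentation by $\U_L$'s. This is the same approach and the same key steps as in the paper.
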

\begin{proof}
We start by constructing a natural transformation $(\bullet_\dagger)^\wedge\rightarrow \bullet_\ddag$.
Pick $X\in \HC(\U)$ and choose a finite dimensional  $\ad(\g)$-submodule  $X_0\subset X$ generating
$X$ as a left (or as a right) $\U$-module. Define the filtration on $X$ using
$X_0$ as in the construction of $\bullet_\dagger $. Consider the $\W^\wedge_\hbar\widehat{\otimes}_{\K[[\hbar]]}\Walg^\wedge_\hbar$-bimodule $X_\hbar^\wedge$. We have the twisted Kazhdan action of $\K^\times$
on $X_\hbar^\wedge$, compare with the end of Subsection \ref{SUBSECTION_Ocat}.
So we can consider the subspace $(X_\hbar^\wedge)_{\K^\times-fin}$ of $\K^\times$-finite vectors
in $X_\hbar^\wedge$.  Set $$X^\heartsuit:=(X_\hbar^\wedge)_{\K^\times-fin}/(\hbar-1)(X_\hbar^\wedge)_{\K^\times-fin}.$$
It is easy to see that $X^\heartsuit=\U^{\heartsuit}\otimes_{\U}X$.
So we get that the natural homomorphism
$X\rightarrow X^\wedge$ extends to $X^\heartsuit\rightarrow X^\wedge$.

Let us show that there is a natural homomorphism $X_\dagger\rightarrow (X^\heartsuit)^{\ad V}$.
Indeed, $T$ acts locally finitely on the space of $\K^\times$-finite (with respect to the usual Kazhdan action)
elements in $(X^\wedge_\hbar)^{\ad V}$ so this space of $\K^\times$-finite elements is included into  $[(X_\hbar^\wedge)_{\K^\times-fin}]^{\ad V}$. The image of the induced homomorphism $$X_{\dagger}=
[(X^\wedge_\hbar)^{\ad V}]_{\K^\times-fin}/(\hbar-1)[(X^\wedge_\hbar)^{\ad V}]_{\K^\times-fin}\rightarrow (X_\hbar^\wedge)_{\K^\times-fin}/(\hbar-1)(X_\hbar^\wedge)_{\K^\times-fin}=X^\heartsuit$$ lies in
$(X^\heartsuit)^{\ad V}$.

So we have constructed  a   homomorphism $X_\dagger\rightarrow X_\ddag$. From the construction
it follows that this homomorphism is functorial. Since $X_\ddag$ is complete with respect to the
$\Walg_{>0}$-adic topology, this homomorphism extends to $X_{\dagger}^\wedge\rightarrow X_{\ddag}$.
We are going to show that the corresponding natural transformation $\bullet_{\dagger}^\wedge\rightarrow \bullet_\ddag$ is an isomorphism.

Both functors are right exact (for $\bullet_\ddag$ this follows from Lemma \ref{Lem:11.2}).
Similarly to the proof of
Lemma \ref{Lem:11.1}, it is enough to  show that $(\U_{L\dagger})^{\wedge}\cong (\U_L)_{\ddag}$
or, alternatively, that the map \begin{equation}\label{eq:iso}\W(\U_{L\dagger})^\wedge \rightarrow \U_L^\wedge\end{equation}
induced by $(\U_{L\dagger})^\wedge\rightarrow \U_{L\ddag}$ is an isomorphism.
Recall (Proposition \ref{Prop:ULdagger}) that $\U_{L\dagger}=\Walg_L$.

The proof of Proposition \ref{Prop:ULdagger} implies that $\Phi_\hbar$ induces
an isomorphism $(\U_L)^\heartsuit\rightarrow \W(\Walg_L)^\heartsuit$.  Analogously to \cite{Wquant},
Subsection 3.2, we see that
$\W(\Walg_L)^\wedge$  is the completion of $\W(\Walg_L)^\heartsuit$  in the $[(\widetilde{\m}\cap V)\otimes 1+1\otimes \Walg_{>0}]$-adic topology. Similarly, $\U_L^\wedge$ is the completion of
$\U_L^\heartsuit$ in the $\widetilde{\m}_\chi$-adic topology. So
the isomorphism $\W(\Walg_L)^\heartsuit\rightarrow\U_L^\heartsuit$ extends
to a topological isomorphism   $\W(\Walg_L)^\wedge\rightarrow
\U_L^\wedge$.
This isomorphism is nothing else but (\ref{eq:iso}).
\end{proof}

\begin{proof}[Proof of Theorem \ref{Thm_tensor}]
By Lemma \ref{Lem:11.1}, $X\otimes_\U M=X^\wedge\widehat{\otimes}_{\U^\wedge}M$.
We have (see the discussion on the construction of $\KFun$)
\begin{align*}
\Phi_*(X^\wedge\widehat{\otimes}_{\U^\wedge}M)=\W^\wedge(X_{\ddag})\widehat{\otimes}_{\W^\wedge(\Walg^\wedge)}(\K[\widetilde{\m}\cap V]\otimes \KFun(M))=\K[\widetilde{\m}\cap V]\otimes (X_{\ddag}\widehat{\otimes}_{\Walg^\wedge}\KFun(M)).
\end{align*}
By Lemma \ref{Lem:11.3}, $X_{\ddag}=(X_{\dagger})^\wedge$. So it remains to show that $(X_\dagger)^\wedge=X_\dagger\otimes_{\Walg}\Walg^\wedge$. The proof is similar to that of Lemma \ref{Lem:11.1}.
\end{proof}

%

\subsection{Equivalence relation on $\Irr_{fin}(\Walg)$}\label{SUBSECTION_equivalence}
In this subsection we will introduce an equivalence relation on the set $\Irr_{fin}(\Walg)$
of (isomorphism classes) of finite dimensional irreducible $\Walg$-modules.
For this we will need to introduce tensor products of $\Walg$-modules with $G$-modules,
compare with \cite{Goodwin}. Then we will define a relation $\sim$ on $\Irr_{fin}(\Walg)$
using Proposition \ref{Prop:3.2}. Finally, we will prove that $\sim$ is indeed an equivalence
relation, Theorem \ref{Thm:3.3}.

Let $L$ be a finite dimensional $G$-module. Define the functor $L\rightthreetimes \bullet: \Walg$-$\Mod\rightarrow
\Walg$-$\Mod$ by $L\rightthreetimes N:=\Walg_L\otimes_\Walg N$.

\begin{Prop}\label{Prop:1.5.2}
\begin{enumerate}
\item We have a functorial $\q$-equivariant isomorphism $L\rightthreetimes N\cong L\otimes N$.
If $N$ is $Q$-equivariant (meaning that $Q$ acts on $N$   such that $\Walg\otimes N\rightarrow N$
is $Q$-equivariant and the $Q$-action integrates the
action of $\q\subset \Walg$), then the isomorphism above is also $Q$-equivariant.
\item The functor $L\rightthreetimes\bullet$ is exact.
\end{enumerate}
\end{Prop}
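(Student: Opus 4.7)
The plan is to derive both assertions from Lemma \ref{Lem:1.5.1}, which provides a $G \times Q$-equivariant isomorphism $\K[G] \otimes \Walg \xrightarrow{\sim} \widetilde{\Walg}$ of right $\Walg$-modules. Applying $\Hom_G(L^*, -)$ to this isomorphism and using the standard matrix-coefficient identification $\Hom_G(L^*, \K[G]) \cong L$ (under which right translation by $Q$ on $\K[G]$ corresponds to the natural $Q \subset G$-action on $L$) yields a $Q$-equivariant isomorphism $\Walg_L \cong L \otimes \Walg$ of right $\Walg$-modules, recovering Lemma \ref{Lem:10.2}. Tensoring with $N$ over $\Walg$ then produces the desired vector-space isomorphism $L \rightthreetimes N \cong L \otimes N$, and when $N$ is $Q$-equivariant the diagonal $Q$-action on the right-hand side matches the $Q$-action on the left coming from Lemma \ref{Lem:10.2}.

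The $\q$-equivariance in (1) requires identifying the left action of $\widehat{H}^\Walg_\xi \in \Walg$ on $L \otimes \Walg \cong \Walg_L$. I would write this left action as the sum of right multiplication and the adjoint action, and then use the defining property $[\widehat{H}^\Walg_\xi, a] = \xi_{\widetilde{\Walg}}\, a$ of the quantum comoment map together with the compatibility of Lemma \ref{Lem:1.5.1} with the $Q$-action (diagonal: right translation on $\K[G]$ together with the adjoint $Q$-action on $\Walg$). The outcome is
$$\widehat{H}^\Walg_\xi \cdot (l \otimes u) \;=\; \xi l \otimes u + l \otimes \widehat{H}^\Walg_\xi u,$$
after which passing to the tensor product with $N$ yields the diagonal $\q$-action $\xi \cdot (l \otimes n) = \xi l \otimes n + l \otimes \xi n$, completing (1).

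Assertion (2) then follows at once: $\Walg_L \cong L \otimes \Walg$ is a free right $\Walg$-module of rank $\dim L$, so the functor $\Walg_L \otimes_\Walg (-)$ is exact on $\Walg\text{-}\Mod$. The only mildly delicate step in the whole argument is the bookkeeping of left versus right $\Walg$-actions when verifying the $\q$-equivariance; modulo this, everything is an essentially formal consequence of Lemma \ref{Lem:1.5.1}.
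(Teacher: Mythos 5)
Your proposal is correct and takes essentially the same route as the paper: the paper's proof simply says that assertion (1) follows from Lemma \ref{Lem:1.5.1} (together with Lemma \ref{Lem:10.2}), and that (2) follows easily from (1); your write-up spells out this same derivation in detail, including the correct identification of the left $\widehat{H}^\Walg_\xi$-action on $L\otimes\Walg$ and the exactness via freeness of $\Walg_L$ as a right $\Walg$-module.
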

\begin{proof}
Assertion 1 follows from Lemma \ref{Lem:1.5.1}. The second assertion follows easily
from the first one.
\end{proof}

Below we will need the following lemma.

\begin{Lem}\label{Lem:1.5.3}
Let $N$ be a finite dimensional $\Walg$-module and $L$ a finite dimensional $G$-module.
Then there is a sub-bimodule $R\subset \Walg_L$ of finite codimension and such that $R\otimes_\Walg N=0$
so that $L\rightthreetimes N\cong (\Walg_L/R)\otimes_{\Walg}N$.
\end{Lem}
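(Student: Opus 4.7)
The plan is to produce $R$ explicitly as $\Walg_L\cdot I$, where $I=\Ann_{\Walg}(N)$. Since $N$ is finite dimensional, $\Walg/I$ embeds into $\End_\K(N)$, so $I$ is a two-sided ideal of finite codimension in $\Walg$. I would first check that $R:=\Walg_L\cdot I$ is a sub-bimodule: it is manifestly a right $\Walg$-submodule because $I$ is a right ideal, and it is a left submodule because $\Walg_L$ is a $\Walg$-bimodule, so $\Walg\cdot\Walg_L\cdot I\subset \Walg_L\cdot I$.

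Next, to see that $R$ has finite codimension, I would invoke the right $\Walg$-module isomorphism $\Walg_L\cong L\otimes\Walg$ from Lemma \ref{Lem:10.2}. Under this identification, $\Walg_L\cdot I$ corresponds to $L\otimes I$, so the quotient becomes
\[
\Walg_L/R \;\cong\; L\otimes(\Walg/I),
\]
which is finite-dimensional since both $L$ and $\Walg/I$ are.

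Third, I would verify the vanishing $R\otimes_{\Walg}N=0$ directly: any element of $R$ can be written as a sum of terms $s\cdot a$ with $s\in\Walg_L$ and $a\in I$, and in the tensor product over $\Walg$ we have $s\cdot a\otimes n = s\otimes a\cdot n = 0$ because $a\in\Ann_{\Walg}(N)$. Finally, applying the right exact functor $\bullet\otimes_{\Walg}N$ to the short exact sequence $0\to R\to\Walg_L\to\Walg_L/R\to 0$ yields
\[
\Walg_L\otimes_{\Walg}N\;\xrightarrow{\sim}\;(\Walg_L/R)\otimes_{\Walg}N,
\]
and the left-hand side is $L\rightthreetimes N$ by definition.

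The argument is essentially formal once $R$ is guessed correctly, and I do not see a real obstacle: the only place requiring minor care is the verification that $\Walg_L\cdot I$ is stable under the \emph{left} $\Walg$-action, which depends on $I$ being two-sided (not just a right ideal) -- this is why one must take $I$ to be the annihilator rather than, say, an arbitrary right ideal of finite codimension killing $N$.
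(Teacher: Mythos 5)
Your proof is correct and is essentially identical to the paper's: you take $R=\Walg_L\Ann_\Walg(N)$, use the isomorphism $\Walg_L\cong L\otimes\Walg$ of right $\Walg$-modules to see $R$ has finite codimension (the paper phrases this via ``$\Walg_L$ is finitely generated as a right $\Walg$-module,'' which is the same point), and observe the vanishing $R\otimes_\Walg N=0$. You have merely spelled out the bimodule check, the codimension computation, and the final right-exactness step, which the paper leaves implicit.
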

\begin{proof}
Set $R:=\Walg_L\Ann_\Walg(N)$. Since $\Ann_\Walg(N)$ has finite codimension in $\Walg$
and $\Walg_L$ is finitely generated as a right $\Walg$-module, we see that $\dim \Walg_L/R<\infty$.
On the other hand, $R\otimes_\Walg N=0$.
\end{proof}

The following claim follows from Theorem \ref{Thm_tensor}.

\begin{Cor}\label{Cor:1.5.6}
Let $L,L_1,L_2$ be finite dimensional $G$-modules. Then
\begin{enumerate}
\item
the functors $L\rightthreetimes\bullet$ and $L^*\rightthreetimes\bullet$
from $\OCat^\t(\theta)$ to itself are mutually adjoint.
\item the functors $L_1\rightthreetimes(L_2\rightthreetimes\bullet)$ and $(L_1\otimes L_2)\rightthreetimes \bullet$
are isomorphic.
\end{enumerate}
\end{Cor}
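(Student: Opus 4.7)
The plan is to transport both statements through the equivalence $\KFun$ of Theorem \ref{Thm_Ocat} and reduce them to essentially tautological statements about tensoring $\U$-modules with finite dimensional $G$-modules. The bridge is provided by the two results already established in this section: Proposition \ref{Prop:ULdagger}, which identifies $\Walg_L$ with $(\U_L)_\dagger$, and Theorem \ref{Thm_tensor}, which yields a bifunctorial isomorphism
\[
  L\rightthreetimes \KFun(M)\;=\;(\U_L)_\dagger\otimes_\Walg \KFun(M)\;\cong\;\KFun(\U_L\otimes_\U M),
\]
for $L$ a finite dimensional $G$-module and $M\in\widetilde{\Wh}^\t(e,\theta)$. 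Since $\KFun$ is an equivalence, every object of $\widetilde{\OCat}^\t(\theta)$ is isomorphic to some $\KFun(M)$, so it suffices to establish the two assertions for the ``upstairs'' functors $M\mapsto \U_L\otimes_\U M=L\otimes M$ on $\widetilde{\Wh}^\t(e,\theta)$.

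For part (2), I would simply iterate the above isomorphism:
\[
L_1\rightthreetimes(L_2\rightthreetimes\KFun(M))\cong \KFun\bigl(\U_{L_1}\otimes_\U\U_{L_2}\otimes_\U M\bigr)\cong \KFun\bigl(\U_{L_1\otimes L_2}\otimes_\U M\bigr)\cong (L_1\otimes L_2)\rightthreetimes \KFun(M),
\]
where the middle isomorphism uses the evident identification $\U_{L_1}\otimes_\U\U_{L_2}\cong \U_{L_1\otimes L_2}$ of Harish-Chandra bimodules (as both sides are just $L_1\otimes L_2\otimes \U$ with the standard diagonal bimodule structure). One should verify that the chain of isomorphisms is natural in $M$, but this is automatic from the naturality built into Theorem \ref{Thm_tensor}.

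For part (1), the analogous upstairs functors are $M\mapsto L\otimes M$ and $M\mapsto L^*\otimes M$ acting on $\widetilde{\Wh}^\t(e,\theta)$. These are mutually adjoint as endofunctors of the whole category of $\U$-modules via the standard unit/counit maps $\K\hookrightarrow L\otimes L^*$, $L^*\otimes L\twoheadrightarrow \K$ coming from finite dimensionality of $L$. What has to be checked is that the adjunction preserves the Whittaker conditions: if $M$ is finitely generated over $\U$, has diagonalizable $\t$-action, and locally nilpotent $\widetilde{\m}_\chi$-action, then so does $L\otimes M$ (the latter because $L$ is $G$-finite, hence $\widetilde{\m}$-locally nilpotent, and $\chi$ acts trivially on $L$), and similarly for $L^*\otimes M$. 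Transporting this adjunction across $\KFun$ using the isomorphism from Theorem \ref{Thm_tensor} yields the required adjunction on $\widetilde{\OCat}^\t(\theta)$.

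The only subtle point — and the main obstacle, such as it is — is to be sure that when one transfers the adjunction via $\KFun$, the unit and counit constructed upstairs from the $G$-equivariant maps $\K\to L\otimes L^*$ and $L^*\otimes L\to \K$ correspond downstairs to natural transformations between $L\rightthreetimes\bullet$ and $L^*\rightthreetimes\bullet$. This compatibility is forced by the fact that Theorem \ref{Thm_tensor} gives an isomorphism of \emph{bifunctors}, so that morphisms in the first argument (here, the $G$-equivariant evaluation and coevaluation) are transported to the corresponding natural transformations between the induced functors on $\widetilde{\OCat}^\t(\theta)$.
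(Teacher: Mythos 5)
Your proposal is correct and follows essentially the same route as the paper: identify $\Walg_L$ with $(\U_L)_\dagger$ via Proposition~\ref{Prop:ULdagger}, invoke Theorem~\ref{Thm_tensor} to carry the tensor-product bifunctors across $\KFun$, and then read off both claims from their obvious analogues for $L\otimes\bullet$ on $\widetilde{\Wh}^\t(e,\theta)$. The paper states this very tersely (``both claims follow from the fact that $\KFun$ is an equivalence''); you usefully spell out what that shorthand hides — the identification $\U_{L_1}\otimes_\U\U_{L_2}\cong\U_{L_1\otimes L_2}$ for part (2), and for part (1) the check that the standard $L\otimes\bullet\dashv L^*\otimes\bullet$ adjunction preserves the Whittaker conditions so that the unit/counit live in $\widetilde{\Wh}^\t(e,\theta)$ and transport naturally through the bifunctorial isomorphism of Theorem~\ref{Thm_tensor}.
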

\begin{proof}
Recall that by Proposition \ref{Prop:ULdagger}, $\U_{L\dagger}=\Walg_L$. By Theorem \ref{Thm_tensor},
the functors $\KFun(L\otimes\bullet), L\rightthreetimes \KFun(\bullet): \widetilde{\Wh}^\t(e,\theta)\rightarrow \widetilde{\OCat}^\t(\theta)$ are isomorphic. Both claims of the lemma follow from the fact that
$\KFun$ is an equivalence between $\widetilde{\Wh}^\t(e,\theta)$ and $\widetilde{\OCat}^\t(\theta)$.
\end{proof}

 \begin{Prop}\label{Prop:3.2}
Let $N_1,N_2$ be irreducible $\Walg$-modules. Then the following conditions are equivalent:
 \begin{enumerate} \item there is an irreducible finite dimensional $G$-module $L$ such that
$N_2$ is a quotient of $L\rightthreetimes N_1$,
 \item there is an irreducible finite dimensional $G$-module $L$ such that $N_2$ is a subquotient
 of $L\rightthreetimes N_1$.
 \item there is an irreducible finite dimensional $G$-module $L$ such that the $\Walg$-bimodule $\Hom_{\K}(N_1,N_2)$
 is a quotient of $\Walg_L$,
\item there is an irreducible finite dimensional $G$-module $L$ and a sub-bimodule $R\subset \Walg_L$ of finite codimension such that $\Hom_\K(N_1,N_2)$ is a subquotient of $\Walg_L/R$.
 \end{enumerate}
 \end{Prop}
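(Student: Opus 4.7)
My plan is to establish the four equivalences via three implications: $(1)\Leftrightarrow(3)$, $(2)\Leftrightarrow(4)$, and $(4)\Rightarrow(3)$. Combined with the trivial $(1)\Rightarrow(2)$ and $(3)\Rightarrow(4)$, these close the cycle. Whenever a constructed $L$ is not irreducible, one splits $L=\bigoplus L_i$ and uses the additivity $\Walg_L=\bigoplus\Walg_{L_i}$ together with the corresponding decomposition of $L\rightthreetimes\bullet$ to restrict to irreducible $G$-modules.

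For $(1)\Leftrightarrow(3)$, the crucial observation is that $\Hom_\K(N_1,N_2)\cong N_2\otimes_\K N_1^*$ is a \emph{simple} $\Walg$-bimodule: by Jacobson density each surjection $\Walg\twoheadrightarrow\End_\K(N_i)$ exhibits the bimodule structure as factoring through the simple algebra $\End_\K(N_2)\otimes\End_\K(N_1)^{\mathrm{opp}}$, over which it is the unique simple module. The tensor--Hom adjunction $\Hom_{\Walg\text{-bimod}}(\Walg_L,\Hom_\K(N_1,N_2))\cong\Hom_\Walg(L\rightthreetimes N_1,N_2)$ then relates the two conditions: on each side the target is simple, so any nonzero map is surjective. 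Moreover, tensoring a surjection $\Walg_L\twoheadrightarrow\Hom_\K(N_1,N_2)$ over $\Walg$ with $N_1$ yields $L\rightthreetimes N_1\twoheadrightarrow\Hom_\K(N_1,N_2)\otimes_\Walg N_1\cong N_2$, via the standard matrix identity $N_1^*\otimes_{\End_\K(N_1)}N_1\cong\K$.

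For $(2)\Leftrightarrow(4)$, I apply Lemma~\ref{Lem:1.5.3}: intersecting the given $R$ with the cofinite sub-bimodule killed by $\otimes_\Walg N_1$, I may assume $R\otimes_\Walg N_1=0$, so $A:=\Walg_L/R$ is a finite-dimensional bimodule with $A\otimes_\Walg N_1=L\rightthreetimes N_1$. Taking a bimodule composition series of $A$ and applying the right-exact functor $\otimes_\Walg N_1$, using that a simple bimodule $\Hom_\K(P,Q)$ satisfies $\Hom_\K(P,Q)\otimes_\Walg N_1\cong Q$ when $P\cong N_1$ and vanishes otherwise, produces a multiplicity-preserving bijection between bimodule composition factors of $A$ of the form $\Hom_\K(N_1,\cdot)$ and $\Walg$-module composition factors of $L\rightthreetimes N_1$; this gives the equivalence.

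The substantive implication is $(4)\Rightarrow(3)$, which must upgrade a subquotient relation to an actual surjection. Replacing $R$ by its $Q$-stable intersection $\bigcap_{q\in Q/Q^\circ}qR$ (still of finite codimension, since $Q/Q^\circ$ is finite), I make $A=\Walg_L/R$ a $Q$-equivariant finite-dimensional bimodule in which $\Hom_\K(N_1,N_2)$ remains an ordinary subquotient. Choosing a $Q$-equivariant composition series, I extract the $Q$-equivariant simple subquotient $\Xi$ containing $\Hom_\K(N_1,N_2)$; since the ordinary socle of $\Xi$ is $Q$-stable, $Q$-equivariant simplicity forces it to equal $\Xi$, so $\Xi$ is ordinary-semisimple with simple summands forming a single $Q$-orbit, and $\Hom_\K(N_1,N_2)$ is one of these. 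By Theorem~\ref{Thm_dagger}(6) applied to $\M=\U_L$ and $\Nil=R$, I write $R=\M'_\dagger$ for some $\M'\subset\U_L$ with $\U_L/\M'\in\HC_{\overline\Orb}(\U)$, so $\Walg_L/R=(\U_L/\M')_\dagger$ lies in the image of $\bullet_\dagger$; by closure of this image under subquotients (Theorem~\ref{Thm_dagger}(7)), $\Xi=\M''_\dagger$ for some $\M''\in\HC_{\overline\Orb}(\U)$. Proposition~\ref{Prop:11.2}(2) then realizes $\Xi$ as a quotient of $\Walg_{L'}$ for some finite-dimensional $G$-module $L'$, and composing with the projection onto the direct summand $\Hom_\K(N_1,N_2)\subseteq\Xi$ produces the desired surjection. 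The main obstacle is precisely this transition to the $Q$-equivariant setting—arranging $R$ to be $Q$-stable and then extracting a possibly non-equivariant summand from the $Q$-equivariant container $\Xi$—so that Theorem~\ref{Thm_dagger} and Proposition~\ref{Prop:11.2} become applicable.
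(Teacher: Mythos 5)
Your proof is correct and follows essentially the same route as the paper's: the tensor--Hom adjunction plus simplicity of $\Hom_\K(N_1,N_2)$ for the equivalence of (1) and (3), Lemma~\ref{Lem:1.5.3} followed by a composition series and right exactness for (2)$\Rightarrow$(4), and the passage to a $Q$-stable $R$ and a $Q$-equivariant composition factor (then Proposition~\ref{Prop:11.2}) for (4)$\Rightarrow$(3). Your unwinding of Proposition~\ref{Prop:11.2} through Theorem~\ref{Thm_dagger}(6),(7) is just an expansion of the citation the paper makes directly.

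One small overclaim worth flagging, though it does not break the proof: in your treatment of (2)$\Leftrightarrow$(4) you assert a \emph{multiplicity-preserving bijection} between bimodule composition factors of $A=\Walg_L/R$ of the form $\Hom_\K(N_1,\cdot)$ and composition factors of $A\otimes_\Walg N_1$. Since $\otimes_\Walg N_1$ is only right exact, tensoring a composition series $A^{i-1}\subset A^i$ produces maps $A^{i-1}\otimes N_1\to A^i\otimes N_1$ that need not be injective, so one only gets the inequality $[A\otimes_\Walg N_1]\le \sum_{P_i\cong N_1}[Q_i]$ in the Grothendieck group; the reverse inclusion of composition factors can fail. This is exactly what you need for (2)$\Rightarrow$(4), which is the only direction required to close the cycle $(1)\Rightarrow(2)\Rightarrow(4)\Rightarrow(3)\Rightarrow(1)$ (the converse (4)$\Rightarrow$(2) then comes for free from the cycle), so the argument stands; just do not present it as a two-sided bijection.
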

We say that $N_1,N_2$ are {\it equivalent} and write $N_1\sim N_2$ if the pair $(N_1,N_2)$ satisfies one of the four
equivalent conditions of Proposition \ref{Prop:3.2}. Below we will see that this is indeed an equivalence relation.
 \begin{proof}
(3)$\Rightarrow$(1).
It is enough to show that the functors $$(N_1,N_2)\mapsto \Hom_{\Walg-\Walg^{op}}(\Walg_L,\Hom_{\K}(N_1,N_2)), \Hom_{\Walg}(L\rightthreetimes N_1,N_2)$$ from $\Walg$-$\operatorname{mod}_{fd}\times \Walg$-$\operatorname{mod}_{fd}^{opp}$
 to the category of vector spaces  are isomorphic. Recall that $L\rightthreetimes N_1$ is, by definition,
$\Walg_L\otimes_{\Walg} N_1$. Now the isomorphism of functors is a standard fact.

(1)$\Rightarrow$(2) is tautological.

(2)$\Rightarrow$(4). Let $R$ be a sub-bimodule in $\Walg_L$ of finite codimension such that $R\otimes_{\Walg}N_1=0$
existing by Lemma \ref{Lem:1.5.3}. Set $P:=\Walg_L/R$. Choose a composition series $0=P^0\subset P^1\subset \ldots\subset P^m=P$
for $P$ and let $N_1^i,N_2^i$ be such that $P^i/P^{i-1}=\Hom_\K(N_1^i,N_2^i)$. Since the tensor product functor is
right exact, we see that that any composition factor of $L\rightthreetimes N_1=P\otimes_{\Walg}N_1$ has the form $N_2^i$ with
$N_1^i=N_1$, q.e.d.

(4)$\Rightarrow$(3).  Being a sub-bimodule in $\Walg_L$, $R$ is $Q^\circ$-stable. Replacing $R$ with $\bigcap_{q\in Q}q.R$ we may assume, in addition, that $R$ is  $Q$-stable. The bimodule
$\Hom(N_1,N_2)$ is a subquotient of $\Walg_L/R$ if and only if it is a direct summand of a composition factor of the
{\it $Q$-equivariant} bimodule $\Walg_L/R$. By Proposition \ref{Prop:11.2}, this composition factor is
a quotient of $\Walg_{L'}$ for some finite dimensional $G$-module $L'$.
 \end{proof}


\begin{Rem}\label{Rem:3.4}
 Since $\Walg_L$ is a $Q$-equivariant bimodule, we see that $\sim$ is an $A(e)$-invariant relation,
 where $A(e)$ denotes the component group of $e$, i.e., $A(e):=Q/Q^\circ$.
\end{Rem}

\begin{Prop}\label{Thm:3.3}
$\sim$ is an equivalence relation.
\end{Prop}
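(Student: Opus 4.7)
The plan is to verify reflexivity, symmetry, and transitivity of $\sim$ by working with condition (1) (or the equivalent condition (2)) of Proposition~\ref{Prop:3.2} and exploiting the two assertions of Corollary~\ref{Cor:1.5.6}: the adjunction between $L\rightthreetimes\bullet$ and $L^*\rightthreetimes\bullet$, and the multiplicativity $L_1\rightthreetimes(L_2\rightthreetimes\bullet)\cong(L_1\otimes L_2)\rightthreetimes\bullet$.

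For reflexivity, take $L=\K$, the trivial one-dimensional $G$-module. By Proposition~\ref{Prop:1.5.2}(1) one has $\K\rightthreetimes N\cong N$, so $N$ is (trivially) a quotient of $\K\rightthreetimes N$, giving $N\sim N$. For transitivity, suppose $N_2$ is a quotient of $L_1\rightthreetimes N_1$ and $N_3$ is a quotient of $L_2\rightthreetimes N_2$ with $L_1,L_2$ irreducible. Since $L_2\rightthreetimes\bullet$ is exact (Proposition~\ref{Prop:1.5.2}(2)), $N_3$ becomes a quotient of $L_2\rightthreetimes(L_1\rightthreetimes N_1)\cong(L_2\otimes L_1)\rightthreetimes N_1$ by Corollary~\ref{Cor:1.5.6}(2). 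Decomposing $L_2\otimes L_1=\bigoplus_i L_i'$ into irreducibles (using that $G$ is reductive) yields $(L_2\otimes L_1)\rightthreetimes N_1=\bigoplus_i L_i'\rightthreetimes N_1$; since $N_3$ is irreducible, the composition $\bigoplus_i L_i'\rightthreetimes N_1\twoheadrightarrow N_3$ is nonzero on at least one summand $L_i'\rightthreetimes N_1$, which by irreducibility of $N_3$ must surject onto $N_3$. Thus $N_1\sim N_3$ with witness $L_i'$.

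For symmetry, assume $N_2$ is a quotient of $L\rightthreetimes N_1$ with $L$ irreducible, giving a nonzero morphism $L\rightthreetimes N_1\to N_2$. Apply the adjunction of Corollary~\ref{Cor:1.5.6}(1) to obtain a nonzero morphism $N_1\to L^*\rightthreetimes N_2$. Since $N_1$ is irreducible, this morphism is injective, so $N_1$ is a subobject of $L^*\rightthreetimes N_2$, in particular a subquotient. As $L^*$ is irreducible whenever $L$ is, condition~(2) of Proposition~\ref{Prop:3.2} gives $N_2\sim N_1$.

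The main obstacle is the application of the adjunction in the symmetry step: Corollary~\ref{Cor:1.5.6}(1) is stated for the category $\widetilde{\OCat}^\t(\theta)$, so one must ensure that the finite-dimensional irreducible $\Walg$-modules $N_1,N_2$ actually lie in this category for a suitable choice of regular cocharacter $\theta$. This amounts to checking that $\t$ acts diagonalizably on $N_i$ and that $\Walg_{>0}$ acts locally nilpotently; the latter is automatic from finite-dimensionality plus the $\theta$-grading (every element of $\Walg_{>0}$ strictly raises $\theta$-weight, so acts nilpotently on a finite-dimensional module), while the former can be arranged by choosing $\theta$ compatibly with the central character of $N_i$, or alternatively one may bypass Corollary~\ref{Cor:1.5.6}(1) altogether by noting that the adjunction $\Hom_{\Walg}(L\rightthreetimes N_1,N_2)\cong\Hom_{\Walg}(N_1,L^*\rightthreetimes N_2)$ follows directly from the identification $\Walg_L\cong L\otimes\Walg$ as right $\Walg$-modules (Lemma~\ref{Lem:1.5.1}) together with the standard Hom-tensor adjunction for free right $\Walg$-modules and the $G$-duality $\Hom_\K(L\otimes N_1,N_2)\cong\Hom_\K(N_1,L^*\otimes N_2)$.
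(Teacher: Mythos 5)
Your proof follows essentially the same route as the paper's: reflexivity via $L=\K$, symmetry via the adjunction of Corollary~\ref{Cor:1.5.6}(1), and transitivity by composing the two surjections, invoking exactness, and using Corollary~\ref{Cor:1.5.6}(2) to rewrite $L_2\rightthreetimes(L_1\rightthreetimes N_1)$ as $(L_2\otimes L_1)\rightthreetimes N_1$ (the paper leaves the decomposition into irreducible summands implicit, which you correctly spell out). Your concern about Corollary~\ref{Cor:1.5.6}(1) applying to finite-dimensional modules is legitimate but dissolves immediately by taking $T=\{1\}$, hence $\theta=0$ and $\t=0$ (so diagonalizability is vacuous and $\Walg_{>0}=0$) --- a specialization the paper itself later uses in the proof of Proposition~\ref{two actions} --- so the alternative argument via Lemma~\ref{Lem:1.5.1} is not needed, though it is a valid observation.
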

\begin{proof}
Reflectivity ($N\sim N$) is clear: take $L=\K$. The claim that $\sim$ is symmetric follows from
Corollary \ref{Cor:1.5.6}: $\Hom_\Walg(L\rightthreetimes N_1,N_2)=\Hom_\Walg(N_1,L^*\rightthreetimes N_2)$.

Let us check transitivity: let $N_1,N_2,N_3$ be irreducible finite dimensional $\Walg$-modules and $L_{12},L_{23}$ be irreducible
$G$-modules with $\Hom_{\Walg}(L_{23}\rightthreetimes N_3,N_2),\Hom_{\Walg}(L_{12}\rightthreetimes N_2,N_1)\neq 0$.
Choose nonzero elements $\varphi\in \Hom_{\Walg}(L_{23}\rightthreetimes N_3,N_2), \psi\in
\Hom_{\Walg}(L_{12}\rightthreetimes N_2,N_1)$. Since $N_1,N_2$ are irreducible, we see that
$\varphi,\psi$ are surjective. Since the functor $ L_{12}\leftthreetimes \bullet$ is exact, we see that
the homomorphism $\widetilde{\varphi}: (L_{12}\otimes L_{23})\rightthreetimes N_3=L_{12}\rightthreetimes (L_{23}\rightthreetimes N_{3})\rightarrow L_{12}\rightthreetimes N_2$ induced by $\varphi$ is surjective.
So $\psi\circ\widetilde{\varphi}$ is a nonzero element of $\Hom_{\Walg}((L_{12}\otimes L_{23})\rightthreetimes N_3),
N_1)$. Hence $N_1\sim N_3$.
\end{proof}

The reason why we need the equivalence relation $\sim$ is as follows.
Proposition \ref{Prop:3.2} and Proposition \ref{Prop:11.2} show that Theorem
\ref{Thm_main} is equivalent to the following claim:
\begin{itemize}
\item[(*)] Any two irreducible $\Walg$-modules $N_1,N_2$ with integral central characters that differ by an element of $P^+$
are equivalent.
\end{itemize}
In the next two subsections we will prove that (*) holds.

\subsection{The case of even nilpotents}\label{SUBSECTION_even}
In this subsection we will prove Theorem \ref{Thm_main} in the case when $\Orb$ is even,
i.e., when  all eigenvalues of $\ad h$ are even. But first we will need a more general result.

\begin{Prop}\label{Lem:12.3}
Let $\Orb$ be an arbitrary special nilpotent orbit  in $\g$.
Any two $A(e)$-orbits in $Y^{P^+}$ have equivalent points.
\end{Prop}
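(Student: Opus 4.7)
The plan is to handle the regular integral central character $\rho$ directly from Corollary~\ref{Cor:indecomp} and Theorem~\ref{Thm_dagger}, and then to reduce an arbitrary $\mu \in P^+$ to $\rho$ by the translation trick used in the proof of Proposition~\ref{Prop:W_translation}. Together with the $A(e)$-invariance (Remark~\ref{Rem:3.4}) and transitivity (Proposition~\ref{Thm:3.3}) of $\sim$, this will suffice.

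\emph{Regular step.} Fix two $A(e)$-orbits $O_1, O_2 \subset Y^\rho$ and the associated primitive ideals $\J_1, \J_2 \in \Prim_{\Orb}(\U_\rho)$. Corollary~\ref{Cor:indecomp} supplies a simple $\M \in \HC_{\Orb}(\U_\rho)$ with $\LAnn(\M) = \J_1$ and $\RAnn(\M) = \J_2$, and Theorem~\ref{Thm_dagger} then realizes $\M_\dagger$ as a nonzero simple object of $\HC^Q_{fin}(\Walg)$ with left and right annihilators $\J_{1\dagger}$, $\J_{2\dagger}$. Because each $\Walg/\J_{i\dagger}$ is a finite product of matrix algebras indexed by $O_i$, the bimodule $\M_\dagger$ is automatically semisimple over $\Walg \otimes \Walg^{opp}$ and decomposes into simple summands of the form $\Hom_\K(N_1, N_2)$ with $N_1 \in O_2$ and $N_2 \in O_1$; at least one such summand is nonzero. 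By Proposition~\ref{Prop:11.2}, $\M_\dagger$ is a quotient of some $\Walg_L$, whence the direct summand $\Hom_\K(N_1, N_2)$ is also a quotient of $\Walg_L$. Condition (3) of Proposition~\ref{Prop:3.2} now gives $N_1 \sim N_2$, so $O_1$ and $O_2$ share equivalent representatives.

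\emph{Reduction of singular $\mu$ to $\rho$.} Given any $N \in Y^\mu$, I would form $\widetilde N := (\mathcal{T}_\mu^\rho(1))_\dagger \otimes_\Walg N$ as in the proof of Proposition~\ref{Prop:W_translation}. The identity $(\mathcal{T}_\rho^\mu(k))_\dagger \otimes_\Walg (\mathcal{T}_\mu^\rho(1))_\dagger \otimes_\Walg N \cong N^{\oplus |W_\mu|}$ for $k \gg 0$ guarantees an irreducible subquotient $N' \in Y^\rho$ of $\widetilde N$ such that $N$ appears as a composition factor of $(\mathcal{T}_\rho^\mu(k))_\dagger \otimes_\Walg N'$. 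Since $\mathcal{T}_\rho^\mu(k)$ is by construction a quotient of $\U_L$ with $L = L(\nu + \rho)$ and $\nu$ the dominant representative of $W(\mu - \rho)$, the exactness of $\bullet_\dagger$ combined with $\U_{L\dagger} = \Walg_L$ (Proposition~\ref{Prop:ULdagger}) exhibits $(\mathcal{T}_\rho^\mu(k))_\dagger$ as a quotient of $\Walg_L$; hence $N$ is a subquotient of $L \rightthreetimes N'$, and condition (2) of Proposition~\ref{Prop:3.2} yields $N \sim N'$. Combining the two steps with transitivity and $A(e)$-invariance of $\sim$ then shows that every $A(e)$-orbit in $Y^{P^+}$ lies in a single $\sim$-class, which is what the proposition asserts. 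The only step requiring real care is the descent of the translation bimodules through $\bullet_\dagger$ to genuine quotients of the $\Walg_L$ attached to $G$-modules, and the verification that the chosen $N'$ indeed lands at the regular block $Y^\rho$; both are clean consequences of the formalism of $\bullet_\dagger$ set up earlier, but they are the essential technical input that makes the translation trick actually transport equivalences.
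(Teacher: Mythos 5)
Your ``regular step'' is precisely what the paper means by calling the claim ``a reformulation of Corollary \ref{Cor:indecomp}'': take the simple $\M$ with $\LAnn(\M)=\J_1,\RAnn(\M)=\J_2$, apply $\bullet_\dagger$, note that any nonzero simple bimodule summand $\Hom_\K(N_1,N_2)$ of $\M_\dagger$ has $N_2\in O_1$, $N_1\in O_2$, and invoke Propositions \ref{Prop:11.2} and \ref{Prop:3.2}. The restriction to $Y^\rho$ followed by the translation-functor reduction is, however, an unnecessary detour: Corollary \ref{Cor:indecomp} is stated for $\J_1,\J_2\in\Prim_\Orb(\U_{P^+})$, not merely $\Prim_\Orb(\U_\rho)$ --- this is exactly what the indecomposability of $^{\Lambda}\!\JCat^{\Lambda}_\Orb$ for an arbitrary finite $\Lambda\subset P^+$ (Proposition \ref{Prop:J_indecomp}) buys you --- so running your first step on two arbitrary $A(e)$-orbits $O_1,O_2\subset Y^{P^+}$ already finishes the proof, which is why the paper disposes of it in one sentence. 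The translation argument you append, borrowed from the proof of Proposition \ref{Prop:W_translation}, is correct but redundant here.

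One further caution: your closing sentence claims to deduce that ``every $A(e)$-orbit in $Y^{P^+}$ lies in a single $\sim$-class, which is what the proposition asserts.'' That is not what Proposition \ref{Lem:12.3} asserts; it asserts only the weaker fact that any two orbits contain \emph{some} pair of equivalent representatives. The stronger statement --- that all of $Y^{P^+}$ is one $\sim$-class --- is the claim (*) equivalent to Theorem \ref{Thm_main}, and it additionally requires the existence of an $A(e)$-fixed point (Proposition \ref{Prop:12.4} in the even case, then the reduction of Subsection \ref{SUBSECTION_reduction}) together with the $A(e)$-invariance of $\sim$ (Remark \ref{Rem:3.4}). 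Your two steps do imply Proposition \ref{Lem:12.3} as stated (pick $N_i\in O_i$, translate each to $Y^\rho$, connect the resulting orbits by the regular step, and transport back using $A(e)$-invariance), but they do not yet yield the stronger claim you wrote.
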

\begin{proof}
This claim is a reformulation of Corollary \ref{Cor:indecomp}.
\end{proof}

Next we are going to treat the case when $e$ is even.

\begin{Prop}\label{Prop:12.4}
Suppose $\Orb$ is even. Then there is an $A(e)$-fixed point in $Y^{P^+}$.
\end{Prop}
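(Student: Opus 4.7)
The plan is to exhibit a canonical one-dimensional $\Walg$-module $N_0$, show it is automatically $Q$-fixed (hence $A(e)$-fixed), and verify that its central character is integral.

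When $e$ is even, $\g(-1)=0$, so $\m=\g_{<0}$ (no Lagrangian splitting needed), and the Kazhdan filtration on $\Walg$ is $\ZZ_{\geqslant 0}$-valued with $\KF_0\Walg=\K$. I would first construct an augmentation $\varepsilon:\Walg\to\K$ lifting the evaluation-at-$e$ map on the associated graded $\gr\Walg=\K[S]$. The compatibility with the Fedosov star product relies on the observation that for even $e$ the bidifferential correction terms $D_i(f,g)$ of Kazhdan degree $-2i$, applied to positive-degree elements of $\K[S]$, have vanishing degree-zero component at $e$: only even Kazhdan degrees occur on $\K[S]$, so the possible obstruction terms are forced into $\K[S]_{>0}$ by parity and degree counting. (Alternatively one may invoke the general existence theorem of Premet and Losev for 1-dim representations, which applies in the even case.) This yields the 1-dim $\Walg$-module $N_0:=\K_\varepsilon$.

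Next, $N_0$ is $A(e)$-fixed. The augmentation $\varepsilon$ is characterized intrinsically by the Kazhdan $\K^\times$-action on $\Walg_\hbar$ (it is the reduction mod $\hbar-1$ of the projection onto the degree-zero part of the graded algebra $\Walg_\hbar$), and this $\K^\times$-action commutes with the $Q$-action — both arise from the $G\times Q\times\K^\times$-equivariant structure of $\widetilde{\Walg}_\hbar$ in Subsection \ref{SUBSECTION_W_def}. Hence $\varepsilon$ is $Q$-equivariant, so $N_0$ is $Q$-fixed and a fortiori $A(e)=Q/Q^\circ$-fixed.

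Finally, the central character of $N_0$ is the composition $\Centr\hookrightarrow\Walg\xrightarrow{\varepsilon}\K$, which is the Kazhdan augmentation of $\Centr$. Under the Harish-Chandra isomorphism $\Centr\cong\K[\h^*]^W$ this augmentation corresponds to an integral dominant weight (namely $\rho$ in the paper's normalization, i.e.\ the central character of the trivial $\g$-module). Hence $N_0\in Y^{P^+}$ as required. The main obstacle is the first step: justifying the Fedosov compatibility that produces $\varepsilon$. Evenness is essential here — without it, odd-Kazhdan-degree elements of $\K[S]$ would obstruct the clean graded augmentation and one would need the more delicate general arguments of Premet/Losev.
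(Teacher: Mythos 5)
The key step in your proposal---constructing a \emph{canonical} augmentation $\varepsilon:\Walg\rightarrow\K$ directly from the homogeneous star product---does not go through, and this is not a minor gap. You describe $\varepsilon$ as ``the reduction mod $\hbar-1$ of the projection onto the degree-zero part of the graded algebra $\Walg_\hbar$.'' But that projection sends $\hbar$ to $0$, while the quotient $\Walg=\Walg_\hbar/(\hbar-1)$ sends $\hbar$ to $1$, so the projection does not descend: the image of $(\Walg_\hbar)_{>0}$ in $\Walg$ contains $1$ and is therefore all of $\Walg$. Concretely, if $f,g\in\K[S]$ have Kazhdan degrees $a,b>0$ (both even for even $e$), then in $\Walg$ one has $f*g=\sum_i D_i(f,g)$, and the degree-zero (constant) component is $D_{(a+b)/2}(f,g)$; evenness of $a+b$ is precisely what \emph{allows} this term to be nonzero, not what forces it to vanish. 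So the parity argument is a red herring, and the existence of a one-dimensional $\Walg$-module is genuinely nontrivial even for even $e$. Falling back on ``the general existence theorem of Premet and Losev'' does not rescue the argument either: that theorem produces \emph{some} one-dimensional module but gives you neither $A(e)$-fixedness nor control over the central character, which are exactly the two properties you need.

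The paper's actual proof supplies both missing ingredients at once by going through the parabolic $P$ with Lie algebra $\bigoplus_{i\geqslant 0}\g(i)$ (this is where evenness enters): by Borho--Brylinski there is a surjection $\U\twoheadrightarrow D(G/P)$ whose kernel $\J$ has integral central character, and since $Q\subset P$ the Springer map $T^*(G/P)\rightarrow\overline{\Orb}$ is birational, so $\mult_{\Orb}(\U/\J)=1$. Then Theorem~\ref{Thm:1.2.1} gives that $\J_\dagger$ is $Q$-stable of codimension $1$, and the unique simple module of $\Walg/\J_\dagger$ is $A(e)$-fixed with integral central character. If you want to repair your proposal, this is the route: the $A(e)$-invariance comes from $Q$-stability of $\J_\dagger$ (automatic, since $\J$ is two-sided in $\U$), and the one-dimensionality from the multiplicity computation, not from a graded augmentation of the Fedosov product.
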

\begin{proof}
In the notation of the end of Subsection \ref{SUBSECTION_W_def}, let $P$ be the parabolic
subgroup of $G$ corresponding to the parabolic subalgebra $\bigoplus_{i\geqslant 0}\g(i)\subset \g$.
As Borho and Kraft checked in \cite{BB}, the action of $G$ on $G/P$ induces a surjection $\U\twoheadrightarrow D(G/P)$
(Theorem 3.8 in {\it loc. cit.}),
its kernel $\J$ has an integral central character (Corollary 3.7, loc.cit.).
Since $Q\subset P$, we see that  the map $T^*(G/P)\rightarrow
\overline{\Orb}$ is birational. It follows that $\gr \U/\J=\K[\overline{\Orb}]$.
 Therefore the multiplicity of $\U/\J$ on $\Orb$ is 1. So $\J_\dagger$ has codimension 1 in $\Walg$ and is $A(e)$-stable. A unique irreducible representation of $\Walg/\J_\dagger$ is fixed by $A(e)$.
\end{proof}

Then the claim (*) for an even $e$ follows directly from Propositions \ref{Lem:12.3}, \ref{Prop:12.4} and
Remark \ref{Rem:3.4}.

\subsection{A reduction}\label{SUBSECTION_reduction}
We use the notation introduced in Subsection \ref{SUBSECTION_Ocat}. Let $G_0$ be the Levi subgroup
of $G$ corresponding to $\g_0$. Let $\sim_0$ denote the equivalence on $\Irr_{fin}(\underline{\Walg})$
defined by the tensor product with $G_0$-modules.
\begin{Prop}\label{Prop:12.1}
Let $\underline{N}_1,\underline{N}_2$ be $\underline{\Walg}$-modules. If $\underline{N}_1\sim_0 \underline{N}_2$,
then $L^\theta(\underline{N}_1)\sim L^\theta(\underline{N}_2)$.
\end{Prop}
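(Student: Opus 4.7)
The plan is to transport the hypothesis from $\underline{\Walg}$ to $\Walg$ through the equivalence $\KFun$ of Theorem \ref{Thm_Ocat}. Combining Theorem \ref{Thm_tensor} (applied with $X=\U_L$) and Proposition \ref{Prop:ULdagger} ($\U_{L\dagger}=\Walg_L$), the functors $L\otimes\bullet$ on $\widetilde{\Wh}^\t(e,\theta)$ and $L\rightthreetimes\bullet$ on $\widetilde{\OCat}^\t(\theta)$ are intertwined by $\KFun$. By Theorem \ref{Thm_Ocat}(3), $\KFun^{-1}(L^\theta(\underline{N}_i))$ is the simple quotient $L^{e,\theta}(\underline{N}_i)$ of $\Verm^{e,\theta}(\underline{N}_i)=\U\otimes_{U(\g_{\geqslant 0})}\Sk_0(\underline{N}_i)$. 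Hence it suffices to produce a finite-dimensional $G$-module $L$ such that $L^{e,\theta}(\underline{N}_2)$ is a composition factor of $L\otimes L^{e,\theta}(\underline{N}_1)$.

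By the hypothesis $\underline{N}_1\sim_0\underline{N}_2$ I fix a finite-dimensional $G_0$-module $\underline{L}$ with $\underline{N}_2$ a subquotient of $\underline{L}\rightthreetimes\underline{N}_1$ (the analogous tensor product for $\g_0$). Since every integral $G_0$-weight is an integral $G$-weight and $G_0$-representations are completely reducible, I can choose a finite-dimensional $G$-module $L$ whose restriction to $G_0$ contains $\underline{L}$ as a direct summand. Filter $L$ by $\theta$-weight; this is a $\g_{\geqslant 0}$-stable filtration whose associated graded, as a $\g_{\geqslant 0}$-module, is $L|_{G_0}$ with $\g_{>0}$ acting trivially. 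The projection formula gives $L\otimes\Verm^{e,\theta}(\underline{N}_1)\cong\U\otimes_{U(\g_{\geqslant 0})}(L\otimes\Sk_0(\underline{N}_1))$; combining the exactness of induction (PBW flatness) with the $\g_0$-version of Theorem \ref{Thm_tensor} (namely $\underline{L}'\otimes\Sk_0(\underline{N}_1)\cong\Sk_0(\underline{L}'\rightthreetimes\underline{N}_1)$ for any finite-dimensional $G_0$-module $\underline{L}'$) yields a filtration on $L\otimes\Verm^{e,\theta}(\underline{N}_1)$ with graded pieces $\Verm^{e,\theta}(\underline{L}'\rightthreetimes\underline{N}_1)$ running over the $G_0$-summands $\underline{L}'$ of the $\theta$-weight spaces of $L$. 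Since $\underline{L}$ is among these, and $\Verm^{e,\theta}$ is exact, $\Verm^{e,\theta}(\underline{N}_2)$ and hence its simple quotient $L^{e,\theta}(\underline{N}_2)$ is a subquotient of $L\otimes\Verm^{e,\theta}(\underline{N}_1)$.

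The main obstacle is to replace $\Verm^{e,\theta}(\underline{N}_1)$ by $L^{e,\theta}(\underline{N}_1)$: tensoring the short exact sequence $0\to K\to\Verm^{e,\theta}(\underline{N}_1)\to L^{e,\theta}(\underline{N}_1)\to 0$ with $L$, the composition factor $L^{e,\theta}(\underline{N}_2)$ could a priori live entirely in $L\otimes K$. To circumvent this I would invoke Frobenius reciprocity for the adjoint pair $(\Verm^{e,\theta},\GF)$: a nonzero morphism $\Verm^{e,\theta}(\underline{N}_2)\to L\otimes L^{e,\theta}(\underline{N}_1)$ is the same as a nonzero map $\underline{N}_2\to\GF(L\otimes L^{e,\theta}(\underline{N}_1))=(L\otimes L^{e,\theta}(\underline{N}_1))^{\widetilde{\m}_\chi}$. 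The crux is that $L^{e,\theta}(\underline{N}_1)$ contains a canonical copy of $\Sk_0(\underline{N}_1)$ in its top $\theta$-weight layer (the image of $1\otimes\Sk_0(\underline{N}_1)\subset\Verm^{e,\theta}(\underline{N}_1)$, which embeds into the simple quotient by the Verma weight-space argument). Using the $\theta$-grading of $L$ and the Skryabin functor $\Sk_0$ for $\g_0$, the top-layer contribution to $\GF(L\otimes L^{e,\theta}(\underline{N}_1))$ can be identified with $\underline{L}\rightthreetimes\underline{N}_1$ as a $\underline{\Walg}$-submodule (via a careful computation of the $\widetilde{\m}_\chi=\g_{>0}\oplus\underline{\m}_\chi$-invariants of $L\otimes\Sk_0(\underline{N}_1)$), which contains $\underline{N}_2$ as subquotient; the resulting Frobenius morphism has $L^{e,\theta}(\underline{N}_2)$ as the simple top of its image in $L\otimes L^{e,\theta}(\underline{N}_1)$. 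The delicate step is this identification of the top-layer contribution; should the direct graded calculation prove too technical, a fallback is to combine transitivity of $\sim$ with an induction on the length of $\Verm^{e,\theta}(\underline{N}_1)$ to reduce the question to the composition factors of $K$.
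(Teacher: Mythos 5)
Your plan follows the same strategic outline as the paper's proof: both reduce the claim to an identification, via $\KFun$ and the Skryabin functor for $\g_0$, of the top $\theta$-weight layer of $L\rightthreetimes L^\theta(\underline{N}_1)$ with $\underline{L}\rightthreetimes\underline{N}_1$. That identification is precisely Lemma \ref{Lem:12.2} of the paper, and it is the substance of the argument. You correctly flag it as the delicate step but leave it unproved, and the fallback you propose -- induction on the length of $\Verm^{e,\theta}(\underline{N}_1)$ combined with transitivity of $\sim$ -- does not close the gap: if $L^{e,\theta}(\underline{N}_2)$ sits entirely inside $L\otimes K$, you only learn that it is $\sim$-equivalent to some composition factor of $K$, and there is no reason for that factor to be $\sim$-equivalent to $L^{e,\theta}(\underline{N}_1)$.

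Two further pieces need repair. First, for the layer argument to apply you must have $\underline{L}$ be the \emph{top} $\theta$-weight space of $L$, so that the vectors of $\underline{L}\otimes\Sk_0(\underline{N}_1)$ are annihilated by $\g_{>0}$ and can contribute to $\GF$; your construction only places $\underline{L}$ somewhere inside $L|_{G_0}$, and a $G_0$-dominant weight need not be $G$-dominant. The paper fixes this by twisting $\underline{N}_2$ by a one-dimensional $G_0$-module $\K_\mu$ so that the relevant highest weight becomes $G$-dominant and $\underline{L}$ genuinely sits on top. Second, the Frobenius-reciprocity step does not run as stated: the adjunction requires a nonzero map $\underline{N}_2\to\GF(L\otimes L^{e,\theta}(\underline{N}_1))$, i.e.\ $\underline{N}_2$ as a subobject, whereas the hypothesis gives $\underline{N}_2$ only as a quotient (or subquotient) of the top layer. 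The paper sidesteps adjunction entirely: once Lemma \ref{Lem:12.2} is established, passing to the top $\theta$-weight space is exact on the Serre subcategory of modules with weights bounded by $\gamma$, and each simple $\Walg^0$-constituent of that layer arises from a simple constituent $L^\theta(\bullet)$ of $N^1$ with the same top weight; thus $\underline{N}_2$ in the top layer directly forces $L^\theta(\underline{N}_2)$ to appear in $L\rightthreetimes L^\theta(\underline{N}_1)$, with no need to produce a morphism.
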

In the proof we will need an auxiliary lemma.

Let $N$ be an irreducible $\Walg$-module,
$\alpha\in \K$ be a maximal eigenvalue of $\theta$ on $N$ with eigenspace $\underline{N}$
("maximal" means that $\alpha+n$ is not an eigenvalue for a positive integer $n$) so that $N\cong L^\theta(\underline{N})$.  Let $L$ be an irreducible $G$-module, $\beta\in \K$ be the maximal eigenvalue of $\theta$, and $\underline{L}$ be the corresponding
eigenspace. Then $\underline{L}$ is an irreducible $G_0$-submodule of $L$ with the same highest
weight (with respect to an appropriate Borel subgroup) as $L$.  Finally, set $N^1:=L\rightthreetimes N$. Let $\gamma$ be the maximal eigenvalue of $\theta$ on $N^1$, $\underline{N}^1$ being the corresponding eigenspace.
Then $\underline{N}^1$ is a $\Walg^0$-module and so
we can consider it as $\underline{\Walg}$-module.

\begin{Lem}\label{Lem:12.2}
We have $\gamma=\alpha+\beta$ and $\underline{N}^1\cong\underline{L}\rightthreetimes \underline{N}$
(an isomorphism of $\underline{\Walg}$-modules).
\end{Lem}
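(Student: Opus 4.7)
The plan is to first obtain the vector-space identification via Proposition \ref{Prop:1.5.2}, then upgrade to the $\underline{\Walg}$-module statement by routing through the Whittaker-module equivalence $\KFun$ of Theorem \ref{Thm_Ocat}. Proposition \ref{Prop:1.5.2}(1) supplies a $\q$-equivariant, hence $\t$-equivariant (as $\t\subset\q$), isomorphism $N^1=L\rightthreetimes N\cong L\otimes N$. The $\theta$-weight decomposition of the right-hand side is the convolution of those of $L$ and $N$, so the top weight is $\alpha+\beta$ with weight space $\underline{L}\otimes\underline{N}$; this yields $\gamma=\alpha+\beta$ and identifies $\underline{N}^1\cong\underline{L}\otimes\underline{N}$ as vector spaces.

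For the $\underline{\Walg}$-module structure, let $M:=L^{e,\theta}(\underline{N})\in\widetilde{\Wh}^\t(e,\theta)$. By Theorem \ref{Thm_Ocat}(3) we have $\KFun(M)\cong N$, and by Theorem \ref{Thm_Ocat}(2), $\Psi^*\circ\FF\circ\KFun\cong\GF=(\cdot)^{\widetilde{\m}_\chi}$, so in particular $M^{\widetilde{\m}_\chi}\cong\underline{N}$. Theorem \ref{Thm_tensor} applied with $X=\U_L$ and Proposition \ref{Prop:ULdagger} give
$$\KFun(L\otimes M)=\KFun(\U_L\otimes_\U M)\cong(\U_L)_\dagger\otimes_\Walg\KFun(M)=\Walg_L\otimes_\Walg N=N^1,$$
and applying $\Psi^*\FF$ yields the $\underline{\Walg}$-module isomorphism $\underline{N}^1\cong(L\otimes M)^{\widetilde{\m}_\chi}$. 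In parallel, Theorem \ref{Thm_tensor} for the pair $(\g_0,e)$ (for which $\KFun$ specializes to the Skryabin equivalence $\Sk_0^{-1}$, since $\theta$ is central in $\g_0$), taken with $X=U(\g_0)_{\underline{L}}$, produces $\underline{L}\rightthreetimes\underline{N}\cong(\underline{L}\otimes\Sk_0(\underline{N}))^{\underline{\m}_\chi}$.

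The main obstacle is then to identify $(L\otimes M)^{\widetilde{\m}_\chi}$ with $(\underline{L}\otimes\Sk_0(\underline{N}))^{\underline{\m}_\chi}$ as $\underline{\Walg}$-modules. I would first handle the Verma case: since $\Verm^{e,\theta}(\underline{N})=\U\otimes_{U(\g_{\geqslant 0})}\Sk_0(\underline{N})$, the tensor identity gives $L\otimes\Verm^{e,\theta}(\underline{N})\cong\U\otimes_{U(\g_{\geqslant 0})}(L|_{\g_{\geqslant 0}}\otimes\Sk_0(\underline{N}))$. Filtering $L$ by $\theta$-weight from above realizes it as an iterated extension of $\g_0$-modules, with $\underline{L}$ as the smallest piece and $\g_{>0}$ acting trivially there; a relative Skryabin-style computation then identifies the $\widetilde{\m}_\chi$-invariants of the induced module with the $\underline{\m}_\chi$-invariants of $\underline{L}\otimes\Sk_0(\underline{N})$. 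Descending to the irreducible quotient $M$ (using left exactness of invariants together with the dimension count from Step 1) and verifying $\underline{\Walg}$-action compatibility via $\Psi$ and the restriction of $\Phi$ to the $\g_0$-Levi piece of $\U^\wedge\cong\W(\Walg)^\wedge$ completes the proof.
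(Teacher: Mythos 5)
Your first step (deriving $\gamma=\alpha+\beta$ from the $\q$-equivariant isomorphism $N^1\cong L\otimes N$) matches the paper. The module-structure argument, however, has a genuine gap at the step ``applying $\Psi^*\FF$ yields $\underline{N}^1\cong(L\otimes M)^{\widetilde{\m}_\chi}$''. The functor $\FF$ is $M\mapsto M^{\Walg_{>0}}$, not ``take the top $\theta$-eigenspace''. These only coincide on irreducibles, and $N^1=L\rightthreetimes N$ is in general \emph{not} irreducible: $(N^1)^{\Walg_{>0}}$ receives contributions from the highest weight spaces of every composition factor, so it is strictly larger than $\underline{N}^1$. What $\Psi^*\circ\FF\circ\KFun\cong\GF$ actually gives you is $\Psi^*\bigl((N^1)^{\Walg_{>0}}\bigr)\cong(L\otimes M)^{\widetilde{\m}_\chi}$, and that is a different object than the one you want. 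The subsequent program --- identifying $(L\otimes M)^{\widetilde{\m}_\chi}$ with $(\underline{L}\otimes\Sk_0(\underline{N}))^{\underline{\m}_\chi}$ --- then compares the wrong thing; and even the comparison you propose is only sketched via a ``relative Skryabin-style computation'' on a filtered induced module, which is the hard part and is left unjustified.

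The missing idea is the $\delta$-shift: $\KFun$ does \emph{not} preserve $\theta$-eigenvalues, but shifts them by $\langle\delta,\theta\rangle$ for a fixed $\delta\in\t^*$ (from \cite{BGK}; see Remark 5.5 of \cite{Ocat}). The paper's proof uses this shift to realize $\Sk_0(\underline{N})$ as the \emph{top} $\theta$-eigenspace (at value $\alpha+\langle\delta-\delta_0,\theta\rangle$) in $\KFun^{-1}(N)$, and likewise $\Sk_0(\underline{N}^1)$ as the top eigenspace in $\KFun^{-1}(N^1)$. Since Theorem \ref{Thm_tensor} gives $\KFun^{-1}(L\rightthreetimes N)=L\otimes\KFun^{-1}(N)$, and the top eigenspace of $L\otimes\KFun^{-1}(N)$ is $\underline{L}\otimes\Sk_0(\underline{N})$, one gets $\Sk_0(\underline{N}^1)=\underline{L}\otimes\Sk_0(\underline{N})$ as $\g_0$-submodules. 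Applying Theorem \ref{Thm_tensor} once more for $(\g_0,e)$ identifies $\underline{L}\otimes\Sk_0(\underline{N})$ with $\Sk_0(\underline{L}\rightthreetimes\underline{N})$, and $\Sk_0^{-1}$ finishes. This comparison of top eigenspaces after the $\delta$-shift entirely sidesteps the need to analyze $\widetilde{\m}_\chi$-invariants of non-irreducible modules, which is exactly where your argument runs into trouble.
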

\begin{proof}
The equality $\gamma=\alpha+\beta$ follows from the observation that $N^1$ and $L\otimes N$
are $Q$-equivariantly isomorphic (Proposition \ref{Prop:1.5.2}).

Let us prove the isomorphism. Recall an element $\delta\in \mathfrak{t}^*$ introduced in \cite{BGK}, see also Remark 5.5 in \cite{Ocat}
for the definition of $\delta$. If $\alpha$ is the maximal eigenvalue of $\theta$ in $N$, then $\alpha+\langle\delta,\theta\rangle$
is the maximal eigenvalue of $\theta$ in $\KFun^{-1}(N)$. Theorem \ref{Thm_Ocat} implies that $\underline{N}$ is the $\theta$-eigenspace
with eigenvalue $\alpha+\langle\delta,\theta\rangle$ in $\KFun^{-1}(N)^{\widetilde{\m}_\chi}$. The Skryabin equivalence theorem implies that
$\Sk_0(\underline{N})$ is the $\theta$-eigenspace with eigenvalue $\alpha+\langle\delta-\delta_0, \theta\rangle$
(where $\delta_0$ is the analog of $\delta$ for $\g_0$) in $\KFun^{-1}(N)^{\g_{>0}}$ and (since this eigenvalue is the maximal one) also in $\KFun^{-1}(N)$. Similarly, $\Sk_0(\underline{N}^1)$ is the $\theta$-eigenspace with eigenvalue $\gamma+\langle\delta-\delta_0,\theta\rangle$ in $\KFun^{-1}(L\rightthreetimes N)$. But $\underline{L}\otimes \Sk_0(\underline{N})$ is the $\theta$-eigenspace of eigenvalue $\gamma+\langle\delta-\delta_0,\theta\rangle$ in $L\otimes \KFun^{-1}(N)$.
According to Theorem \ref{Thm_tensor}, $\underline{L}\otimes \Sk_0(\underline{N})\cong \Sk_0(\underline{L}\rightthreetimes \underline{N})$, while $\KFun^{-1}(L\rightthreetimes N)=L\otimes \KFun^{-1}(N)$. It follows that $\Sk_0(\underline{L}\rightthreetimes \underline{N})=\Sk_0(\underline{N}_1)$ and so $\underline{L}\rightthreetimes \underline{N}=\underline{N}_1$.
\end{proof}

\begin{proof}[Proof of Proposition \ref{Prop:12.1}]
Pick a $G_0$-module $L'$ such that  $L'\rightthreetimes \underline{N}_1\twoheadrightarrow
\underline{N}_2$. Let $\lambda$ denote its highest weight. Assume for a moment that $L'=\underline{L}$ for some $G$-module $L$,  equivalently,  $\lambda$  is dominant for $G$. Then $L'\rightthreetimes \underline{N}_1$ is the highest weight subspace in $L\rightthreetimes L^\theta(\underline{N}_1)$ by  Lemma \ref{Lem:12.2}. Hence $L^\theta(\underline{N}_2)$ is a composition factor in
$L\rightthreetimes L^\theta(\underline{N}_1)$, and we are done.

In general, there is a
 character $\mu$ of $L$ such that both $\mu$ and $\lambda+\mu$ are dominant for $G$. Let $\K_\mu$
 denote the 1-dimensional module with highest weight $\mu$ for $G_0$. Replacing $\underline{N}_2$
 with $\K_\mu\rightthreetimes \underline{N}_2$ we get to the situation of the previous paragraph.
\end{proof}

Now we are in position to finish the proof of (*).

\begin{proof}
Assume that $\g_0$ is chosen such that $e$ is distinguished in $\g_0$, i.e., all
semisimple elements in $\z_{\g_0}(e)$ are in the center of $\g_0$. Such an element is
necessarily even in $\g_0$, see, e.g., \cite{CM}, 8.2.

Pick $N_1,N_2\in Y^{P^+}$.
Then $N_1=L^\theta(\underline{N}_1),N_2=L^\theta(\underline{N}_2)$, where $\underline{N}_1,\underline{N}_2$
are irreducible $\underline{\Walg}$-modules. They  have integral central characters (for $G_0$).
This follows, for example, from \cite{BGK}, Theorem 4.7, or from \cite{Miura}, Theorem 5.1.1.
Thanks to Proposition \ref{Prop:12.1}, $N_1\sim N_2$ provided $\underline{N}_1\sim \underline{N}_2$.
The latter follows   from Subsection \ref{SUBSECTION_even}.
\end{proof}

\section{Preliminaries on cells, Springer correspondence and Lusztig's groups}\label{SECTION_cell}
\subsection{Cells in Weyl groups and cell modules}\label{SUBSECTION_cells}
Let $\Hecke$ be the Hecke algebra of $W$ viewed as a $\ZZ[q^{\pm 1/2}]$-algebra.
Let $c_w, w\in W,$ be the Kazhdan-Lusztig basis, see e.g. \cite[5.1.1]{orange}.
Recall (see e.g. \cite[\S 5.1]{orange}) that
one can use it to partition $W$ into two-sided, left and right cells as follows. For $w\in W$ consider the based
two-sided ideal $I^d_w$ in $\Hecke$ generated by $c_w$ (``based'' means ``spanned by basis elements'').
By the two-sided cell $\dcell_w$  of $w$ we mean the set of all $u$ such that $c_u\in I^d_w$
but $c_u\not\in I^d_{w'}$ for $I^d_{w'}\subsetneq I^d_w$.
Similarly, we can consider the left (resp., right) based ideal $I^l_w$ (resp., $I^r_w$) generated by $w$
and define the left (resp., right) cell $\cell_w$ of $w$. It is clear that $W$ gets partitioned
into two-sided cells, and each two-sided cell gets partitioned into left cells. Moreover, one can show that
the map $w\mapsto w^{-1}$ preserves two-sided cells and maps left cells to right ones, and vice versa.
Also it is clear from the definition that the equivalence relation $\sim$ induced by the partition
of $W$ into two-sided cells is generated by the equivalence relations $\sim_L,\sim_R$ corresponding to
partitions into left and right cells, respectively.

To each two-sided cell $\dcell$ we assign the cell $\Hecke$-bimodule $[\dcell]_q$ that is the quotient
of $I^d_w, w\in \dcell,$ by the sum of all $I^d_{w'}\subsetneq I^d_{w}$. Similarly,
to each left cell $\cell$ we assign the left $\Hecke$-module $[\cell]_q$. All cell
bimodules and modules are flat over $\ZZ[q^{\pm 1/2}]$. Since $\ZZ W=\Hecke/(q^{1/2}-1)$,
we get the $W$-bimodule  $[\dcell]$ and the left $W$-module $[\cell]$. It is clear that
$\dim [\cell]=|\cell|$.  In the sequel we will consider $[\dcell]$ as a left module.
As such it decomposes as $[\dcell]=\bigoplus_{\cell\subset \dcell}[\cell]$.

We say that an irreducible $W$-module belongs to $\dcell$ if it appears
as an irreducible constituent of $[\dcell]$. This defines a partition of
$\Irr(W)$ into {\it families} $\Irr(W)^{\dcell}$ (in the sense of Lusztig, \cite{orange}).
Each family has a distinguished irreducible module, called a {\it special} module
(or a special representation).

In \cite{orange} Lusztig introduced a certain parametrization of
$\Irr(W)^{\dcell}$. This parametrization involves a certain finite group
$\bA(=\bA_{\dcell})$ constructed by Lusztig for the two-sided cell $\dcell$.  This group will be described
below (see e.g. Subsection \ref{SUBSECTION_Springer}). To each $U\in \Irr(W)^{\dcell}$
Lusztig assigned a pair $(x_U,V_U)$, where $x_U$ is an element in $\bA$ defined up to
conjugacy, and $V_U$ is an irreducible $Z_{\bA}(x_U)$-module. Following Lusztig,
we denote the set of such pairs by $\Mfr(\bA)$. For different
irreducibles the corresponding pairs are different, but not every
pair arises from some irreducible module.

\subsection{Orbits, Harish-Chandra bimodules, and the algebra $J$}\label{SUBSECTION_cell_HC}
It is a classical fact that left and two-sided cells admit an alternative
description in terms of  primitive ideals in $\Pr(\U_\rho)$.
Namely, it happens that the left cells are precisely the fibers
of the map $W\rightarrow \Pr(\U_\rho), w\mapsto \J(w\rho)$. Furthermore,
the two-sided cells are the fibers of the map that sends $w\in W$
to the open orbit in $\VA(\U/\J(w\rho))$. So we get  bijections
$\dcell\mapsto \Orb^{\dcell}, \Orb\mapsto \dcell^{\Orb}$
between the set of the special orbits and the set of the two-sided cells.

Pick a special orbit $\Orb$ and let $\dcell$ be the corresponding two-sided
cell. Recall that $\JCat_{\Orb}$ denotes the category $^{\rho}\!\HC^\rho_{\Orb}(\U)^{ss}$.
The based $\BQ$-algebra $[\JCat_{\Orb}]$ is known to be isomorphic to the block
$\BQ_{\otimes \ZZ}\JAlg_{\Orb}$ corresponding to $\Orb$ of the (rational form of the) Lusztig asymptotic Hecke algebra $\BQ\otimes_{\ZZ}\JAlg$.
Recall that $\JAlg_{\Orb}$ has a basis $t_x$ indexed by $\dcell$.  The set of simples in $\JCat_{\Orb}$ is again parameterized
by elements of $\dcell$ via $w\mapsto \M_w:=\BG(L(w\rho))$.
An isomorphism $[\JCat_{\Orb}]\rightarrow \JAlg_{\Orb}$ sends $\BG(L(w\rho))$ to $t_{w_*^{-1}}$,
where $w\mapsto w_*$ is a certain involution on $W$ studied by Joseph in \cite{Joseph_cyclicity}.
The results mentioned above in  this paragraph follow from \cite{Joseph_cyclicity} and the claim that the category
$\JCat_{\Orb}$ is multi-fusion, see e.g., Lemma \ref{Lem:J_MF}.

Recall that Lusztig defined an explicit (but complicated) homomorphism
$\Hecke \to \JAlg\otimes_{\ZZ}\ZZ[q^{\pm 1/2}]$, see e.g. \cite[\S 3.2]{Lusztig_subgroups}.
Under the specialization $q^{1/2}=1$, this homomorphism induces an isomorphism
$\BQ (W)\xrightarrow{\sim} \BQ\otimes_{\ZZ}\JAlg$, see e.g. \cite[\S 3.5]{Lusztig_subgroups}.
The last isomorphism induces a bijection between irreducible representations of $[\JCat_{\Orb}]$
and $\Irr(W)^{\Orb}(=\Irr(W)^{\dcell})$, see {\em loc. cit.}

Lusztig's homomorphism $\ZZ(W)\to \JAlg_\Orb$ can be described as follows.
Let us identify $\ZZ(W)$ with the Grothendieck ring of the category of {\em projective functors}
sending the generalized central character $\rho$ to itself, see \cite{BG} or \cite[5.1,5.2]{Mazorchuk}. Let ${\bf 1}\in \JCat_{\Orb}$ be
the unit object and let $\tilde {\bf 1}$ be an arbitrary lift of ${\bf 1}\in \JCat_\Orb \subset \HC_\Orb(\U)$
to an object of $\HC_{\bar \Orb}(\U)$. Let $F$ be a projective functor as above. Then the map
\begin{equation}\label{catLus}
\ZZ(W)\ni [F]\mapsto [F(\tilde {\bf 1})\pmod{\HC_{\partial\Orb}(\U)}]\in [\HC_{\Orb}(\U)]=[\JCat_\Orb]=\JAlg_\Orb
\end{equation}
is well defined and it follows from the results of
\cite{BG} combined with the Kazhdan-Lusztig conjecture that it
coincides with the Lusztig's homomorphism defined as in \cite[\S 3.5]{Lusztig_subgroups}.
We notice that the fact that the map \eqref{catLus} is a
homomorphism of rings is far from being trivial.

Let us introduce some notation.
The left and right annihilators of $\M_w$ are $\J(w\rho),\J(w^{-1}\rho)$. Consider the
subcategory $\JCat_{\cell}$ of all objects in $\JCat_{\Orb}$ such that the right annihilator
of the corresponding simple Harish-Chandra bimodule is $\J(w\rho), w\in \cell$. The tensor
product functor $\JCat_{\Orb}\boxtimes \JCat_{\Orb}\rightarrow \JCat_{\Orb}$ clearly
restricts to $\JCat_{\Orb}\boxtimes \JCat_{\cell}\rightarrow \JCat_{\cell}$. The rational
$K$-group $[\JCat_{\cell}]$ is naturally identified with $[\cell]$.

Similarly we can consider the subcategory $_{\cell}\!\JCat$ with left annihilator corresponding to $\cell$.
Next, for left cells $\cell_1,\cell_2$, set $_{\cell_1}\!\JCat_{\cell_2}:=
\,_{\cell_1}\!\JCat\cap \JCat_{\cell_2}$.

Now let us discuss the categories like $^\lambda\!\JCat$ in the case of an arbitrary (integral) central character $\lambda$.
Of course, if $\lambda$ is regular, then $^\lambda\!\JCat$ is identified with $\JCat$ via
a translation functor. For a singular character $\lambda$ we need some modifications.
Thanks  to (i)-(iii) in the end of Subsection \ref{SUBSECTION_shifts},
we see that the condition that $w$ is compatible with $\lambda\in P^+$
holds or does not hold simultaneously for all $w$ in a given left cell. So the notion of
compatibility of left cells and dominant weights makes sense.
For $\lambda\in P^+$ compatible with a left cell $\sigma$ let $^{\lambda}_{\sigma}\!\JCat$ be the full
subcategory in $^\lambda\!\JCat$ whose simples are in the image of $T_\rho^\lambda(_{\sigma}\!\JCat)$.

%




\subsection{Springer representation}\label{SUBSECTION_Springer}
Let us recall that to each nilpotent orbit $\Orb$ Springer attached a $W\times A(e)$-module
$\Spr(\Orb)$. As a vector space, the Springer representation has a basis
indexed by  irreducible
components of the Springer fiber $\mathcal{B}_e$ that is the preimage of
$e\in \g$ under the natural morphism $T^*\mathcal{B}\rightarrow \g\cong \g^*$.
In other words, $\mathcal{B}_e$ consists of all Borel subalgebras in $\g$ containing $e$.

The group $A(e)$ acts on the set of irreducible components of $\mathcal{B}_e$ by
permutations and so acts also on $\Spr(\Orb)$. It turns out that there is a natural
action of $W$ commuting with $A(e)$ on $\Spr(\Orb)$. An important property of
$\Spr(\Orb)$ is that it is a multiplicity free $W\times A(e)$-module, i.e., each irreducible
$W\times A(e)$-module appears at most once. Further, the trivial $A(e)$-module always appears in $\Spr(\Orb)$,
and the corresponding $W$-module is a unique special representation in $\Irr(W)^{\Orb}$
provided the orbit $\Orb$ is special. 

Now consider the isotypic component $\Spr(\Orb)^{\dcell}$, that is, the sum of all irreducible
$W$-modules that appear in $[\dcell]$. Then $\bA_{\dcell}$ is the quotient
of $A(e)$ by  the kernel of the action of $A(e)$ on $\Spr(\Orb)^{\dcell}$, see \cite[13.1.3]{orange}.

The following result establishes a relationship between Lusztig's parametrization of $\Irr(W)^{\dcell}$,
see the end of Subsection \ref{SUBSECTION_cells}, and the structure of $\Spr(\Orb)^{\dcell}$. 
It follows from \cite[Corollary 0.5]{Lusztig_families}.

\begin{Prop}\label{Prop:Springer1}
We have the following isomorphism of $W\times \bA$-modules: $$\Spr(\Orb)^{\dcell}=\bigoplus_{U\in \Irr(W)^{\dcell}, x_U=1} U\otimes V_U.$$
\end{Prop}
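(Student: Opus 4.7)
The plan is to deduce the decomposition from two classical inputs: the Springer correspondence (together with the multiplicity-freeness of $\Spr(\Orb)$ as a $W\times A(e)$-module) and Lusztig's explicit parametrization of the family $\Irr(W)^{\dcell}$ by pairs $(x_U,V_U)\in \Mfr(\bA)$, as encoded in \cite[Corollary 0.5]{Lusztig_families}.

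First I would use the fact that $\Spr(\Orb)$ is multiplicity free as a $W\times A(e)$-module to write it uniquely in the form
\[
\Spr(\Orb)=\bigoplus_{U} U\otimes \rho_U,
\]
where $U$ ranges over irreducible $W$-modules and $\rho_U\in \Irr(A(e))$ is the local system that the Springer correspondence attaches to the pair $(\Orb,U)$; those $U$ for which no such $\rho_U$ exists contribute $0$. Restricting to the family $\Irr(W)^{\dcell}$ gives an analogous decomposition of $\Spr(\Orb)^{\dcell}$. Since by definition $\bA$ is the quotient of $A(e)$ acting faithfully on $\Spr(\Orb)^{\dcell}$, each $\rho_U$ occurring in $\Spr(\Orb)^{\dcell}$ descends canonically to an irreducible $\bA$-module, which is what we must identify.

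Next I would invoke Lusztig's description of the Springer correspondence inside a family. Within $\Irr(W)^{\dcell}$, Lusztig attaches to each $U$ a pair $(x_U,V_U)$, and identifies the $W$-isotypic components of $\Spr(\Orb)^{\dcell}$ via the nonabelian Fourier transform on $\Mfr(\bA)$. The upshot (this is exactly the content of \cite[Corollary 0.5]{Lusztig_families}) is twofold: (a) the Springer orbit attached to $U\in \Irr(W)^{\dcell}$ is $\Orb$ itself, as opposed to a strictly smaller orbit in $\overline{\Orb}$, if and only if $x_U=1$; and (b) for such $U$, the $\bA$-module $\rho_U$ obtained by pulling back to $A(e)$ coincides precisely with $V_U$. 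Substituting these two facts into the decomposition of the previous paragraph gives the desired formula.

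The main obstacle, if one wished to give a self-contained argument, would be establishing the explicit identification of the Springer local system with $V_U$ via the nonabelian Fourier transform; this is the deep combinatorial content of Chapter~4 of \cite{orange}. Fortunately this is exactly what is packaged in the cited corollary, so for the present proposition it suffices to combine the multiplicity-one decomposition with a direct appeal to \cite[Corollary 0.5]{Lusztig_families} and the defining property of $\bA$ as the quotient of $A(e)$ acting on $\Spr(\Orb)^{\dcell}$.
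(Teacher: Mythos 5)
Your argument correctly reconstructs why the proposition follows from Lusztig's result, which is in fact all the paper gives: the text simply states that the proposition ``follows from \cite[Corollary 0.5]{Lusztig_families}'' with no further elaboration. The scaffolding you supply—multiplicity-freeness of $\Spr(\Orb)$ as a $W\times A(e)$-module, the defining property of $\bA$ as the quotient of $A(e)$ acting on $\Spr(\Orb)^{\dcell}$, and the identification (via Lusztig's nonabelian Fourier transform picture) that $U\in\Irr(W)^{\dcell}$ occurs in $\Spr(\Orb)$ iff $x_U=1$ with local system $V_U$—is an accurate unpacking of that citation.
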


\subsection{Lusztig's subgroups $H_{\cell}$} \label{hsigma}
In \cite[Proposition 3.8]{Lusztig_subgroups},  to each left cell $\cell$ in $\dcell$, Lusztig assigned a subgroup
$H_{\cell}\subset \bA$ determined up to conjugacy. The subgroups depend on
the cell modules rather than on the left cells themselves.

Let us explain an important property
of those subgroups, \cite[Proposition 3.16]{Lusztig_subgroups}.
Consider the left cells $\cell_1,\cell_2$ in $\dcell$.

\begin{Lem}\label{Lem:cells_vs_H}
The number $\#\,_{\cell_1}\! \JCat_{\cell_2}$ of simples in $\,_{\cell_1}\!\JCat_{\cell_2}$ (equal
to $|\cell_1^{-1}\cap \cell_2|$) coincides with $\#\Coh^{H_{\cell_2}}(\bA/H_{\cell_1})=\#\Coh^{\bA}(\bA/H_{\cell_1}\times \bA/H_{\cell_2})$.
\end{Lem}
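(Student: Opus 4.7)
The plan is to split the statement into three separate identifications: (A) $\#\,_{\cell_1}\!\JCat_{\cell_2}=|\cell_1^{-1}\cap\cell_2|$ (the parenthetical), (B) $|\cell_1^{-1}\cap\cell_2|=\#\Coh^{H_{\cell_2}}(\bA/H_{\cell_1})$, and (C) $\#\Coh^{H_{\cell_2}}(\bA/H_{\cell_1})=\#\Coh^{\bA}(\bA/H_{\cell_1}\times\bA/H_{\cell_2})$.

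For (A), I would use the parametrization of the simples in $\JCat_\Orb$ by $\dcell$ via $w\mapsto\M_w=\BG(L(w\rho))$ from Subsection \ref{SUBSECTION_cell_HC}, together with the identification of the left and right annihilators of $\M_w$ as $\J(w\rho)$ and $\J(w^{-1}\rho)$ respectively. Since $\JCat_{\cell_2}$ (resp.\ $\,_{\cell_1}\!\JCat$) consists of those $\M_{w'}$ whose right (resp.\ left) annihilator is $\J(u\rho)$ for some $u$ in the given cell, and since $\J(u_1\rho)=\J(u_2\rho)$ iff $u_1,u_2$ lie in the same left cell, the simple $\M_w$ lies in $\,_{\cell_1}\!\JCat_{\cell_2}$ exactly when $w\in\cell_1$ and $w^{-1}\in\cell_2$. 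Hence the simples biject with $\cell_1\cap\cell_2^{-1}$, and inversion $w\mapsto w^{-1}$ produces a bijection with $\cell_1^{-1}\cap\cell_2$. For (C), this is a standard general fact: induction from $H_{\cell_2}$ to $\bA$ (with respect to the left translation action of $H_{\cell_2}\subset\bA$ on $\bA/H_{\cell_1}$) yields an equivalence $\Coh^{H_{\cell_2}}(\bA/H_{\cell_1})\simeq\Coh^{\bA}(\bA/H_{\cell_1}\times\bA/H_{\cell_2})$; both sides enumerate pairs consisting of an $H_{\cell_1}$--$H_{\cell_2}$ double coset $H_{\cell_1}gH_{\cell_2}$ in $\bA$ and an irreducible representation of the corresponding stabilizer $H_{\cell_1}\cap gH_{\cell_2}g^{-1}$.

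The main obstacle is (B), the combinatorial identity $|\cell_1^{-1}\cap\cell_2|=\#\Coh^{H_{\cell_2}}(\bA/H_{\cell_1})$. This is precisely the content of \cite[Proposition 3.16]{Lusztig_subgroups}, and I would simply cite Lusztig's result. As a sanity check, Proposition \ref{Prop:Springer1} together with the defining property of $H_\cell$ gives
\[ |\bA/H_\cell|=\dim\Hom_W([\cell],\Spr(\Orb))=\sum_{U\in\Irr(W)^\dcell,\,x_U=1}a_U(\cell)\dim V_U, \]
where $a_U(\cell)$ is the multiplicity of $U$ in $[\cell]$; Lusztig's proof of (B) proceeds by combining this with an analogous character-theoretic expansion of $|\cell_1^{-1}\cap\cell_2|$ using his parametrization of $\Irr(W)^\dcell$ by pairs $(x_U,V_U)\in\Mfr(\bA)$, reducing the equality to an explicit orbit-count on $\bA$.
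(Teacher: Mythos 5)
Your proposal is correct, but the paper takes a genuinely different route for the part that does the work. Steps (A) and (C) are both right: (A) is the translation of the definition of $\,_{\cell_1}\!\JCat_{\cell_2}$ into the parametrization $w\mapsto\M_w$ of simples by $\dcell$ together with the description of the left and right annihilators of $\M_w$, and (C) is the standard induction equivalence $\Coh^{K}(X)\simeq\Coh^{G}(G\times_K X)$ for equivariant sheaves applied to $K=H_{\cell_2}\subset G=\bA$ and $X=\bA/H_{\cell_1}$. For (B) you propose to cite \cite[Proposition 3.16]{Lusztig_subgroups}, which is indeed the citation the paper gives when it states the lemma; however, the authors immediately remark that ``it is not quite easy to extract the necessary information on Lusztig's subgroups from his work,'' and they deliberately avoid this. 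Instead of parsing Lusztig's definition of $H_\cell$, they \emph{construct} a candidate subgroup attached to each left cell --- via Temperley--Lieb patterns in types $B,C$ and $D$ in Subsections \ref{SUBSECTION_explicit_bc}, \ref{SUBSECTION_explicit_d}, and case-by-case in the exceptional types in Subsections \ref{SUBSECTION_expl_excep_2}, \ref{SUBSECTION_explicit_excep2} --- and verify Lemma \ref{Lem:cells_vs_H} and Proposition \ref{Prop:Springer2} directly for those. Concretely, in types $B$ and $C$ the verification (Proposition \ref{Prop:LSubgBC}) uses Lemma \ref{Lem:L_T_BC} to compute $\dim\Hom_W([T],[T'])=|\Lagr_T\cap\Lagr_{T'}|$ as the size of an intersection of Lagrangian $\bF_2$-subspaces, and then Lemma \ref{lem:lagr_subspace} identifies this with $|\bA|\,|H_\cell\cap H_{\cell'}|/(|H_\cell|\,|H_{\cell'}|)=\#\Coh^{\bA}(\bA/H_\cell\times\bA/H_{\cell'})$. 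The authors explicitly decline to prove that their candidate subgroups coincide with Lusztig's, since only Lemma \ref{Lem:cells_vs_H} and Proposition \ref{Prop:Springer2} are used downstream. So your route is the one the attribution suggests and is shorter, at the price of correctly reading Lusztig; the paper's route is self-contained combinatorics, and it is the one actually invoked in the proof of Theorem \ref{theorem:main}.
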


We will also need a relationship between the (left) cell
modules, $\Spr(\Orb)$, and the Lusztig subgroups.

\begin{Prop}\label{Prop:Springer2}
We have an $\bA$-equivariant isomorphism $\BQ (\bA/H_{\cell})\cong \Hom_W([\JCat^{\cell}], \Spr(\Orb))$
(the latter is an $\bA$- and not just an $A(\Orb)$-module, thanks to Proposition \ref{Prop:Springer1}).
\end{Prop}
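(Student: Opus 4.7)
The plan is to combine Lusztig's original definition of $H_\cell$ via the decomposition of the cell module $[\cell]$ with Proposition \ref{Prop:Springer1}, and then unwind both via Frobenius reciprocity. First I would recall from \cite[Proposition 3.8]{Lusztig_subgroups} (together with Lusztig's parametrization of $\Irr(W)^\dcell$ by pairs $(x_U, V_U)\in \Mfr(\bA)$ recalled in Subsection \ref{SUBSECTION_cells}) that $H_\cell\subset \bA$ is characterized, up to conjugacy, by the identity
\[
[\cell]\;\cong\;\bigoplus_{\substack{U\in \Irr(W)^\dcell\\ x_U=1}} \bigl(\dim V_U^{H_\cell}\bigr)\, U
\]
of $W$-modules; note that when $x_U=1$ we have $Z_{\bA}(x_U)=\bA$, so each such $V_U$ is a genuine irreducible $\bA$-module. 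Under the identification $[\JCat^\cell]=[\cell]$ coming from Lusztig's homomorphism $\BQ(W)\xrightarrow{\sim}\BQ\otimes_\ZZ \JAlg$ (Subsection \ref{SUBSECTION_cell_HC}), this gives the multiplicities of the $W$-irreducibles in $[\JCat^\cell]$.

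Next I would compute $\Hom_W([\JCat^\cell],\Spr(\Orb))$ as an $\bA$-module. All $W$-irreducible constituents of $[\cell]$ belong to the family $\Irr(W)^\dcell$, so only the isotypic piece $\Spr(\Orb)^\dcell$ contributes, and by Proposition \ref{Prop:Springer1}
\[
\Hom_W\!\bigl([\JCat^\cell],\Spr(\Orb)\bigr)\;=\;\Hom_W\!\Bigl([\cell],\,\bigoplus_{U:\,x_U=1} U\otimes V_U\Bigr)\;=\;\bigoplus_{U:\,x_U=1}\bigl(\dim V_U^{H_\cell}\bigr)\,V_U
\]
as $\bA$-modules, where in the middle step I use that $\Spr(\Orb)^\dcell$ is multiplicity-free as a $W\times\bA$-module (so the identification of the $W$-Hom with the $\bA$-factor is canonical), and in the last step I insert the formula of the previous paragraph.

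Finally, I would invoke the fact that the assignment $U\mapsto V_U$ restricted to $\{U\in \Irr(W)^\dcell : x_U=1\}$ gives a bijection with $\Irr(\bA)$: this is precisely the content of Lusztig's parametrization combined with the definition of $\bA$ as the minimal quotient of $A(e)$ acting on $\Spr(\Orb)^\dcell$ (Subsection \ref{SUBSECTION_Springer}). Consequently, the $\bA$-module computed above equals $\bigoplus_{V\in \Irr(\bA)}(\dim V^{H_\cell})\,V$, which by Frobenius reciprocity is exactly $\operatorname{Ind}_{H_\cell}^{\bA}\mathbf{1}=\BQ(\bA/H_\cell)$, proving the proposition.

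The main obstacle is the third step: one needs a clean reference for the bijectivity of $U\mapsto V_U$ between $\{U:x_U=1\}$ and $\Irr(\bA)$. This can be extracted from Lusztig's tables in \cite{orange} case by case, but it is also implicit in the claim that $\bA$ is the \emph{minimal} quotient of $A(e)$ acting on $\Spr(\Orb)^\dcell$; once that is granted, the rest of the argument reduces to Lusztig's definition of $H_\cell$ and elementary manipulation of induced representations.
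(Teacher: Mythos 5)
There is a genuine gap in your final step. You invoke a claimed bijection between $\{U\in\Irr(W)^\dcell : x_U=1\}$ and $\Irr(\bA)$ via $U\mapsto V_U$, and then rewrite $\bigoplus_{U:\,x_U=1}(\dim V_U^{H_\cell})\,V_U$ as $\bigoplus_{V\in\Irr(\bA)}(\dim V^{H_\cell})\,V=\BQ(\bA/H_\cell)$. But that map is \emph{not} a bijection in general: the minimality of $\bA$ only says the $V_U$'s with $x_U=1$ have trivial joint kernel, not that every irreducible of $\bA$ occurs among them. Concretely, in the $F_4$ case with $\bA=S_4$ (Subsection \ref{SUBSECTION_explicit_excep2}), $\Spr(\Orb)^\dcell=U_0\otimes V_4\oplus U_1\otimes V_{31}\oplus U_2\otimes V_{22}\oplus U_3\otimes V_{211}$, so the sign module $V_{1111}$ does not arise as any $V_U$; likewise $V_{11111}$ is absent when $\bA=S_5$. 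Your conclusion would still be correct \emph{provided} that $\dim V^{H_\cell}=0$ for every missing $V$ and every left cell $\cell$ — which turns out to hold here because each such $V$ is a sign character and each $H_\cell$ contains a transposition — but that is an extra fact requiring verification, and you do not argue it. As an ancillary imprecision, the formula you write, $[\cell]\cong\bigoplus_{U:\,x_U=1}(\dim V_U^{H_\cell})U$, cannot be an isomorphism of $W$-modules: cell modules also contain constituents $U$ with $x_U\neq 1$ (compare Lemma \ref{Lem:L_T_BC}, where $[T]=\bigoplus_{(x_U,V_U)\in\Lagr_T}U$ and $\Lagr_T$ typically contains non-identity elements in the $\bA$-factor). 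What you really need, and should state, is only the multiplicity statement for those $U$ with $x_U=1$.

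It is also worth knowing that the paper does not attempt this route at all. The authors remark explicitly (end of Subsection \ref{hsigma}) that ``it is not quite easy to extract the necessary information on Lusztig's subgroups from his work,'' so instead of unwinding \cite[Proposition~3.8]{Lusztig_subgroups} they simply \emph{produce} subgroups $H_\cell$ (via Temperley--Lieb patterns and the Lagrangians $\Lagr_T=H_T^B\oplus H_T^C$ in classical types; by hand from tables for exceptional $\bA$) and then verify that both Lemma \ref{Lem:cells_vs_H} and Proposition \ref{Prop:Springer2} hold for this explicit family. That trades conceptual cleanliness for a case-by-case check, but it completely avoids the issues above: one never needs to compare the constructed subgroups with Lusztig's originals, since only the two properties are used downstream. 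If you want to keep your approach, you will need to (i) source precisely the multiplicity statement for $[\cell]$ from \cite{Lusztig_subgroups}, and (ii) add the case-by-case check that $\dim V^{H_\cell}=0$ for $V\in\Irr(\bA)$ not appearing in $\Spr(\Orb)^\dcell$ — at which point your argument is doing comparable work to the paper's, just organized differently.
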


A problem is that it is not quite easy to extract the necessary information on Lusztig's subgroups
from his work. So, in the next three subsections, we will just produce certain subgroups in $\bA$
and prove Lemma \ref{Lem:cells_vs_H} and Proposition \ref{Prop:Springer2} for them. One can
also show that our subgroups are the same as Lusztig's but since the lemma and the proposition
above is all we need, we will not prove that the definitions are equivalent.

\subsection{Explicit descriptions: types B and C}\label{SUBSECTION_explicit_bc}
We recall (see \cite[\S 4.5]{orange}) that the irreducible representations of the Weyl group $W(B_n)\simeq W(C_n)$ are labeled by
{\em symbols} of rank $n$
and defect 1. By definition, such a symbol $\Lambda =\binom{M'}M$ is just two subsets $M, M'$ of $\ZZ_{\ge 0}$
with $|M'|=|M|+1$ and $\sum_{x\in M}x+\sum_{x\in M'}x=n+|M|^2$ up to an equivalence relation
described in {\em loc. cit.}; each equivalence class contains a unique {\em reduced}
symbol with $0\not \in M\cap M'$.

In types $B,C$ the combinatorial description of
families (i.e., the subsets of the form $\Irr(W)^{\dcell}$)
is as follows: two representations labeled by reduced symbols $\binom{M'_1}{M_1}$ and $\binom{M'_2}{M_2}$
are in the same family if and only if $M_1\cup M_1'=M_2\cup M_2'$ and
$M_1\cap M_1'=M_2\cap M_2'$, see \cite[\S 4.5]{orange}. Thus a family is completely determined by two
subsets $Z_1, Z_2\subset \ZZ_{\ge 0}$ such that $Z_1\cap Z_2=\varnothing$, $0\not \in Z_2$ and
$|Z_1|$ is odd: the family corresponding to $Z_1,Z_2$ consists of representations with reduced symbols  $\binom{M'}M$
such that $M\cap M'=Z_2$ and $M\cup M'=Z_1\cup Z_2$; let $\Fam(Z_1,Z_2)$ denote such a family.
Let us write the elements of $Z_1$ in an increasing order: $z_0< z_1< \ldots < z_{2m}$.
Then any representation from $\Fam(Z_1,Z_2)$ corresponds to reduced symbol
$\binom{Z_2\cup (Z_1-M)}{Z_2\cup M}$ where $M$ is a subset of $Z_1$ with $|M|=m$; thus we
have a bijection between $\Fam(Z_1,Z_2)$ and the set of subsets of $Z_1$ of cardinality $m$.
The special representation from $\Fam(Z_1,Z_2)$ corresponds to the subset
$M_0=\{ z_1, z_3, \ldots z_{2m-1}\} \subset Z_1$.

Fix a family $\Fam(Z_1,Z_2)$ and let $\dcell,\Orb$
 be the  corresponding two-sided cell and the special nilpotent orbit.
Recall that in the classical types nilpotent orbits are parameterized by partitions.
\cite{Carter}, 13.3, provides  a combinatorial recipe to compute $\Orb$ (i.e., the corresponding
partition) from the symbol of the special representation inside $\Fam(Z_1,Z_2)$.
%
%

We now describe representations of the form $[\cell]$ with $\cell\subset \dcell$.
The $W$-modules of the form $[\cell]$ (but not the left cells themselves!)
are in bijection with {\em Temperley-Lieb patterns} of the following form:
$$
\xymatrix{
&&&\bullet \ar@/^1pc/@{-}[rd]&&&&\\
z_0\ar@/^1pc/@{-}[rrr]&z_1\ar@/^.5pc/@{-}[r]&z_2&z_3&z_4&z_5\ar@/^1pc/@{-}[rr]&\ldots &z_{2m}
}
$$

Here is a formal definition: Temperley-Lieb (shortly, TL) pattern above is an embedded into
$\BR \times [0,1]$ unoriented cobordism of the set
$Z_1\subset \ZZ \subset \BR=\BR \times 0\subset \BR \times [0,1]$ to a 1-point set embedded
into $\BR =\BR \times 1\subset  \BR \times [0,1]$. To such a pattern $Y$ one associates a
representation $[Y]$ of $W(B_n)$ which is the direct sum of the irreducible representations labeled by symbols
$\binom{Z_2\cup (Z_1-M)}{Z_2\cup M}$ (all with multiplicities 1) where $M\subset Z_1$ contains
precisely one of each $z_i$ and $z_j$ connected by an arc. This procedure produces a bijection
between TL patterns and left cell modules, see \cite[11.1]{fam} and \cite{cellules}.

\begin{Ex} Assume that $Z_2=\{ 1,3\}$ and $Z_1=\{0,2,5,6,7\}$. Then the pattern
$$
\xymatrix{
&&\bullet \ar@/^1pc/@{-}[rrd]&&\\
0\ar@/^1pc/@{-}[rrr]&2\ar@/^.5pc/@{-}[r]&5&6&7
}
$$
corresponds to the following cell representation of $W(B_{12})$:
$$\binom{0,1,2,3,7}{1,3,5,6}+\binom{1,2,3,6,7}{0,1,3,5}+\binom{0,1,3,5,7}{1,2,3,6}+\binom{1,3,5,6,7}{0,1,2,3}.$$
\end{Ex}

Now let us describe the Lusztig group $\bA:=\bA_{\dcell}$.
Let $V_{Z_1}$ be the set of subsets of $Z_1$  of even cardinality. Then $V_{Z_1}$ has a natural
structure of a symplectic vector space over the field $\bF_2$: the sum is the symmetric difference and
the symplectic form is $$(M,M')=|M\cap M'|\pmod{2}.$$ Let $e_i=\{z_{i-1},z_i\} \in V_{Z_1}.$ Then
$e_1,\ldots e_{2m}$ is a basis of $V_{Z_1}$ with $(e_i,e_j)=1$ if and only if $|i-j|=1$.

Let $\bA^B, \bA^C\subset V_{Z_1}$ be the Lagrangian subspaces with bases
$\{e_1, e_3, \ldots, e_{2m-1}\}$ (for $\bA^B$) and $\{e_2, e_4, \ldots, e_{2m}\}$
(for $\bA^C$).
It is clear that $V_{Z_1}=
\bA^B\oplus \bA^C$ and the symplectic form gives the identification $(\bA^B)^*=\bA^C$.

It turns out that the Lusztig group $\bA$ gets identified with $\bA^B$ in type $B$
and with $\bA^C$ in type $C$, see \cite[\S 4.5]{orange}.
So we have an identification of $V_{Z_1}$ with $\bA\oplus \bA^*$ and hence with Lusztig's set
$\Mfr(\bA)$ recalled in Subsection \ref{SUBSECTION_cells}.

Now let us describe the Lusztig parameterization $U\mapsto (x_U,V_U)$ with $x_U\in \bA, V_U\in \bA^*$.
The set $\Fam(Z_1,Z_2)$ is embedded in $V_{Z_1}$ as follows: the representation labeled by the symbol
$\binom{Z_2\cup (Z_1-M)}{Z_2\cup M}$ corresponds to the symmetric difference of $M$ and $M_0=\{z_1,z_3,\ldots,z_{2m-1}\}$.
This gives the Lusztig parameterization (we take the ``same type component'' of the image
of $U$ for $x_U$, and  the ``different type component'' for $V_U$).




It will be convenient for us to describe the cell modules in a way slightly different from the above.

Let $T$ be a Temperley-Lieb pattern as above; then the subspace $\Lagr_T$ of $V_{Z_1}$ with basis given
by two element subsets $\{ z_i,z_j\}$, where $z_i$ and $z_j$ are connected by an arc, is Lagrangian.

\begin{Lem}\label{Lem:L_T_BC}
We have $\Lagr_T\subset \Fam(Z_1,Z_2)$ and $[T]=\bigoplus_{(x_U,V_U)\in \Lagr_T}U$.
\end{Lem}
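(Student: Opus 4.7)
The plan is to reduce the lemma to the purely combinatorial statement that the map $M \mapsto M \triangle M_0$ (where $M_0 = \{z_1,z_3,\ldots,z_{2m-1}\}$) carries the set of $m$-element subsets of $Z_1$ that contain \emph{exactly one} endpoint from each arc of $T$ bijectively onto $\Lagr_T$. By the construction of $[T]$ given just above the lemma, this is precisely the set of irreducibles occurring in $[T]$, identified with their images in $V_{Z_1}$ under the embedding $\Fam(Z_1,Z_2)\hookrightarrow V_{Z_1}$. Since $\Lagr_T$ is Lagrangian of dimension $m$ and so has $2^m$ elements, while the number of ``one-per-arc'' choices of $M$ is also $2^m$, and the map $M\mapsto M\triangle M_0$ is injective, it will suffice to verify the inclusion $\{M\triangle M_0\}\subset \Lagr_T$; both assertions of the lemma will follow simultaneously, with part~(a) amounting to the fact that the image lands inside $\Fam(Z_1,Z_2)$.

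The key combinatorial input is a parity statement about the planar pattern $T$. First, since the through-line must split the bottom row into two pieces each of which supports a perfect non-crossing matching, the through-line index $t$ must be even; in particular $z_t\notin M_0$. Second, every arc of $T$ lies entirely to one side of the through-line, so it belongs to a non-crossing matching of a consecutive block $\{z_a,z_{a+1},\ldots,z_{a+2k-1}\}$ of even length. A standard induction on the outermost arc shows that in such a matching every arc $(z_i,z_j)$ satisfies $i+j\equiv 1\pmod 2$, i.e.\ its endpoints have opposite parities in $\{0,1,\ldots,2m\}$.

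With this in hand I would fix an $M$ selecting one endpoint per arc and check, arc by arc, that $\{z_i,z_j\}\cap (M\triangle M_0)$ is either $\{z_i,z_j\}$ or $\emptyset$. Using $z_k\in M_0 \Leftrightarrow k$ odd, the four subcases (which endpoint $M$ picks, times which endpoint is even) collapse to the uniform conclusion: when $M$ picks the even-indexed endpoint both endpoints lie in $M\triangle M_0$, and when $M$ picks the odd-indexed endpoint neither does. Since $z_t\notin M$ and $t$ is even so $z_t\notin M_0$, the through-line element also never enters $M\triangle M_0$. Hence $M\triangle M_0$ is a union of complete arcs of $T$, i.e.\ lies in $\Lagr_T$.

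The only subtle point in this argument is the parity claim about the arcs of $T$, which depends crucially on the planarity constraint that an arc cannot cross the through-line; once one records that arcs live on one side of $z_t$, the parity fact is the standard one for non-crossing matchings of even-length consecutive blocks. The translation to the Lusztig parameterization $(x_U,V_U)\in \bA\oplus \bA^*$ is tautological through the identification $V_{Z_1}=\bA\oplus \bA^*$ recalled just before the lemma, so the equality $[T]=\bigoplus_{(x_U,V_U)\in \Lagr_T}U$ drops out of the bijection established above.
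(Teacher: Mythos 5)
Your proof is correct and takes essentially the same route as the paper's: both rest on the observation that each arc of $T$ joins indices of opposite parity, so that $M\mapsto M\triangle M_0$ gives a bijection between the one-endpoint-per-arc subsets of $Z_1$ and $\Lagr_T$. The paper merely asserts the parity fact and runs the map in the direction $\ell\mapsto M_0\triangle\ell$, while you supply a planarity/non-crossing argument for the parity claim and close the bijection by a $2^m$-cardinality count; these are cosmetic differences around the same combinatorial core.
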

\begin{proof}
An element $\ell$ of $\Lagr_T$ is the union of pairs $\{z_{i_k},z_{j_k}\}$, each pair being connected  by an arc.
We remark that an arc always connect elements with different parities. The set $M_0$ consists
of all elements with fixed parity. For $M$ take the symmetric difference of $M_0$ and the union
of all pairs in $\ell$. Then $\ell$ corresponds to the representation with symbol $\binom{Z_2\cup (Z_1-M)}{Z_2\cup M}$.

The second statement is just the reformulation of the description of the map $U\mapsto (x_U,V_U)$.
given above.
\end{proof}


We now describe the subgroup $H_\cell \subset \bA$ corresponding to a left cell $\cell$.
For this we first  introduce  certain subgroups $H^B_T\subset \bA^B, H^C_T\subset \bA^C$
for a TL pattern $T$.

Type $B$: connect $z_0$ and $z_1$, $z_2$ and $z_3$ and so on. A basis of
$H_T^B\subset \bA^B$ is labeled by the connected components of the resulting picture
homeomorphic to a circle; the basis element corresponding to such a connected component is
$\sum e_{2i+1}$, where the summation is over the indices $i$ such that the arc connecting $z_{2i}, z_{2i+1}$ appears in that connected component.

Type $C$:  connect $z_1$ and $z_2$, $z_3$ and $z_4$ and so on. A basis of
$H_T^C\subset \bA^C$ is labeled by the connected components of the resulting picture
homeomorphic to a circle;  the basis element corresponding to such a connected component is
$\sum e_{2i}$ with the summation   over the indices $i$ such that the arc connecting $z_{2i-1}, z_{2i}$ appears in that connected component.

\begin{Ex} Consider the pattern
$$
\xymatrix{
&&&\bullet \ar@/^1pc/@{-}[rd]&&&\\
z_0\ar@/^1pc/@{-}[rrr]&z_1\ar@/^.5pc/@{-}[r]&z_2&z_3&z_4&z_5\ar@/^.5pc/@{-}[r]&z_6
}
$$

The procedure above in type $B$ gives:
$$
\xymatrix{
&&&\bullet \ar@/^1pc/@{-}[rd]&&&\\
z_0\ar@/^1pc/@{-}[rrr]\ar@/_.25pc/@{-}[r]&z_1\ar@/^.5pc/@{-}[r]&z_2\ar@/_.25pc/@{-}[r]&z_3&
z_4\ar@/_.25pc/@{-}[r]&z_5\ar@/^.5pc/@{-}[r]&z_6
}
$$
Thus, $H_T^B=\langle e_1+e_3\rangle \subset \langle e_1,e_3,e_5\rangle$.

The procedure above in type $C$ gives:
$$
\xymatrix{
&&&\bullet \ar@/^1pc/@{-}[rd]&&&\\
z_0\ar@/^1pc/@{-}[rrr]&z_1\ar@/^.5pc/@{-}[r]\ar@/_.25pc/@{-}[r]&z_2&z_3\ar@/_.25pc/@{-}[r]&
z_4&z_5\ar@/^.5pc/@{-}[r]\ar@/_.25pc/@{-}[r]&z_6
}
$$
Thus, $H_T^C=\langle e_2,e_6\rangle \subset \langle e_2,e_4,e_6\rangle$.
\end{Ex}


\begin{Lem}\label{lem:lagr_subspace} For a Temperley-Lieb pattern $T$ the subgroup
$H_T^B\oplus H_T^C\subset \bA^B\oplus \bA^C=V_{Z_1}$ equals $\Lagr_T$.
\end{Lem}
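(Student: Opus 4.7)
The plan is to prove the two inclusions $H_T^B\oplus H_T^C\subseteq\Lagr_T$ and $\Lagr_T\subseteq H_T^B+H_T^C$ separately; the sum on the left is automatically direct because $H_T^B\subseteq\bA^B$ and $H_T^C\subseteq\bA^C$ meet trivially in $V_{Z_1}=\bA^B\oplus\bA^C$.

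For $H_T^B+H_T^C\subseteq\Lagr_T$, the key observation is that in $V_{Z_1}$ the sum $\sum\{z_a,z_b\}$ taken over the edges of any graph-theoretic cycle equals $0$, since each vertex on the cycle appears as an endpoint of an even number of those edges. Any circle $C$ in the type $B$ picture alternates between type $B$ extras and $T$-arcs, so this identity becomes
\[
\sum_{\text{type $B$ extras in }C}(\text{extra})\;=\;\sum_{T\text{-arcs in }C}(T\text{-arc})\qquad\text{in }V_{Z_1}.
\]
The left-hand side is by definition the basis element of $H_T^B$ attached to $C$, while the right-hand side visibly lies in $\Lagr_T$; hence $H_T^B\subseteq\Lagr_T$, and the same argument gives $H_T^C\subseteq\Lagr_T$.

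For the reverse inclusion, it suffices to realize every $T$-arc $\alpha_k=\{z_a,z_b\}$ (with $a<b$) as an element of $H_T^B+H_T^C$. By the planarity argument used in the proof of Lemma \ref{Lem:L_T_BC}, $a$ and $b$ have opposite parities; assume $a$ even, $b$ odd, the other case being symmetric. Planarity also implies that every other $T$-arc lies either entirely inside $\{z_{a+1},\dots,z_{b-1}\}$ or entirely outside, and that the up-vertex is outside (the interior has an even number of vertices, which are paired up by nested arcs). Let $I_k$ be the set of $T$-arcs nested inside $\alpha_k$. I would introduce the 1-chains
\[
\gamma_B:=\alpha_k+\!\!\sum_{\substack{\text{type $B$ extras}\\\text{between }z_a,z_b}}\!\!(\text{extra})+\sum_{\alpha\in I_k}\alpha,\qquad \gamma_C:=\!\!\sum_{\substack{\text{type $C$ extras}\\\text{between }z_a,z_b}}\!\!(\text{extra})+\sum_{\alpha\in I_k}\alpha
\]
in the type $B$ and type $C$ pictures respectively. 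A vertex-by-vertex valency check --- each interior vertex $z_i$ carries exactly one incident extra of each type in this range and exactly one incident nested arc, while $z_a$ is covered in $\gamma_B$ by $\alpha_k$ together with the adjacent type $B$ extra $\{z_a,z_{a+1}\}$, and symmetrically for $z_b$ --- shows that $\gamma_B,\gamma_C$ are honest 1-cycles in their respective graphs, hence decompose as sums of cycle-components. Applying the cycle identity from the previous paragraph component-by-component, the resulting elements of $H_T^B$ and $H_T^C$ evaluate in $V_{Z_1}$ to $\alpha_k+\sum_{\alpha\in I_k}\alpha$ and $\sum_{\alpha\in I_k}\alpha$, whose sum is $\alpha_k$.

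The delicate point is the construction of the 1-chains $\gamma_B,\gamma_C$: one must place $\alpha_k$ into whichever of the two chains has the parity compatible with balancing the valency at the endpoints $z_a,z_b$, and simultaneously include all nested $T$-arcs in \emph{both} chains in order to kill the surplus valency at each interior vertex (which otherwise receives degree one from each type of extra). Once this is set up correctly the cancellation producing $\alpha_k$ is automatic, and the two inclusions combined with the direct-sum observation complete the proof.
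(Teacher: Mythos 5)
Your proof is correct, but takes a genuinely different route from the paper's. The paper asserts the inclusion $H_T^B, H_T^C\subseteq\Lagr_T$ in a single sentence (``follows from the definitions'') and then proves equality by a dimension count: induction on $m$, deleting an innermost adjacent arc $\{z_i,z_{i+1}\}$ of $T$ together with its two points to obtain a smaller pattern $T'$, then checking that exactly one basis vector of $H_T^B\oplus H_T^C$ (namely $\{z_i,z_{i+1}\}$, lying in whichever of $H_T^B$, $H_T^C$ has the right parity) is lost in passing from $T$ to $T'$, so that $\dim\bigl(H_T^B\oplus H_T^C\bigr)=m=\dim\Lagr_T$. You instead prove both inclusions directly. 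Your cycle identity --- that the symmetric difference of the edges along a graph cycle vanishes in $V_{Z_1}$, and that any circle in the type $B$ or type $C$ picture alternates between extras and $T$-arcs --- supplies the argument the paper leaves implicit for the first inclusion. For the reverse inclusion you build, around each $T$-arc $\alpha_k$, the $1$-cycles $\gamma_B,\gamma_C$ whose extra-parts are elements of $H_T^B$ and $H_T^C$ respectively (because $\gamma_B,\gamma_C$ decompose into full circle components of the degree-$\leq 2$ type $B$ and $C$ graphs) and whose $T$-arc parts sum to $\alpha_k+\sum_{\alpha\in I_k}\alpha$ and $\sum_{\alpha\in I_k}\alpha$; the valency check and the parity and nesting analysis around $\alpha_k$ you carry out are correct. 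What the paper's route buys is brevity; what yours buys is an explicit expression for every $T$-arc as an element of $H_T^B\oplus H_T^C$ and a self-contained verification of the inclusion the paper only gestures at. Both are sound.
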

\begin{proof}
It follows from the definitions of $H_T^B, H_T^C$ that both are contained in $\mathcal{L}_T$.
It remains to show that $\dim H_T^B+\dim H_T^C=m$. This is done by induction on $m$. Namely,
in $T$ there will be an arc connecting $z_i,z_{i+1}$ for some $i$. The vector $e_i$ lies
either in $H_T^B$ or in $H_T^C$. Then delete the points $z_i,z_{i+1}$. Denote the resulting
TL pattern by $T'$. Replace the arcs
connecting $z_{i-1}$ with $z_i$ and $z_{i+1}$ with $z_{i+2}$ (used when we construct
$H_B^\bullet,H_C^\bullet$) with the arc connecting $z_{i-1},z_{i+2}$. We will get some subspaces
$H_B^{T'},H_C^{T'}$ with $\dim H_B^{T'}\oplus H_C^{T'}=m-1$. But $H_B^T\oplus H_C^T=
H_B^{T'}\oplus H_C^{T'}\oplus\langle e_i\rangle$ and so the dimension of $H_B^T\oplus H_C^T$
equals $m$.
\end{proof}
\begin{Prop}\label{Prop:LSubgBC}
Let $\cell\subset \dcell$ be a left cell, and let $T$ be a unique TL pattern
with $[T]=[\cell]$. Then $H_{\cell}:=H^B_T$ in type $B$ and $H_{\cell}:=H^C_T$
satisfy Lemma \ref{Lem:cells_vs_H} and Proposition \ref{Prop:Springer2}.
\end{Prop}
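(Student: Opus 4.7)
The plan is to verify the two claimed properties — Proposition \ref{Prop:Springer2} and Lemma \ref{Lem:cells_vs_H} — for the explicit subgroups $H_T^B\subset \bA^B$ and $H_T^C\subset \bA^C$. I will focus on type $B$; the type $C$ argument is identical after swapping the roles of $B$ and $C$.

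For Proposition \ref{Prop:Springer2}, I would combine Lemma \ref{Lem:L_T_BC} with Proposition \ref{Prop:Springer1}. The former decomposes $[\cell]=[T]$ into those irreducibles $U$ whose Lusztig parameters $(x_U,V_U)\in \bA^B\oplus \bA^C=V_{Z_1}$ lie in $\Lagr_T$, while the latter tells us that $U\subset \Spr(\Orb)$ iff $x_U=1$, in which case the $\bA$-isotypic piece is $V_U$. Therefore $\Hom_W([\cell],\Spr(\Orb))$ is naturally identified with the sum of the one-dimensional characters $V_U$ of $\bA^B$ as $V_U$ ranges over $\Lagr_T\cap \bA^C$, which by Lemma \ref{lem:lagr_subspace} is $H_T^C$. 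On the other hand, $\BQ(\bA^B/H_T^B)$ decomposes as the sum of those characters of $\bA^B$ that vanish on $H_T^B$, i.e., the characters indexed by $(H_T^B)^\perp\subset (\bA^B)^*=\bA^C$. The Lagrangian property of $\Lagr_T$ gives $H_T^C\subset (H_T^B)^\perp$, while Lemma \ref{lem:lagr_subspace} supplies the matching dimension count $\dim H_T^C=m-\dim H_T^B=\dim(H_T^B)^\perp$. Hence $(H_T^B)^\perp=H_T^C$, and the two $\bA$-modules coincide.

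For Lemma \ref{Lem:cells_vs_H} the right-hand side is a direct calculation. Since $\bA$ is abelian, the diagonal $\bA$-orbits on $\bA/H_{\cell_1}\times \bA/H_{\cell_2}$ are cosets of $H_{\cell_1}H_{\cell_2}$ and each has stabilizer $H_{\cell_1}\cap H_{\cell_2}$ with exactly $|H_{\cell_1}\cap H_{\cell_2}|$ one-dimensional irreducibles, giving
$$\#\Coh^\bA(\bA/H_{\cell_1}\times \bA/H_{\cell_2})= \frac{|\bA|\,|H_{\cell_1}\cap H_{\cell_2}|^2}{|H_{\cell_1}|\,|H_{\cell_2}|}.$$
It remains to verify the combinatorial identity $|\cell_1^{-1}\cap \cell_2| = |\bA|\,|H_{T_1}\cap H_{T_2}|^2/(|H_{T_1}|\,|H_{T_2}|)$ for left cells $\cell_i$ with TL patterns $T_i$. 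This I would derive from the explicit combinatorial classification of individual left cells in types $B, C$ (\cite{cellules}, \cite{fam}): the $|\bA/H_T|$ cells sharing the module $[T]$ form a single $\bA$-torsor under the action of $\bA$ on left cells, and the count of $w\in \cell_1^{-1}\cap\cell_2$ via the domino/symbol combinatorics matches the displayed formula.

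The main obstacle is this last combinatorial step: whereas Proposition \ref{Prop:Springer2} reduces cleanly to the symplectic linear algebra of $V_{Z_1}=\bA^B\oplus \bA^C$ together with Lemma \ref{lem:lagr_subspace}, the quantity $|\cell_1^{-1}\cap \cell_2|$ depends on the finer data of individual left cells and not just of their cell modules, so one must go through the explicit classification of cells and the involution $w\mapsto w^{-1}$. Once that enumeration is in place, however, the rest of the argument is merely a dimension count combined with the Lagrangian decomposition $\Lagr_T=H_T^B\oplus H_T^C$.
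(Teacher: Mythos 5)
Your handling of Proposition \ref{Prop:Springer2} is essentially the paper's argument: Lemma \ref{Lem:L_T_BC} decomposes $[T]$, Proposition \ref{Prop:Springer1} picks out the constituents with $x_U=1$, and $\Lagr_T=H_T^B\oplus H_T^C$ (Lemma \ref{lem:lagr_subspace}) gives $(H_T^B)^\perp=H_T^C$, identifying the answer with $\BQ(\bA/H_T^B)$. Your count on the right-hand side of Lemma \ref{Lem:cells_vs_H} is also correct; in fact the displayed formula in the paper's proof drops the square on $|H_{\cell_1}\cap H_{\cell_2}|$, and your version $\#\Coh^{\bA}(\bA/H_{\cell_1}\times\bA/H_{\cell_2})=|\bA|\,|H_{\cell_1}\cap H_{\cell_2}|^2/(|H_{\cell_1}|\,|H_{\cell_2}|)$ is the one that actually equals $|\Lagr_{T_1}\cap\Lagr_{T_2}|$ via the Lagrangian decomposition.

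Where you go astray is the left-hand side of Lemma \ref{Lem:cells_vs_H}. You assert that $|\cell_1^{-1}\cap\cell_2|$ ``depends on the finer data of individual left cells and not just of their cell modules,'' and therefore you propose to enumerate left cells individually (the alleged $\bA$-torsor structure) and count inverses by domino combinatorics. This is a misconception, and it is exactly what forces you into a hard, uncompleted combinatorial detour. The number $|\cell_1^{-1}\cap\cell_2|$ is the dimension of $t_{d_{\cell_1}}\JAlg_{\dcell}t_{d_{\cell_2}}$ (with $d_\cell$ the distinguished involution in $\cell$), and since $\JAlg_\dcell$ is a semisimple based ring this equals $\dim\Hom_{\JAlg_\dcell}([\cell_2],[\cell_1])=\dim\Hom_W([\cell_1],[\cell_2])$; equivalently it is $\#{}_{\cell_1}\!\JCat_{\cell_2}$, which is precisely how the statement of Lemma \ref{Lem:cells_vs_H} phrases it. In particular it depends \emph{only} on the cell modules, never on which of the $|\bA/H_T|$ left cells with module $[T]$ you picked. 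Once this is in place, the proof closes exactly as for Proposition \ref{Prop:Springer2}: Lemma \ref{Lem:L_T_BC} and multiplicity-freeness give $\dim\Hom_W([T],[T'])=|\Lagr_T\cap\Lagr_{T'}|$, and the Lagrangian decomposition from Lemma \ref{lem:lagr_subspace} gives $|\Lagr_T\cap\Lagr_{T'}|=|H_{T}^B\cap H_{T'}^B|\cdot|H_T^C\cap H_{T'}^C|=|\bA|\,|H_T^B\cap H_{T'}^B|^2/(|H_T^B||H_{T'}^B|)$, matching your computation of the right-hand side. No combinatorial classification of individual cells is needed.
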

\begin{proof}
Take cells $\cell,\cell'$ and let $T,T'$ be the corresponding TL patterns.
We will consider type B, type C is completely analogous.

Lemma \ref{Lem:L_T_BC} implies that $\dim \Hom_W([T],[T'])=|\Lagr_{T}\cap \Lagr_{T'}|$.
On the other hand
$$\#\Coh^{\bA}(\bA/H_{\cell}\times \bA/H_{\cell'})=\frac{|\bA||H_{\cell}\cap H_{\cell'}|}{|H_{\cell}||H_{\cell'}|}.$$
The equality $|\Lagr_{T_1}\cap \Lagr_{T_2}|=\frac{|\bA||H_{\cell}\cap H_{\cell'}|}{|H_{\cell}||H_{\cell'}|}$
is a straightforward consequence of Lemma \ref{lem:lagr_subspace}.

Now let us check that Proposition \ref{Prop:Springer2} holds. Clearly, we have
$\Hom_{W}([T], \Spr(\Orb))=\Hom_{W}([T], \Spr(\Orb)^{\dcell})$.
From Proposition \ref{Prop:Springer1}, we deduce that
$\Hom_{W}([T], \Spr(\Orb)^{\dcell})$ is the direct sum of the irreducible
$\bA$-modules from $\Lagr_T\cap (\bA^B)^*$, each with multiplicity $1$.
But thanks to Lemma \ref{lem:lagr_subspace}, the latter sum is nothing else
but $\BQ (\bA^B/H^B_T)$ (as an $\bA^B$-module).
\end{proof}

In our proof of Theorem \ref{Thm:very_main} in Subsection \ref{SUBSECTION:proof_classical}
we will need the existence of left cells $\cell$ with very special Lusztig
subgroups. This is established in the following proposition.

\begin{Prop}\label{Prop:cell_existbc}
There exist left cells $\cell_0,\cell_{\varnothing},\cell_1,\ldots,\cell_m\subset \dcell$
(where $m$ is the dimension of $\bA$ over $\bF_2$) such that
$H_{\cell_0}=\bA, H_{\cell_{\varnothing}}=\{0\}, \codim_{\bA}H_{\cell_j}=1$ for all
$j$ and $\bigcap_{j=1}^m H_{\cell_j}=\{0\}$.
\end{Prop}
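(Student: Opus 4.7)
The plan is to prove the proposition by explicit construction of Temperley--Lieb patterns in the combinatorial model of Subsection \ref{SUBSECTION_explicit_bc}. By Proposition \ref{Prop:LSubgBC}, each cell module in $\dcell$ corresponds to a TL pattern $T$, and $H^B_T$ (resp.\ $H^C_T$) is generated by the sums $\sum e_{2i+1}$ (resp.\ $\sum e_{2i}$) over the auxiliary arcs belonging to each closed circle of the combined picture of $T$ with the type $B$ (resp.\ type $C$) auxiliary arcs. Since types $B$ and $C$ are interchanged by swapping which set of auxiliary arcs is added, I will carry out the construction in type $B$; type $C$ is entirely analogous.

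For $\cell_0$ with $H_{\cell_0}=\bA^B$, I would take the TL pattern with arcs $(z_{2i},z_{2i+1})$ for $i=0,\ldots,m-1$ and the line from $z_{2m}$ to the top. Each pair consisting of a TL arc and its coinciding auxiliary arc forms a circle contributing $e_{2i+1}$, so $H^B_T=\bA^B$. For $\cell_{\varnothing}$ with $H_{\cell_\varnothing}=0$, I would take instead the pattern with the line from $z_0$ to the top and arcs $(z_{2i-1},z_{2i})$ for $i=1,\ldots,m$: its combined picture is a single zig-zag path $\mathrm{top}\to z_0\to z_1\to\cdots\to z_{2m}$, with no closed circles.

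For $\cell_j$ with $1\le j\le m$, I will construct a TL pattern whose $H^B_T$ is the codimension-one subspace $\langle e_{2i+1}:i\ne j-1\rangle$ obtained by removing $e_{2j-1}$. Starting from the pattern of $\cell_0$, delete the arc $(z_{2j-2},z_{2j-1})$ and the line from $z_{2m}$, and replace them by the arc $(z_{2j-1},z_{2m})$ (drawn to nest around the arcs $(z_{2i},z_{2i+1})$ with $i\ge j$) together with a line from $z_{2j-2}$ to the top. The pattern is planar by construction. For every $i\ne j-1$ the arc $(z_{2i},z_{2i+1})$ still closes up with its auxiliary arc into a circle contributing $e_{2i+1}$, while the modified region forms the single path $\mathrm{top}\to z_{2j-2}\xrightarrow{\mathrm{aux}}z_{2j-1}\to z_{2m}$ and contributes no circle. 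Hence $H_{\cell_j}$ has codimension one and misses exactly $e_{2j-1}$; since $\{e_1,e_3,\ldots,e_{2m-1}\}$ is a basis of $\bA^B$, the intersection $\bigcap_{j=1}^m H_{\cell_j}$ is trivial.

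The argument contains no serious obstacle; the only points requiring care are the verification of planarity and the correct nested structure of the arcs (and, when transcribing to type $C$, choosing the symmetric placement so that the designated auxiliary arc falls on the top-to-bottom path rather than into a circle), both of which are immediate from the explicit pictures.
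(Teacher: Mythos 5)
Your construction is essentially identical to the paper's: the pattern you build for $\cell_0$ is the paper's $T_B$, the one for $\cell_{\varnothing}$ is the paper's $T_C$, and the pattern for $\cell_j$ (obtained by deleting the arc $(z_{2j-2},z_{2j-1})$ and the top line at $z_{2m}$ and inserting the nesting arc $(z_{2j-1},z_{2m})$ with a top line at $z_{2j-2}$) is the paper's $T_B^j$, with the same circle-counting argument determining $H_T^B$ in each case. The proposal is correct.
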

\begin{proof}
We are just going to present  TL patterns such that the corresponding $H^B$ and $H^C$-subgroups
have the indicated properties.

Consider the Temperley-Lieb pattern $T_B$:
$$
\xymatrix{
&&&\bullet \ar@/^1pc/@{-}[rrrrd]&&&&\\
z_0\ar@/^.5pc/@{-}[r]&z_1&z_2\ar@/^.5pc/@{-}[r]&z_3&\ldots&z_{2m-2}\ar@/^.5pc/@{-}[r]&z_{2m-1}&z_{2m}
}
$$
It is clear that $H_{T_B}^B=\bA^B$ and $H_{T_B}^C=\{0\}$.

Similarly, consider the Temperley-Lieb pattern $T_C$:
$$
\xymatrix{
&&&\bullet \ar@/_1pc/@{-}[llld]&&&&\\
z_0&z_1\ar@/^.5pc/@{-}[r]&z_2&z_3\ar@/^.5pc/@{-}[r]&z_4&\ldots&z_{2m-1}\ar@/^.5pc/@{-}[r]&z_{2m}
}
$$
It is clear that $H_{T_C}^B=\{0\}$ and $H_{T_C}^C=\bA^C$.

Now consider the patterns $T_B^j$ (and similar patterns for type $C$) given by:
$$
\xymatrix{
&&&\bullet \ar@/^1pc/@{-}[d]&&&\\
z_0\ar@/^.5pc/@{-}[r]&z_1&\ldots&z_{2j-2}&z_{2j-1}\ar@/^1pc/@{-}[rr]&\ldots&z_{2m}
}
$$
It is clear that $H_{T_B^j}^B$ is spanned by $e_{2i-1}, i\ne j$, and so these subgroups
have the required properties.
\end{proof}

\begin{Rem}\label{Rem:bA_quotient_realiz}
In fact, it is important for our purposes to present $\bA$ not as an abstract group
but as a quotient of $A(e), e\in \Orb$. Let us explain a recipe to compute the epimorphism
$A(e)\twoheadrightarrow \bA$. The group $A(e)$ is again the direct sum of several
copies of $\ZZ/2\ZZ$ and can be read explicitly
from the partition corresponding to $\Orb$, see, for instance, \cite{CM}, Section 6.1.
Recall that, by our definition, $\bA$ is the smallest quotient
of $A(e)$ acting on $\Spr(\Orb)^{\dcell}$. Proposition \ref{Prop:Springer1} asserts
that $\Spr(\Orb)^{\dcell}$ is the direct sum of  $U\otimes V_U$ for all $U\in \Fam(Z_1,Z_2)\cap \bA^*$.
In fact, $\bA^*\subset \Fam(Z_1,Z_2)$, because $\bA^*=\Lagr_T$ for $T=T_C$ in type $B$
and $T=T_B$ in type $C$. Pick a basis $V_1,\ldots,V_m$ in $\bA^*$. Let $U_1,\ldots,U_m$
be the corresponding irreducible $W$-modules, whose symbols we can compute explicitly,
thanks to the above description of the embedding $\Fam(Z_1,Z_2)\hookrightarrow V_{Z_1}$.
Then from the symbols we can recover the irreducible $A(e)$-modules (=$A(e)$-characters)
$\chi_1,\ldots,\chi_m$ corresponding to $U_1,\ldots,U_m$, see \cite{Carter}, 13.3.
The dual map to the epimorphism $A(e)\twoheadrightarrow \bA$ must send $V_i$ to $\chi_i$
and this determines $A(e)\twoheadrightarrow \bA$ uniquely.
\end{Rem}

\subsection{Explicit descriptions: type D}\label{SUBSECTION_explicit_d}
Recall, see \cite[\S 4.6]{orange}, that a {\em symbol} $\Lambda =\binom{M'}M=
\binom{M}{M'}$ of rank $n$ and defect 0 is an unordered pair $(M,M')$ of subsets of $\ZZ_{\ge 0}$
with $|M|=|M'|$ and $\sum_{x\in M}x+\sum_{x\in M'}x=n+|M|^2-|M|$ up to an equivalence relation
described in {\em loc. cit.}; each equivalence class contains a unique {\em reduced}
symbol with $0\not \in M\cap M'$.  Such a symbol is called {\em degenerate} if $M=M'$
(this condition does not depend on the representative from the equivalence class) and {\em non-degenerate} otherwise. To each symbol $\Lambda$ of rank $n$ and defect zero one associates a representation $[\Lambda]$ of the Weyl group $W(D_n)$ as in \cite[\S 4.6]{orange}; the representation $[\Lambda]$ is irreducible if $\Lambda$ is non-degenerate and splits into a sum of two distinct irreducible representations $[\Lambda]_I$ and $[\Lambda]_{II}$ if
$\Lambda$ is degenerate. This gives a parametrization of irreducible representations of $W(D_n)$:
they are of the form $[\Lambda]$, $[\Lambda]_I$, $[\Lambda]_{II}$ and all these representations are
distinct.

Representations $[\Lambda]_I$ and $[\Lambda]_{II}$ are special and each of them form a family by itself.
The group $\bA$ is trivial.

From now on we will consider only non-degenerate symbols $\Lambda$.
Two representations labeled by reduced symbols $\binom{M'_1}{M_1}$ and $\binom{M'_2}{M_2}$
are in the same family if and only if $M_1\cup M_1'=M_2\cup M_2'$ and
$M_1\cap M_1'=M_2\cap M_2'$, see \cite[\S 4.6]{orange}. Thus a family is completely determined by two subsets $Z_1, Z_2\subset \ZZ_{\ge 0}$ such that $Z_1\cap Z_2=\varnothing$, $0\not \in Z_2$ and $|Z_1|$ is even: the corresponding family  consists
of representations with reduced symbols $\binom{M'}M$ such that $M\cap M'=Z_2$ and $M\cup M'=Z_1\cup Z_2$; we will denote such a family by $\Fam(Z_1,Z_2)$. Let us write the elements of $Z_1$ in an increasing order: $z_1< \ldots < z_{2m}$.
Then any representation from $\Fam(Z_1,Z_2)$ corresponds to the reduced symbol
$\binom{Z_2\cup (Z_1-M)}{Z_2\cup M}$, where $M$ is a subset of $Z_1$ with $|M|=m$; thus we
have a bijection between $\Fam(Z_1,Z_2)$ and the set of subsets of $Z_1$ of cardinality $m$ modulo
an identification of $M$ and $Z_1-M$.
The special representation from $\Fam(Z_1,Z_2)$ corresponds to the subset
$M_0=\{ z_1, z_3, \ldots z_{2m-1}\} \subset Z_1$ or to the subset
$\{ z_2, z_4, \ldots z_{2m}\} \subset Z_1$.

Fix a family $\Fam(Z_1,Z_2)$ and the corresponding two-sided cell $\dcell$ (and the corresponding
special orbit $\Orb$). Let us describe the possible left cell modules.
They    are again in bijection with Temperley-Lieb patterns, now of the following form:
$$
\xymatrix{
&&&&&&\\
z_1\ar@/^1pc/@{-}[rrr]&z_2\ar@/^.5pc/@{-}[r]&z_3&z_4&z_5\ar@/^1pc/@{-}[rr]&\ldots &z_{2m}
}
$$

To such a pattern $T$ one associates a
representation $[T]$ of $W(D_n)$ which is a direct sum of representations labeled by symbols
$\binom{Z_2\cup (Z_1-M)}{Z_2\cup M}$ (all with multiplicities 1) where $M\subset Z_1$ contains
precisely one of each $z_i$ and $z_j$ connected by an arc.

\begin{Ex} Assume that $Z_2=\{ 1,3\}$ and $Z_1=\{0,2,5,6,7,9\}$. Then the pattern
$$
\xymatrix{
&&&&&\\
0\ar@/^1pc/@{-}[rrr]&2\ar@/^.5pc/@{-}[r]&5&6&7\ar@/^.5pc/@{-}[r]&9
}
$$
corresponds to the following representation of $W(D_{17})$:
$$\binom{0,1,2,3,7}{1,3,5,6,9}+\binom{1,2,3,6,7}{0,1,3,5,9}+\binom{0,1,3,5,7}{1,2,3,6,9}+\binom{1,3,5,6,7}{0,1,2,3,9}.$$
\end{Ex}

Let us proceed to describing the group $\bA$.
Let $V_{Z_1}$ be the set of subsets of $Z_1$  of even cardinality;
we consider it as an $\bF_2$-vector space with sum given by the symmetric difference.
The space $V_{Z_1}$ is endowed with an
alternating bilinear form $(M,M')=|M\cap M'|\pmod{2}$. The elements $e_i=\{ z_i, z_{i+1}\}$ with
$1\le i\le 2m-1$ form a basis of $V_{Z_1}$ with $(e_i,e_j)=1$ if and only if $|i-j|=1$.
Let $V_{Z_1}'$ be the quotient of $V_{Z_1}$ by the kernel of the bilinear form (which is the line spanned by $Z_1$);
clearly $V_{Z_1}'$ is a symplectic vector space spanned by the images $\bar e_i$ of $e_i$; the
only relation between the $\bar e_i$'s is $\bar e_1+\bar e_3+\ldots \bar e_{2m-1}=0$.
Let $\bA^D, \bA^{*D}\subset V_{Z_1}'$ be the Lagrangian subspaces spanned by
$\{\bar e_2, \bar e_4, \ldots, \bar e_{2m-2}\}$ and $\{\bar e_1, \bar e_3, \ldots, \bar e_{2m-1}\}$.
It is clear that $V_{Z_1}'=\bA^D\oplus \bA^{*D}$ and the symplectic form gives the identification $(\bA^D)^*=\bA^{*D}$.

The set $\Fam(Z_1,Z_2)$ is embedded in $V_{Z_1}'$ as follows:
the representation labeled by the symbol $\binom{Z_2\cup (Z_1-M)}{Z_2\cup M}$ corresponds to the symmetric difference of
$M$ and $M_0$.
The Lusztig's quotient $\bA$ associated with family $\Fam(Z_1,Z_2)$ is the group $\bA^D$,
see \cite[\S 4.6]{orange}.
So $\Fam(Z_1,Z_2)\subset V_{Z_1}'=\bA^D\oplus (\bA^D)^*=\Mfr(\bA^D)$, which is precisely
Lusztig's embedding from \cite{orange} recalled in Subsection \ref{SUBSECTION_cells}.

Let $T$ be a Temperley-Lieb pattern as above; then the subspace $\Lagr_T$ of $V_{Z_1}'$ spanned
by two element subsets $\{ z_i,z_j\}$ where $z_i$ and $z_j$ are connected by an arc is Lagrangian.
It is clear from the above that the representation $[T]$ is the direct sum
of the representations corresponding to the elements of $\Lagr_T$.

We now describe the subgroup $H_\cell \subset \bA$ corresponding to a left cell $\cell$.
The subgroup  $H_\cell$ again depends only on $[\cell]$; thus we will use the  notation $H_T$ for
$H_\cell$, where $[\cell]=[T]$ for a Temperley-Lieb pattern $T$.
Let us connect $z_2$ and $z_3$, $z_4$ and $z_5$ and so on. A basis of
$H_T\subset \bA^D$ is labeled by the connected components of the resulting picture
homeomorphic to a circle; the basis element corresponding to such a connected component is
$\sum \bar e_{2i}$, where the summation is over the indices $i$ such that the arc connecting $z_{2i}, z_{2i+1}$ appears in that connected component.

\begin{Ex} Consider the pattern
$$
\xymatrix{
&&&&&&&\\
z_1\ar@/^1.5pc/@{-}[rrrrr]&z_2\ar@/^1pc/@{-}[rrr]&z_3\ar@/^.5pc/@{-}[r]&z_4&z_5&z_6&z_7\ar@/^.5pc/@{-}[r]&z_8
}
$$

The procedure above gives:
$$
\xymatrix{
&&&&&&&\\
z_1\ar@/^1.5pc/@{-}[rrrrr]&z_2\ar@/^1pc/@{-}[rrr]\ar@/_.25pc/@{-}[r]&z_3\ar@/^.5pc/@{-}[r]&z_4\ar@/_.25pc/@{-}[r]&z_5&
z_6\ar@/_.25pc/@{-}[r]&z_7\ar@/^.5pc/@{-}[r]&z_8
}
$$
Thus, $H_T=\langle \bar e_2+\bar e_4\rangle \subset \langle \bar e_2,\bar e_4,\bar e_6\rangle$.
\end{Ex}

\begin{Rem}
One observes that the orthogonal complement $H_T^\perp \subset (\bA^D)^*\simeq \bA^{*D}$ can
be computed in terms of $T$ as follows: connect $z_1$ and $z_2$, $z_3$ and $z_4$ etc.
Then  $H_T^\perp$ is spanned by the elements $\sum \bar e_{2i-1}$ labeled by the connected
components of the resulting picture with the summation over the indices $i$ such that the arc
connecting $z_{2i-1}, z_{2i}$ appears in that connected component. In addition, one
has $\Lagr_T=H_T\oplus H_T^\perp \subset \bA^D\oplus \bA^{*D}\simeq V_{Z_1}'$.
\end{Rem}

\begin{Rem} \label{D=BC}
The combinatorics described in this section is completely parallel to
the combinatorics in type $B$ via the following transformation of Temperley-Lieb patterns:
a type $D$ pattern $T$ with $Z_1=\{ z_1, \ldots ,z_{2m}\}$
$$
\xymatrix{
&&&&&&\\
z_1\ar@/^1pc/@{-}[rrr]&z_2\ar@/^.5pc/@{-}[r]&z_3&z_4&z_5\ar@/^1pc/@{-}[rr]&\ldots &z_{2m}
}
$$
corresponds to the type $B$ pattern $T'$ with $Z_1=\{ z_2,\ldots z_{2m}\}$.
$$
\xymatrix{
&&\bullet \ar@/^1pc/@{-}[rd]&&&\\
&z_2\ar@/^.5pc/@{-}[r]&z_3&z_4&z_5\ar@/^1pc/@{-}[rr]&\ldots &z_{2m}
}
$$
More precisely, consider the $\bF_2$-linear isomorphism $\iota:\bA^B\xrightarrow{\sim} \bA^D$ given
by $e_{2i-1}\rightarrow \bar{e}_{2i}, i=1,\ldots,m$ and the induced isomorphism $\iota:\bA^B\oplus (\bA^B)^*
\xrightarrow{\sim} \bA^D\oplus (\bA^D)^*$. This isomorphism maps the lagrangian subspace $\Lagr_{T'}\subset
\bA^B\oplus (\bA^B)^*$ to $\Lagr_T$ and so $H^B_{T'}$ to $H_{T}$.
\end{Rem}

Thanks to the previous remark, Proposition \ref{Prop:LSubgBC} implies that Proposition \ref{Prop:Springer2}
and Lemma \ref{Lem:cells_vs_H} hold for the subgroups $H_T\subset \bA^D$.
Also we have a complete analog of Proposition \ref{Prop:cell_existbc}.

\begin{Prop}\label{Prop:cell_existd}
There exist left cells $\cell_0,\cell_{\varnothing},\cell_1,\ldots,\cell_m\subset \dcell$
(where $m$ is the dimension of $\bA$ over $\bF_2$) such that
$H_{\cell_0}=\bA, H_{\cell_{\varnothing}}=\{0\}, \codim_{\bA}H_{\cell_j}=1$ for all
$j$ and $\bigcap_{j=1}^m H_{\cell_j}=\{0\}$.
\end{Prop}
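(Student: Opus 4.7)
The plan is to reduce the statement to Proposition \ref{Prop:cell_existbc} via the bijection between Temperley-Lieb patterns in types $D$ and $B$ described in Remark \ref{D=BC}. Recall that to a type $D$ pattern $T$ on $Z_1=\{z_1,\ldots,z_{2m}\}$ this remark associates a type $B$ pattern $T'$ on $\{z_2,\ldots,z_{2m}\}$, where the ``role of the wedge'' in $T'$ is played by $z_1$ in $T$. The remark further provides an $\bF_2$-linear isomorphism $\iota:\bA^B\xrightarrow{\sim}\bA^D$ (with $\bA^B$ computed for the truncated symbol) sending $H^B_{T'}$ to $H_T$.

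First I would note that, since in the present setup $\dim_{\bF_2}\bA^D=m-1$, the corresponding type $B$ situation (where $|Z_1|$ has $2m-1=2m'+1$ elements) has $\dim_{\bF_2}\bA^B=m-1$ as well, so the integer ``$m$'' appearing in Proposition \ref{Prop:cell_existd} (which, by convention, denotes $\dim_{\bF_2}\bA$) matches on both sides. Then I would apply Proposition \ref{Prop:cell_existbc} in this type $B$ setting to obtain TL patterns $T'_0,T'_\varnothing,T'_1,\ldots,T'_{m-1}$ with $H^B_{T'_0}=\bA^B$, $H^B_{T'_\varnothing}=\{0\}$, $\codim_{\bA^B}H^B_{T'_j}=1$ for each $j$, and $\bigcap_{j=1}^{m-1}H^B_{T'_j}=\{0\}$.

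Next I would transfer these patterns to type $D$ via the bijection of Remark \ref{D=BC}, obtaining TL patterns $T_0,T_\varnothing,T_1,\ldots,T_{m-1}$ on $\{z_1,\ldots,z_{2m}\}$. By the discussion at the end of Subsection \ref{SUBSECTION_explicit_d} (the analog of Proposition \ref{Prop:LSubgBC} provided by Remark \ref{D=BC}), each $T_j$ is of the form $[T_j]=[\cell_j]$ for a unique left cell $\cell_j\subset \dcell$, and $H_{\cell_j}=\iota(H^B_{T'_j})$. Since $\iota$ is a group isomorphism, the four conditions on the $H^B_{T'_j}$ translate term by term into the four required conditions on the $H_{\cell_j}$, completing the proof. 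The main (very mild) obstacle is simply the bookkeeping verifying that the two ``$m$''s agree: once one tracks the relation $|Z_1^{type\,D}|=2m$ versus $|Z_1^{type\,B}|=2m-1$, everything matches, and no essentially new combinatorial construction is needed beyond what was already done in type $B$.
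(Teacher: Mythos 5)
Your proposal is correct and matches the paper's approach: the paper itself provides no separate proof but simply asserts, immediately after Remark~\ref{D=BC}, that ``we have a complete analog of Proposition~\ref{Prop:cell_existbc}'', which is precisely the reduction via the type-$D$/type-$B$ Temperley--Lieb bijection and the induced isomorphism $\iota:\bA^B\xrightarrow{\sim}\bA^D$ carrying $H^B_{T'}$ to $H_T$ that you carry out. Your bookkeeping verifying that the ``$m$'' of the proposition (i.e.\ $\dim_{\bF_2}\bA$) agrees on both sides of the correspondence is accurate.
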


\subsection{Explicit descriptions: exceptional cases with $\bA=\ZZ/2\ZZ$}\label{SUBSECTION_expl_excep_2}
%
Recall that one can describe $\bA$ in terms of $\Spr(\Orb)^{\dcell}$.
Inspecting  the list of left cell modules, \cite[13.2]{Carter}, and tables in \cite[13.3]{Carter}  giving the Springer representations, we get the following lists of special orbits with
$\bA=\ZZ/2\ZZ$:
\begin{itemize}
 \item[$G_2$:] no.
\item[$F_4$:] $\tilde{A}_1,F_4(a_1)$.
\item[$E_6$:] $A_2, E_6(a_3)$.
\item[$E_7$:] $A_2, A_2+A_1, D_4(a_1)+A_1, A_4, A_4+A_1, D_5(a_1), E_6(a_3), E_6(a_1), E_7(a_3)$.
\item[$E_8$:] $A_2, A_2+A_1, 2A_2, A_4, D_4(a_1)+A_2, A_4+A_1, D_5(a_1), A_4+2A_1, E_6(a_3), $
$D_6(a_1), E_6(a_1), $ $D_7(a_2), E_6(a_1)+A_1, E_7(a_3), E_8(a_5), E_8(a_4),E_8(a_3)$.
\end{itemize}

All modules $\Spr(\Orb)^{\dcell}$ have the form $U_1\otimes V_1\oplus U_2\otimes V_{\epsilon}$,
where $V_1,V_\epsilon$ are the trivial and the sign modules for $\bA$.

There are two possible behaviors of the left cell modules. First, for all but 3 orbits
there are two different left cell modules and they have the form $U^1:=U_1\oplus U_2, U^2:=U_1\oplus \tilde{U}_2$.
Here $\tilde{U}_2$ is an irreducible $W$-module different from $U_1,U_2$. Define $H_{\cell}=\{1\}$
if $[\cell]=U^1, H_{\cell}=\bA$ if $[\cell]=U^2$. It is straightforward to see that these subgroups
satisfy Proposition \ref{Prop:Springer2} and Lemma \ref{Lem:cells_vs_H}.

Now, for the orbits $A_4+A_1$ in $E_7$, $A_4+A_1, E_6(a_1)+A_1$ in $E_8$ there is only one cell module
$U_1\oplus U_2$. These orbits (and the corresponding two-sided cells) are called {\it exceptional}.
All subgroups $H_{\cell}$ are equal to $\{1\}$.


\subsection{Explicit computations: exceptional cases with $\bA=S_3,S_4,S_5$}\label{SUBSECTION_explicit_excep2}
\subsubsection{$\bA=S_3$}
We have the following numbers of special orbits with $\bA=S_3$:
one for $G_2$, one for $E_6$, two for $E_7$, six for $E_8$.

First, we consider the $G_2$ case. Here the Springer representation has the form
$U_0\otimes V_3\oplus U_1\otimes V_{21}$. Here $U_0,U_1$ are certain irreducible
$W$-modules, $U_0$ is special, and for a partition $\lambda$ of $3$ we write $V_\lambda$
for the correspoding irreducible $S_3$-module. There are two possibilities for the
left cell modules as follows: $U^1:=U_0+U_1+\tilde{U}_1, U^2:=U_0+\tilde{U}_1+\tilde{U}_2$, where $\tilde{U}_1,\tilde{U}_2$
are some other irreducible $W$-modules different from $U_0,U_1$. Set $H_\sigma:=S_2$ if
$[\sigma]=U^1$, and $H_\sigma:=S_3$ if $[\sigma]=U^2$. Proposition \ref{Prop:Springer2}
and Lemma \ref{Lem:cells_vs_H} hold for this choice of subgroups.

In the remaining cases the situation is uniform.
There the Springer representation has the form
$U_0\otimes V_3+U_1\otimes V_{21}+U_2\otimes V_{111}$. There are three possibilities
for the left cell modules $U^1:=U_0+U_1^{\oplus 2}+U_2, U^2:=U_0+U_1+\tilde{U}_1, U^3:=U_0+\tilde{U}_1+\tilde{U}_2$.
We have $H_\sigma=\{1\},S_2,S_3$ when $[\sigma]=U^1,U^2,U^3$, respectively. Again, Proposition \ref{Prop:Springer2}
and Lemma \ref{Lem:cells_vs_H} hold.

\subsubsection{$\bA=S_4$}
There is only one special orbit with this $\bA$ and it is in type $F_4$. The Springer representation is
of the form $U_0\otimes V_4+U_1\otimes V_{31}+U_2\otimes V_{22}+ U_3\otimes V_{211}$, in the notation
similar to the $S_3$-part.

There are five different left cell modules:
\begin{itemize}
 \item[(i)] $U^1=U_0+\tilde{U}_1+\tilde{U}_2+\tilde{U}_3+\tilde{U}_4$,
\item[(ii)] $U^2=U_0+\tilde{U}_1+U_1+\tilde{U}_3+\tilde{U}_5$.
\item[(iii)] $U^3=U_0+\tilde{U}_1+2\tilde{U}_2+U_2+\tilde{U}_4+\tilde{U}_6$.
\item[(iv)] $U^4=U_0+\tilde{U}_1+2U_1+U_2+\tilde{U}_5+U_3$.
\item[(v)] $U^5=U_0+2\tilde{U}_1+U_1+\tilde{U}_2+U_2+\tilde{U}_7$.
\end{itemize}

Here, as above, $\tilde{U}_1,\ldots,\tilde{U}_7$ are pairwise non-isomorphic
irreducible $W$-modules different from $U_0,\ldots,U_3$.

For $[\cell]=U^i$, we set $H_{\cell}=H^i$,
where  $H^1:=S_4,H^2:=S_3,H^3:=\operatorname{Dyh}_8,H^4:=S_2,H^5:=S_2\times S_2$.
Here $\operatorname{Dyh}_8$ is the dihedral group of order 8. It is not difficult
but tedious to check that Proposition \ref{Prop:Springer2} and Lemma \ref{Lem:cells_vs_H}
hold for this choice of subgroups.

\subsubsection{$\bA=S_5$}
Here $\Spr(\Orb)$ has the form $U_0\otimes V_5\oplus U_1\otimes V_{41}+U_2\otimes V_{32}\oplus
U_3\otimes V_{311}\oplus U_4\otimes V_{221}\oplus U_5\otimes V_{2111}$.
There are 7 left cell modules $U^1,\ldots, U^7$, see \cite{Carter}, the very end of 13.2, we need the modules
including the special representation $\phi_{4480,16}$. They correspond  to the following
Lusztig subgroups $H^1:=S_5,H^2:=S_4, H^3:=S_3\times S_2, H^4:=\operatorname{Dyh}_8,
H^5:=S_3, H^6:=S_2\times S_2, H^7:=S_2$. Again, it is  not very difficult but very tedious to check
that both Proposition \ref{Prop:Springer2} and Lemma \ref{Lem:cells_vs_H} hold in this situation.

\subsection{Classification of left cells}\label{SUBSECTION_classification}

\begin{Lem} \label{TeLi}
 For any finite Weyl group $W$ the cell modules are linearly independent as virtual
representations of $W$.
\end{Lem}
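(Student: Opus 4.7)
The plan is first to reduce linear independence of cell modules across all cells to a within-family statement, then prove the latter using the explicit descriptions of cell modules developed in this section.

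For the reduction, I invoke the fact built into the definitions recalled in Subsection~\ref{SUBSECTION_cells}: every irreducible constituent of $[\cell]$ lies in the family $\Irr(W)^{\dcell}$ attached to the two-sided cell $\dcell \supset \cell$. Since families corresponding to distinct two-sided cells are disjoint as subsets of $\Irr(W)$, any linear relation among cell modules splits into relations supported on individual two-sided cells. It therefore suffices to prove, for each fixed two-sided cell $\dcell$, that the distinct cell modules $\{[\cell]\}_{\cell \subset \dcell}$ are linearly independent in the Grothendieck group of $\Irr(W)^{\dcell}$.

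For types $A$, $B$, $C$, $D$ the task is combinatorial. Type $A$ is trivial since every cell module is irreducible. For types $B$, $C$, $D$, Lemma~\ref{Lem:L_T_BC} together with Remark~\ref{D=BC} identifies distinct cell modules with Temperley-Lieb patterns $T$ and expresses $[T]$ as the multiplicity-free sum of the irreducibles labeled by the Lagrangian subspace $\Lagr_T \subset V_{Z_1}$. Linear independence of the $[T]$ then amounts to linear independence of the indicator functions $\mathbf{1}_{\Lagr_T}$ viewed as functions on $\Fam(Z_1, Z_2)$. I would establish this by induction on $m$: since the non-crossing condition forces any TL pattern to contain at least one arc between adjacent points $z_i, z_{i+1}$, I would stratify patterns by their outermost such arc and recurse on patterns of rank $2(m-1)$; equivalently, for each $T$ one exhibits a distinguishing irreducible $U_T \in \Lagr_T$ lying in no other $\Lagr_{T'}$, yielding a triangular system with invertible matrix.

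For the exceptional types linear independence is verified by direct inspection of the explicit cell module decompositions gathered in Subsections~\ref{SUBSECTION_expl_excep_2} and \ref{SUBSECTION_explicit_excep2}. In each case the listed cell modules $U^1, U^2, \ldots$ differ pairwise by irreducible constituents absent from the others, so linear independence is immediate from the list. The main obstacle is the classical-type combinatorial step, where the intersections of Temperley-Lieb Lagrangians inside $V_{Z_1}$ must be controlled; the induction should succeed, but the bookkeeping will require care to handle uniformly the defect-$1$ cases (types $B$, $C$) and the defect-$0$ case (type $D$, with its sign-module quotient on $V_{Z_1}$).
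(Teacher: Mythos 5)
Your reduction steps agree with the paper's: you split linear relations along two-sided cells, reduce to a fixed family, dispose of type $A$ trivially and exceptional types by inspection. The difference, and the gap, is in the argument for types $B$, $C$, $D$. The paper does not run an induction; it computes the Gram matrix of pairwise scalar products $\dim\Hom_W([T'],[T''])=|\Lagr_{T'}\cap\Lagr_{T''}|=2^{d(T',T'')}$, where $d(T',T'')$ is the number of circles in the concatenation of $T'$ with $T''$ flipped upside down, observes that this is precisely the composition pairing matrix on $\Hom([1],[2n+1])$ in the Temperley--Lieb category, and concludes non-degeneracy from the semisimplicity of that category (citing Goodman--Wenzl). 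This is a self-contained linear-algebra argument with no induction and no choice of ordering.

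Your proposed reformulation — \emph{``for each $T$ one exhibits a distinguishing irreducible $U_T\in\Lagr_T$ lying in no other $\Lagr_{T'}$, yielding a triangular system''} — is false. Take $m=2$, so $Z_1=\{z_0,\dots,z_4\}$ with basis $e_1,\dots,e_4$ of $V_{Z_1}$. The five TL patterns and their Lagrangians are: $T_1=(z_0z_1)(z_2z_3)$, $\Lagr_{T_1}=\{0,e_1,e_3,e_1+e_3\}$; $T_2=(z_0z_1)(z_3z_4)$, $\Lagr_{T_2}=\{0,e_1,e_4,e_1+e_4\}$; $T_3=(z_1z_2)(z_3z_4)$, $\Lagr_{T_3}=\{0,e_2,e_4,e_2+e_4\}$; $T_4=(z_0z_3)(z_1z_2)$, $\Lagr_{T_4}=\{0,e_2,e_1+e_3,e_1+e_2+e_3\}$; $T_5=(z_1z_4)(z_2z_3)$, $\Lagr_{T_5}=\{0,e_3,e_2+e_4,e_2+e_3+e_4\}$. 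Then every element of $\Lagr_{T_1}$ lies in some other $\Lagr_{T'}$: $0$ in all, $e_1\in\Lagr_{T_2}$, $e_3\in\Lagr_{T_5}$, $e_1+e_3\in\Lagr_{T_4}$; the same failure occurs for $T_3$. So no one-step triangularization is available. The system is still invertible (one can eliminate $c_2,c_4,c_5$ first and then $c_1,c_3$), but this is exactly the kind of multi-step elimination that needs a carefully organized induction, and you have not supplied it — the ``stratify by the outermost adjacent arc'' plan is vague: removing such an arc is not injective on TL patterns, and it is not clear how the Lagrangian $\Lagr_T$ transforms under that removal, so the recursion does not obviously reduce the problem to smaller $m$. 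You should either carry out a genuine induction with an explicit ordering that makes the Gram matrix block-triangular, or (as the paper does) compute the Gram matrix directly and invoke semisimplicity of $\mathcal{TL}$ at the relevant loop value.
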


\begin{proof} Since cell modules corresponding to different families are disjoint it is enough to show
that the statement is true for the cell modules from a fixed family. We can assume that $W$ is irreducible
(since families and cell modules for $W'\times W''$ are just tensor products of those for $W'$ and $W''$).
For $W$ of type $A$ the statement is trivial; for $W$ of exceptional types one verifies it by a direct check.

For $W$ of types $B$ and $C$ we compute the matrix of scalar products of (characters of)
cell modules. It follows from description in Section \ref{SUBSECTION_explicit_bc}
that the scalar product of cell modules corresponding to TL patterns $T'$ and
$T''$ can be computed as follows: turn the TL pattern $T''$ upside down and concatenate it with TL
pattern $T'$. The scalar product of cell modules corresponding to $T'$ and $T''$ is $2^d$ where $d$
is the number of compact components (i.e. circles) in the resulting picture.  One observes that
the resulting matrix is, by definition, the matrix of the composition pairing $\Hom([1],[2n+1])\times
\Hom([2n+1],[1])\to \Hom([1],[1])=\BQ$ in the so-called {\em Temperley-Lieb category}
${\mathcal TL}(\tau)$ with $\tau=1$, see e.g. \cite{GW}. Since the category ${\mathcal TL}(1)$
is semisimple (see {\em loc. cit.}), the pairing above is non-degenerate and the lemma
is proved in this case.

Finally, for $W$ of type $D$ one proceeds similarly, or uses Remark \ref{D=BC}.
\end{proof}

Recall that $[\dcell]=\bigoplus_{\cell\subset \dcell}[\cell]$, see Section \ref{SUBSECTION_cells}.
Clearly, $[\dcell]=\bigoplus_{E\in \Irr(W)^\dcell}\dim(E)E$. Thus Lemma \ref{TeLi} guarantees that
we can compute the number of left cells $\sigma$ with a given cell module. In particular, we can
compute explicitly the Lusztig's set $Y'=\bigsqcup_{\cell}\bA/H_\cell$, see Section \ref{hsigma}.

\begin{Ex} (a) Assume that $\g=\mathfrak{so}(15)$ and the (special) nilpotent orbit $\Orb$ corresponds to partition $(7,3^2,1^2)$. Using \cite[\S 13.3]{Carter} one computes that the corresponding special
representation of $W$ has a symbol $\binom{0,2,5}{1,3}$. Here is a table of dimensions of the
representations from the corresponding family:
\begin{equation*}
\begin{array}{c|c|c|c|c|c|c|c|c|c}
\binom{0,2,5}{1,3}&\binom{0,1,2}{3,5}&\binom{0,1,3}{2,5}&\binom{0,1,5}{2,3}&
\binom{0,2,3}{1,5}&\binom{0,3,5}{1,2}&\binom{1,2,3}{0,5}&\binom{1,2,5}{0,3}&
\binom{1,3,5}{0,2}&\binom{2,3,5}{0,1}\\
\hline
210&14&63&70&84&105&35&126&112&21\\
\end{array}
\end{equation*}

The representations $\binom{0,1,2}{3,5}$ and $\binom{2,3,5}{0,1}$ appear only in the cell modules
corresponding to the following TL patterns:

$$
T_1=\xymatrix{
&&\bullet \ar@/_1pc/@{-}[lld]&&\\
0&1\ar@/^1pc/@{-}[rrr]&2\ar@/^.5pc/@{-}[r]&3&5
} \phantom{blankspace}
T_2=\xymatrix{
&&\bullet \ar@/^1pc/@{-}[rrd]&&\\
0\ar@/^1pc/@{-}[rrr]&1\ar@/^.5pc/@{-}[r]&2&3&5
}
$$

Thus we will have $\dim \binom{0,1,2}{3,5}=14$ left cells with left cell module described by $T_1$ and
$\dim \binom{2,3,5}{0,1}=21$ left cells of type $T_2$. Similarly, using representation
$\binom{1,2,3}{0,5}$ we find that there are 35 left cells of type $T_3$ where
$$
T_3=\xymatrix{
&&\bullet \ar@/^1pc/@{-}[d]&&\\
0\ar@/^.5pc/@{-}[r]&1&2&3\ar@/^.5pc/@{-}[r]&5
}
$$

Two remaining TL patterns are
$$
T_4=\xymatrix{
&&\bullet \ar@/_1pc/@{-}[lld]&&\\
0&1\ar@/^.5pc/@{-}[r]&2&3\ar@/^.5pc/@{-}[r]&5
} \phantom{blankspace}
T_5=\xymatrix{
&&\bullet \ar@/^1pc/@{-}[rrd]&&\\
0\ar@/^.5pc/@{-}[r]&1&2\ar@/^.5pc/@{-}[r]&3&5
}
$$
The representation $\binom{0,1,5}{2,3}$ appears only in left cell modules corresponding to $T_4$ and
$T_2$; thus the number of left cells of type $T_4$ is $\dim \binom{0,1,5}{2,3}-21=49$. Similarly,
using representation $\binom{0,3,5}{1,2}$ we find that the number of left cells of type $T_5$ is 91.

Using the descriptions of the group $\bA$ and the  subgroups $H_\sigma$ from
Section \ref{SUBSECTION_explicit_bc}, we find that
$\bA=\langle e_1,e_3\rangle$ and
$$Y'=(\bA/\langle e_3\rangle)^{14}\sqcup (\bA/\langle e_1+e_3\rangle)^{21}\sqcup
(\bA/\langle e_1\rangle)^{35}\sqcup (\bA)^{49}\sqcup (\bA/\bA)^{91}.
$$
In particular, the cardinality of $Y'$ is $14\cdot 2+21\cdot 2 +35\cdot 2+49\cdot 4+91=427$.

(b) Let $\g=\mathfrak{sp}(14)$ and $\Orb$ corresponds to partition $(6,4,2^2)$. The corresponding special
representation of $W$ again has a symbol $\binom{0,2,5}{1,3}$ and using almost the same
computation as above we find that $\bA=\langle e_2,e_4\rangle$ and
$$Y'=(\bA/\langle e_2+e_4\rangle)^{14}\sqcup (\bA/\langle e_2\rangle)^{21}\sqcup
(\bA/\langle e_4\rangle)^{35}\sqcup (\bA/\bA)^{49}\sqcup (\bA)^{91}.
$$

(c) Let $\Orb$ be a special orbit such that the corresponding cell $\dcell$ is non-exeptional
with $\bA=\ZZ/2\ZZ$. The corresponding family contains 3 representations of $W$: the special
representation $U_0$, the representation $U_1$ which appears in $\Spr(\Orb)$, and one more
representation $\tilde{U}_1$. It is well known (and it follows from \cite{cellules})
that $\dim(U_0)=\dim(U_1)+\dim(\tilde{U}_1)$. In all such cases we have
$$Y'=(\bA)^{\dim(U_1)}\sqcup (\bA/\bA)^{\dim(\tilde{U}_1)}.$$

(d) We list here all cases where $\bA =S_r$, $3\le r\le 5$. We follow \cite[\S 13]{Carter} in the notation
for nilpotent orbits.

$\g$ is of type $G_2$ and $\Orb$ is of type $G_2(a_1)$: in this case $\bA=S_3$ and
$Y'=S_3/S_2 \sqcup S_3/S_3$.

$\g$ is of type $E_6$ and $\Orb$ is of type $D_4(a_1)$: in this case $\bA=S_3$ and
$$Y'=(S_3)^{20} \sqcup (S_3/S_3)^{10}\sqcup (S_3/S_2)^{50}.$$

$\g$ is of type $E_7$ and $\Orb$ is of type $D_4(a_1)$: in this case $\bA=S_3$ and
$$Y'=(S_3)^{35} \sqcup (S_3/S_3)^{70}\sqcup (S_3/S_2)^{210}.$$

$\g$ is of type $E_8$ and $\Orb$ is of type $D_4(a_1)$ or $E_8(b_5)$: in both cases $\bA=S_3$ and
$$Y'=(S_3)^{56} \sqcup (S_3/S_3)^{448}\sqcup (S_3/S_2)^{896}.$$

$\g$ is of type $E_8$ and $\Orb$ is of type $D_4(a_1)+A_1$ or $E_8(a_6)$: in both cases $\bA=S_3$ and
$$Y'=(S_3)^{350} \sqcup (S_3/S_3)^{175}\sqcup (S_3/S_2)^{875}.$$

$\g$ is of type $F_4$ and $\Orb$ is of type $F_4(a_3)$: in this case $\bA=S_4$ and
$$Y'=(S_4/S_4)^3\sqcup (S_4/S_3)^3 \sqcup (S_4/S_2\times S_2)^4\sqcup S_4/S_2 \sqcup
S_4/\mbox{Dyh}_8.$$

$\g$ is of type $E_8$ and $\Orb$ is of type $E_8(a_7)$: in this case $\bA=S_5$ and
$$Y'=(S_5/S_5)^{420}\sqcup (S_5/S_4)^{756}\sqcup (S_5/\mbox{Dyh}_8)^{168}\sqcup (S_5/S_2)^{70}
\sqcup(S_5/S_3\times S_2)^{1596}$$
$$\sqcup (S_5/S_2\times S_2)^{1092} \sqcup (S_5/S_3)^{378}.$$
\end{Ex}

Above we have determined (in some form, at least) the number of left cells with given cell
module. One, however, can ask how to compute the cell module starting from a cell itself.
In fact, in classical types there is a combinatorial classification of left cells
due to Barbash and Vogan, \cite{BV_class}. To each $w\in W$ they combinatorially assigned a standard $2n$-tableau.
Then they proved that $w,w'$ are in the same left cell if their tableaux coincide and provided
a recipe to determine $\VA(\U/\J(w\rho))$ from the tableau of $w$. Garfinkle in a series of papers
produced an equivalent (but simpler) combinatorial classification of left cells in terms of combinatorial objects
standard  domino $n$-tableaux of special form that are again produced
from Weyl group elements, see \cite{Ga} for the definition.  In \cite{McGovern_class}
McGovern found a combinatorial recipe to produce the cell module from the Garfinkle tableau
corresponding to a left cell. So the conclusion is that one can combinatorially compute the group
$H_\sigma$ from starting from an element $w\in \sigma$.

\section{Proof of the main theorem}\label{SECTION:proof_main}
\subsection{Results of Dodd: $\Irr_{fin}(\Walg_\rho)$ vs $\Spr(\Orb)$}\label{SUBSECTION_Dodd}
In this subsection we will quote results of Dodd, \cite{Dodd}, relating the
$K$-group $[\Walg_{\lambda}-\operatorname{mod}_{fd}]$ of the category of finite dimensional
 modules over the central reduction $\Walg_\lambda$ for some central character $\lambda$ and the Springer
$W\times A(\Orb)$-module $\Spr(\Orb)$.

In \cite[\S 3]{Dodd}, Dodd defined a natural $A(\Orb)$-equivariant map $[\Walg_{\lambda}-\operatorname{mod}_{fd}]\rightarrow \Spr(\Orb)$; moreover he proved that this map is an embedding, see \cite[Theorem 1]{Dodd}.
Furtermore, for $\lambda =\rho$ he used Goodwin's translation functors from \cite{Goodwin} to define
$W-$action on $[\Walg_{\rho}-\operatorname{mod}_{fd}]$ and showed that the map above is
$W-$equivariant with respect to the standard $W-$action on $\Spr(\Orb)$, see \cite[\S 8]{Dodd}.
Recall that $[\Walg_{\rho}-\operatorname{mod}_{fd}]$ carries another $W-$action coming from the
epimorphism $\BQ (W)\twoheadrightarrow  [\JCat_{\Orb}]$, see \S \ref{SUBSECTION_cell_HC}.

\begin{Prop}
\label{two actions}
The two $W-$actions on $[\Walg_{\rho}-\operatorname{mod}_{fd}]$ described above coincide.
\end{Prop}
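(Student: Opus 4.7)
My plan is to verify the equality of the two $W$-actions on a set of generators. The ring $\BQ(W)$ is spanned by classes $[F_L]$ of projective functors $F_L = L\otimes_\K\bullet$ for finite dimensional $G$-modules $L$, under the identification recalled in Subsection \ref{SUBSECTION_cell_HC}, so it suffices to check that both actions assign the same endomorphism of $[\Walg_\rho\operatorname{-mod}_{fd}]$ to each such generator.

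On the Dodd--Goodwin side, I would first observe that Goodwin's translation functor attached to $F_L$ on the category of $\Walg_\rho$-modules coincides, after the central character has been pinned down, with the functor $L\rightthreetimes\bullet = \Walg_L\otimes_\Walg\bullet$ from Subsection \ref{SUBSECTION_equivalence}. Thus the first action sends $[F_L]\in\BQ(W)$ to the class of the endofunctor $N\mapsto [\Walg_L\otimes_\Walg N]$.

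On the categorical side, I would then compute the image of $[F_L]$ under the composition $\BQ(W)\twoheadrightarrow[\JCat_\Orb]\to\End([\YCat])$, where the second arrow is the $\JCat_\Orb$-module structure on $\YCat$ induced by $\bullet_\dagger$. By formula \eqref{catLus}, this image is the class of the functor $N\mapsto[(\U_L\otimes_\U\tilde\be)_\dagger\otimes_\Walg N]$. Combining the tensor functoriality of $\bullet_\dagger$ (Theorem \ref{Thm_dagger}(4)) with Proposition \ref{Prop:ULdagger}, this functor is isomorphic to $N\mapsto[\Walg_L\otimes_\Walg\tilde\be_\dagger\otimes_\Walg N]$. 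It remains to check that $\tilde\be_\dagger$ acts as the identity on $[\YCat]$: indeed, any representative of $\tilde\be - \be$ lies in $\HC_{\partial\Orb}(\U)$ and is therefore annihilated by $\bullet_\dagger$ thanks to Theorem \ref{Thm_dagger}(3), while $\be_\dagger$ is the unit object of the multi-fusion category $\HC^Q_{fin}(\Walg_\rho)^{ss}$ acting on $\YCat$. Consequently the second action also sends $[F_L]$ to $[\Walg_L\otimes_\Walg\bullet]$, and the proposition follows.

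The step I expect to be the main obstacle is the precise identification of Goodwin's translation functors with the functors $L\rightthreetimes\bullet$. Goodwin's construction on the $\Walg$-side mirrors the usual construction on $\U$-modules (tensor with $L$, then project onto a generalized central character), and matching it with $L\rightthreetimes\bullet=\U_{L\dagger}\otimes_\Walg\bullet$ should follow from exactness of $\bullet_\dagger$, its compatibility with (generalized) central characters via Theorem \ref{Thm:1.2.1}(i), and Proposition \ref{Prop:ULdagger}. Once this dictionary is in place, the comparison above reduces the proposition to the formal manipulations indicated.
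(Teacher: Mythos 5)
Your proposal is correct and follows essentially the same route as the paper: rewrite the Goodwin-side action as $X\mapsto X_\dagger\otimes_{\Walg}\bullet$, use the tensor functoriality of $\bullet_\dagger$ (Theorem \ref{Thm_dagger}(4)) together with the fact that $\tilde\be$ and $\be$ have the same image under $\bullet_\dagger$, and then invoke \eqref{catLus} to identify the result with Lusztig's homomorphism. Working with the specific generators $[F_L]$ is a cosmetic difference from the paper, which phrases the argument directly on the level of the whole category $\hat{\,^{\rho}\!\HC^\rho}(\U)\text{-proj}$.

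The step you rightly flag as the main obstacle — matching Goodwin's translation functors on $\Walg$-modules with the functors $\U_{L\dagger}\otimes_{\Walg}\bullet$ — is resolved in the paper by a single application of Theorem \ref{Thm_tensor} in the degenerate case when $T$ is trivial (so $\theta=0$, $\KFun$ is the Skryabin equivalence, and $\widetilde{\OCat}^\t(\theta)$ is all of $\Walg\text{-}\Mod$). That theorem gives the needed isomorphism $\Sk^{-1}(X\otimes_\U \Sk(N))\cong X_\dagger\otimes_\Walg N$ directly, which is cleaner and more robust than re-deriving it from Theorem \ref{Thm:1.2.1}(i) and Proposition \ref{Prop:ULdagger} alone: the latter tells you what $\U_{L\dagger}$ is, but you still need the comparison of the two tensorings, and that comparison is exactly the content of Theorem \ref{Thm_tensor} (also used via Corollary \ref{Cor:1.5.6} to get compatibility with iterated translations, which you implicitly need when you reduce to generators). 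So cite Theorem \ref{Thm_tensor} with $\theta=0$ for that step and the rest of your argument goes through as written.
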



\begin{proof}
From Theorem \ref{Thm_tensor} (applied in the case, when $T$ is trivial and so $\theta=0$)
it follows that this action coming from the Goodwin translation functors
is the same as the one coming from \begin{equation}\label{eq:cat_action}^{\rho}\!\HC(\U)^{\rho}\times \Walg^{\rho}-\operatorname{mod}_{fd}\rightarrow \Walg^{\rho}-\operatorname{mod}_{fd}, (X,N)\mapsto X_\dagger\otimes_{\Walg}N\end{equation} (where the superscript $\rho$ means the generalized central  character $\rho$). We claim that
 the action $\JCat_{\Orb}\times \Walg_{\rho}-\operatorname{mod}_{fd}\rightarrow \Walg_{\rho}-\operatorname{mod}_{fd}$
induced by (\ref{eq:cat_action})
is compatible with the epimorphism $\BQ (W)\twoheadrightarrow  [\JCat_{\Orb}]$
from Subsection \ref{SUBSECTION_cell_HC}.
Indeed, consider the category $\hat{\,^{\rho}\!\HC^\rho}(\U)$
whose objects have the form $\varprojlim_{n,m}X_{n,m}$, where $X_{n,m}$ is a HC bimodule annihilated by $\Centr(\U)_\rho^n$
on the left and $\Centr(\U)_\rho^m$ on the right. In particular, $\,^{\rho}\!\HC^{\rho}(\U)\subset \hat{\,^{\rho}\!\HC^\rho}(\U)$.
An advantage of $\hat{\,^{\rho}\!\HC^\rho}(\U)$ over $\,^{\rho}\!\HC^{\rho}(\U)$ is that the former has enough projective objects.
The category $\hat{\,^{\rho}\!\HC^\rho}(\U)$ is monoidal: its objects are bimodules over the completion
$\varprojlim_n \U/\U\Centr(\U)_\rho^n$, and we take tensor products of bimodules over this algebra. The projective objects of this category are identified with projective functors sending the generalized
central character $\rho$ to itself
so that the tensor product becomes the composition, see \cite{BG}.
As we have already mentioned in Subsection \ref{SUBSECTION_cell_HC}, the $\BQ$-form of the Grothendieck ring of $\hat{\,^{\rho}\!\HC^\rho}(\U)-\operatorname{proj}$ is $\BQ W$.

For $X\in \hat{\,^{\rho}\!\HC^\rho}(\U), Y\in \HC_{\Orb}(\U), N\in \Walg$-$\operatorname{mod}$
we have $X_\dagger\otimes_{\Walg} (Y_\dagger\otimes_\Walg  N)=(X\otimes_\U  Y)_\dagger\otimes_{\Walg}N$ by Theorem \ref{Thm_dagger} (4).
Now let $N$ be a simple in $\Walg_\rho$-$\operatorname{mod}$, $Y={\bf 1}\in\JCat_\Orb$ be
the unit object, and $X\in \hat{\,^{\rho}\!\HC^\rho}(\U)$. Then, of course, $Y\otimes_{\Walg}N=N$ and so we have $(X\otimes_\U {\bf 1})_\dagger \otimes_{\Walg}N=X_\dagger\otimes_{\Walg}N$. On the level of the Grothendieck groups, the map $\BQ W\rightarrow \JAlg_{\Orb}$ that sends the class of $X\in \hat{\,^{\rho}\!\HC^\rho}(\U)-\operatorname{proj}$ to the class of $X\otimes {\bf 1}$ is just Lusztig's homomorphism, see \eqref{catLus}.

So Dodd's action of $\BQ (W)$ on $[\Walg_{\rho}-\operatorname{mod}_{fd}]$ is the same as ours.
\end{proof}

\begin{Rem}
We sketch here an alternative proof of the Dodd's result on the injectivity of the map
$[\Walg_{\rho}-\operatorname{mod}_{fd}]\rightarrow \Spr(\Orb)$ based on
some results of Lusztig from \cite{Lusztig_families}. Namely, Dodd considers a map $[\Walg_{\rho}-\operatorname{mod}_{fd}]
\rightarrow H^*(\mathcal{B}_e)$
that is the composition of the reduction mod $p$ for $p\gg 0$
and the map from the Grothendieck group of the restricted representations of the W-algebra in characteristic
$p$ to $H^*(\mathcal{B}_e)\cong \BQ\otimes_{\ZZ}K_0(\operatorname{Coh}(\mathcal{B}_e))$. The first map is injective
from the construction. The second is an isomorphism, it comes from the derived localization, that is an equivalence
of the corresponding derived categories. So the map $[\Walg_{\rho}-\operatorname{mod}_{fd}]
\rightarrow H^*(\mathcal{B}_e)$ is injective. Dodd's main result is that the image of that map
actually lies in the top cohomology, \cite[\S 7]{Dodd}.
Independently, the map above is also shown to be $W$-equivariant, see \cite[\S 8.2]{Dodd}.
Proposition \ref{two actions} implies that irreducible constituents of its image are in $\Irr(W)^{\Orb}=
\Irr(W)^{\dcell}$.
But according to \cite[Proposition 0.2]{Lusztig_families},
if a representation from $\Irr(W)^{\dcell}$ appears in $H^*(\mathcal{B}_e)$, it only appears in the top degree, i.e., in $\operatorname{Spr}(e)$.
So the W-equivariance actually implies the injectivity.
\end{Rem}

\subsection{Summary} \label{summary}
In this subsection we are going to summarize the results obtained so far in the form
suitable for the proof of the main theorem. We start by interpreting results
from Subsection \ref{SUBSECTION_primitive}, Theorem \ref{Thm_main}, Proposition \ref{Prop:int_hom}.
Until Subsection  \ref{SUBSECTION_nonint} we assume that $G$ is of adjoint type.

Fix a finite set $\Lambda$ of dominant weights (for $\g$) containing a regular weight $\varrho$.
Let $\Walg^{ss}_\Lambda$ be the quotient of $\Walg$ by the intersection of
all maximal ideals of finite codimension in $\Walg$ with central characters from $\Lambda$.
This is a finite dimensional semisimple associative algebra equipped with a Hamiltonian action of $Q$.
So we can consider the category  $\Bimod^Q(\Walg^{ss}_\Lambda)$ of finite dimensional $Q$-equivariant
bimodules. The $Q$-equivariance condition implies, in particular, that the $Q^\circ$-action on an
object of $\Bimod^Q(\Walg^{ss}_\Lambda)$ is recovered from the adjoint $\Walg^{ss}_\Lambda$-action. Pick
 a finite subgroup  $\fA\subset Q$ surjecting onto $A(\Orb)=Q/Q^\circ$. The existence of such a
subgroup is a standard fact, see, e.g., \cite[Proposition 7]{Vinberg}. Then we have a natural inclusion
$\Bimod^Q(\Walg^{ss}_\Lambda)\hookrightarrow \Bimod^{\fA}(\Walg^{ss}_\Lambda)$. The category
$\Bimod^{\fA}(\Walg^{ss}_\Lambda)$ is known to be isomorphic to $\Coh^{\fA,\psi}(Y^\Lambda\times Y^{\Lambda})$,
for an appropriate collection of 2-cocycles $\psi$, see \cite[\S 5.1]{BeO}.

Lemma \ref{Lem:J_MF} and Proposition \ref{Prop:J_indecomp} imply the following
\begin{itemize}
\item[(A1)] There is an embedding $\,^{\Lambda}\!\JCat_{\Orb}^\Lambda\hookrightarrow \Coh^{\fA,\psi}(Y^\Lambda\times Y^{\Lambda})$
of multi-fusion categories. Moreover, $^{\Lambda}\!\JCat_{\Orb}^\Lambda$ is indecomposable.
\end{itemize}

Let $\YCat^\Lambda:=\Coh(Y^\Lambda)$. We can view $\YCat^\Lambda$ as a left $\,^{\Lambda}\!\JCat_{\Orb}^\Lambda$-module
 via the embedding $\,^{\Lambda}\!\JCat^\Lambda_{\Orb}\hookrightarrow
 \Coh^{\fA,\psi}(Y^\Lambda\times Y^\Lambda)$. Theorem \ref{Thm_main} means
\begin{itemize}
\item[(A2)] The left $\,^{\Lambda}\!\JCat_{\Orb}^\Lambda$-module $\YCat^\Lambda$ is indecomposable.
\end{itemize}

Next, Proposition \ref{Prop:W_translation} gives
\begin{itemize}
\item[(A3)] We have $\bH_\cell\subset \bH_\cell^\lambda$ for any compatible $\cell$ and $\lambda$.
\end{itemize}

Now let us interpret results from Section \ref{SECTION_categories}. Lemma \ref{Lem:0.2}
together with assertion (iii) of Lemma \ref{Lem:0.3pre} imply the following statement.
\begin{itemize}
\item[(B1)] There is a quotient $\bfA$ of $\fA$ and a class $\omega\in H^3(\bfA, \K^\times)$
independent of $\Lambda$ such that the action of $\fA$ on $Y^\Lambda$ factors through $\bfA$
and $\,^{\Lambda}\!\JCat_{\Orb}^\Lambda=(\Vec_{\bfA}^\omega)^*_{\YCat^\Lambda}$. For the
decomposition $Y^\Lambda=\bigsqcup_{\lambda\in \Lambda, \sigma} Y^{\lambda}_{\sigma}$ into
$\bfA$-orbits we have $\Coh(Y^\lambda_\sigma)=\YCat^{\lambda}_{\sigma}:=e^\lambda_\sigma\otimes \YCat^\Lambda$.
The latter are indecomposable right $\Vec_{\bfA}^\omega$-modules.
\end{itemize}

Further, applying Lemmas \ref{Lem:0.3pre}, \ref{Lem:0.3}, \ref{Lem:cells_vs_H} we get
\begin{itemize}
\item[(B2)] For any dominant weights $\lambda,\mu$ and any left cells $\sigma,\cellb$
compatible with $\lambda,\mu$ (meaning that $w$ and $\lambda$ are compatible for each $w\in \sigma$
and similarly for $\cellb,\mu$), respectively, the following numbers are all equal:
\begin{enumerate}
\item $\#\,_\sigma^\lambda\!\JCat_\cellb^\mu$.
\item $\Hom_{[\,^\Lambda\!\JCat^\Lambda]}([\JCat_\sigma^\lambda], [\JCat_\cellb^\mu])$.
\item $\Hom_W([\sigma],[\cellb])$.
\item $\#\Coh^{\bA}(\bA/H_\sigma\times \bA/H_\cellb)$.
\item $\#\Coh^{\bfA,\psi}(Y^\lambda_\sigma\times Y^\mu_\cellb)$.
\end{enumerate}
\end{itemize}
In more detail, the coincidence of (1) and (2) follows from Lemma \ref{Lem:0.3}
(applied to the direct summand $e_{\mu}^{\cellb}$ of unit and
$\MCat:=\,^{\Lambda}\!\J^{\Lambda}\otimes e_{\lambda}^{\sigma}$). The coincidence
of (2) and (3) follows from Subsection \ref{SUBSECTION_cell_HC}.
The coincidence of (3) and (4) is Lemma \ref{Lem:cells_vs_H}.
Finally, the coincidence of (1) and (5) is Lemma \ref{Lem:0.3pre}(iii) (applied to the direct summands $e^\lambda_{\sigma},e^{\mu}_{\cellb}$
of $\be$ and $\MCat:=\YCat^{\Lambda}$; here we interpret the category in (5) as $\Fun_{\Vec_{\bfA}^\omega}(e^{\mu}_{\cellb}\MCat, e^\lambda_{\sigma}\MCat)$).

Now we are going to apply Lemma \ref{Lem:0.3pre},(i), and Lemma \ref{Lem:0.3} to $e_{\sigma}(:=e_\sigma^\rho)$ and $\MCat=\YCat$.
\begin{itemize}
\item[(B3)] For any dominant weight $\lambda$ and a left
cell $\sigma$ compatible the following $\fA$-modules coincide:
\begin{enumerate}
\item $\Hom_{W}([\sigma],[\YCat])$, where $W$ acts on $[\YCat]$
as explained in Subsection \ref{SUBSECTION_Dodd}.
\item $[\Fun_{\JCat_{\Orb}}(\JCat^\sigma,\YCat)]$, where the $\bfA$-action comes from the right $\Vec_{\bfA}^\omega$-action on $\YCat$.
\item $\BQ (Y_\sigma)=[e_\sigma\otimes\YCat]$.
\end{enumerate}
\end{itemize}

Finally, let us recall an embedding $\BQ (Y)\hookrightarrow \Spr(\Orb)^{\dcell}$
of $W\times \fA$-modules quoted in Subsection \ref{SUBSECTION_Dodd}. It induces
an embedding $\Hom_W([\sigma],\BQ (Y))\hookrightarrow \Hom_W([\sigma],\Spr(\Orb))$
of $\fA$-modules.
The source module is $\BQ(Y_\sigma)$ by (B3), while the target module is
 $\BQ (\bA/H_\sigma)$ by Proposition \ref{Prop:Springer2}. Combining this with   Propositions \ref{Prop:Springer1},  we get
\begin{itemize}
\item[(B4)] There is an embedding $\BQ (Y_\sigma)\hookrightarrow \bA/H_\sigma$ of $\fA$-modules.
The image coincides with the sum of all $V$-isotypic components of $\bA/H_\sigma$, where $V$ is an $\bA$-module such that $V$
(or, equivalently,  the $W$-module corresponding to $(1,V)$
under the Lusztig parametrization) appears in $\BQ (Y)$.
\end{itemize}

\subsection{Preparation for the proof}
We use the notation introduced in the previous subsection.
Recall that $\bH^\lambda_\cell$ denotes the stabilizer of a point from $Y_\cell^\lambda$.
Our main goal is to prove that $\bH^\lambda_\cell$ coincides with $H_\cell$
(up to conjugacy).

\begin{Lem}\label{Lem:prep1}
(i) If $H_\cell=\bA$, then $\bH_\cell^\lambda=\bfA$.

(ii) If the two-sided cell $\dcell$ is non-exceptional (this excludes
precisely 3 cells in types $E_7,E_8$), then there is $\cell$
with $H_{\cell}=\bA$.

(iii) If $\bH_\cell^\lambda=\bfA$ for some compatible $\lambda,\cell$, then $\omega=0$.
\end{Lem}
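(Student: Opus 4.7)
Part (i) is a short consequence of (B4) combined with Losev's prior classification result and (A3). From (B4) there is an $\fA$-equivariant embedding $\BQ(Y_\sigma) \hookrightarrow \BQ(\bA/H_\sigma)$, with $Y_\sigma = Y_\sigma^\rho$. When $H_\sigma = \bA$ the target is one-dimensional with trivial $\fA$-action, so $|Y_\sigma| \le 1$. The non-emptiness of $Y_\sigma$ follows from the main result of \cite{HC} (every primitive ideal in $\Prim_\Orb(\U_\rho)$ lies below a nonempty $A(e)$-orbit of finite dimensional irreducible $\Walg$-modules), so $|Y_\sigma| = 1$ and the unique point is $\fA$-fixed, giving $\bH_\sigma = \bfA$. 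Part (A3) then yields $\bH_\sigma^\lambda \supset \bH_\sigma = \bfA$ for any $\lambda$ compatible with $\sigma$, hence equality.

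Part (ii) is a case-by-case check using the explicit descriptions of left cell modules and their Lusztig subgroups from Section \ref{SECTION_cell}. For type $A$ the group $\bA$ is trivial and there is nothing to prove. For classical types $B$, $C$, and $D$ the assertion is the first part of Propositions \ref{Prop:cell_existbc} and \ref{Prop:cell_existd}, which produce a Temperley--Lieb pattern whose associated left cell $\cell_0$ satisfies $H_{\cell_0} = \bA$. For exceptional types with $\bA = \ZZ/2\ZZ$, Subsection \ref{SUBSECTION_expl_excep_2} shows that in each non-exceptional two-sided cell the left cell module denoted $U^2$ yields a left cell with $H_\cell = \bA$; the three excluded exceptional cells in $E_7$ and $E_8$ are precisely those where this second cell module is unavailable. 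For the remaining exceptional families, with $\bA = S_3, S_4, S_5$, the explicit lists in Subsection \ref{SUBSECTION_explicit_excep2} contain a cell with $H_\cell = \bA$ in each case.

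Part (iii) uses the structure theory of module categories over $\Vec_{\bfA}^\omega$. By (B1), $\YCat_\cell^\lambda := e_\cell^\lambda \otimes \YCat^\Lambda$ is an indecomposable right $\Vec_{\bfA}^\omega$-module, and its simple objects are naturally identified with the points of $Y_\cell^\lambda$. The hypothesis $\bH_\cell^\lambda = \bfA$ forces $Y_\cell^\lambda$ to be a single $\bfA$-orbit of cardinality one, so $\YCat_\cell^\lambda$ has exactly one simple. By the classification of indecomposable $\Vec_{\bfA}^\omega$-modules recalled in Subsection \ref{SUBSECTION_module_categories}, such a module corresponds to a pair $(\bfA_0, \psi)$ with $\bfA_0 \subset \bfA$ a subgroup (up to conjugacy) and $\psi$ a 2-cochain on $\bfA_0$ satisfying $d\psi \sim \omega|_{\bfA_0}$, and it has $|\bfA/\bfA_0|$ simples. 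The unique-simple case forces $\bfA_0 = \bfA$, and since $d\psi$ is automatically a coboundary, $\omega$ must vanish in $H^3(\bfA, \K^\times)$. The only point that requires some care is keeping track of which category the module structure is over; it is essential that (B1) identifies $\,^{\Lambda}\!\JCat^\Lambda_\Orb$ with the dual $(\Vec_{\bfA}^\omega)^*_{\YCat^\Lambda}$, which is what interprets $\YCat_\cell^\lambda$ as a $\Vec_{\bfA}^\omega$-module in the form to which the classification applies. This matching of W-algebraic data to module-category data in (B1) is the main conceptual step; once granted, (i)--(iii) are all essentially formal.
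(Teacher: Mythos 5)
Your proof is correct and follows essentially the same route as the paper: part (i) is (B4) read for $\lambda=\rho$ plus (A3) to pass to general compatible $\lambda$, part (ii) is the case-by-case inspection of the $H_\cell$'s computed in Section \ref{SECTION_cell}, and part (iii) applies the classification of indecomposable $\Vec_{\bfA}^\omega$-modules to $\YCat^\lambda_\cell$ exactly as the paper does. Your exposition only adds detail (explicitly counting simples of $\YCat^\lambda_\cell$ to force $\bfA_0=\bfA$, and making the non-emptiness of $Y_\sigma$ explicit), without changing the argument.
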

\begin{proof}
(i) follows from (B4), and (ii) follows from the explicit
descriptions of the subgroups $H_\cell$ given in Section \ref{SECTION_cell}.
To prove (iii) let us recall that $\YCat^\lambda_\cell=\Coh(Y^\lambda_\cell)$
is an indecomposable right module over $\Vec_{\bfA}^\omega$, see (B1). From the description
of indecomposable $\Vec_{\bfA}^\omega$-modules in Subsection \ref{SUBSECTION_module_categories}
we see that $\omega|_{\bH^\lambda_\sigma}$ is trivial (in fact, it does not
matter whether we consider left or right modules). Since $\bH^\lambda_\sigma=\bfA$,
we are done.
\end{proof}

In particular, we see that for non-exceptional cells $\omega$ is 0. So
$\YCat^\Lambda$ is isomorphic to the right $\Vec_{\bfA}$-module corresponding
to some  collection $\bpsi$ of 2-cocycles.
In particular $\,^\Lambda\!\JCat^\Lambda_{\Orb}\cong \Coh^{\bfA,\bar{\psi}}(Y^\Lambda\times Y^{\Lambda})$,

Here is a technical claim that we are going to prove:
\begin{Thm}\label{theorem:main} Assume that the two-sided cell $\dcell$ is not exceptional.

(i) The quotient $\bfA$ of $\fA$ coincides with Lusztig's quotient $\bA$. Moreover, $\bH^\lambda_\cell=H_\cell$,
whenever $\lambda$ and $\cell$ are compatible.

(ii) There is $\bpsi_0 \in H^2(\bA,\K^\times )$ such that  $\bpsi_\cell$ is cohomologous to the restriction of $\bpsi_0$
to $H_\cell$;

(iii) The image of the embedding $\BQ (Y^\cell)\hookrightarrow \Spr(\Orb)$ is $\Spr(\Orb)^{\dcell}$.
\end{Thm}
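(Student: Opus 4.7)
The plan is to run the bootstrap algorithm outlined in Subsection \ref{SUBSECTION_discussion}, feeding into it the categorical data collected in (A1)--(A3), (B1)--(B4) of Subsection \ref{summary} and the explicit combinatorics of Lusztig subgroups from Section \ref{SECTION_cell}. The base of the induction will be the distinguished left cell $\cell_0\subset\dcell$ with $H_{\cell_0}=\bA$ supplied by Lemma \ref{Lem:prep1}(ii) (existence is read off from Propositions \ref{Prop:cell_existbc}, \ref{Prop:cell_existd} in the classical types and from the tables of Subsections \ref{SUBSECTION_expl_excep_2}--\ref{SUBSECTION_explicit_excep2} in the exceptional non-exceptional-cell cases). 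Lemma \ref{Lem:prep1}(i) then yields $\bH^\lambda_{\cell_0}=\bfA$, so $Y^\lambda_{\cell_0}$ reduces to a single point; combining (B3), (B4) and Proposition \ref{Prop:Springer1} shows that the trivial $\bA$-module (equivalently the special $W$-representation of $\Irr(W)^{\dcell}$) appears in the image of $[\YCat]\hookrightarrow\Spr(\Orb)$. Lemma \ref{Lem:prep1}(iii) also forces $\omega=0$, which makes the 2-cocycle statement of (ii) sensible.

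The inductive step alternates two operations. \emph{Operation $\alpha$:} given a set of $\bA$-modules currently known to appear in $[\YCat]$, apply (B4) to identify, for each left cell $\sigma$, the sum of isotypic components of $\BQ(\bA/H_\sigma)$ that must lie in the image of $\BQ(Y_\sigma)$. \emph{Operation $\beta$:} applying (B2) to pairs $(\cell_0,\sigma)$ and $(\cell_j,\sigma)$, with $\cell_j$ drawn from the codimension-one family $\cell_1,\ldots,\cell_m$ of Propositions \ref{Prop:cell_existbc}, \ref{Prop:cell_existd} (or the corresponding exceptional lists), combined with (B3), constrains $|Y_\sigma|$ and the $\bfA$-orbit structure of $Y_\sigma$, forcing $\bH^\rho_\sigma$ to project onto $H_\sigma$ and introducing fresh $V$-isotypic components into $[\YCat]$. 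Because $\bigcap_j H_{\cell_j}=\{1\}$ and the characters of the $\bA/H_{\cell_j}$ generate the dual of $\bA$ in the abelian classical cases (with an analogous spanning property visible in the tables of Subsection \ref{SUBSECTION_explicit_excep2} for the nonabelian exceptional cases), iterating $\alpha$ and $\beta$ will eventually exhaust all of $\Spr(\Orb)^{\dcell}$, proving (iii).

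Once (iii) is in hand, (i) follows: the action of $\bfA$ on $\Spr(\Orb)^{\dcell}$ factors through $\bA$ by the defining minimality of $\bA$, forcing $\bfA=\bA$; with the full image available, (B4) together with (A3) yields $\bH^\rho_\cell=H_\cell$ up to conjugacy, and for a general compatible $\lambda$ the inclusion $H_\cell\subset\bH^\lambda_\cell$ from (A3) is upgraded to equality by a cardinality comparison via (B2). For (ii), with $\bfA=\bA$ and $\omega=0$ established, the classification of indecomposable $\Vec_\bA$-module categories (Subsection \ref{SUBSECTION_module_categories} and Lemma \ref{Lem:0.2}) writes each $\YCat^\lambda_\sigma=\Coh^{\bpsi_\sigma}(\bA/H_\sigma)$ for some $\bpsi_\sigma\in H^2(H_\sigma,\K^\times)$; the identification $^{\Lambda}\!\JCat^\Lambda_{\Orb}=\Coh^{\bA,\bpsi}(Y^\Lambda\times Y^\Lambda)$ from (B1) combined with multi-fusion compatibility packages these classes as restrictions of a single universal $\bpsi_0\in H^2(\bA,\K^\times)$.

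The principal obstacle I anticipate is controlling the iteration in the presence of the a priori unknown cocycle twists $\psi$ entering (B2)(5): the number of projective irreducibles of a stabilizer depends sensitively on $\psi$, so matching $\#\Coh^{\bfA,\psi}(Y^\lambda_\sigma\times Y^\mu_\cellb)$ to $\#\Coh^{\bA}(\bA/H_\sigma\times\bA/H_\cellb)$ cleanly requires either proving cocycle-triviality en route or carrying the cocycle through the inductive counting. In classical types this resolves smoothly since the relevant cocycles turn out to be cohomologically trivial; in the exceptional cases with nonabelian $\bA\in\{S_3,S_4,S_5\}$ one must inspect each family in Subsection \ref{SUBSECTION_explicit_excep2} individually to verify that operations $\alpha$ and $\beta$ continue to yield new $W$-constituents at every iteration until $\Spr(\Orb)^{\dcell}$ is completely realized.
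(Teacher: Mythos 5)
Your proposal follows the same bootstrap strategy as the paper (and as sketched in Subsection \ref{SUBSECTION_discussion}), starting from a cell $\cell_0$ with $H_{\cell_0}=\bA$, alternating between (B4)-style reading of constituents of $\BQ(Y)$ and (B2)-style counting to pin down the stabilizers $\bH^\lambda_\sigma$, and using the special families $\cell_0,\cell_1,\dots,\cell_m,\cell_\varnothing$ from Propositions \ref{Prop:cell_existbc}, \ref{Prop:cell_existd}. The overall shape is correct and you have correctly identified the base case, the inductive engine, and even the principal obstacle.

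However, the obstacle you name at the end — the presence of a priori unknown 2-cocycle twists in the counting in (B2)(5) — is exactly the hard part, and you leave it unresolved rather than defeating it. The paper defeats it with two specific tools you do not invoke. First, Remark \ref{Rem:same_inv}: a Frobenius--Perron dimension argument from \cite[Corollary 8.14]{ENO} shows \emph{a priori} that $|\bfA|=|\bA|$, before anything about the identification of $\bfA$ with $\bA$ is known. Second, the sum-of-squares argument in Step 1: because a nontrivial 2-cocycle on $\bH_\cellb$ has no one-dimensional projective irreducibles, the equality $\sum_V (\dim V)^2=|\bH_\cellb|$ combined with $\#\Rep^{\bpsi}(\bH_\cellb)=|\bA|/2=|\bfA|/2$ rules out $\bH_\cellb=\bfA$ for the codimension-one cells. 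Without $|\bA|=|\bfA|$ this dichotomy does not close, and without the sum-of-squares argument the cocycle cannot be detected from the raw count $\#\Rep^{\bpsi}$. Your ``Operation $\beta$'' gestures at such a count but does not carry it through, and your hope that ``cocycle-triviality'' will be established ``en route'' is precisely what is left unjustified.

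A secondary point: your plan is to derive (iii) first and then deduce (i) from it. The logical dependencies actually run the other way — to apply (B4) to $\cell_\varnothing$ and obtain the full image $\Spr(\Orb)^{\dcell}$ you first need to know $\bH_{\cell_\varnothing}=\{1\}$ and $\bfA=\bA$, and these in turn require the constituents $V_1,\dots,V_m$, which come from the identification $\bfA/\bH_{\cell_j}=\bA/H_{\cell_j}$ via the Step-1 counting. So the iteration you describe does work, but only when interleaved in the order (i)$\to$(iii); the phrasing ``once (iii) is in hand, (i) follows'' does not match the actual direction of deduction. This is a presentational slip rather than a mathematical error, but taken together with the unresolved cocycle issue the proposal is an accurate strategy outline rather than a complete proof.
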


\begin{Rem}\label{Rem:faithfulness}
We will see below that $\bfA$ acts faithfully on $Y$, i.e., only the unit element acts trivially.
\end{Rem}

\begin{Rem}\label{Rem:same_inv}
Although we do not know yet that $\bfA$ coincides with $\bA$, we can say from the beginning
that $|\bA|=|\bfA|$. Indeed, the categories $\,^{\Lambda}\!\JCat_{\Orb}^\Lambda$ and
$(\Vec_{\bfA}^\omega)^*_{\YCat^\Lambda}$ are indecomposable multi-fusion categories and for
such categories the {\em Frobenius-Perron dimensions} (see \cite[\S 8.2]{ENO}) of all
component categories (i.e., of fusion categories of the form $e\mathfrak{C}e\subset \mathfrak{C}$, where
$e$ is an indecomposable summand of unit) coincide, see
\cite[Corollary 8.14]{ENO}. For the category $(\Vec_{\bfA}^\omega)^*_{\YCat^\Lambda}$ this
common value equals $|\bfA|$ by {\em loc. cit.} For the category
$\,^{\Lambda}\!\JCat_{\Orb}^\Lambda$ this common value can be read of the character
tables of Grothendieck rings of component categories computed in \cite{Lusztig_subgroups}
and equals $|\bA|$ (cf. \cite[p. 225]{BFO1}). We deduce the desired equality since
$\,^{\Lambda}\!\JCat_{\Orb}^\Lambda=(\Vec_{\bfA}^\omega)^*_{\YCat^\Lambda}$ by (B1).

\end{Rem}

\begin{Rem}\label{Rem:Lusztig_conj}
We remark that Theorem \ref{theorem:main} gives an alternative proof of the Lusztig's conjecture
proved in \cite[Theorem 4]{BFO1}.
Also we want to remark that not only $^{\Lambda}\!\JCat^{\Lambda}_{\Orb}$
is isomorphic to $\Coh^{\bA}(Y^{\Lambda}\times Y^{\Lambda})$ but actually this equivalence is
realized by the embedding $^{\Lambda}\!\JCat^{\Lambda}_{\Orb}\hookrightarrow \Coh^{\fA}(Y^{\Lambda}\times Y^{\Lambda})$.
Let us explain what we mean by this. Consider a simple object $\M\in ^{\Lambda}\!\JCat^{\Lambda}_{\Orb}$
and a point $(x,y)\in Y^{\Lambda}\times Y^{\Lambda}$, where the fiber of $\M$ is nonzero.
It is not true that the fiber is a genuine representation of $\bA_{(x,y)}$, it is still
a projective $\bA_{(x,y)}$-module but the Schur multiplier is a coboundary, so we can
view the fiber as a $\bA_{(x,y)}$-module, say $V$. By construction, the embedding
$^{\Lambda}\!\JCat^{\Lambda}_{\Orb}\hookrightarrow \Coh^{\bA}(Y^{\Lambda}\times Y^{\Lambda})$
sends $\M$ to the simple equivariant sheaf corresponding to $(x,y,V)$.
 In particular, the multiplicity of $\M$
on $\Orb$ can be computed as follows. According to Theorem \ref{Thm_dagger}, this multiplicity
equals $\dim \M_{\dagger}$. But as a vector space, $\M_{\dagger}:=V^{\bigoplus |\bA/\bA_{(x,y)}|}\otimes \Hom_{\K}(N_x,N_y)$,
where $N_x,N_y$ are irreducible $\Walg$-modules corresponding to the points $(x,y)$.
So
\begin{equation}\label{eq:multiplicity}
\operatorname{mult}_{\Orb}(\M)=\frac{|\bA|}{|\bA_{(x,y)}|}\dim V\dim N_x\dim N_y.
\end{equation}
This formula will be of great importance in \cite{Goldie} and is one of the main reasons
why our classification business is important for the computation of the Goldie ranks.
\end{Rem}

Now let us establish a few more technical tools to be used in the proof of Theorem \ref{theorem:main}.

\begin{Lem}\label{Lem:prep2}
Let $\cell_0$ be a left cell with $H_{\cell_0}=\bA$. Then for any compatible $\lambda,\cell$ we have
$\#\Rep(H_\cell)=\#\Rep^{\bpsi_{\cell_0}-\bpsi_\cell^\lambda}(\bH^\lambda_\cell)$.
\end{Lem}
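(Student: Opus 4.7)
The plan is to obtain both sides of the desired equality as two different counts of the same indexing set, namely $\#\,_{\cell_0}^{\rho}\!\JCat_\cell^\lambda$, by applying the chain of coincidences (B2) of Subsection~\ref{summary} to the pair of data $(\cell_0,\rho)$ and $(\cell,\lambda)$. Note that $\rho$ is strictly dominant, so it is automatically compatible with the cell $\cell_0$, which legitimizes the use of (B2).

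First, from entry (4) of (B2) we get $\#\,_{\cell_0}^{\rho}\!\JCat_\cell^\lambda = \#\Coh^{\bA}(\bA/H_{\cell_0}\times \bA/H_\cell)$. Since $H_{\cell_0}=\bA$ by assumption, the quotient $\bA/H_{\cell_0}$ is a one-point set on which $\bA$ acts trivially, and the right-hand side collapses to $\#\Coh^{\bA}(\bA/H_\cell)$, which by the usual equivalence between $\bA$-equivariant sheaves on $\bA/H_\cell$ and representations of $H_\cell$ equals $\#\Rep(H_\cell)$.

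Next, entry (5) of (B2) gives $\#\,_{\cell_0}^{\rho}\!\JCat_\cell^\lambda = \#\Coh^{\bfA,\bpsi}(Y^\rho_{\cell_0}\times Y^\lambda_\cell)$. Here I would invoke Lemma~\ref{Lem:prep1}(i) to see that $H_{\cell_0}=\bA$ implies $\bH^\rho_{\cell_0}=\bfA$, so $Y^\rho_{\cell_0}$ is a single $\bfA$-fixed point; since $\dcell$ is non-exceptional, Lemma~\ref{Lem:prep1}(iii) moreover gives $\omega=0$, so the twist data reduces to the collection $\bpsi$. The category $\Coh^{\bfA,\bpsi}(\{\mathrm{pt}\}\times Y^\lambda_\cell)$ is then equivalent, via restriction to a point $p\in Y^\lambda_\cell$ with stabilizer $\bH^\lambda_\cell$, to $\Rep^{\bpsi_{\cell_0}|_{\bH^\lambda_\cell}-\bpsi^\lambda_\cell}(\bH^\lambda_\cell)$ by the standard description of equivariant sheaves with projective fibers. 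Comparing with the previous paragraph yields the claimed equality.

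The only real subtlety—more bookkeeping than substance—is the interpretation of the difference $\bpsi_{\cell_0}-\bpsi^\lambda_\cell$: the class $\bpsi_{\cell_0}$ lives on $\bfA=\bH^\rho_{\cell_0}$ whereas $\bpsi^\lambda_\cell$ lives on $\bH^\lambda_\cell\subset \bfA$, so the difference must be read as the class on $\bH^\lambda_\cell$ obtained after first restricting $\bpsi_{\cell_0}$ to that subgroup. Once this convention is fixed, there is nothing left to do: the lemma is just the combination of the first and last entries in the string of equalities furnished by (B2).
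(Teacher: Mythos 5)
Your proof is correct and takes essentially the same route as the paper: the paper's entire proof is the single sentence ``The left and right hand sides are (4) and (5) in (B2), respectively,'' and you have simply unwound what that means, applying (B2) with $(\sigma,\lambda_{(B2)})=(\cell_0,\rho)$ and $(\cellb,\mu_{(B2)})=(\cell,\lambda)$, then using $H_{\cell_0}=\bA$ (hence $\bH^\rho_{\cell_0}=\bfA$ by Lemma~\ref{Lem:prep1}(i)) to collapse the first factor to a point on each side. One small remark: you do not need to separately invoke non-exceptionality of $\dcell$ to get $\omega=0$ --- the hypothesis that $\cell_0$ with $H_{\cell_0}=\bA$ exists already forces $\omega=0$ via Lemma~\ref{Lem:prep1}(i) followed by (iii).
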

\begin{proof}
The left and right hand sides are (4) and (5) in (B2), respectively.
\end{proof}

\begin{Cor}\label{Cor:rep_coinc}
Suppose $\cell$ is not exceptional. Then $\#\Rep\bfA=\#\Rep\bA$.
\end{Cor}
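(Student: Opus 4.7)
The plan is to extract the corollary directly from Lemmas \ref{Lem:prep1} and \ref{Lem:prep2} by exploiting the existence of a particularly nice left cell guaranteed by non-exceptionality. Concretely, since $\dcell$ is non-exceptional, part (ii) of Lemma \ref{Lem:prep1} supplies a left cell $\cell_0 \subset \dcell$ with $H_{\cell_0} = \bA$; part (i) of the same lemma then upgrades this to $\bH^\rho_{\cell_0} = \bfA$. Thus the cell $\cell_0$ has the maximal possible stabilizer on both the ``Lusztig side'' and the ``W-algebra side.''

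Now I would apply Lemma \ref{Lem:prep2} with $\lambda = \rho$ and $\cell = \cell_0$. The weight $\rho$ is compatible with every left cell, so the hypotheses are met, and the lemma gives
$$\#\Rep(H_{\cell_0}) = \#\Rep^{\bpsi_{\cell_0} - \bpsi^\rho_{\cell_0}}(\bH^\rho_{\cell_0}).$$
Substituting $H_{\cell_0} = \bA$, $\bH^\rho_{\cell_0} = \bfA$, and recalling the convention that $\bpsi_{\cell_0}$ denotes $\bpsi^\rho_{\cell_0}$ (so the twisting cocycle vanishes and projective representations become honest ones), the right-hand side equals $\#\Rep(\bfA)$. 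There is no real obstacle here: the whole content is the choice of the distinguished cell $\cell_0$, which is exactly what non-exceptionality buys us, and the rest is a substitution into the previous lemma.
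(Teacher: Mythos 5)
Your proof is correct and follows exactly the same route as the paper's: invoke Lemma \ref{Lem:prep1}(ii) to produce $\cell_0$ with $H_{\cell_0}=\bA$, upgrade via part (i) to $\bH_{\cell_0}=\bfA$, and then specialize Lemma \ref{Lem:prep2} to $\cell=\cell_0$, $\lambda=\rho$, where the twisting cocycle $\bpsi_{\cell_0}-\bpsi^\rho_{\cell_0}$ vanishes. You are just more explicit about the choice $\lambda=\rho$ and the cocycle cancellation than the paper's one-line argument.
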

\begin{proof}
Take a left cell ${\sigma_0}$ with $H_{\sigma_0}=\bA$. By Lemma \ref{Lem:prep1}, $\bH_{\sigma_0}=\bfA$.
Applying Lemma \ref{Lem:prep2} to $\sigma=\sigma_0$, we get the required equality.
\end{proof}

\begin{Lem}\label{Lem:prep3}
Suppose that $\bA$-modules $V_1,\ldots,V_m$ occur in $\BQ (Y)$. Let $\BQ(\bA/H_\cell)'$
denote the sum of all irreducible components of $\BQ (\bA/H_\cell)$ isomorphic to $V_1,\ldots,V_n$.
Then we have inclusion $\BQ (\bA/H_\cell)'\hookrightarrow \BQ (Y_\cell)\hookrightarrow \BQ (\bA/H_\cell)$.
\end{Lem}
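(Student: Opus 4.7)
The plan is to derive both inclusions directly from property (B4) of Subsection \ref{summary}, so essentially no new argument is needed beyond unpacking what (B4) already asserts. Recall that (B4) supplies an $\fA$-equivariant embedding
\[
\iota_\cell : \BQ (Y_\cell) \hookrightarrow \BQ (\bA/H_\cell)
\]
whose image is precisely the direct sum of the $V$-isotypic components of $\BQ (\bA/H_\cell)$ taken over all those irreducible $\bA$-modules $V$ which appear in $\BQ (Y)$.

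First I would observe that $\iota_\cell$ is itself the second inclusion $\BQ (Y_\cell) \hookrightarrow \BQ (\bA/H_\cell)$ asserted by the lemma, so there is nothing to prove there. For the first inclusion, I would argue as follows: by hypothesis each of the irreducibles $V_1,\ldots,V_m$ occurs in $\BQ (Y)$, hence by the image description in (B4) the $V_i$-isotypic component of $\BQ (\bA/H_\cell)$ is contained in the image of $\iota_\cell$ for every $i$. Taking the sum over $i$ yields
\[
\BQ (\bA/H_\cell)' \;=\; \bigoplus_{i=1}^m (\BQ (\bA/H_\cell))_{V_i} \;\subset\; \operatorname{im}(\iota_\cell),
\]
and pulling back through the embedding $\iota_\cell$ gives the desired inclusion $\BQ (\bA/H_\cell)' \hookrightarrow \BQ (Y_\cell)$.

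There is no real obstacle at this step; the substantial work has already been done in establishing (B4), which combines Dodd's embedding $\BQ (Y) \hookrightarrow \Spr(\Orb)$, Proposition \ref{Prop:Springer1} describing the $W\times\bA$-isotypic decomposition of $\Spr(\Orb)^{\dcell}$, Proposition \ref{Prop:Springer2} identifying $\Hom_W([\sigma],\Spr(\Orb))$ with $\BQ (\bA/H_\sigma)$ as $\bA$-modules, and item (B3) identifying $\BQ (Y_\sigma)$ with $\Hom_W([\sigma],\BQ (Y))$. Thus Lemma \ref{Lem:prep3} is a purely formal consequence: the embedding half is tautological, and the first inclusion is obtained by restricting attention to those isotypic components whose occurrence in $\BQ (Y)$ has already been verified.
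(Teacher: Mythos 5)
Your proof is correct and follows the same route as the paper, which simply says the lemma "follows from (B4)"; you have merely spelled out the unpacking that the paper leaves to the reader. The second inclusion is the embedding in (B4) itself, and the first inclusion is exactly the statement that the image of that embedding contains every $V_i$-isotypic component, which is what (B4) asserts.
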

\begin{proof}
This follows from (B4).
\end{proof}

\begin{Cor}\label{Cor:prep3}
If $\bA$ is abelian, we have an $\fA$-equivariant surjection $\bA/H_\cell\twoheadrightarrow \bfA/\bH_\cell$.
\end{Cor}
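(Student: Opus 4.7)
The plan is to exploit the hypothesis that $\bA$ is abelian to force the decomposition of $\BQ(Y_\cell)$ into one--dimensional characters and then recover the desired surjection by Pontryagin duality. Throughout I will regard both $\bA$ and $\bfA$ as quotients of $A(e)$ (which in turn is a quotient of $\fA$), so that the two actions on $\BQ(Y_\cell)$ arising from (B4) and from (B1) are just two factorizations of a single $A(e)$-action. Let $\tilde H_\cell\subset A(e)$ be the preimage of $H_\cell\subset\bA$, and $\tilde\bH_\cell\subset A(e)$ the preimage of $\bH_\cell\subset\bfA$; the corollary will follow if I produce the containment $\tilde H_\cell\subset\tilde\bH_\cell$ (up to conjugation), because then $A(e)/\tilde H_\cell=\bA/H_\cell$ surjects $A(e)$-equivariantly onto $A(e)/\tilde\bH_\cell=\bfA/\bH_\cell$, and $\fA$-equivariance is automatic.

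First I would invoke Lemma \ref{Lem:prep3} together with the fact that $\bA$ is abelian. The right-hand side $\BQ(\bA/H_\cell)=\operatorname{Ind}_{H_\cell}^{\bA}\mathbf 1$ is a direct sum of one-dimensional characters of $\bA$, namely all $\chi\in\widehat{\bA}$ with $\chi|_{H_\cell}=1$. Since the embedding $\BQ(Y_\cell)\hookrightarrow\BQ(\bA/H_\cell)$ from (B4) is $\bA$-linear and each isotypic component is one-dimensional, $\BQ(Y_\cell)$ is itself a sum of a subset $S\subset\widehat{\bA/H_\cell}$ of one-dimensional $\bA$-characters. Pulling back via $A(e)\twoheadrightarrow\bA$, I view $S$ as a set of one-dimensional characters of $A(e)$.

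Now I use the other presentation: as an $\bfA$-module, $\BQ(Y_\cell)\cong\operatorname{Ind}_{\bH_\cell}^{\bfA}\mathbf 1$ by (B1). Since this coincides with the $A(e)$-module just decomposed above, it is a sum of one-dimensional $\bfA$-characters. By Frobenius reciprocity this forces every irreducible $\bfA$-representation $V$ with $V^{\bH_\cell}\ne 0$ to be one-dimensional, which is equivalent to $[\bfA,\bfA]\subset\bH_\cell$. Hence $\bH_\cell$ is normal in $\bfA$, the quotient $\bfA/\bH_\cell$ is abelian, and (counting dimensions) the set of one-dimensional characters of $A(e)$ appearing in $\BQ(Y_\cell)$ is exactly $\widehat{\bfA/\bH_\cell}$, pulled back along $A(e)\twoheadrightarrow\bfA$. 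Combining with the previous paragraph gives the inclusion $\widehat{\bfA/\bH_\cell}\subset\widehat{\bA/H_\cell}$ inside $\widehat{A(e)}$.

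By Pontryagin duality for finite abelian groups this inclusion of character subgroups of $\widehat{A(e)}$ corresponds to the reverse inclusion of their annihilators in $A(e)$, namely $\tilde H_\cell\subset\tilde\bH_\cell$; I would spell this out by noting that a character of $A(e)$ is trivial on $\tilde H_\cell$ (respectively $\tilde\bH_\cell$) iff it factors through $\bA/H_\cell$ (respectively $\bfA/\bH_\cell$), so the inclusion of character sets is literally the statement that every character killed by $\tilde\bH_\cell$ is killed by $\tilde H_\cell$. This yields the required $A(e)$-equivariant (hence $\fA$-equivariant) surjection $\bA/H_\cell\twoheadrightarrow\bfA/\bH_\cell$. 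The one step that needs to be handled with care is the passage between the two quotients: both $\bA$ and $\bfA$ act on $\BQ(Y_\cell)$ only because the $A(e)$-action factors through each of them, and I should make explicit that the decomposition into one-dimensional summands is a decomposition of $A(e)$-modules so that the characters appearing can be compared inside the single group $\widehat{A(e)}$ before either quotient is applied. Everything else is formal once the Dodd embedding of (B4) and the abelianness of $\bA$ are in hand.
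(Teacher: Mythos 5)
Your proof is correct. Every step checks out: the decomposition of $\BQ(\bA/H_\cell)$ into distinct characters (because $\bA$ is abelian), the identification of $\BQ(Y_\cell)$ with $\operatorname{Ind}_{\bH_\cell}^{\bfA}\mathbf 1$ via (B1), the Frobenius-reciprocity argument forcing $[\bfA,\bfA]\subset\bH_\cell$, and finally the Pontryagin-duality translation from inclusion of character subgroups to the reverse inclusion of their annihilators $\tilde H_\cell\subset\tilde\bH_\cell$ inside $A(e)$. You correctly note that the normality of $\bH_\cell$ must be established before the quotient $\bfA/\bH_\cell$ makes sense as a group, and your handling of the passage through $\widehat{A(e)}$ is careful.

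There is, however, a more economical route that sidesteps both Frobenius reciprocity and Pontryagin duality. The $\fA$-module embedding $\BQ(\bfA/\bH_\cell)=\BQ(Y_\cell)\hookrightarrow\BQ(\bA/H_\cell)$ from Lemma~\ref{Lem:prep3} and (B1) is an embedding of $A(e)$-modules. Because $\bA$ is abelian, $H_\cell$ is normal in $\bA$, so the kernel of the $A(e)$-action on the permutation module $\BQ(\bA/H_\cell)$ is exactly $\tilde H_\cell$. Any $g\in\tilde H_\cell$ therefore acts as the identity on the submodule $\BQ(Y_\cell)$; since this is a permutation module with basis $Y_\cell$, $g$ fixes every point of $Y_\cell\cong\bfA/\bH_\cell$, hence lies in $\tilde\bH_\cell$. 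This gives $\tilde H_\cell\subset\tilde\bH_\cell$ directly and the desired $A(e)$-equivariant surjection follows immediately. What your longer argument buys in exchange is additional information along the way — in particular that $\bH_\cell$ is normal in $\bfA$ with abelian quotient and that the set of characters in $\BQ(Y_\cell)$ is exactly $\widehat{\bfA/\bH_\cell}$ — which is not needed for the corollary but is consonant with what later steps of the paper establish. Both routes are valid.
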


Now let us explain the general strategy of the proof. First, we note that, in virtue of Lemma \ref{Lem:prep3},
the claims (i) and (iii)  of the theorem for $\lambda=\rho$ are very closely related (in fact, equivalent
when $\bA$ is abelian). So, basically, we need to establish the existence of sufficiently many
irreducible constituents of $\BQ (Y)$. On the other hand, the only tool for us to get constituents of
$\BQ (Y)$ is to prove the equalities $\bH_\cell=H_\cell$. For some $H_\cell$ (roughly, for large ones)
this is doable by using Lemma \ref{Lem:prep2} and the second inclusion of Lemma \ref{Lem:prep3}.

When (i) is fully established, proving (ii) is not difficult (sometimes we
need to prove these two claims simultaneously, though). After establishing (i),(ii) for $\lambda=\rho$ we will
treat the case of general $\lambda$.

Now we give a proof of Theorem \ref{theorem:main} in the case when the cell $\dcell$ is exceptional.
It is proved in \cite[Theorem 1.1 and Remark 1.2 (iv)]{O} that in this case we have a tensor equivalence
$\,^\Lambda\!\JCat^\Lambda_{\Orb}\cong \Vec_{\ZZ/2\ZZ}^\omega \boxtimes \Coh(Y'\times Y')$ for
the nontrivial element $\omega \in H^3(\ZZ/2\ZZ ,\K^\times)\cong \ZZ/2\ZZ$ and some finite set $Y'$. It follows from
\cite[Proposition 2.3 and Example 2.1]{Orbi}
that the category $\Vec_{\ZZ/2\ZZ}^\omega \boxtimes \Coh(Y'\times Y')$
has a unique (up to equivalence of module categories)
indecomposable module category, namely $\Vec_{\ZZ/2\ZZ}^\omega \boxtimes \Coh(Y')$.
The assertions of Theorem \ref{theorem:main} follow
easily from this and statement (A2) in Section \ref{summary}. Notice that in this case
$\fA=\bfA=\ZZ/2\ZZ$ and $H_\cell$ is trivial for any left  cell $\cell \subset \dcell$.

\subsection{Proof for classical types}\label{SUBSECTION:proof_classical}
Recall, Subsections \ref{SUBSECTION_explicit_bc},\ref{SUBSECTION_explicit_d} that $\bA\cong \bF_2^m$.

In the proof we will need to use cells $\cell_0,\cell_1,\ldots,\cell_m,\cell_\varnothing$ with
the following properties: $H_{\cell_0}=\bA, H_{\cell_k}$ has index 2 in $\bA$, and $\bigcap_{k=1}^m H_{\cell_k}=\{1\}$,
and, finally, $H_{\cell_\varnothing}=\{1\}$. The existence of such cells for all classical types follows
from Propositions \ref{Prop:cell_existbc} (types B and C) and \ref{Prop:cell_existd} (type D).  In fact, $\bH_{\cell_\varnothing}=H_{\cell_\varnothing}$ implies $\bH^\lambda_\cell=H_\cell$
for all $\lambda,\cell$. But to prove
that $\bH_{\cell_\varnothing}=H_{\cell_\varnothing}$  we will need to check the coincidence of the subgroups
for $\cell_1,\ldots,\cell_m$.

The proof  will be divided into the following steps:
\begin{itemize}
\item[Step 1:] Prove $\bfA/\bH_{\cell_i}=\bA/H_{\cell_i}$ (the equality of
quotients of $\fA$) for $i=1,\ldots,m$.
\item[Step 2:] Establish the inclusion of ``sufficiently many'' simple $\bA$-modules into $\BQ (Y)$.
\item[Step 3:] Prove $\bfA=\bA$.
\item[Step 4:] Prove $\bH_{\cell_\varnothing}=H_{\cell_\varnothing}$ based on Step 2.
\item[Step 5:] Deduce $\bH_\cell^\lambda=H_\cell$ in general.
\item[Step 6:] Prove (ii) of the theorem.
\item[Step 7:] Prove (iii) of the theorem.
\end{itemize}
%

{\it Step 1.} Set $\cell:=\cell_0$  and $\cellb:=\cell_i$ for some $i$.
Corollary \ref{Cor:prep3} implies that either $\bH_\cellb=\bfA$ or $\bfA/\bH_\cellb=\bA/H_\cellb$.
By Lemma \ref{Lem:prep2}, $\#\Rep^{\bpsi^\cell-\bpsi^\cellb}(\bH_\cellb)=\#\Rep(H_\cellb)$.
The r.h.s. is $|\bA|/2$ or, equivalently, by Remark \ref{Rem:same_inv}, $|\bfA|/2$.

We remark that if $\bpsi$ is a non-trivial (=not coboundary) 2-cocycle on $\bH_\cellb$,  then there is no 1-dimensional representation in
$\Rep^{\bpsi} \bH_\cellb$.
The category $\Rep^{\bpsi} \bH_\cellb$ is the same as the category of modules over
the twisted group algebra $\K^{\bpsi} \bH_\cellb$ with $\dim \K^{\bpsi} \bH_\cellb=|\bH_\cellb|$.
It follows that $\sum_{V}(\dim V)^2=|\bH_\cellb|$, where the summation is taken over all
irreducible $\K^{\bpsi}\bH_{\cellb}$-modules. But the number of
simple representations in $\Rep^{\bpsi}\bH_\cellb$ is $|\bA|/2=|\bfA|/2$ by the previous paragraph.

From here we see that $\bH_\cellb\neq \bfA$ (otherwise the number of simples is $\#\Rep\bfA=\#\Rep\bA=|\bA|$
if $\bpsi$ is a coboundary -- the first equality follows from Corollary \ref{Cor:rep_coinc}--
and does not exceed $|\bfA|/4=|\bA|/4$ else). 
So the quotients $\bfA/\bH_\cellb$ and $\bA/H_\cellb$ coincide.

{\it Step 2.} Now apply (B4) to $\cell=\cell_0,\cellb=\cell_i$.
We get that the non-trivial $\bA/H_{\cell_i}$-module $V_i$
appears in the $\bA$-module $\BQ (Y)$.

{\it Step 3.} From the previous step we see that $V_1,\ldots,V_m$
appear in $\BQ (Y)$. But the common kernel
of those representations in $\fA$ coincides with the kernel of the projection $\fA\twoheadrightarrow
\bA$. On the other hand, the $\fA$-action on the right hand side factors through $\bfA$.
From here we get  $\bfA\twoheadrightarrow \bA$ and hence $\bfA=\bA$.

{\it Step 4.} Now apply Lemma \ref{Lem:prep3} to $\cell=\cell_{\varnothing}$
and the $\bA$-modules $V_1,\ldots,V_m$. The $\bfA=\bA$-module $\BQ(\bA/H_\cell)'$
is just $\bigoplus_{i=1}^m V_i$. It follows that the $\bA$-action on $\BQ(Y_\cell)$
is faithful and therefore $\bH_\cell=\{1\}$.


{\it Step 5.} Now let $\cell=\cell_\varnothing,\lambda=\rho$, and take arbitrary compatible $\mu,\cellb$.
Expression (4) in (B2) is $|\bA/H_\cellb|$, while (5) is $|\bfA/\bH^\mu_\cellb|$.
So (B2) implies $|H_\cellb|=|\bH^\mu_\cellb|$. For $\mu=\rho$ we deduce $H_\cellb=\bH_\cellb$
from Corollary \ref{Cor:prep3}. For general $\mu$ (A3) reads $H_\cellb=\bH_\cellb\subset \bH_\cellb^\mu$
and hence $H_\cellb=\bH_\cellb^\mu$. So (i) of Theorem \ref{theorem:main} is fully proved.

{\it Step 6.} Now set $\cell:=\cell_0,\lambda=\rho, \bpsi_0:=\bpsi_\cell$  and pick arbitrary
compatible  $\mu,\cellb$.
Using the equality of (4) and (5) in (B2) we see that $\#\Rep^{\bpsi_0-\bpsi_\mu^\cellb}H_\cellb=\#\Rep H_\cellb$.
Similarly to Step 1, this implies that $\bpsi_0-\bpsi_\mu^\cellb$
is cohomologous to 0.

{\it Step 7.} To get (iii) of Theorem \ref{theorem:main}, apply (B4) to $\cell=\cell_\varnothing$
and $\lambda=\rho$.

\subsection{Proof for exceptional types}
Now we are going to prove Theorem \ref{theorem:main} for non-exceptional cells in
exceptional types. We need to consider the cases $\bA=\ZZ/2\ZZ,S_3,S_4,S_5$
separately.

\subsubsection{$\bA=\ZZ/2\ZZ$}
The proof repeats that in the classical case (where we omit Steps 4 and 6 -- no 2nd cohomology
for the subgroups of $\bA$).

\subsubsection{$\bA=S_3$}
Recall that $H_\cell$ is one of the subgroups $\{1\},S_2,S_3$.
We have $\bfA=\bH_{\cell}^\lambda$ provided
$H_\cell=S_3$, thanks to Lemma \ref{Lem:prep1} and (A3). We also remark that (ii) follows
readily from the fact that the subgroups of $S_3$ has no 2nd cohomology.
Lemma \ref{Lem:prep2} implies that $\#\Rep \bH^\lambda_\cell=\#\Rep H_\cell$. Also
Lemma \ref{Lem:prep3} together with (A3) show that $\BQ (S_3/\bH_\lambda^\cell)\hookrightarrow \BQ (S_3/H_\cell)$
(as $\fA$-modules). From here it is easy to deduce that $|\bH^\lambda_\cell|=|H_\cell|$
and hence $\BQ (S_3/\bH^\lambda_\cell)=\BQ (S_3/H_\cell)$. The $\fA$-action on the l.h.s.
factors through $\bfA$. Since $\bA=S_3$ acts faithfully on $\BQ (S_3/S_2)$
we deduce from $|\bfA|=|\bA|$ that $\bfA=\bA$. Finally, we see that $H_\cell=\bH_\cell^\lambda$.
Assertion (iii) of Theorem \ref{theorem:main} follows from (B4) applied to $\cell$
with $H_\cell=\{1\}$ (outside $G_2$) and with $H_\cell=S_2$ (in $G_2$).

\subsubsection{$\bA=S_4$}
Recall that we have the following Lusztig subgroups $H^1=S_4,H^2=S_3,H^3=\operatorname{Dyh}_8,H^4=S_2,H^5=S_2\times S_2$.
Recall that $\bH_\cell^\lambda=H_\cell$ whenever $H_\cell=H^1$. We use the notation for
$\bA$-modules and for $W$-modules introduced in Subsection \ref{SUBSECTION_explicit_excep2}.

Thanks to Remark \ref{Rem:same_inv}, the group $\bfA$ has
24 elements.

The proof of Theorem \ref{theorem:main} is carried out in the following steps:
\begin{itemize}
\item[Step 1:] Prove that $V_{31}, V_{22}$ appear in $\BQ (Y)$.
\item[Step 2:] Deduce that $\bfA=\bA$.
\item[Step 3:] Deduce that $\bH_\cell=H_\cell$ if $H_\cell=S_3,\operatorname{Dyh}_8,S_2\times S_2$.
\item[Step 4:] Prove that $\bH_\cell=H_\cell$ whenever $H_\cell=S_2$.
\item[Step 5:] Deduce assertion (iii) of Theorem \ref{theorem:main}.
\item[Step 6:] Prove that $\bH_\cell^\lambda=H_\cell$ for all $\lambda$.
\item[Step 7:] Deduce assertion (ii).
\end{itemize}

{\it Step 1.} Let us show that both  $V_{22}$ and $V_{31}$ appear in $\BQ (Y)$.
Assume the converse.
Let $\cell,\cellb$ be  left cells with $H_\cell=S_4, H_\cellb=S_2\times S_2$.
We have $\BQ (S_4/S_2\times S_2)=V_4\oplus V_{31}\oplus V_{22}$
Applying (B4) to the left cell $\cellb$, we see that $\bH_\cellb=\bfA$  (neither $V_{31}$ nor
$V_{22}$ appear in $\BQ (Y)$)
or  $\BQ (\bfA/\bH_\cellb)= \BQ (S_4/S_3)$ ($V_{31}$ appears in $\BQ (Y)$ but $V_{22}$
does not) or $\BQ (\bfA/\bH_\cellb)=\BQ (S_4/\operatorname{Dyh}_8)$. Lemma \ref{Lem:prep2}
implies $\#\Rep^{\bpsi^\cell-\bpsi^\cellb}(\bH_\cellb)=\#\Rep(H_\cellb)=4$. But  the sum of squared dimensions of the simples in
$\Rep^{\bpsi^\cell-\bpsi^\cellb}(\bH_\cellb)$ must be equal to $|\bH_\cellb|$. Since $\bfA$ consists
of $24$ elements, we see that $|\bH_\cellb|=6$ or $8$ or $24$. But neither of these numbers
can be represented as the sum of 4 positive squares.
So  $V_{31},V_{22}\subset \BQ (Y)$.

{\it Step 2.} The equality $\bfA=\bA$ follows now from $|\bfA|=24$ and the observation that $S_4$ acts
faithfully on $V_{31}$.

{\it Step 3.} Let $\cell$ be as on Step 1, and $\cellb$ be one of the cells with $H_\cellb=H^2,H^3$ or $H^5$.
Thanks to Step 1 and Lemma \ref{Lem:prep3}, we have $\BQ (S_4/\bH_\cellb)=\BQ (S_4/H_\cellb)$. It is easy to see that this equality
actually implies $\bH_\cellb=H_\cellb$.

{\it Step 4.} Now take $\cellb$ with $H_\cellb=S_2$. We have $\BQ (S_4/S_2)=V_4\oplus V_{31}^{\oplus 2}\oplus V_{22}\oplus V_{2111}$.
Thanks to Lemma \ref{Lem:prep3} applied to $V_{31}$ and $V_{22}$, we see that $V_4\oplus V_{31}^{\oplus 2}\oplus V_{22}
\subset \BQ (S_4/\bH_\cellb)\subset V_4\oplus V_{31}^{\oplus 2}\oplus V_{22}\oplus V_{2111}$. The dimension of
$V_4\oplus V_{31}^{\oplus 2}\oplus V_{22}$ is $9$ and does not divide $24$. So
$\BQ (S_4/\bH_\cellb)= V_4\oplus V_{31}^{\oplus 2}\oplus V_{22}\oplus V_{2111}$. From here
one can deduce that $\bH_\cellb=S_2$.

{\it Step 5.} Assertion (iii) of the theorem follows from (B4) applied to $\cellb$ with $\bH_\cellb=S_2$,
compare with the proof for the classical types.

{\it Step 6.} Take $\cell$ with $H_\cell=S_2$ and arbitrary compatible $\mu,\cellb$.
Apply (B2) to that choice (with $\lambda=\rho$). From the coincidence of (4) and (5) we deduce
that the number of $S_2$-equivariant sheaves on $S_4/H_\cellb$ and on $S_4/\bH_\cellb^\mu$
coincide. Recall, see (A3), that $\bH_\cellb\subset \bH_\cellb^\mu$.

For an $S_2$-set $X$ the number $s(X)$ of simple $S_2$-equivariant
sheaves on $X$ is $2n_X+\frac{1}{2}m_X$, where $n_X$ (resp., $m_X$) is the number of $S_2$-fixed
(resp., non $S_2$-fixed) points. Since the transpositions generate $S_4$, we see that $m_X>0$
unless the $S_2$-action is trivial.

If $H_\cellb=S_3$, then $s(S_4/\bH_\cellb^\mu)=s(S_4/S_3)=5$ and $S_3\subset \bH_\cellb^\mu$.
This is only possible if $\bH_\cellb^\mu=S_3$.

If $H_\cellb=\operatorname{Dyh}_8$, then   $s(S_4/\bH_\cellb^\mu)=s(S_4/\operatorname{Dyh}_8)=3$ and $\operatorname{Dyh}_8\subset \bH_\cellb^\mu$.
This is only possible if $\bH_\cellb^\mu=\operatorname{Dyh}_8$.

If $H_\cellb=S_2\times S_2$, then $s(S_4/\bH_\cellb^\mu)=s(S_4/S_2\times S_2)=6$ and $S_2\times S_2\subset \bH_\cellb^\mu$.
This is only possible if $\bH_\cellb^\mu=S_2\times S_2$ because the only subgroups containing $S_2\times S_2$
are $\operatorname{Dyh}_8$ and $S_4$.

Finally, consider the case $H_\cellb=S_2$. Here $s(S_4/S_2)=9$. So for $X=S_4/\bH_\cellb^\mu$ we should
have $s(X)=2n_X+\frac{1}{2}m_X=9$. Also $n_X+m_X=|X|$  divides $12$. This is only possible
if $n_X=2, m_X=10$ and so $\bH_\cellb^\mu=S_2$.

{\it Step 7.} Applying Lemma \ref{Lem:prep2} to $\mu$ and $\cellb$ we see that
$\#\Rep^{\bpsi_\cell-\bpsi_\cellb^\mu}(H_\cellb)=\#\Rep(H_\cellb)$. Doing the sum of
squares analysis as above, we see that the category on the left hand side is forced
to have a $1$-dimensional representation. This implies that the 2-cocycle is actually
a coboundary, compare with Step 1 of the proof in the classical types.

\subsubsection{$\bA=S_5$} Again, our proof is in several steps, more or less following
the pattern of the preceding cases.

{\it Step 1.} Let us prove that, first, $\bfA=S_5$ (as quotients of $\fA$), second, $\bH_\cellb=H_\cellb$ for $H_\cellb=S_3\times S_2$
and, third, $V_{41},V_{32}$ appear in $\BQ (Y)$.

We start by showing that $\bfA\cong S_5$ as abstract groups. Indeed, $_{\cell}\!\J_{\cell}\cong \Rep(\bfA)\cong \Rep(S_5)$
by Lemma \ref{Lem:0.3pre},(ii). Now \cite{EtGe} implies that $S_5$ and $\bfA$ have to be isomorphic.

Let us notice that $\BQ (S_5/S_3\times S_2)=V_5\oplus V_{41}\oplus V_{32}$. By Lemma \ref{Lem:prep3},
we have an embedding $\BQ (\bfA/ \bH_\cellb)\hookrightarrow \BQ (S_5/S_3\times S_2)$
of $\fA$-modules. This gives us four possibilities for the module $\BQ(\bfA/\bH_\cellb)$
and so $H_\cellb$ is forced to have one of the following orders: $120, 24, 20,12$.
On the other hand,  Lemma \ref{Lem:prep2} gives $\#\Rep^{\bpsi_\cell-\bpsi_\cellb}(\bH_\cellb)=\#\Rep(S_3\times S_2)=6$.
According to the last table in \cite{BFO1}, this implies that $H_\cellb=S_3\times S_2$
(and then the 2-cocycle is a coboundary automatically) because the order of $H_\cellb$ is at least 12. 

To complete the proof of the claims in the beginning of the step it remains to show that
$\bfA=\bA$ as quotients of $\fA$. This follows from the observation that $S_5$
acts faithfully on both $V_{41}$ and $V_{32}$ that are constituents of $\BQ(S_5/S_3\times S_2)$.



{\it Step 2.} We have $\BQ (S_5/S_4)=V_5\oplus V_{41}$.  Applying Lemma \ref{Lem:prep3}
to $\cellb$ with $H_\cellb=S_4$ to the $S_5$-module $V_{41}$, we see that $\bH_\cellb=S_4$.
Also checking the appropriate table in \cite{BFO1}, we see that $\bpsi^\cell-\bpsi^\cellb$
is a coboundary.

{\it Step 3.} Let us show that $\bH_\cellb=H_\cellb$ if $H_\cellb=S_3$
and that $V_{311}$ is contained in $\BQ (Y)$. We have $\BQ (S_5/S_3)=V_5\oplus V_{41}^{\oplus 2}\oplus
V_{32}\oplus V_{311}$. Applying Lemma \ref{Lem:prep3} to our cell $\cellb$ and the irreducible
$S_5$-modules $V_{41},V_{32}$, we see that $V_5\oplus V_{41}^{\oplus 2}\oplus
V_{32}\subset \BQ (\bfA/\bH_\cellb)\subset  V_5\oplus V_{41}^{\oplus 2}\oplus V_{32}\oplus V_{311}$.
Since  $\dim \BQ (S_5/H_\cellb)$ divides $120$, we see that $V_{311}\subset \BQ (Y)$ and
$\BQ (S_5/\bH_\cellb)=\BQ (S_5/S_3)$. Since $V_{41}$ appears in $\BQ (S_5/\bH_\cellb)$ with multiplicity $2$, it follows
that the $\bH_\cellb$-fixed point space  in the reflection representation $V_{41}$ is 2-dimensional.
Together with $|\bH_\cellb|=6$ this implies $\bH_\cellb=S_3$.

{\it Step 4.} Now we are going to deal with $H_\cellb=\operatorname{Dyh}_8$. We have $\BQ (S_5/\operatorname{Dyh}_8)=
V_5\oplus V_{41}\oplus V_{32}\oplus V_{221}$. Similarly to the previous step, we see that $V_5\oplus V_{41}\oplus V_{32}
\subset \BQ (S_5/\bH_\cellb)\subset V_5\oplus V_{41}\oplus V_{32}\oplus V_{221}$.
Assume that $\BQ (S_5/H_\cellb)=V_5\oplus V_{41}\oplus V_{32}$.
Here $\bH_\cellb=S_3\times S_2$. But, again, $5=\# \Rep(\operatorname{Dyh}_8)$ does not coincide with the number of
simples in $\Rep^\psi(S_3\times S_2)$ that equals $3$ or $6$, see \cite{BFO1}. Applying Lemma \ref{Lem:prep2},
we see that one cannot have $\bH_\cellb=S_3\times S_2$. So  $\BQ (S_5/\bH_\cellb)= V_5\oplus V_{41}\oplus V_{32}\oplus V_{221}$. Since $\operatorname{Dyh}_8$ is the only
subgroup of order $8$ in $S_5$, we are done. Also we see that $V_{221}$ appears in $\BQ (Y)$.

{\it Step 5.}  Consider a left cell $\cellb$ with $H_\cellb=S_2\times S_2$.
We have $\BQ (S_5/S_2\times S_2)=V_5\oplus V_{41}^{\oplus 2}\oplus V_{311}\oplus V_{32}^{\oplus 2}\oplus V_{221}$.
As we have seen on the previous steps, all irreducible summands lie in $\BQ (Y)$ and so, thanks to Lemma \ref{Lem:prep3},
$\BQ (S_5/\bH_\cellb)=\BQ (S_5/S_2\times S_2)$. Since the space of $\bH_\cellb$-fixed vectors in $V_{41}$ is 2-dimensional
and $|\bH_\cellb|=4$, we see that $\bH_\cellb=S_2\times S_2$.

{\it Step 6.} Now we are going to consider the remaining subgroup $H_\cellb=S_2$. The multiplicity of
$V_{2111}$ in $\BQ (S_5/S_2)$ is $1$, also, by Lemma \ref{Lem:prep3}, we know that $\BQ (\bA/\bH_\cellb)=
\BQ (S_5/S_2)$ or $\BQ(\bA/\bH_\cellb)=(\BQ (S_5/S_2))/V_{2111}$.
However, the dimension of the last space, $56$, does not divide $120$ and so we have  $\BQ (\bA/\bH_\cellb)=\BQ (S_5/S_2)$.
As above, this implies $\bH_\cellb=S_2$ and completes the proof of $\bH_\cellb=H_\cellb$ for all $\cellb$.
Also we see that all irreducible constituents of $\Spr(\Orb)^{\dcell}$ appear in $\BQ (Y)$, whence (iii).

{\it Step 7.} Now we are going to verify $\bH_\cellb^\mu=\bH_\cellb$ for all compatible $\mu,\cellb$.
Recall, (A3), that $H_\cellb=\bH_\cellb\subset \bH_\cellb^\mu$. Similarly to the $S_4$-case we have (in the notation
of that proof) \begin{equation}\label{eq:E8_equal} 2m_{S_5/\bH_\cellb^\mu}+\frac{1}{2}n_{S_5/\bH_\cellb^\mu}=2m_{S_5/H_\cellb}+\frac{1}{2}n_{S_5/H_\cellb}.\end{equation}
The last equality shows that $|\bH_\cellb^\mu|/|H_\cellb|=4$ is only possible if $n_{S_5/\bH_\cellb^\mu}=0=m_{S_5/H_\cellb}$.
However the last equality means that $H_\cellb$ contains no transposition, which never happens.

Assume now that $|\bH_\cellb^\mu|/|H_\cellb|=3$, equivalently, the fibers of the natural projection
$S_5/H_\cellb\twoheadrightarrow S_5/\bH_\cellb^\mu$ consist of 3 elements. But in this case the fiber
of each $S_2$-fixed point again contains an $S_2$-fixed point. So $m_{S_5/H_\cellb}\geqslant m_{S_5/\bH_\cellb^\mu}$
and (\ref{eq:E8_equal}) cannot hold.

So it only remains to consider the case $|\bH_\cellb^\mu|=2|H_\cellb|$. This is only possible for $H_\cellb=S_2,S_2\times S_2, S_3$.
If all transpositions contained in $\bH_\cellb^\mu$ are also contained in $H_\cellb$, we have $m_{S_5/H_\cellb}\geqslant m_{S_5/\bH_\cellb^\mu}$,
which is impossible. This excludes the case $H_\cellb=S_2\times S_2$. So we have either $H_\cellb=S_2, \bH_\cellb^\mu=S_2\times S_2$
or $H_\cellb=S_3, \bH_\cellb^\mu=S_3\times S_2$. We have $s(S_5/S_2)=2\cdot 6+\frac{1}{2}(60-6)=39, s(S_5/S_2\times S_2)=2\cdot 6+\frac{1}{2}24=24$. Next, $s(S_5/S_3)=2\cdot 6+\frac{1}{2}(20-6)=19, s(S_5/S_3\times S_2)=2\cdot 2+\frac{1}{2}(10-2)=8$.
So we see that $|\bH_\cellb^\mu|\neq 2|H_\cellb|$ and the equality $\bH_\cellb^\mu=H_\cellb$ is proved.

{\it Step 8.} Finally, let us prove assertion (ii). This basically follows from the fact that $\#\Rep(H_\cellb)\neq \#\Rep^\varphi(H_\cellb)$
for a non-trivial class $\varphi$. The latter inequality can be checked using the last  table in \cite{BFO1}.

\subsection{Conjectures in the non-integral case}\label{SUBSECTION_nonint}
We concentrate on the case when the central character is regular. The general
case should be obtained from this one using the translations. We have two conjectures
generalizing Theorem \ref{Thm:very_main}.

Let $\lambda$ be a representative of the central character in $\h^*$
that is dominant in the sense that $\langle\lambda,\alpha^\vee\rangle\not\in \ZZ_{<0}$
for every positive root $\alpha$ (there may be several such $\lambda$ but we fix one).
Let $W_\lambda$ be the integral Weyl group of $\lambda$, i.e., the subgroup
of $W$ generated by all reflections $s_\alpha$, where $\langle \lambda,\alpha^\vee\rangle\in \ZZ$.
We assume that the group  $G$ is simply connected.

%


Let $W_a$ be the affine Weyl group, that is, the semi-direct product of $W$ with the root lattice. It is well
known that $W_a$ is a Coxeter group, so the theory of cells applies to it. A deep theorem of Lusztig
(\cite[Theorem 4.8]{cells4})
states that two-sided cells in $W_a$ are in bijection with nilpotent orbits. Let $\dcell^a_\Orb$
denote the two sided cell in $W_a$ corresponding to the orbit $\Orb$. We recall that each left cell
in $W_a$ contains a unique {\em distinguished involution}. Let $\D_\Orb$ be the set of distinguished
involutions contained in $\dcell^a_\Orb$; as explained above the set $\D_\Orb$
is in bijection with the set of left cells contained in $\dcell^a_\Orb$. Conjecture \cite[10.5]{cells4}
associates
to each left cell contained in $\dcell^a_\Orb$ a subgroup of finite index in $Q$ defined up
to conjugacy. We note that a weak form of this conjecture proved in \cite[Theorem 4]{BeO} is
sufficient to define this subgroup. Equivalently, for each $d\in \D_\Orb$ we have a subgroup
$H_d\subset A(e)$ defined up to conjugacy.

Recall that the group $W_\lambda$ is a {\em parahoric} subgroup, that is, the projection under $W_a\to W$
of a conjugate of a well defined standard parabolic subgroup $W_I$ of $W_a$. It follows from
\cite[Lemma 7.4]{cells4} and \cite[Theorem 3.10]{variety}
that the set $\Prim_{\Orb}(\U_\lambda)$ is in bijection with $W_I\cap \D_\Orb$ (equivalently, with
the set of left cells in $W_I\cap \dcell^a_\Orb$).

\begin{Conj}\label{Conj:non_int v2}
Assume that $\J \in \Prim_{\Orb}(\U_\lambda)$ correspond to $d\in W_I\cap \D_\Orb$.
The stabilizer of the $A(e)$-orbit in $\Irr_{fin}(\Walg)$ lying over $\J$
is the subgroup $H_d\subset A(e)$.
\end{Conj}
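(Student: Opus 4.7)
The plan is to mirror the structure of the integral proof (Theorem \ref{Thm:very_main}) but systematically replace the finite Weyl group $W$ by the integral Weyl group $W_\lambda$ (or its affine incarnation $W_I \subset W_a$), and replace Lusztig's finite quotient $\bA$ by the quotient $\bA_{\dcell^a_\Orb}$ of $A(e)$ arising from the affine two-sided cell $\dcell^a_\Orb$. First I would set up the relevant multi-fusion framework. Fix a $W_a$-orbit of central characters $\Lambda$ and consider the category ${}^\Lambda\!\HC^\Lambda_\Orb(\U)^{ss}$. The functor $\bullet_\dagger$ still sends this into $\Bimod^Q$ of a semisimple finite dimensional quotient of $\Walg$, because the proof of Theorem \ref{Thm_dagger} nowhere uses integrality. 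Combined with Proposition \ref{Prop:int_hom}, this should upgrade ${}^\Lambda\!\JCat^\Lambda$ to a multi-fusion category and embed it into $\Coh^{\fA,\psi}(Y^\Lambda\times Y^\Lambda)$; indecomposability should follow from a non-integral analog of Corollary \ref{Cor:indecomp}, which in turn should be derivable from the existence of translation functors within $\Lambda$ (these are well-defined since $\Lambda$ lies in a single $W_\lambda$-orbit).

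Next I would identify this multi-fusion category with an affine avatar of $\JCat_\Orb$. By \cite{BeO} one has a categorification of the block $\JAlg^a_\Orb$ of the affine asymptotic Hecke algebra by equivariant sheaves on $\bigsqcup_d \bA/H_d$; the parabolic subgroup $W_I$ cuts out, via a projector associated with the unit object of the appropriate component of that category, a multi-fusion subcategory whose simples are indexed by $W_I\cap \dcell^a_\Orb$ and which should be naturally equivalent to ${}^\Lambda\!\JCat^\Lambda_\Orb$. The cell modules $[\sigma]$ entering statements (B1)--(B4) of Section \ref{summary} must be replaced by the restrictions to $W_I$ of the corresponding cell modules of $W_a$; Proposition \ref{Prop:Springer2} then has an affine counterpart asserting $\BQ(\bA/H_d) \cong \Hom_{W_a}([{}^\lambda\!\JCat_d],\Spr(\Orb))$ where the Springer module is considered via the $W\hookrightarrow W_a$ action.

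The main obstacle, and in my view the deepest part of the proof, is extending Dodd's theorem to non-integral central character, i.e.\ producing a $W_\lambda\times A(e)$-equivariant embedding $\BQ(Y^\lambda)\hookrightarrow \Spr(\Orb)^{\dcell^a_\Orb}$. Dodd's original argument passes through reduction mod $p$ and derived localization; it is tied to the specific Goodwin translation functors available for integral characters. A workable substitute should come from a relative version of those translations: since all elements of $\Lambda$ share the same integral Weyl group $W_\lambda$, one can iterate the bimodules $\mathcal{T}_\mu^{\mu'}$ with $\mu,\mu'\in \Lambda$ to produce a categorical $W_\lambda$-action on $[\Walg_\lambda\text{-}\mathrm{mod}_{fd}]$. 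Identifying this action with the restriction of the $W_a$-action on $\Spr(\Orb)$ via Lusztig's asymptotic homomorphism $\ZZ(W_a)\to \JAlg^a_\Orb$ (specialized to the $W_I$-block, as in the analog of Proposition \ref{two actions}) is the key technical point; once this is done, the remaining counting arguments from Subsection \ref{SUBSECTION:proof_classical} should go through uniformly.

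Finally, assuming the embedding above is established, the rest of the proof would proceed exactly as in Section \ref{SECTION:proof_main}: choose distinguished involutions $d_0, d_1,\ldots,d_m,d_\varnothing \in W_I\cap \D_\Orb$ with $H_{d_0} = \bA$, each $H_{d_k}$ of index a prime in $\bA$, and $\bigcap_k H_{d_k} = \{1\}$ (the existence of such distinguished involutions inside $W_I$ for each non-exceptional affine cell would need to be verified case-by-case, paralleling Propositions \ref{Prop:cell_existbc} and \ref{Prop:cell_existd}); then the steps identifying $\bfA$ with $\bA$ and $\bH^\lambda_d$ with $H_d$ should follow from the same indecomposability plus dimension-counting arguments, with the 2- and 3-cocycle twists handled exactly as in Steps 6 and 7 of that subsection. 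The exceptional affine cells — if any arise inside $W_I$ — would have to be dealt with separately, presumably by the direct arguments of \cite{O} adapted to the non-integral setting.
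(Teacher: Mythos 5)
The statement you are trying to prove is stated in the paper as a \emph{Conjecture}, with no proof given; the authors even remark immediately afterwards that ``it is not clear whether Conjecture \ref{Conj:non_int v2} is compatible with Theorem \ref{Thm:very_main}.'' So there is no proof in the paper to compare yours against, and the fact that the authors themselves cannot check compatibility with their own main theorem in the integral case is a serious warning sign for the approach you sketch, which implicitly treats the finite and affine descriptions as seamlessly interchangeable.

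On the substance, what you have written is a plan with several load-bearing gaps, not a proof. The largest one you name yourself: extending Dodd's embedding $\BQ(Y^\lambda)\hookrightarrow \Spr(\Orb)$ to a $W_\lambda\times A(e)$-equivariant map for non-integral $\lambda$. Dodd's argument (reduction mod $p$, derived localization, then \cite{Lusztig_families} to land in top cohomology) and the identification in Proposition \ref{two actions} are tied to $\lambda=\rho$ through Goodwin translations; there is no construction of a $W_\lambda$-action on $[\Walg_\lambda\text{-}\mathrm{mod}_{fd}]$, nor a proof that it matches the $\JAlg^a_\Orb$-module structure, in the literature, and you do not supply one. A second gap is the target module itself: your proposed affine analogue of Proposition \ref{Prop:Springer2}, $\BQ(\bA/H_d)\cong\Hom_{W_a}([\cell_d],\Spr(\Orb))$, requires $\Spr(\Orb)$ to carry a $W_a$-action compatible with the affine cell structure on $\dcell^a_\Orb$, but $\Spr(\Orb)$ is a $W$-module and affine cell modules do not factor through the quotient $W_a\twoheadrightarrow W$; what plays the role of $\Spr(\Orb)^\dcell$ on the affine side is exactly what the paper's second conjecture, Conjecture \ref{Conj:non_int v3}, proposes, and the authors explicitly do not know whether \ref{Conj:non_int v2} and \ref{Conj:non_int v3} are compatible. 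Third, the indecomposability of the non-integral ${}^\Lambda\!\JCat^\Lambda_\Orb$ and the existence of the distinguished involutions $d_0,\ldots,d_m,d_\varnothing$ in $W_I\cap\D_\Orb$ with the required Lusztig subgroups are deferred to ``case-by-case'' checks that are not carried out and are not known to go through. Until the Dodd-type embedding and the Springer-side target are pinned down, the counting steps of Subsection \ref{SUBSECTION:proof_classical} cannot even be formulated, so the proposal does not constitute a proof of the conjecture.
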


It is easy to see that Conjecture \ref{Conj:non_int v2} holds in the extreme cases of regular and
trivial nilpotent orbits; however it is not clear whether Conjecture \ref{Conj:non_int v2} is
compatible with Theorem \ref{Thm:very_main}.
%

Let us now explain our second  conjecture that basically  generalizes Theorem \ref{theorem:main} (iii).

 Let $E$ be the irreducible representation of $W$ which corresponds to the trivial local
system on the orbit $\Orb$ under the Springer correspondence.
We consider the special representations $E_1$ of $W_\lambda$ with the following properties,
cf. \cite[1.3]{spun},

(a) $E_1$ appears with nonzero multiplicity in $E|_{W_\lambda}$;

(b) the number $a_{E_1}$ (see \cite[Section 4.1]{orange}) equals $(\dim\g-\rank \g-\dim \Orb)/2$.

It follows from \cite[Theorem 3.10]{variety} that there is a natural bijection $\J \mapsto \cell_\J$ between
the set $\Prim_{\Orb}(\U_\lambda)$ and the set
of left cells in $W_\lambda$ such that the corresponding cell representation contains a special
representation satisfying (a) and (b) above.

Let $\Spr(\Orb)^\lambda$ be the maximal submodule of $\Spr(\Orb)|_{W_\lambda}$ with
irreducible constituents from all families of $W_\lambda$ which contain special representations
satisfying (a) and (b) above. Notice that if $\lambda$ is integral then $\Spr(\Orb)^\lambda=
\Spr(\Orb)^\dcell$.

\begin{Conj}\label{Conj:non_int v3}
Let $H_\J$ be the stabilizer of the $A(e)$-orbit in $\Irr_{fin}(\Walg)$ lying over
$\J \in \Prim_{\Orb}(\U_\lambda)$. Then there is an isomorphism of $A(e)-$modules
$\BQ (A(e)/H_\J)=\Hom_{W_\lambda}([\cell_\J],\Spr(\Orb))$ (this determines the subgroup $H_\J$
uniquely up to conjugacy). The Grothendieck group of the category of finite dimensional
$\Walg$-modules with central character represented by $\lambda$ is
isomorphic to $\Spr(\Orb)^\lambda$ as an $A(e)-$module. In particular,
the number of isomorphism classes of irreducible finite dimensional $\Walg$-modules
with regular central character represented by $\lambda$ equals $\dim \Spr(\Orb)^\lambda$.
\end{Conj}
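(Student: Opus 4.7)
The plan is to follow the strategy of Theorem \ref{Thm:very_main} but replace $W$ throughout by the integral Weyl group $W_\lambda$, which is itself a Weyl group of some reductive subalgebra of $\g$, so that Lusztig's apparatus of cells, families, and subgroups is available. The multi-fusion category $\,^\lambda\!\JCat^\lambda_\Orb := \,^\lambda\!\HC^\lambda_\Orb(\U)^{ss}$ still makes sense (Lemma \ref{Lem:J_MF} goes through verbatim, since the proof uses only the embedding $\bullet_\dagger$ and the finiteness of $\Prim_\Orb(\U_\lambda)$), and its simple objects are parameterized, via the analogue of the Bernstein--Gelfand equivalence for the $W_\lambda$-block, by pairs of left cells in $W_\lambda \cap \dcell_\Orb^a$ whose cell modules contain special representations satisfying conditions (a) and (b) in the definition of $\Spr(\Orb)^\lambda$. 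Thanks to \cite[Theorem 3.10]{variety} referenced just before the conjecture, this gives exactly the parameterization we want, and the indecomposability of $\,^\lambda\!\JCat^\lambda_\Orb$ should again follow by the cell-connectivity argument of Proposition \ref{Prop:J_indecomp}, now using that the $W_\lambda$-cells in question form a single orbit under $\sim_L,\sim_R$.

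The second step is to produce the embedding $\BQ(Y^\lambda) \hookrightarrow \Spr(\Orb)|_{W_\lambda}$ of $W_\lambda \times A(e)$-modules refining Dodd's. The natural plan is to apply Dodd's reduction-mod-$p$ argument unchanged, so as to get $[\Walg_\lambda\text{-}\mathrm{mod}_{fd}] \hookrightarrow H^*(\mathcal{B}_e)$, and then to equip the left-hand side with a $W_\lambda$-action using the tensor product with those projective functors $\U\text{-}\mathrm{Mod}^\lambda \to \U\text{-}\mathrm{Mod}^\lambda$ that generate the block, via the categorical analogue of (\ref{catLus}) with $W$ replaced by $W_\lambda$ and $\JAlg_\Orb$ by the block of the asymptotic Hecke algebra of $W_\lambda$ attached to $\dcell_\Orb^a \cap W_\lambda$. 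The fact that the image lands in $\Spr(\Orb)^\lambda$ should then follow from Lusztig's characterization \cite[Proposition 0.2]{Lusztig_families}, now applied to the families of $W_\lambda$: the image lies in the top cohomology and consists of $W_\lambda$-irreducibles appearing in cell modules labeled by left cells in $\dcell_\Orb^a \cap W_\lambda$.

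Once these two ingredients are in place, one would run the algorithm from Subsection \ref{SUBSECTION_discussion}: start with a left cell $\cell_0$ in $W_\lambda \cap \dcell_\Orb^a$ for which the putative stabilizer is the whole of the relevant Lusztig quotient $\bA^\lambda$, use Lemma \ref{Lem:prep2} and Lemma \ref{Lem:prep3} (whose proofs only used the formal properties (A1)--(B4), which now go through with $W_\lambda$ in place of $W$) to boot-strap the identification $\bH_\cellb = H_\cellb$ for enough additional cells, and hence force $V_i$'s into $\BQ(Y^\lambda)$ until one has $\BQ(Y^\lambda) = \Spr(\Orb)^\lambda$. This last equality, combined with the first step, yields both the character formula and the stabilizer description simultaneously.

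The main obstacle, I expect, is the existence part: producing, in the $W_\lambda$-cell inside $\dcell_\Orb^a$, a left cell whose Lusztig subgroup in $A(e)$ is the full Lusztig quotient $\bA^\lambda$, together with enough cells of codimension-one subgroups to run the bootstrap. In the integral case the needed cells were found by the explicit Temperley-Lieb / exceptional-type classification of Section \ref{SECTION_cell}; here one instead has cells in the affine Weyl group $W_a$ (governed by Lusztig's conjectures \cite[10.5]{cells4}, now a theorem in weak form by \cite[Theorem 4]{BeO}) intersected with a parahoric $W_\lambda = W_I$, and the combinatorial description of how these intersections carve out cell modules of $W_\lambda$ is delicate. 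A secondary difficulty is that the groups $H_d$ entering Conjecture \ref{Conj:non_int v2} are defined in terms of the affine cell containing $d$, and reconciling those groups with the stabilizers $H_\J$ of Conjecture \ref{Conj:non_int v3} (i.e., matching the affine-Weyl description with the $W_\lambda$-Springer description via $\Hom_{W_\lambda}([\cell_\J],\Spr(\Orb))$) is itself a nontrivial compatibility statement, which I expect would follow from a version of Proposition \ref{Prop:Springer2} for parahoric restrictions of Springer representations.
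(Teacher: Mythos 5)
This statement is a \emph{conjecture} in the paper (Subsection 7.6), not a theorem, so there is no proof in the paper to compare against; the authors explicitly leave it open and even record that its compatibility with Conjecture \ref{Conj:non_int v2} is unclear. Your proposal is therefore not being measured against an actual argument but is a research program, and as a program it is sensible and tracks what one would naturally attempt: transplant the $W$-machinery to $W_\lambda$, refine Dodd's map to be $W_\lambda$-equivariant with image in $\Spr(\Orb)^\lambda$, and rerun the bootstrap of Section \ref{SECTION:proof_main}.

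However, the two pieces you flag as ``expected to go through'' are exactly where the hard work lies, and as written they are gaps, not steps. First, the existence of a left cell $\cell_0$ in $W_\lambda\cap\dcell^a_\Orb$ whose Lusztig subgroup is the full quotient, together with enough codimension-one cells to run the bootstrap, was obtained in the integral case by heavy case-by-case combinatorics (Temperley--Lieb patterns in classical types, explicit tables in exceptional types), with the exceptional two-sided cells requiring a separate treatment via \cite{O}; nothing of the sort is established for the non-integral setting, and note that $W_\lambda$ is in general a pseudo-Levi Weyl group rather than the Weyl group of a Levi of $\g$, so the existing combinatorics cannot simply be restricted. Second, $W_\lambda$-equivariance of the refined Dodd map and the identification of its image with $\Spr(\Orb)^\lambda$ require more than invoking \cite[Proposition 0.2]{Lusztig_families}: one must show that the categorified Lusztig homomorphism \eqref{catLus}, with $W$ replaced by $W_\lambda$ and $\JAlg_\Orb$ by the relevant block of the asymptotic algebra of $W_\lambda$, is compatible with the parahoric restriction $\Spr(\Orb)|_{W_\lambda}$ of the Springer action --- precisely the compatibility you defer to a putative $W_\lambda$-analogue of Proposition \ref{Prop:Springer2}. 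Until these two points are supplied, the proposal remains an outline of the conjecture rather than a proof of it.
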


Obviously, Conjecture \ref{Conj:non_int v3} is compatible with Theorem \ref{Thm:very_main};
however it is not clear whether Conjecture \ref{Conj:non_int v3} is
compatible with Conjecture \ref{Conj:non_int v2}.

Now let $\lambda$ be arbitrary dominant but not necessarily regular weight. Let $W_\lambda^0 \subset W_\lambda$ be the stabilizer
of $\lambda$ with respect to the dot action. We expect that the Grothendieck group of the category of
finite dimensional $\Walg$-modules with central character represented by $\lambda$ is
isomorphic to $(\Spr(\Orb)^\lambda)^{W_\lambda^0}$ as an $A(e)$-module.

We  finish with a less precise conjecture. Pick a  special representation $E_1$ of $W_\lambda$
as above. To this representation we can assign a two-sided cell $\dcell_1$ in $W_\lambda$. On the other hand,
recall an equivalence relation $\sim$ on the irreducible $\Walg_\lambda$-modules and also the induced
equivalence relation on $\Prim_{\Orb}(\U_\lambda)$. The equivalence classes in the latter are parameterized
by the two-sided cells of the form $\dcell_1$. Let $\Prim_{\dcell_1}(\U_\lambda)$ be the class
corresponding to $\dcell_1$ and $\Irr_{\dcell_1}(\Walg_\lambda)$ denote the preimage of
$\Prim_{\dcell_1}(\U_\lambda)$. As before, any two fibers of the map
$\Irr_{\dcell_1}(\Walg_\lambda)\rightarrow \Prim_{\dcell_1}(\U_\lambda)$ contain equivalent
points but not all points in   $\Irr_{\dcell_1}(\Walg_\lambda)$ need to be equivalent.
It is easy to see that any such equivalence relation is as follows: there is a uniquely
determined $A(e)$-orbit, say $A(e)/A_{\dcell_1}$, and an $A(e)$-equivariant surjection $
\Irr_{\dcell_1}(\Walg_\lambda)\rightarrow A(e)/A_{\dcell_1}$, such that two points are equivalent
if and only if their images coincide. It is an interesting question how to see this subgroup
$A_{\dcell_1}$ in the settings of any of the two conjectures above.

Now consider the Lusztig group $\bA_{\dcell_1}$ constructed for the two-sided cell $\dcell_1\subset W_\lambda$.
There should be a natural epimorphism $A_{\dcell_1}\twoheadrightarrow \bA_{\dcell_1}$ with the following property:
the stabilizer of the $A(e)$-orbit in $\Irr_{\dcell_1}(\Walg_\lambda)$  corresponding to a left cell
$\sigma_1\subset \dcell_1$ is the preimage of $H_{\sigma_1}$ under the epimorphism above.


\begin{thebibliography}{99}
\bibitem[BV]{BV_class} D. Barbash, D. Vogan. {\it Primitive ideals and orbital integrals in complex
classical groups}. Math. Ann. 249(1982), 153-199.
\bibitem[BG]{BG} J. Bernstein, S. Gelfand. {\it Tensor products of finite and infinite dimensional representations of
semisimple Lie algebras}. Compositio Mathematica, 41(1980), n.2, p. 245-285.
\bibitem[BO]{BeO} R. Bezrukavnikov, V. Ostrik. {\it On tensor categories attached
to cells in affine Weyl groups, II}. Representation theory of algebraic groups and quantum groups,
101-119, Adv. Stud. Pure Math., 40, Math. Soc. Japan, Tokyo, 2004.
\bibitem[BFO1]{BFO1} R. Bezrukavnikov, M. Finkelberg, V. Ostrik, {\it On tensor categories attached to cells in affine Weyl groups, III}. Israel J. Math. 170(2009), 207-234.
\bibitem[BFO2]{BFO2} R. Bezrukavnikov, M. Finkelberg, V. Ostrik, {\it Character D-modules via Drinfeld center of Harish-Chandra bimodules}. arXiv:0902.1493. To appear in Invent. Math.
\bibitem[BB]{BB} W. Borho, J.-L. Brylinski. {\it Differential operators on homogeneous
spaces I. Irreducibility of associated variety for annihilators of induced modules}. Invent.
Math 69(1982), 437-476.
\bibitem[BG1]{BG1} J. Brown, S. Goodwin. {\it Finite dimensional irreducible representations of finite W-algebras associated to even multiplicity nilpotent orbits in classical Lie algebras}. arXiv:1009.3869.
\bibitem[BG2]{BG2} J. Brown, S. Goodwin. {\it Non-integral representation theory of even multiplicity finite W-algebras}.
arXiv:1112.6293.
\bibitem[BGK]{BGK} J. Brundan, S. Goodwin, A. Kleshchev. {\it Highest weight theory
for finite $W$-algebras}.  IMRN 2008, no. 15, Art. ID rnn051; arXiv:0801.1337.
\bibitem[BK]{BK2} J. Brundan, A. Kleshchev. {\it
Representations of shifted Yangians and finite W-algebras}.
Mem. Amer. Math. Soc. 196 (2008), 107 pp.
\bibitem[C]{Carter} R. Carter. {\it Finite groups of Lie type. Conjugacy classes and complex
characters}. 2nd ed. John Wiley and sons, 1993.
\bibitem[CM]{CM} D. Collingwood, W. McGovern. {\it Nilpotent orbits in semisimple Lie algebras}.
Chapman and Hall, London, 1993.
\bibitem[D]{Dodd} C. Dodd. {\it Injectivity of a certain cycle map for finite dimensional W-algebras}.
arXiv:1009.2456.
\bibitem[EG]{EtGe} P. Etingof, Sh. Gelaki. {\it Isocategorical groups}. IMRN, vol. 2, (2001), p. 59-76.
\bibitem[ENO]{ENO} P. Etingof, V. Ostrik, D. Nikshych. {\it On fusion categories}.
Ann. of Math. (2) 162 (2005), no. 2, 581-642.
\bibitem[GJ]{GJ} O. Gabber, A. Joseph. {\it On the Bernstein-Gelfand-Gelfand resolution and
the Duflo sum formula}. Compositio Math., 43(1981), 107-131.
\bibitem[GG]{GG} W.L. Gan, V. Ginzburg. {\it Quantization of Slodowy slices}. IMRN, 5(2002), 243-255.
\bibitem[Ga]{Ga} D. Garfinkle. {\it On the classification of primitive ideals for complex classical
Lie algebras, I}. Comp. Math. 75(1990), 135-169.
\bibitem[Gi]{Ginzburg} V. Ginzburg. {\it Harish-Chandra bimodules for quantized Slodowy
slices}. \emph{Harish-Chandra bimodules for quantized Slodowy
slices}, Repres. Theory 13(2009), 236-271.
\bibitem[Go]{Goodwin} S. Goodwin, {\it Translation for finite W-algebras}. Repres. Theory 15(2011), 307-346.
\bibitem[GW]{GW} F.~Goodman, H.~Wenzl, {\it Ideals in Temperley-Lieb Category},
Comm. Math. Phys. 234 (2003), no. 1, 172-181.
\bibitem[Ja]{Jantzen} J.C. Jantzen. {\it Einh\"{u}llende Algebren
halbeinfacher Lie-Algebren}. Ergebnisse der Math., Vol. 3, Springer,
New York, Tokio etc., 1983.
\bibitem[Jo1]{Joseph_A} A. Joseph, \emph{Sur la classification des id\'{e}aux primitifs dans l'algebre envellopante
de $\sl(n+1,\mathbb{C})$}, C.R. Acad. Sci. Paris S\'{e}r A-B, 287(1978), N5, A303-306.
\bibitem[Jo2]{Joseph_cyclicity} A. Joseph. {\it On the cyclicity of vectors associated
with Duflo involutions}. in Lecture Notes in Mathematics, vol. 1243, 144-188, Springer-Verlag,
Berlin, 1987.
\bibitem[Jo3]{Joseph_scale} A. Joseph. {\it A sum rule for the scale factors
in the Goldie rank polynomials}. J. Algebra, 118(1988), 276-311.
\bibitem[Jo4]{variety} A. Joseph. {\it On the associated variety of a primitive ideal}.
J. Algebra, 93 (1985), no. 2, 509-523.
\bibitem[Lo1]{Wquant} I. Losev. {\it Quantized symplectic actions and
$W$-algebras}. J. Amer. Math. Soc 23(2010), 34-59.
\bibitem[Lo2]{HC} I. Losev. {\it Finite dimensional representations of
W-algebras}.   Duke Math. J. 159(2011), n.1, 99-143.
\bibitem[Lo3]{Ocat} I. Losev. {\it On the structure of the category $\OCat$ for W-algebras}.
S\'{e}minaires et Congr\`{e}s 24 (2013), 351-368. arXiv:0812.1584.
\bibitem[Lo4]{Miura} I. Losev. {\it Parabolic induction and one-dimensional representations
of $W$-algebras}. Adv. Math. 226(2011), 6, 4841-4883.
\bibitem[Lo5]{ICM} I. Losev. {\it Finite W-algebras}.  Proceedings of the International Congress of Mathematicians
Hyderabad, India, 2010, p. 1281-1307.
\bibitem[Lo6]{Goldie} I. Losev. {\it Dimensions of irreducible modules over W-algebras
and Goldie ranks}. arXiv:1209.1083.
\bibitem[Lu1]{fam} G. Lusztig. {\it A class of irreducible representations of a Weyl group, II}.
Proc. Kon. Nederl. Akad., series A, 85(1982), 219-226.
\bibitem[Lu2]{orange}  G. Lusztig. {\it Characters of reductive groups over a finite field}, Ann. Math. Studies 107, Princeton University Press (1984).
\bibitem[Lu3]{cellules} G. Lusztig. {\it Sur les cellules gauches des groupes de Weyl},
C.~R. Acad. Sc. Paris, t. 302, S\'erie I, n. 1(1986), 5-8.
\bibitem[Lu4]{Lusztig_subgroups} G. Lusztig.  {\it Leading coefficients of character values of Hecke algebras}, Proc.Symp.Pure Math.47(2), Amer.Math.Soc. 1987, 235-262.
\bibitem[Lu5]{cells4} G. Lusztig. {\it Cells in affine Weyl groups, IV}.
J. Fac. Sci. Univ. Tokyo Sect. IA, Math. 36 (1989), 297-328.
\bibitem[Lu6]{Lusztig_tensor} G. Lusztig. {\it Cells in affine Weyl groups and tensor categories}.
Adv. Math. 129 (1997), 85-98.
\bibitem[Lu7]{spun} G. Lusztig. {\it Unipotent classes and special Weyl group representations}.
J. Algebra, 321 (2009), no. 11, 3418-3449.
\bibitem[Lu8]{Lusztig_families} G. Lusztig. {\it Families and Springer's correspondence}.
arXiv:1201.5593.
\bibitem[McG]{McGovern_class} W. McGovern, {\it Left cells and domino tableaux in classical Weyl
groups}. Comp. Math. 101(1996), 77-98.
\bibitem[Ma]{Mazorchuk} V. Mazorchuk, {\it Lectures on algebraic categorification}. The QGM Master
Class Series. Eur. Math. Soc. 2012.
\bibitem[O1]{Omod} V.~Ostrik, {\it Module categories, weak Hopf algebras and modular invariants}.
Transformation Groups 8 (2003), 177-206.
\bibitem[O2]{Orbi} V.~Ostrik, {\it Module categories over the Drinfeld double of a finite group}.
IMRN 27 (2003), 1507-1520.
\bibitem[O3]{O} V.~Ostrik, {\it Tensor categories attached to exceptional cells in Weyl groups}.
arXiv:1108.3060.
\bibitem[Pr1]{Premet1} A. Premet. {\it Special transverse slices and their enveloping algebras}. Adv. Math. 170(2002),
1-55.
\bibitem[Pr2]{Premet2} A. Premet. {\it Enveloping algebras of Slodowy slices and the Joseph ideal}.
J. Eur. Math. Soc, 9(2007), N3, 487-543.
\bibitem[V]{Vinberg} E. Vinberg, {\it On invariants of a set of matrices}. J. Lie Theory, 6(1996), 249-269.
\bibitem[W]{Wang} W. Wang, \emph{Nilpotent orbits and W-algebras}, arXiv:0912.0689.
Fields Institute Communications Series 59 (2011), 71-105.
\end{thebibliography}
\end{document}